\DeclareFontFamily{OT1}{rsfs}{}
\DeclareFontShape{OT1}{rsfs}{n}{it}{<-> rsfs10}{}
\DeclareMathAlphabet{\curly}{OT1}{rsfs}{n}{it}
\newcommand{\eqnum}{\refstepcounter{equation}\textup{\tagform@{\theequation}}}
\DeclareRobustCommand{\SkipTocEntry}[3]{}
\newcommand\@dotsep{4.5}
\def\@tocline#1#2#3#4#5#6#7{\relax
  \ifnum #1>\c@tocdepth 
  \else
    \par \addpenalty\@secpenalty\addvspace{#2}%
    \begingroup \hyphenpenalty\@M
    \@ifempty{#4}{%
      \@tempdima\csname r@tocindent\number#1\endcsname\relax
    }{%
      \@tempdima#4\relax
    }%
    \parindent\z@ \leftskip#3\relax \advance\leftskip\@tempdima\relax
    \rightskip\@pnumwidth plus1em \parfillskip-\@pnumwidth
    #5\leavevmode #6\relax
    \leaders\hbox{$\m@th
      \mkern \@dotsep mu\hbox{.}\mkern \@dotsep mu$}\hfill
    \hbox to\@pnumwidth{\@tocpagenum{#7}}\par
    \nobreak
    \endgroup
  \fi}
\newcommand\beq[1]{\begin{equation}\label{#1}}
\newcommand\eeq{\end{equation}}
\newcommand\beqa{\begin{eqnarray*}}
\newcommand\eeqa{\end{eqnarray*}}
\title[GV invariants and wall-crossing]{Gopakumar-Vafa invariants and wall-crossing}
\date{}
\author{Yukinobu Toda}
\DeclareFontFamily{U}{rsfs}{%
\skewchar\font127}
\DeclareFontShape{U}{rsfs}{m}{n}{%
<-6>rsfs5<6-8.5>rsfs7<8.5->rsfs10}{}
\DeclareSymbolFont{rsfs}{U}{rsfs}{m}{n}
\DeclareRobustCommand*\rsfs{%
\@fontswitch\relax\mathrsfs}
\theoremstyle{plain}
\newtheorem{thm}{Theorem}[section]
\newtheorem{prop}[thm]{Proposition}
\newtheorem{lem}[thm]{Lemma}
\newtheorem{defi}[thm]{Definition}
\newtheorem{rmk}[thm]{Remark}
\newtheorem{cor}[thm]{Corollary}
\newtheorem{prop-defi}[thm]{Proposition-Definition}
\newtheorem{thm-defi}[thm]{Theorem-Definition}
\newtheorem{lem-defi}[thm]{Lemma-Definition}
\newtheorem{conj}[thm]{Conjecture}
\newtheorem{exam}[thm]{Example}
\newcommand{\sslash}{/\!\!/}
\newcommand{\aA}{\mathcal{A}}
\newcommand{\cC}{\mathcal{C}}
\newcommand{\eE}{\mathcal{E}}
\newcommand{\fF}{\mathcal{F}}
\newcommand{\hH}{\mathcal{H}}
\newcommand{\lL}{\mathcal{L}}
\newcommand{\mM}{\mathcal{M}}
\newcommand{\oO}{\mathcal{O}}
\newcommand{\sS}{\mathcal{S}}
\newcommand{\uU}{\mathcal{U}}
\newcommand{\Supp}{\mathop{\rm Supp}\nolimits}
\newcommand{\Hom}{\mathop{\rm Hom}\nolimits}
\newcommand{\dR}{\mathbf{R}}
\newcommand{\dL}{\mathbf{L}}
\newcommand{\Pic}{\mathop{\rm Pic}\nolimits}
\newcommand{\Chow}{\mathop{\rm Chow}\nolimits}
\newcommand{\id}{\textrm{id}}
\newcommand{\ch}{\mathop{\rm ch}\nolimits}
\newcommand{\Ext}{\mathop{\rm Ext}\nolimits}
\newcommand{\Spec}{\mathop{\rm Spec}\nolimits}
\newcommand{\Coh}{\mathop{\rm Coh}\nolimits}
\newcommand{\cneq}{\mathrel{\raise.095ex\hbox{:}\mkern-4.2mu=}}
\newcommand{\eqcn}{\mathrel{=\mkern-4.5mu\raise.095ex\hbox{:}}}
\newcommand{\codim}{\mathop{\rm codim}\nolimits}
\newcommand{\Ex}{\mathop{\rm Ex}\nolimits}
\newcommand{\Stab}{\mathop{\rm Stab}\nolimits}
\newcommand{\oPPer}{\mathop{\rm ^{0}Per}\nolimits}
\newcommand{\iPPer}{\mathop{\rm ^{-1}Per}\nolimits}
\newcommand{\ptau}{\mathop{^{{p}}\tau}\nolimits}
\newcommand{\otau}{\mathop{^{{0}}\tau}\nolimits}
\newcommand{\itau}{\mathop{^{{-1}}\tau}\nolimits}
\newcommand{\pZ}{\mathop{^{{p}}Z}\nolimits}
\newcommand{\pH}{\mathop{^{{p}}\mathcal{H}}\nolimits}
\newcommand{\Perv}{\mathop{\rm Perv}\nolimits}
\newcommand{\IC}{\mathop{\rm IC}\nolimits}
\newcommand{\pPPer}{\mathop{\rm ^{\mathit{p}}Per}\nolimits}
\newcommand{\Sym}{\mathop{\rm Sym}\nolimits}
\newcommand{\Imm}{\mathop{\rm Im}\nolimits}
 \newcommand{\RHom}{\mathop{\dR\mathrm{Hom}}\nolimits}
\newcommand{\Ree}{\mathop{\rm Re}\nolimits}
\newcommand{\GL}{\mathop{\rm GL}\nolimits}
\newcommand{\tr}{\mathop{\rm tr}\nolimits}
\newcommand{\length}{\mathop{\rm length}\nolimits}
\newcommand{\lkakko}{[\![}
\newcommand{\rkakko}{]\!]}
 \renewcommand{\theequation}{%
   \thesection.\arabic{equation}}
\begin{document}

\begin{abstract}
In this paper, we 
generalize a mathematical definition of 
Gopakumar-Vafa (GV) invariants on Calabi-Yau 3-folds
introduced by Maulik and the author, using 
an analogue of BPS sheaves 
introduced by Davison-Meinhardt on
the coarse
 moduli spaces of one dimensional 
twisted semistable 
sheaves with arbitrary 
holomorphic Euler characteristics. 
We show that our generalized GV invariants 
are
independent of twisted stability conditions, 
and conjecture that they are 
also independent of holomorphic
Euler characteristics, so that 
they define the same 
GV invariants. As an application, we will 
show the flop transformation formula of GV invariants. 
\end{abstract}

\maketitle

\section{Introduction}
\subsection{Background}
Let $X$ be a smooth projective Calabi-Yau 3-fold over $\mathbb{C}$. For
$g \in \mathbb{Z}_{\ge 0}$ and 
$\beta \in H_2(X, \mathbb{Z})$, 
Gopakumar-Vafa~\cite{GV} conjectured the existence of integer valued 
invariants (called \textit{Gopakumar-Vafa (GV) invariants})
\begin{align}\label{intro:GV}
n_{g, \beta} \in \mathbb{Z}, \ g \in \mathbb{Z}_{\ge 0}, \ \beta \in H_2(X, \mathbb{Z})
\end{align}
which determine Gromov-Witten invariants~\cite{BGW}
 and Pandharipande-Thomas invariants~\cite{PT} by 
taking their generating series (see~\cite[Section~3.3]{MT} for precise 
conjectures). 
The original approach
 of Gopakumar-Vafa~\cite{GV}
toward defining (\ref{intro:GV})
was to use the $sl_2 \times sl_2$-action on the cohomology of 
the moduli space of 
D2-branes, which
may be mathematically interpreted as the moduli space of 
one dimensional (semi)stable sheaves on $X$. 
In~\cite{MT}, Maulik and the author proposed 
a mathematical definition of the invariants (\ref{intro:GV})
along with the idea of Gopakumar-Vafa~\cite{GV}, 
based on earlier works 
by Hosono-Saito-Takahashi~\cite{HST}, 
Katz~\cite{Katz} (the $g=0$ case) 
and Kiem-Li~\cite{KL}. 
As we will review shortly, the 
 key ingredients of the definition in~\cite{MT} are
the perverse sheaf of vanishing cycles 
on the moduli space of one dimensional stable sheaves, 
and the character formula of $sl_2 \times sl_2$-action
on its cohomology.   

The relevant moduli space in the above works~\cite{HST, Katz, KL, MT}
is the 
moduli space $M(\beta, 1)$
of one dimensional Gieseker-stable 
sheaves $E$ on $X$ satisfying $[l(E)]=\beta$
and $\chi(E)=1$, where $l(E)$ is the fundamental one cycle of $E$. 
In this paper, we address the question 
whether we can 
also define the invariants (\ref{intro:GV}) using 
some variants of the moduli space $M(\beta, 1)$, i.e. 
different stability conditions, holomorphic Euler characteristics.  
For $m\in \mathbb{Z}$ and 
an element of the 
complexified ample cone
\begin{align}\label{intro:sigma}
\sigma=B+i\omega \in A(X)_{\mathbb{C}}
\end{align}
let $M_{\sigma}(\beta, m)$
be the coarse moduli space 
of $B$-twisted $\omega$-semistable
one dimensional sheaves
$E$ on $X$ satisfying $[l(E)]=\beta$
and $\chi(E)=m$.
The moduli space $M_{\sigma}(\beta, m)$
is related to 
the previous one by 
$M(\beta, 1)=M_{\sigma=i\omega}(\beta, 1)$. 
The purpose of this paper is to 
extend the construction of 
GV invariants in~\cite{MT}
using
the 
moduli space
$M_{\sigma}(\beta, m)$,
and 
study the 
independence of the resulting invariants of 
additional data $(\sigma, m)$.

\subsection{Generalized GV invariants}
In~\cite{MT}, by modifying the earlier work
of Kiem-Li~\cite{KL}, 
we defined the invariant (\ref{intro:GV})
using a perverse sheaf of vanishing cycles on 
$M(\beta, 1)$, and the perverse cohomologies 
of its push-forward to 
the Chow variety $\Chow_X(\beta)$. Here
the latter moduli space 
parametrizes effective one cycles on $X$
with homology class $\beta$. 
The basic fact behind this idea is that 
$M(\beta, 1)$ is the truncation of a 
derived scheme with a $(-1)$-shifted symplectic 
structure~\cite{PTVV}, so by the derived 
Darboux theorem~\cite{BBBJ}
it has a $d$-critical structure 
introduced by Joyce~\cite{JoyceD}. 
In particular 
it is locally written as a critical locus of 
some functions. The locally defined 
vanishing cycle sheaves can be glued if we choose an 
orientation data, which is a square root line bundle 
of the virtual canonical line bundle of $M(\beta, 1)$. 

We will apply the similar idea for the 
moduli space $M_{\sigma}(v)$, where 
$\sigma$ is an element (\ref{intro:sigma})
and $v=(\beta, m)$. 
A difference from the moduli space $M(\beta, 1)$ is 
that $M_{\sigma}(v)$ is not a fine moduli space in general, 
so it is not locally written as a critical locus. 
Nevertheless, we will define the perverse sheaf\footnote{The definition of
the perverse sheaf (\ref{intro:phiM}) was suggested to 
the author by Ben Davison.}
\begin{align}\label{intro:phiM}
\phi_{M_{\sigma}(v)} \in \Perv(M_{\sigma}(v))
\end{align}
as an analogue of BPS sheaves
introduced by Davison-Meinhardt~\cite{DaMe} on 
the coarse moduli spaces of semistable quiver 
representations with super-potentials. 
The perverse sheaf (\ref{intro:phiM}) is, roughly speaking, 
constructed 
as follows. 
Let $\mM_{\sigma}(v)$ 
be the moduli stack of $\sigma$-semistable sheaves on $X$ 
with Chern character $v$. 
Then the stack $\mM_{\sigma}(v)$ admits a $d$-critical 
structure~\cite{BBBJ}, so 
given an orientation data 
we can construct 
a perverse sheaf of vanishing cycles on the stack $\mM_{\sigma}(v)$. 
Then we push-forward it to the coarse moduli space $M_{\sigma}(v)$, 
and its 
first perverse cohomology defines the
perverse sheaf (\ref{intro:phiM}).  
It will turn out that (\ref{intro:phiM}) is an analogue of 
BPS sheaves (see Subsection~\ref{subsec:BPSsheaf}). 

Let $\pi_M$ be the Hilbert-Chow map 
\begin{align*}
\pi_M \colon M_{\sigma}(v) \to \Chow_X(\beta)
\end{align*}
sending $E$ to its fundamental one cycle. 
Then as a generalization of the construction in~\cite{MT}, 
we introduce the following definition:
\begin{defi}\emph{(Definition~\ref{def:def:phiM})}
For $\gamma \in \Chow_X(\beta)$, we define the invariant
\begin{align}\label{intro:def:Phi}
\Phi_{\sigma}(\gamma, m) \cneq 
\sum_{i\in \mathbb{Z}}
\chi(\pH^i(\dR \pi_{M\ast} \phi_{M_{\sigma}(v)})|_{\gamma})
y^i \in \mathbb{Z}[y^{\pm 1}]. 
\end{align}
\end{defi}
We note that the GV invariant
$n_{g, \beta}$ defined in~\cite{MT}
is recovered from (\ref{intro:def:Phi}) by the  
character formula of $sl_2 \times sl_2$-action
\begin{align*}
\int_{\gamma \in \Chow_X(\beta)}
\Phi_{\sigma=i\omega}(\gamma, 1)  \ de=
\sum_{g\ge 0}n_{g, \beta}(y^{1/2}+y^{-1/2})^{2g}. 
\end{align*}
The following is the main conjecture we address in this paper. 
\begin{conj}\label{intro:conj}
The invariant 
$\Phi_{\sigma}(\gamma, m)$ is independent of 
$\sigma$ and $m$. 
\end{conj}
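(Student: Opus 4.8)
The plan is to prove the independence of $\Phi_{\sigma}(\gamma,m)$ on $\sigma$ completely, along the lines of wall-crossing in the cohomological Hall algebra, and to explain separately why the independence on $m$ is the genuinely open part. First I would fix a class $v=(\beta,m)$ and set up the chamber structure: boundedness of $B$-twisted $\omega$-semistable one dimensional sheaves of fixed Chern character $v$ produces a locally finite family of real codimension one walls in $A(X)_{\mathbb{C}}$, away from which the stack $\mM_{\sigma}(v)$, its $d$-critical structure, a compatible choice of orientation data, the perverse sheaf $\phi_{M_{\sigma}(v)}$, and the Hilbert-Chow map $\pi_M$ are all locally constant. Hence $\Phi_{\sigma}(\gamma,m)$ is locally constant on the complement of the walls, and it suffices to compare $\Phi_{\sigma_+}(\gamma,m)$ and $\Phi_{\sigma_-}(\gamma,m)$ for $\sigma_\pm$ in adjacent chambers separated by a general point $\sigma_0$ of a single wall $W$.

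Next I would analyze a neighbourhood of the strictly semistable locus over $W$. A $\sigma_0$-semistable but non-stable sheaf $E$ of class $v$ has a Jordan-H\"older filtration whose $\sigma_0$-stable factors have classes $v_1,\dots,v_k$ lying on the wall ray, and boundedness makes the set of such collections $(v_i)$ finite. Around the image in $M_{\sigma_0}(v)$ of a polystable sheaf $\bigoplus_i S_i^{\oplus n_i}$ I would use the Luna-type slice theorem for $d$-critical stacks to model $\mM_{\sigma_0}(v)$, in the \'etale topology, on the stack of representations of the $\mathrm{Ext}$-quiver of $\{S_i\}$ equipped with its formal Calabi-Yau superpotential, with King stability on the two sides of $W$ corresponding to $\sigma_\pm$; under this model the perverse sheaf of vanishing cycles becomes the quiver BPS sheaf of Davison-Meinhardt, while the map to $\Chow_X(\beta)$ becomes the evident cycle-class map on the quiver side (moving the stable factors $S_i$ along the self-loop deformation parameters and summing cycles with multiplicities). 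This reduces the wall-crossing statement for $\phi_{M_{\sigma}(v)}$, locally over $\Chow_X(\beta)$, to the corresponding statement for BPS sheaves of a symmetric quiver with potential under a change of King stability parameter.

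For the quiver statement I would invoke the cohomological integrality theorem of Davison-Meinhardt, which factors the vanishing cycle cohomology of the stack of semistable quiver representations as an ordered product over BPS contributions, together with the Hall-algebra wall-crossing formula relating the stacky invariants on the two sides of $W$. Since on both sides the stacky generating series is expressed as the same kind of ordered product over BPS pieces, with the ordering dictated only by the common wall ray, matching the primitive (BPS) parts shows that the BPS sheaf of class $v$, pushed forward along the local cycle-class map, agrees on the two sides of $W$ together with its perverse filtration. Gluing these local identifications over $\Chow_X(\beta)$ and taking stalkwise Euler characteristics of the perverse cohomology sheaves $\pH^i(\dR\pi_{M\ast}\phi_{M_{\sigma}(v)})$ then yields $\Phi_{\sigma_+}(\gamma,m)=\Phi_{\sigma_-}(\gamma,m)$, and hence the independence of $\Phi_{\sigma}(\gamma,m)$ on $\sigma$.

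The main obstacle, already for the $\sigma$-direction, is that $\Phi_{\sigma}$ remembers the entire perverse filtration of $\dR\pi_{M\ast}\phi_{M_{\sigma}(v)}$, i.e. the weight $y^i$ attached to each $\pH^i$, and not merely the stalkwise Euler characteristic of the total complex; so the local quiver comparison must be made compatible with the global perverse $t$-structure on $D^b_c(\Chow_X(\beta))$, which means tracking the perverse filtration of $\dR\pi_{M\ast}\phi_{M_{\sigma}(v)}$ across the wall, and this is where the real work concentrates. The $m$-independence is harder still: changing $m$ is not a wall-crossing within twisted Gieseker stability, and the moduli spaces $M_{\sigma}(\beta,m)$ for different $m$ need not be birational or deformation equivalent, so none of the above applies directly. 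A plausible line of attack is to enlarge the stability framework — to Bridgeland-type stability conditions on $X$, or suitable limits of them — so that varying $m$ becomes an honest wall-crossing, or to compare both sides with the stable pair invariants, where the dependence on the Euler characteristic is known to wash out; since neither route is currently available, the full statement is recorded only as Conjecture~\ref{intro:conj}.
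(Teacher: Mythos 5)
The statement you are addressing is recorded in the paper as a conjecture, not a theorem; the paper proves it only partially, and always conditionally on the CY property of the ambient stack $\mM_X(\beta)$ (Conjecture~\ref{conj:vir}): $\sigma$-independence is Theorem~\ref{thm:inde}, and $m$-independence for \emph{primitive} one-cycles $\gamma$ is Theorem~\ref{thm:indeE}. Your proposal for the $\sigma$-direction is conceptually the same as the paper's (wall-crossing, Ext-quiver local model, Davison--Meinhardt), but two steps are off in ways that matter. First, the local model is \emph{not} an \'etale Luna slice: the key input (Theorem~\ref{thm:compare}, imported from \cite{Todstack}) is a \emph{complex-analytic} isomorphism, near a polystable point, with the critical locus stack of a \emph{convergent} super-potential on the Ext-quiver, compatible with both coarse moduli maps and with the sub-wall stability; the identification of $d$-critical structures is then a separate cyclic $A_\infty$ statement (Proposition~\ref{prop:compare:d}). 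A generic slice theorem does not deliver the analytic-Hilbert-quotient picture that is needed to compare BPS sheaves on coarse spaces. Second, your ``glue local identifications'' step has a real gap: local isomorphisms of perverse sheaves do not glue without a canonical choice. The paper instead constructs one global morphism $\phi_{M_{\sigma}(v)|_U}\to \dR q_{M\ast}\phi_{M_{\sigma^{+}}(v)|_U}$ by adjunction on the open immersion of the new semistable locus, shows via the quiver reduction and Lemmas~\ref{lem:phivanish}--\ref{lem:cmor:phi} that the target is perverse and that this one morphism is an isomorphism, and pushes forward to $\Chow_X(\beta)$; the perverse-filtration bookkeeping you flag as ``the real work'' then comes for free, since one has an honest isomorphism in $D^b_c(U)$. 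Note also that a consistent orientation data across the wall requires triviality of the virtual canonical line bundle on $\mM_X(\beta)|_U$, not merely on each $\mM_{\sigma}(v)|_U$ (cf.\ Remark~\ref{rmk:weaker}); your argument never addresses this.

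For the $m$-direction, you are right that general $m$-independence is the genuinely open part, and indeed it remains a conjecture; but you dismiss the whole direction as inaccessible, whereas the paper proves it when $\gamma$ is primitive by a completely different mechanism than wall-crossing. If $\gamma=\sum a_i[C_i]$ with $\mathrm{g.c.d.}(a_i)=1$, one chooses a divisor $D$ defined near $\Supp\gamma$ with $D\cdot\gamma=1$; then $F\mapsto F\otimes\oO(D)$ gives an isomorphism of $d$-critical stacks $\mM_{\sigma_{B,\omega}}(\beta,m)|_U \cong \mM_{\sigma_{B+D,\omega}}(\beta,m+1)|_U$ over $U$, commuting with the Hilbert--Chow maps, and Theorem~\ref{thm:inde} eliminates the change from $\sigma_{B,\omega}$ to $\sigma_{B+D,\omega}$. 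This covers a large and useful class of cycles and should have been part of your account.
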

If the above conjecture is true, then one 
can define the GV invariants (\ref{intro:GV}) from 
the moduli space $M_{\sigma}(v)$
for an arbitrary element $\sigma$ in (\ref{intro:sigma})
and $v=(\beta, m)$.
Namely if we define $n_{g, \beta, m}(\sigma)$
by the identity
\begin{align*}
\int_{\gamma \in \Chow_X(\beta)}
\Phi_{\sigma}(\gamma, m)  \ de=
\sum_{g\ge 0}n_{g, \beta, m}(\sigma)(y^{1/2}+y^{-1/2})^{2g}
\end{align*}
then $n_{g, \beta, m}(\sigma)$ is independent of $\sigma$ and $m$, 
so we have $n_{g, \beta, m}(\sigma)=n_{g, \beta}$.
When $(\gamma, m)$ is coprime, then Conjecture~\ref{intro:conj}
was also 
proposed in~\cite[Conjecture~3.21]{MT}, and Conjecture~\ref{intro:conj} is its generalization 
to the non-coprime $(\gamma, m)$ case. 
A similar result and conjecture were discussed in~\cite{JS, Todpara}
for generalized Donaldson-Thomas invariants~\cite{JS, K-S} 
counting one dimensional semistable sheaves, 
which in turn implies Pandharipande-Thomas's 
strong rationality conjecture~\cite{PT}
 (see~\cite{Tsurvey} for details).

Here we mention 
about a technical subtlety 
in defining the invariant (\ref{intro:def:Phi}). 
By their constructions, the perverse sheaf (\ref{intro:phiM}) 
and the invariant (\ref{intro:def:Phi})
depend on a choice of an orientation data of $\mM_{\sigma}(v)$, 
so we have to 
specify its choice. 
Similarly to~\cite{MT}, we 
impose the condition on an orientation data of $\mM_{\sigma}(v)$ 
so that it is trivial locally on the Chow variety. 
More precisely, 
we assume that the virtual canonical 
line bundle of the stack $\mM_{\sigma}(v)$ 
is trivial on the preimage of 
some open neighborhood $\gamma \in U \subset \Chow_X(\beta)$
under the Hilbert-Chow map
\begin{align*}
\pi_{\mM} \colon 
\mM_{\sigma}(v) \to \Chow_X(\beta).
\end{align*}
In this case, we
say that $\mM_{\sigma}(v)$ is \textit{CY at $\gamma$}, and 
conjecture that this is always the case. 
 Then 
we can take an
orientation data
of $\pi_{\mM}^{-1}(U)$ 
which is trivial as a line bundle.
Such an orientation data is called 
a \textit{CY orientation data}. 
Then by a local argument in an open neighborhood 
of $\gamma$, we can define the invariant (\ref{intro:def:Phi})
using a CY orientation data. 
The resulting invariant (\ref{intro:def:Phi}) is 
shown to be independent of 
a
CY orientation data (see Lemma~\ref{lem:inde}).

\subsection{Results}
We will study Conjecture~\ref{intro:conj} 
under a CY condition on the bigger stack $\mM_{X}(\beta)$
of pure one dimensional sheaves $E$ with $[l(E)]=\beta$. 
Our main result is the
 independence of stability conditions in Conjecture~\ref{intro:conj}: 
\begin{thm}\emph{(Theorem~\ref{thm:inde})}\label{intro:thm1}
Let $X$ be a smooth projective CY 3-fold. 
For an effective one cycle $\gamma$ on $X$ with 
homology class $\beta$, suppose that the stack 
$\mM_X(\beta)$ is CY at $\gamma$. 
Then the invariant $\Phi_{\sigma}(\gamma, m)$
is independent of $\sigma$. 
\end{thm}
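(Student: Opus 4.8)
The plan is to localize the invariant on the Chow variety, reduce to the crossing of a single wall, and then induct on the effective one cycle $\gamma$, with two decisive inputs: the vanishing of the Euler pairing on one dimensional classes, which makes the relevant wall-crossing commutative, and the cohomological integrality property of the BPS sheaves. First I would pass to an arbitrarily small open neighbourhood $\gamma\in U\subset\Chow_X(\beta)$ on which $\mM_X(\beta)$ is CY, which is legitimate because $\Phi_\sigma(\gamma,m)$ only depends on $\dR\pi_{M\ast}\phi_{M_\sigma(v)}$ near $\gamma$. Since $\mM_\sigma(v)$ is an open substack of $\mM_X(\beta)$ for every $\sigma$ (with $v=(\beta,m)$ fixed), fixing one CY orientation data on $\pi_{\mM}^{-1}(U)\subset\mM_X(\beta)$ produces a single perverse sheaf of vanishing cycles whose restrictions to the open substacks $\mM_\sigma(v)\cap\pi_{\mM}^{-1}(U)$ simultaneously compute all the $\phi_{M_\sigma(v)}$; this is precisely what the hypothesis on the \emph{big} stack $\mM_X(\beta)$ buys us, and by restriction along the sub-cycle strata it also furnishes compatible CY structures on the stacks attached to component classes of proper sub-cycles $\gamma'<\gamma$. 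As $\sigma$ moves in $A(X)_{\mathbb C}$ the $\sigma$-semistability of class-$v$ sheaves with cycle in $U$ changes only across a locally finite family of real hyperplanes, so by connectedness it suffices to prove $\Phi_{\sigma_+}(\gamma,m)=\Phi_{\sigma_-}(\gamma,m)$ for $\sigma_\pm$ in the two chambers adjacent to one wall, with $\sigma_0$ on the wall.

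The effective one cycles $\gamma'\le\gamma$ form a finite poset, so I would induct on $\gamma$, assuming $\Phi_\sigma(\gamma',m')$ independent of $\sigma$ for all proper sub-cycles $\gamma'<\gamma$ and all $m'$. The base case is $\gamma=[C]$ with $C$ integral: a nonzero proper subsheaf of a pure one dimensional $E$ with $l(E)=\gamma$ has fundamental cycle $0$ (impossible by purity) or $[C]$, and in the latter case strictly smaller Euler characteristic, hence does not destabilize for any $\sigma$, so $M_\sigma(v)$, $\mM_\sigma(v)$ and $\phi_{M_\sigma(v)}$ are $\sigma$-independent. For the inductive step I would push everything to $\Chow_X(\beta)$ and work (over $U$) with $\mathcal D_\sigma(v):=\dR\pi_{\mM\ast}\phi_{\mM_\sigma(v)}$, the cohomological Donaldson-Thomas sheaf of $\sigma$-semistable objects relative over $\Chow$. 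The crucial observation is that for one dimensional sheaves $F_1,F_2$ on a Calabi-Yau $3$-fold one has $\chi(F_1,F_2)=\int_X\ch(F_1)^\vee\ch(F_2)\td_X=0$ for degree reasons, so the quantum torus / Hall-type algebra on which the vanishing-cycle integration map lands is \emph{commutative} on one dimensional classes. Therefore the Harder-Narasimhan recursion on the wall, expressing $\mathcal D_{\sigma_0}(v)$ as a sum over decompositions $v=v_1+\cdots+v_k$ with all $v_i$ of equal $\sigma_0$-slope, ordered by $\sigma_\pm$-slope, of products $\mathcal D_{\sigma_\pm}(v_1)\star\cdots\star\mathcal D_{\sigma_\pm}(v_k)$, has a $k\ge2$ part that is insensitive to the chamber: each such $v_i$ has fundamental cycle a proper sub-cycle $\gamma_i<\gamma$, so $\mathcal D_{\sigma_+}(v_i)\cong\mathcal D_{\sigma_-}(v_i)$ by induction and commutativity makes the two orderings agree. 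Writing $\mathcal D_{\sigma_0}(v)\cong\mathcal D_{\sigma_\pm}(v)\oplus C$ with $C$ the common $k\ge2$ contribution then yields $\mathcal D_{\sigma_+}(v)\cong\mathcal D_{\sigma_-}(v)$.

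Finally I would invoke the Davison-Meinhardt cohomological integrality isomorphism in its form relative over $\Chow_X(\beta)$: $\mathcal D_\sigma(v)$ decomposes as $\dR\pi_{M\ast}\phi_{M_\sigma(v)}$ tensored with a fixed ($\sigma$-independent) graded vector space coming from $H^\ast(B\mathbb G_m)$, plus symmetric products of the $\dR\pi_{M\ast}\phi_{M_\sigma(v_i)}$ over decompositions $v=\sum v_i$ into at least two parts, all of which have $\gamma_i<\gamma$. Since $\mathcal D_\sigma(v)$ is now $\sigma$-independent and the $k\ge2$ terms are $\sigma$-independent by induction, cancelling the fixed factor shows $\dR\pi_{M\ast}\phi_{M_\sigma(v)}$ over $U$ is independent of $\sigma$; taking the Euler characteristic of its perverse-filtration stalks at $\gamma$, weighted by $y^i$, gives $\Phi_{\sigma_+}(\gamma,m)=\Phi_{\sigma_-}(\gamma,m)$, completing the induction.

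The main obstacle, I expect, is the foundational work of setting up the vanishing-cycle Hall-type algebra and its integration map on the $d$-critical stacks $\pi_{\mM}^{-1}(U)$, which are not globally of quiver-with-potential type, compatibly with the single fixed CY orientation data, and then establishing the relative (over $\Chow$) cohomological integrality theorem with the purity and support properties it requires. In particular one must check that the sign and $\mathbb L^{1/2}$-type twists appearing in the cohomological wall-crossing formula genuinely trivialize once $\chi(F_1,F_2)=0$, so that the commutativity argument, which is elementary at the level of numerical invariants, really does upgrade to an isomorphism of the complexes $\mathcal D_\sigma(v)$ and hence controls the perverse sheaves $\phi_{M_\sigma(v)}$ that $\Phi$ sees; this compatibility with orientations, together with the relative decomposition and purity statements, is the technical heart of the argument.
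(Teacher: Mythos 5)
Your conceptual skeleton is close to the paper's, but it has a genuine gap that you yourself flag in the final paragraph: a cohomological Hall-algebra / HN-recursion wall-crossing formula for the perverse sheaves $\dR\pi_{\mM\ast}\phi_{\mM_{\sigma}(v)}$, relative over the Chow variety, is not available for the $d$-critical moduli stacks $\mM_{\sigma}(v)$ of sheaves. Such a formula has been established in the Davison--Meinhardt setting of quivers with polynomial super-potentials, but not for the geometric stacks here, and your proposal assumes it rather than supplying it. The paper fills precisely this hole: by Theorem~\ref{thm:compare} (imported from the companion paper~\cite{Todstack}), the diagram comparing $\mM_{\sigma^{+}}(v)$, $\mM_{\sigma}(v)$ and their coarse spaces is, analytically locally on $M_{\sigma}(v)$, identified with the corresponding diagram for the Ext-quiver of a polystable sheaf with a convergent super-potential and a matching $d$-critical structure (Proposition~\ref{prop:compare:d}). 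This local reduction is the indispensable bridge; without it the HN recursion and the integrality decomposition you invoke are not theorems.

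Given that bridge, the paper's actual argument is also simpler than yours in two respects. First, it requires no induction on sub-cycles $\gamma'<\gamma$: one fixes a CY orientation data on $\mM_X(\beta)|_U$, constructs the canonical morphism $\phi_{M_{\sigma}(v)|_U}\to\dR q_{M\ast}\phi_{M_{\sigma^{+}}(v)|_U}$ (with the perversity of the right-hand side guaranteed by the semismallness in Lemma~\ref{lem:small}), and checks analytically locally, via the quiver model, that it is an isomorphism. That local check (Lemmas~\ref{lem:phivanish} and~\ref{lem:cmor:phi}) is exactly where Davison--Meinhardt's Theorems~4.7 and~4.11 enter: both sides reduce to $j^{s}_{!\ast}\IC$ of the stable locus, and a non-zero endomorphism of a simple perverse sheaf is an isomorphism. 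Second, the commutativity you extract from $\chi(F_1,F_2)=0$ appears in the paper not as a Hall-algebra statement but as the symmetry of the Ext-quiver $Q_{E_{\bullet}}$ (the same Serre duality plus Riemann--Roch computation), which is the hypothesis under which the Davison--Meinhardt theorems apply. Your base case (integral $\gamma$) is correct but unused, and your ``relative cohomological integrality over $\Chow$'' is subsumed by doing everything locally on $M_{\sigma}(v)$ and taking stalks only at the end, via the definition of $\Phi_{\sigma}(\gamma,m)$.
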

By the above theorem, if $\mM_X(\beta)$ is CY at $\gamma$
we can write
\begin{align*}
\Phi_{X}(\gamma, m) \cneq \Phi_{\sigma}(\gamma, m). 
\end{align*}
The result of Theorem~\ref{intro:thm1}
implies a trivial wall-crossing of  
the invariants $\Phi_{\sigma}(\gamma, m)$. 
Namely there is 
a locally finite number of codimension one submanifolds in $A(X)_{\mathbb{C}}$ called 
\textit{walls} such that $M_{\sigma}(v)$ is constant 
if $\sigma$ lies
on a connected component of complement of walls (called a \textit{chamber})
but may change if $\sigma$ crosses a wall. 
The result of Theorem~\ref{intro:thm1} implies that, 
although the moduli space $M_{\sigma}(v)$ may change 
by wall-crossing, the associated invariants 
$\Phi_{\sigma}(\gamma, m)$ are not changed, i.e. 
the wall-crossing formula is trivial. 

The main idea of 
the proof of Theorem~\ref{intro:thm1} is to reduce 
to the case of representations of quivers with formal but convergent super-potentials. 
In this reduction step, we use the  
result of the companion paper~\cite{Todstack}, where we prove that the 
moduli stack of semistable sheaves $\mM_{\sigma}(v)$ is
described analytic locally on $M_{\sigma}(v)$ as
the moduli stack of representations of 
the Ext-quiver with a convergent super-potential. 
Here the super-potential is 
defined from the minimal $A_{\infty}$-structure
of the derived category of coherent sheaves on $X$. 
After the above reduction, we use the
results and arguments of Davison-Meinhardt~\cite{DaMe}, 
where a similar 
wall-crossing phenomena was investigated
for representations of quivers with super-potentials.  

The first application of Theorem~\ref{intro:thm1} is to show
 the independence of $m$ of the 
invariant $\Phi_{\sigma}(\gamma, m)$, when 
$\gamma$ is a primitive one cycle, i.e. 
$\gamma$ is written as 
$\sum_{1\le i \le k} a_i[C_i]$ for irreducible curves $C_i$, 
$a_i \in \mathbb{Z}_{\ge 1}$
with 
$\mathrm{g.c.d.}(a_1, \ldots, a_k)=1$. 
\begin{thm}\emph{(Theorem~\ref{thm:indeE})}\label{intro:thm2}
Under the situation of Theorem~\ref{intro:thm1}, suppose that 
$\gamma$ is a primitive one cycle. 
Then $\Phi_X(\gamma, m)$ is independent of $m$. 
\end{thm}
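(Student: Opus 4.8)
The strategy is to deduce the independence of $m$ from the independence of $\sigma$ (Theorem~\ref{intro:thm1}) together with the analytic-local description of the moduli stack by a quiver with convergent super-potential proved in~\cite{Todstack}. By the definition of $\Phi_{\sigma}(\gamma, m)$ it is enough to show that for each $\gamma\in\Chow_X(\beta)$ the graded stalk $\sum_i \chi\bigl(\pH^i(\dR\pi_{M\ast}\phi_{M_{\sigma}(v)})|_{\gamma}\bigr)y^i$ is independent of $m$; this is an analytic-local question in a neighbourhood of $\gamma$, and Theorem~\ref{intro:thm1} lets us use, for each value of $m$, whichever stability condition $\sigma$ is most convenient for the comparison.

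A first, easy reduction uses twisting by line bundles. For $L\in\Pic(X)$ the functor $(-)\otimes L$ identifies $B$-twisted $\omega$-(semi)stability with $(B+c_1(L))$-twisted $\omega$-(semi)stability, and since $\chi(E\otimes L)=\chi(E)+\beta\cdot c_1(L)$ it induces an isomorphism $M_{\sigma}(\beta,m)\simeq M_{\sigma'}(\beta,m+\beta\cdot c_1(L))$, for a suitable $\sigma'$, which commutes with the Hilbert--Chow maps. After checking that this isomorphism intertwines the $d$-critical structures and carries a CY orientation data to a CY orientation data, it identifies the perverse sheaves $\phi_{M_{\sigma}(v)}$, hence shows $\Phi_X(\gamma,m)$ depends only on $m$ modulo $\ell\cneq\gcd\{\beta\cdot D: D\in\NS(X)\}$. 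This settles the case $\ell=1$ but not the general case, and it is here that primitivity of $\gamma$ must be exploited.

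Write $\gamma=\sum_{i=1}^{k}a_i[C_i]$ with $\gcd(a_1,\dots,a_k)=1$ and pick $n_i\in\mathbb{Z}$ with $\sum_i a_i n_i=1$. By~\cite{Todstack}, on an analytic neighbourhood of $\gamma$ the stack $\mM_X(\beta)$ is identified with the moduli stack of representations of an Ext-quiver $(Q_{\gamma},W_{\gamma})$ of the primitive dimension vector $d_{\gamma}=(a_1,\dots,a_k)$, compatibly with the Hilbert--Chow map and the $d$-critical structures, and under this identification $\mM_{\sigma}(\beta,m)$ becomes the locus of $\zeta$-semistable representations for a King stability parameter $\zeta=\zeta(\sigma,m)$. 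Since $d_{\gamma}$ is primitive one may take $\zeta$ generic, and the stalk in question is then computed as the stalk at the semisimple point $\bigoplus_i S_i^{\oplus a_i}$ of the pushforward of the vanishing cycle complex of $(Q_{\gamma},W_{\gamma},d_{\gamma})$ along the map to its affine quotient. The point is that passing from $m$ to $m+1$ can be arranged, using the twists of the $S_i$ by $\mathcal{O}_{C_i}(n_i p_i)$ at chosen smooth points $p_i\in C_i$ (which change the total holomorphic Euler characteristic by $\sum_i a_i n_i=1$, and are harmless locally on $\Chow_X(\beta)$ even though they need not globalise to a line bundle on $X$), to be either an isomorphism of the triple $(Q_{\gamma},W_{\gamma},d_{\gamma})$ or a motion of $\zeta$ across finitely many walls inside the stability space of one and the same such triple. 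Invariance of the resulting (BPS) cohomology under such a change of $\zeta$ is precisely the wall-crossing theorem of Davison--Meinhardt~\cite{DaMe} that already underlies the proof of Theorem~\ref{intro:thm1}; applying it to $\zeta(\sigma,m)$ and $\zeta(\sigma',m+1)$ gives $\Phi_X(\gamma,m)=\Phi_X(\gamma,m+1)$, and induction on $m$ finishes the argument.

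The main obstacle is this last reduction: making precise that incrementing $m$ by one is nothing more than a change of King stability parameter for one and the same Ext-quiver with super-potential. The difficulty is that for different $m$ the distinguished stable sheaves on the $C_i$ serving as the simple summands, hence as the vertices, have different Euler characteristics, so a priori the quiver $Q_{\gamma}$, the super-potential $W_{\gamma}$ read off from the minimal $A_{\infty}$-structure, and the (CY) orientation data could all change; one has to show they are compatibly identified under the component-wise twisting construction above, which relies on $\gcd(a_i)=1$, on the invariance of the relevant $\Ext$-groups and $A_{\infty}$-products under such twists, and on a careful comparison of orientation data. Controlling the orientation data under these identifications, and globalising the local statement over $\Chow_X(\beta)$, are the delicate points; once they are in place the remainder is a direct appeal to~\cite{Todstack} and~\cite{DaMe}.
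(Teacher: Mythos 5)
Your proposal identifies several of the right ingredients (primitivity, the Bezout identity $\sum a_i n_i = 1$, the reduction to Theorem~\ref{intro:thm1}), but it takes a long and ultimately incomplete detour, and the key mechanism for realizing the twist is not the one the paper uses. You yourself flag the genuine gaps at the end: the passage from $m$ to $m+1$ is not shown to be an isomorphism of quivers-with-superpotential or a motion of a King parameter, the orientation data are not controlled, and the local statement is not globalized. These are not minor loose ends but the entire content of the proof by your route.

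The paper's argument is much more direct and avoids every one of these difficulties. Rather than twisting the simple summands $S_i$ (or the vertex objects $E_i$) individually by line bundles $\oO_{C_i}(n_i p_i)$ on the curves $C_i$ — an operation that does not yield a well-defined functor on the moduli stack, since a pure sheaf near $\gamma$ need not decompose along components — one chooses divisors $D_i$ defined in an \emph{analytic neighbourhood of the support of $\gamma$ in $X$} with $D_i \cdot C_j = \delta_{ij}$, and sets $D = \sum d_i D_i$ with $D \cdot \gamma = 1$ (possible because $\gcd(a_1,\ldots,a_k)=1$). Then $F \mapsto F \otimes \oO(D)$ is a genuine autoequivalence of the category of sheaves supported near $\gamma$, giving an isomorphism $\mM_{\sigma}(v)|_U \stackrel{\cong}{\to} \mM_{\sigma'}(v')|_U$ of $d$-critical stacks over $U$, where $v=(\beta,m)$, $v'=(\beta,m+1)$, $\sigma=\sigma_{B,\omega}$ and $\sigma'=\sigma_{B+D,\omega}$. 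This immediately matches $\phi_{M_{\sigma}(v)|_U}$ with $\phi_{M_{\sigma'}(v')|_U}$ and CY orientation data with CY orientation data, so $\Phi_{\sigma}(\gamma,m)=\Phi_{\sigma'}(\gamma,m+1)$; Theorem~\ref{intro:thm1} then gives $\Phi_X(\gamma,m)=\Phi_X(\gamma,m+1)$. No quiver model, King parameter, or wall-crossing analysis enters the argument at all — Theorem~\ref{intro:thm1} is used purely as a black box. Your first ``easy reduction'' via global $L\in\Pic(X)$ is also superfluous once the local-divisor version of the twist is in hand, since the local construction already handles all $m$.
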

The next application of Theorem~\ref{intro:thm1}
is to show the flop invariance of the invariant $\Phi_X(\gamma, m)$. 
Let 
\begin{align}\label{intro:flop:dia}
\phi \colon X \stackrel{f}{\to} Y 
\stackrel{f^{\dag}}{\leftarrow} X^{\dag}
\end{align}
be a flop between smooth projective CY 3-folds. 
In this situation, we 
have the following result: 
\begin{thm}\emph{(Theorem~\ref{thm:flop})}\label{thm:intro:flop}
Let 
$\gamma$ be an effective one cycle on $X$
with homology class $\beta$
such that $f_{\ast}\gamma \neq 0$. 
Suppose 
that the stacks
$\mM_X(\beta)$, $\mM_{X^{\dag}}(\phi_{\ast}\beta)$
are CY at 
$\gamma$, $\phi_{\ast}\gamma$ respectively. 
Then we have the identity 
\begin{align*}
\Phi_X(\gamma, m)=\Phi_{X^{\dag}}(\phi_{\ast}\gamma, m).
\end{align*}
\end{thm}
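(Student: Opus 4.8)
The plan is to deduce the flop formula from the independence result of Theorem~\ref{intro:thm1} together with the derived equivalence attached to the flop and the local description of the moduli stacks. Let $\Psi\colon D^b(\Coh(X))\xrightarrow{\ \sim\ }D^b(\Coh(X^{\dag}))$ be the Fourier--Mukai equivalence associated with the flop~(\ref{intro:flop:dia}); it is the identity away from the flopping loci $C\subset X$, $C^{\dag}\subset X^{\dag}$, it commutes with the Serre functors, and it preserves holomorphic Euler characteristics and the point component of the Chern character (so that the invariants $v=(\beta,m)$ and $(\phi_{\ast}\beta,m)$ correspond under $\Psi$). By Theorem~\ref{intro:thm1} the invariants $\Phi_\sigma(\gamma,m)$ and $\Phi_{\sigma^{\dag}}(\phi_{\ast}\gamma,m)$ are independent of $\sigma$ and of $\sigma^{\dag}$, so it is enough to produce one pair $(\sigma,\sigma^{\dag})$ of complexified ample classes on $X$ and $X^{\dag}$ with $\Phi_\sigma(\gamma,m)=\Phi_{\sigma^{\dag}}(\phi_{\ast}\gamma,m)$. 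Moreover, by construction $\Phi_\sigma(\gamma,m)$ depends only on the stalks at $\gamma$ of the perverse cohomologies $\pH^i(\dR\pi_{M\ast}\phi_{M_{\sigma}(v)})$, hence only on the restriction of $\mM_{\sigma}(v)$---with its $d$-critical structure and CY orientation data---to $\pi_{\mM}^{-1}(U)$ for an arbitrarily small analytic neighborhood $\gamma\in U\subset\Chow_X(\beta)$, and symmetrically on the $X^{\dag}$-side. So the whole problem is local over the Chow varieties near $\gamma$ and $\phi_{\ast}\gamma$.

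First I would fix the dictionary between the two Chow varieties. Since $\phi$ is an isomorphism in codimension one and $f_{\ast}\gamma\neq0$, the strict transform defines an effective one cycle $\phi_{\ast}\gamma$ on $X^{\dag}$ of class $\phi_{\ast}\beta$, and $\phi$ identifies an analytic neighborhood $\gamma\in U\subset\Chow_X(\beta)$ with an analytic neighborhood $\phi_{\ast}\gamma\in U^{\dag}\subset\Chow_{X^{\dag}}(\phi_{\ast}\beta)$, flopping components of a cycle being matched with flopping components on the other side. Next I would choose $\sigma=B+i\omega$ with $\omega$ on the flopping wall---so that the sheaves supported on the flopping curves appear as the extremal Jordan--H\"older factors---and let $\sigma^{\dag}$ be obtained from $\sigma$ through the action of $\Psi$ on stability data. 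One then checks that for $[E]\in M_{\sigma}(v)$ with $l(E)$ close to $\gamma$ the complex $\Psi(E)$ is concentrated in a single cohomological degree and, after the corresponding shift, is a $\sigma^{\dag}$-semistable sheaf on $X^{\dag}$ with Chern character $(\phi_{\ast}\beta,m)$ and fundamental cycle close to $\phi_{\ast}\gamma$. This yields an isomorphism of stacks
\begin{align*}
\mM_{\sigma}(v)\times_{\Chow_X(\beta)}U\ \xrightarrow{\ \sim\ }\ \mM_{\sigma^{\dag}}(\phi_{\ast}\beta,m)\times_{\Chow_{X^{\dag}}(\phi_{\ast}\beta)}U^{\dag}
\end{align*}
intertwining the Hilbert--Chow maps.

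It remains to match the perverse sheaves of vanishing cycles across this isomorphism. Here I would invoke the companion paper~\cite{Todstack}: analytic-locally near a point $[E]\in M_{\sigma}(v)$ with polystable factors $E=\bigoplus_i V_i\otimes E_i$, the stack $\mM_{\sigma}(v)$ is the moduli stack of representations of the $\Ext$-quiver built from $\bigoplus_{i,j}\Ext^{\bullet}_X(E_i,E_j)$ with the convergent super-potential coming from the minimal $A_\infty$-structure on $D^b(\Coh(X))$, and likewise on $X^{\dag}$. Since $\Psi$ is an $A_\infty$-equivalence, it carries these $\Ext$-groups with their $A_\infty$-products isomorphically onto $\bigoplus_{i,j}\Ext^{\bullet}_{X^{\dag}}(\Psi E_i,\Psi E_j)$ with its $A_\infty$-products; the factors supported on the flopping curves are treated by the standard local flop equivalence of the resolved conifold, for which the two local models coincide. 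Using the assumed CY conditions at $\gamma$ and $\phi_{\ast}\gamma$---under which a CY orientation data is the trivial line bundle on both sides---one concludes that $\phi_{M_{\sigma}(v)}$ and $\phi_{M_{\sigma^{\dag}}(\phi_{\ast}\beta,m)}$ correspond under the above isomorphism over $U\cong U^{\dag}$, so $\dR\pi_{M\ast}\phi_{M_{\sigma}(v)}$ and $\dR\pi_{M\ast}\phi_{M_{\sigma^{\dag}}(\phi_{\ast}\beta,m)}$ have matching stalks at $\gamma$ and $\phi_{\ast}\gamma$. Hence $\Phi_{\sigma}(\gamma,m)=\Phi_{\sigma^{\dag}}(\phi_{\ast}\gamma,m)$, and Theorem~\ref{intro:thm1} applied on $X$ and on $X^{\dag}$ gives $\Phi_X(\gamma,m)=\Phi_{X^{\dag}}(\phi_{\ast}\gamma,m)$.

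The main obstacle I expect is the second step: choosing the compatible stability conditions $(\sigma,\sigma^{\dag})$ and---above all---controlling the fundamental one cycle of $\Psi(E)$ along the flopping curves, i.e. verifying that the Hilbert--Chow maps really do match over $U\cong U^{\dag}$; this is where the hypothesis $f_{\ast}\gamma\neq0$ enters, keeping us inside the locus where the two Chow varieties are birationally identified and $\phi_{\ast}\gamma$ is a genuine effective cycle. By contrast, once the moduli stacks are identified over these neighborhoods, the matching of the vanishing cycle sheaves and of the CY orientation data follows formally from the $A_\infty$-invariance of~\cite{Todstack}, and the reduction to $\Phi_X$ and $\Phi_{X^{\dag}}$ is immediate from Theorem~\ref{intro:thm1}.
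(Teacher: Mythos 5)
Your strategy coincides with the paper's: use Theorem~\ref{thm:inde} to reduce to producing a single pair of stability conditions $(\sigma,\sigma^{\dag})$ on the two sides with $\Phi_{\sigma}(\gamma,m)=\Phi_{\sigma^{\dag}}(\phi_{\ast}\gamma,m)$, use the Bridgeland--Fourier--Mukai equivalence $\Phi$ to identify the corresponding moduli stacks over matching neighborhoods of $\gamma$ and $\phi_{\ast}\gamma$ in the Chow varieties compatibly with the Hilbert--Chow maps and the $d$-critical/orientation data, and read off the equality of invariants. But the step you yourself flag as the main obstacle — ``choosing the compatible stability conditions $(\sigma,\sigma^{\dag})$ and controlling the fundamental one cycle of $\Psi(E)$'' — is precisely where the paper's technical content lies, and your sketch leaves it open. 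Saying ``choose $\sigma=B+i\omega$ with $\omega$ on the flopping wall'' is not literally available, since for $\sigma\in U(X)$ the class $\omega$ must be ample on $X$; and for a generic $\sigma\in U(X)$, $\Phi_{\ast}\sigma$ is not a Gieseker-type stability condition on $X^{\dag}$, so it is not at all automatic that $\Phi$ carries $\sigma$-semistable sheaves to $\sigma^{\dag}$-semistable sheaves for some $\sigma^{\dag}\in U(X^{\dag})$.

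The paper closes this gap by working at the boundary of $U(X)$ with perverse coherent sheaves. It identifies boundary stability conditions $\ptau_X=\ptau_{(-1)^{p+1}\delta H,\omega}\in\overline{U(X)}$ whose hearts are the categories $\pPPer_{\le 1}(X/Y)$ (Lemma~\ref{stabp}), shows that the flop equivalence carries $\otau_X$ to $\itau_{X^{\dag}}$ (Lemma~\ref{lem:Phisom}), proves that $\ptau$-semistable objects of class $(\beta,m)$ with $f_{\ast}\beta\neq0$ are actual pure sheaves (Lemma~\ref{beta:m}; this is where $f_{\ast}\gamma\neq0$ enters), and then — the decisive step — exhibits data $(H,\omega)$ and small $\delta$ such that for $\sigma\in U(X)$ close to $\ptau$ the open embedding $\mM_{\sigma}(v)\subset\mM_{\ptau}(v)$ is an isomorphism (Lemmas~\ref{lem:prepare} and~\ref{lem:Misom}). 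Only then is the chain $\mM_{\sigma_X^{(0)}}(\beta)\cong\mM_{\otau_X}(\beta)\xrightarrow{\Phi_{\ast}}\mM_{\itau_{X^{\dag}}}(\phi_{\ast}\beta)\cong\mM_{\sigma_{X^{\dag}}^{(-1)}}(\phi_{\ast}\beta)$ available, and the matching of the $d$-critical structures, virtual canonical bundles, CY orientation data and vanishing cycle sheaves then follows at once from the fact that $\Phi$ is a derived equivalence — there is no need for the $\Ext$-quiver local models you invoke, which the paper reserves for the proof of the wall-crossing Theorem~\ref{thm:inde} itself. So the idea is correct and you have correctly located the gap, but the argument as written would still need Lemmas~\ref{stabp}--\ref{lem:Misom} to become a proof.
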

If the assumption of Theorem~\ref{thm:intro:flop}
holds for $m=1$ and any $\gamma \in \Chow_X(\beta)$, we in particular 
obtain 
\begin{align*}
n_{g, \beta}=n_{g, \phi_{\ast}\beta}
\end{align*}
for the curve class $\beta$ with
 $f_{\ast}\beta \neq 0$. 
The above 
flop invariance of GV invariants 
was proved in~\cite{MT}
when the curve class $\beta$ is irreducible. 
The result of Theorem~\ref{thm:intro:flop}
gives a complete answer to the flop invariance of 
GV invariants for any curve class, assuming the CY properties of the 
relevant moduli stacks of one dimensional sheaves. 

So far the 
results in Theorem~\ref{intro:thm1}, Theorem~\ref{intro:thm2}
and Theorem~\ref{thm:intro:flop}
are conditional to the
conjectural 
CY property of the stack 
$\mM_X(\beta)$. 
We will show that the above CY property holds
for the non-compact CY 3-fold
\begin{align*}
X=\mathrm{Tot}_S(K_S)
\end{align*}
where $S$ is a smooth projective surface. 
Although $X$ is non-compact in this case, we can 
similarly define the invariant $\Phi_{\sigma}(\gamma, m)$
and can ask its independence of $(\sigma, m)$
as in Conjecture~\ref{intro:conj}. 
Then the analogy of the results in Theorem~\ref{intro:thm1}, 
Theorem~\ref{intro:thm2} and 
Theorem~\ref{thm:intro:flop}
hold without assuming the CY properties: 
\begin{thm}\emph{(Theorem~\ref{thm:CYsurface}, Theorem~\ref{thm:inde:loc}, Theorem~\ref{thm:floploc})}\label{intro:thm:loc}
Let $S$ be a smooth projective surface and 
$X=\mathrm{Tot}_S(K_S)$ the non-compact CY 3-fold. 
Then we have the following: 
\begin{enumerate}
\item For any effective compactly supported one cycle $\gamma$ on $X$
with homology class $\beta$, the stack $\mM_X(\beta)$ is CY at $\gamma$. 
In particular
for any element 
$\sigma=B+i\omega \in A(S)_{\mathbb{C}}$ and 
$m \in \mathbb{Z}$, the invariant 
$\Phi_{\sigma}(\gamma, m) \in \mathbb{Z}[y^{\pm 1}]$
is defined as in (\ref{intro:def:Phi}) with the CY orientation data. 
\item The invariant 
$\Phi_{\sigma}(\gamma, m)$ is independent of 
$\sigma$. So we can write it 
as $\Phi_X(\gamma, m)$. 

\item $\Phi_X(\gamma, m)$ is also independent of $m$ 
if $\gamma$ is a 
primitive one cycle. 

\item Suppose that $\gamma$ is supported on the zero section 
$S \subset X$, and let $h \colon S^{\dag} \to S$
be a blow-up at a point. 
Then for $X^{\dag}=\mathrm{Tot}_{S^{\dag}}(K_{S^{\dag}})$ 
and the one cycle $h^{\ast}\gamma$ on $X^{\dag}$ supported on
the zero section $S^{\dag} \subset X^{\dag}$, we have 
$\Phi_X(\gamma, m)=\Phi_{X^{\dag}}(h^{\ast}\gamma, m)$. 
\end{enumerate} 
\end{thm}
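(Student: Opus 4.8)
\emph{Overview and Part (1).}
The four assertions are of unequal depth: once (1) is established, (2) and (3) follow by running the proofs of Theorem~\ref{intro:thm1} and Theorem~\ref{intro:thm2} unchanged, and (4) by adapting the argument of Theorem~\ref{thm:intro:flop}; so I would concentrate on (1). Since $\pi\colon X=\mathrm{Tot}_S(K_S)\to S$ is affine, giving a compactly supported pure one dimensional sheaf $E$ on $X$ is the same as giving a pure one dimensional sheaf $F=\pi_\ast E$ on $S$ together with a Higgs field $\phi\colon F\to F\otimes K_S$. The Koszul resolution for $\pi$ gives a functorial exact triangle
\[
\cdots\to R\Hom_S(F,F)\to R\Hom_X(E,E)\to R\Hom_S(F,F\otimes K_S)[-1]\to\cdots,
\]
with connecting morphism $[\phi,-]$, which globalizes over $\mM_X(\beta)$. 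Passing to determinant line bundles and using Serre duality on $S$ in the form $R\Hom_S(F,F\otimes K_S)\simeq R\Hom_S(F,F)^\vee[-2]$, I obtain $K^{\mathrm{vir}}_{\mM_X(\beta)}\simeq\lambda^{\otimes 2}$, where $\lambda$ is the pullback, along the morphism $\mM_X(\beta)\to\mM_S$, $E\mapsto\pi_\ast E$, of a power of the determinant line bundle $\det R\Hom_{\pi_S}(\mathbb{F},\mathbb{F})$ on the moduli stack $\mM_S$ of one dimensional sheaves on $S$ (here $\pi_S\colon\mM_S\times S\to\mM_S$). Pushing compactly supported one cycles forward along $\pi$ defines a morphism $\Chow_X(\beta)\to\Chow_S(\pi_\ast\beta)$ compatible with the two Hilbert--Chow maps, so to prove that $\mM_X(\beta)$ is CY at $\gamma$ it suffices to show that $\lambda$ is trivial on the preimage, under $\mM_S\to\Chow_S$, of a sufficiently small analytic open neighborhood of any point of $\Chow_S$. (Granting this, $\mM_\sigma(v)$ is also CY at $\gamma$ since it is open in $\mM_X(\beta)$, and then Definition~\ref{def:def:phiM} with a CY orientation datum produces $\Phi_\sigma(\gamma,m)$, which is the ``in particular'' of (1).)

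\emph{Part (1), continued: the triviality.}
To establish the displayed triviality I would work analytic-locally over $\Chow_S$: near a cycle $\sum a_i[C_i]$ one fixes the support curve $C=\bigcup C_i$ (with a suitable scheme structure) and rewrites nearby one dimensional sheaves as sheaves $G$ on $C$ together with their embedding into $S$, obtaining
\[
\det R\Hom_S(F,F)\simeq\det R\Hom_C(G,G)\otimes\bigl(\det R\Hom_C(G,G\otimes K_S|_C)\bigr)^{-1}.
\]
Since $K_S|_C$ is a fixed line bundle on the fixed curve $C$ and the endomorphism sheaf $\mathcal{H}om(\mathbb{G},\mathbb{G})$ has vanishing relative first Chern class, a Grothendieck--Riemann--Roch computation on $C$ shows that $c_1(\lambda)$ vanishes on the neighborhood; one then upgrades $c_1=0$ to triviality by taking the neighborhood contractible. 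Carrying this out uniformly — in particular controlling the variation of the support scheme within a fixed Chow neighborhood, and handling reducible and non-reduced $C$ — is the step I expect to be the main obstacle.

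\emph{Parts (2) and (3).}
By (1), $\mM_X(\beta)$ is CY at $\gamma$, so one may choose CY orientation data, and the proofs of Theorem~\ref{intro:thm1} and Theorem~\ref{intro:thm2} apply with no change: both reduce, via~\cite{Todstack}, to moduli stacks of representations of the Ext-quiver with a convergent super-potential analytic-locally over the coarse space, and then invoke the wall-crossing results of Davison--Meinhardt~\cite{DaMe}; these arguments use only compactness of the support and the existence of CY orientation data, not properness of $X$. This gives independence of $\sigma$, hence the notation $\Phi_X(\gamma,m)$, and independence of $m$ for primitive $\gamma$.

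\emph{Part (4).}
The blow-up $h\colon S^\dag\to S$ induces a crepant birational map between the Calabi--Yau $3$-folds $X$ and $X^\dag=\mathrm{Tot}_{S^\dag}(K_{S^\dag})$ that is an isomorphism away from a curve on each side, with both sides dominated by $\mathrm{Tot}_{S^\dag}(h^\ast K_S)$. Following the strategy of Theorem~\ref{thm:intro:flop}, I would compare the perverse sheaves $\phi_{M_\sigma(v)}$ and their push-forwards to the Chow varieties on the two sides through this correspondence — using that by (1) both $\mM_X(\beta)$ and $\mM_{X^\dag}(h^\ast\beta)$ are CY, and matching the two sides analytic-locally over the Chow variety via the induced comparison of the relevant categories of sheaves — and then identify the resulting bijection on one cycles supported on the zero sections with $\gamma\mapsto h^\ast\gamma$. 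This gives $\Phi_X(\gamma,m)=\Phi_{X^\dag}(h^\ast\gamma,m)$.
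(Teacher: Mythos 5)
Your overall logical architecture matches the paper: (2) and (3) are deduced by rerunning Theorems~\ref{intro:thm1} and~\ref{intro:thm2} verbatim once (1) is known, and (4) follows by the flop argument, so the real content is (1). Your first reduction for (1) is also essentially the paper's: push forward along the affine $p\colon X\to S$ and use the relative Koszul/adjunction to write $\det\dR\hH om_{\pi_T}(\eE,\eE)$ as $\det\dR\hH om_{q_T}(\fF,\fF)\otimes\det\dR\hH om_{q_T}(\omega_S^{-1}\boxtimes\fF,\fF)^\vee$ with $\fF=p_{T*}\eE$. Your Serre-duality observation $K^{\rm vir}\cong\lambda^{\otimes 2}$ with $\lambda=\det\dR\hH om_{q_T}(\fF,\fF)$ is also correct, and it is a nice remark, but it only produces a canonical square root; the CY condition of Definition~\ref{def:vir} requires the virtual canonical bundle itself to be trivial locally over $\Chow_X(\beta)$, so you are left with exactly the hard step.

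And that hard step — the triviality of $\lambda$ locally on $\Chow_S$ — is where your proposal has a genuine gap, which you yourself flag. Fixing a support curve $C$ and viewing nearby sheaves as sheaves on $C$ does not work: the scheme-theoretic support of $\fF$ varies in the family and is not a locally constant closed subscheme of $S$, especially across non-reduced and reducible cycles, so there is no honest ``family of sheaves on a fixed $C$''. The heuristic ``$c_1(\lambda)=0$ plus a contractible base gives triviality'' is not an argument on a stack over a possibly singular analytic base. The paper's proof does not try to analyze $\lambda$ at all and does not invoke Serre duality. Instead it chooses auxiliary smooth curves $C_1, C_2\subset S$, not contained in $\Supp(p_*\gamma)$, with $\omega_S\cong\oO_S(C_2-C_1)$ (take $H$ sufficiently ample, $C_1\in|H|$, $C_2\in|H+K_S|$ general). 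Using the short exact sequences induced by $\oO_S(C_1-C_2)\hookrightarrow\oO_S(C_1)\twoheadrightarrow\oO_{C_2}(C_1)$ and $\oO_S\hookrightarrow\oO_S(C_1)\twoheadrightarrow\oO_{C_1}(C_1)$, the factor $\det\dR\hH om_{q_T}(\fF,\fF)$ cancels and one is left with $K^{\rm vir}|_T$ expressed as a ratio of $\det\dR\hH om_{r_{T,i}}$ of the restrictions $\dL j_{T,i}^*\fF$, which are families of zero-dimensional sheaves on the fixed smooth curves $C_i$. The problem then reduces to Lemma~\ref{lem:curve}, which is proved directly by localizing to $C=\mathbb{A}^1$, where the stack is $[\Hom(V,V)/\GL(V)]$ and the universal sheaf is zero in equivariant K-theory. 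This cancellation via auxiliary curves transverse to $\gamma$ is the key idea your proposal is missing, and the naive direct attack on $\lambda$ is unlikely to be repairable without it.

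For Part (4), the intent is right but the geometry you describe is not what the paper uses. The paper cites a construction (from~\cite{TodS}) of projective CY $3$-folds $\overline{X}$ and $\overline{X}^\dag$ related by an honest flop, with $S\hookrightarrow\overline{X}$ and $S^\dag\hookrightarrow\overline{X}^\dag$ embedded so that $X=\mathrm{Tot}_S(K_S)$ and $X^\dag=\mathrm{Tot}_{S^\dag}(K_{S^\dag})$ are open tubular neighborhoods; then Theorem~\ref{thm:flop} applies directly. Your picture of a common dominating space $\mathrm{Tot}_{S^\dag}(h^*K_S)$ is not the relevant correspondence (note $h^*K_S\ne K_{S^\dag}$) and does not obviously yield the derived equivalence and stability comparison needed to run the flop argument.
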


\subsection{Plan of the paper}
The organization of this paper is as follows. 
In Section~\ref{sec:genGV}, we introduce the 
invariant $\Phi_{\sigma}(\gamma, m)$ and propose the 
conjecture that it is independent of $\sigma$ and $m$. 
In Section~\ref{sec:exam}, we compute the 
invariant $\Phi_{\sigma}(\gamma, m)$ in some examples. 
In Section~\ref{sec:wcf}, 
we discuss a wall-crossing formula of 
perverse sheaves of vanishing cycles of 
representations of quivers with convergent super-potentials. 
In Section~\ref{sec:wcf:gv}, we prove Theorem~\ref{intro:thm1} and 
Theorem~\ref{intro:thm2}.  
In Section~\ref{sec:flop}, 
we prove Theorem~\ref{thm:intro:flop}. 
In Section~\ref{sec:local}, 
we prove Theorem~\ref{intro:thm:loc}.

\subsection{Acknowledgements}
The author is grateful to Ben Davison and Davesh Maulik
for many useful discussions. In particular, the 
definition of 
the perverse sheaf (\ref{intro:phiM}) 
was suggested by Ben Davison through the discussion with him. 
The author also thanks to anonymous referees for several 
suggestions and comments. 
The author is supported by World Premier International Research Center
Initiative (WPI initiative), MEXT, Japan, and Grant-in Aid for Scientific
Research grant (No. 26287002) from MEXT, Japan.

\subsection{Notation and convention}
In this paper, all schemes 
and stacks
are defined over $\mathbb{C}$. 
For a scheme or stack $M$, we
will only consider constructible sheaves on it 
with $\mathbb{Q}$-coefficients. 
We denote by $\Perv(M)$ the category 
of perverse sheaves on $M$, which is the heart of 
a t-structure on the derived category of 
constructible sheaves on $M$ (see~\cite{BBD, MR2480756}). 
Let $\iota \colon M^{\rm{red}} \hookrightarrow M$ be the reduced part
of $M$. 
Since $\iota$ is a homeomorphism, 
we always identity $\Perv(M)$ with $\Perv(M^{\rm{red}})$
in a natural way. 

For a bounded 
complex $E$ of constructible sheaves on $M$, 
we denote by $\pH^i(E)$ the $i$-th cohomology with respect 
to the perverse t-structure, and $\chi(E)$ is the 
the Euler characteristic of $\dR \Gamma(M, E)$. 
For a constructible function $\nu$ on a scheme $M$, 
the weighted Euler characteristic is denoted by 
\begin{align*}
\int_M \nu \ de \cneq \sum_{m \in \mathbb{Z}}
m \cdot e(\nu^{-1}(m)). 
\end{align*}
Here $e(-)$ is the topological Euler characteristic.

\section{Generalized GV invariants}\label{sec:genGV}
In this section, we recall some necessary background on 
moduli spaces of semistable sheaves~\cite{MR1450870}, 
and 
Joyce's $d$-critical schemes
and stacks~\cite{JoyceD}. We then introduce the invariant 
$\Phi_{\sigma}(\gamma, m) \in \mathbb{Z}[y^{\pm 1}]$, using 
an analogy of BPS sheaves~\cite{DaMe}. 
\subsection{Twisted semistable sheaves}\label{subsec:twist2}
Let $X$ be a smooth projective Calabi-Yau 3-fold over $\mathbb{C}$, i.e. 
$\dim X=3$ and $K_X=0$. 
We denote by 
\begin{align*}
\Coh_{\le 1}(X) \subset \Coh(X)
\end{align*} 
the abelian subcategory of 
coherent sheaves $E$ on $X$ whose supports have dimensions less than or equal to one. 
Let $A(X)_{\mathbb{C}}$ be the complexified ample cone of $X$
defined by
\begin{align*}
A(X)_{\mathbb{C}} \cneq \{B+i\omega \in \mathrm{NS}(X)_{\mathbb{C}} : \omega \mbox{ is ample }\}. 
\end{align*}
For 
an object $E \in \Coh_{\le 1}(X)$ and 
an element
\begin{align}\label{sigma:B}
B+i\omega \in A(X)_{\mathbb{C}}
\end{align}
the \textit{$B$-twisted $\omega$-slope} $\mu_{B, \omega}(E) \in \mathbb{R} \cup \{\infty\}$ 
is defined by 
\begin{align*}
\mu_{B, \omega}(E) \cneq \frac{\ch_3^B(E)}{\omega \cdot \ch_2^B(E)}
=\frac{\chi(E)-B \cdot l(E)}{\omega \cdot l(E)}. 
\end{align*}
Here $\ch^B(-) \cneq e^{-B}\ch(-)$ is the $B$-twisted Chern character
and $\mu_{B, \omega}(E)=\infty$ if $\omega \cdot \ch_2^B(E)=0$. 
Also $l(E)$ is the fundamental one cycle of $E$, defined by 
\begin{align*}
l(E) \cneq \sum_{\eta \in X, \dim \overline{\{\eta\}}=1}
 \length(E_{\eta}) \cdot \overline{\{\eta\}}. 
\end{align*}
\begin{defi}
An object $E \in \Coh_{\le 1}(X)$ is called 
$(B, \omega)$-(semi)stable if for any 
subsheaf $0 \neq F \subsetneq E$, we have 
$\mu_{B, \omega}(F)<(\le) \mu_{B, \omega}(E)$. 
\end{defi}
The above stability condition can be interpreted 
in terms of Bridgeland stability conditions~\cite{Brs1} as follows. 
Let 
\begin{align*}
N_1(X) \subset H_2(X, \mathbb{Z})
\end{align*}
 be the group of numerical classes of 
algebraic one cycles on $X$
and set 
\begin{align*}
\Gamma_X \cneq N_1(X) \oplus \mathbb{Z}.
\end{align*}
The Chern character 
of an object in $D^b(\Coh_{\le 1}(X))$ takes
its value in $\Gamma_X$, and given by
\begin{align}\label{ch:Gamma}
\ch(E)=(\ch_2(E), \ch_3(E)) =([l(E)], \chi(E)). 
\end{align}
By definition, a \textit{Bridgeland stability condition} on $D^b(\Coh_{\le 1}(X))$
w.r.t. the Chern character map (\ref{ch:Gamma})
consists of data
\begin{align}\label{def:stab}
\sigma=(Z, \aA), \ Z \colon \Gamma_X \to \mathbb{C}, \ 
\aA \subset D^b(\Coh_{\le 1}(X))
\end{align}
where $Z$ is a group homomorphism, 
$\aA$ is the heart of a bounded t-structure
satisfying some axioms (see~\cite{Brs1, K-S} for details). 
It determines the set of \textit{$\sigma$-(semi)stable objects}: 
$E \in D^b(\Coh_{\le 1}(X))$ is $\sigma$-(semi)stable if 
$E[k] \in \aA$ for some $k\in \mathbb{Z}$, and for any 
non-zero subobject $0\neq F \subsetneq E[k]$ in $\aA$, we have the 
inequality in $(0, \pi]$: 
\begin{align*}
\arg Z(\ch(F))<(\le) \arg Z(\ch(E[k])).
\end{align*}

The set of 
Bridgeland stability conditions (\ref{def:stab})
forms a complex manifold, which 
we denote by $\Stab_{\le 1}(X)$. 
The forgetting map $(Z, \aA) \mapsto Z$ gives a local 
homeomorphism
 \begin{align*}
\Stab_{\le 1}(X) \to (\Gamma_X)_{\mathbb{C}}^{\vee}. 
\end{align*}
For a given element (\ref{sigma:B}), let 
$Z_{B, \omega}$ be the group homomorphism 
$\Gamma_X \to \mathbb{C}$ defined by 
\begin{align}\label{ZBw}
Z_{B, \omega}(\beta, m) \cneq -m+(B+i\omega)\beta. 
\end{align}
Then the pair
\begin{align}\label{sigma:Bw}
\sigma_{B, \omega} \cneq (Z_{B, \omega}, \Coh_{\le 1}(X))
\end{align}
determines a point in $\Stab_{\le 1}(X)$. 

It is obvious that 
an object in $\Coh_{\le 1}(X)$ is 
$(B, \omega)$-(semi)stable iff it is 
Bridgeland $\sigma_{B, \omega}$-(semi)stable. 
We also call $(B, \omega)$-(semi)stable sheaves as 
\textit{$\sigma_{B, \omega}$-(semi)stable objects}. 
Moreover the map
\begin{align*}
A(X)_{\mathbb{C}} \to \Stab_{\le 1}(X),  \ 
(B, \omega) \mapsto \sigma_{B, \omega}
\end{align*}
is a continuous injective 
map, whose image is denoted by 
\begin{align*}
U(X) \subset \Stab_{\le 1}(X).
\end{align*}  
We sometimes write $\sigma=\sigma_{B, \omega} \in U(X)$
as $\sigma=B+i\omega$. 
\subsection{Moduli stacks of twisted semistable sheaves}\label{subsec:twist}
For $\beta \in N_1(X)$, let 
$\mM_X(\beta)$ be the 2-functor
\begin{align}\label{moduli:2funct}
\mM_X(\beta) \colon 
Sch/\mathbb{C} \to Groupoid
\end{align}
sending a $\mathbb{C}$-scheme $S$ to the 
groupoid of $S$-flat sheaves 
$\eE \in \Coh(X \times S)$ such that 
for each closed point $s \in S$, 
the sheaf $\eE_s\cneq \eE|_{X \times \{s\}}$ 
is an object in $\Coh_{\le 1}(X)$ 
satisfying $[l(\eE_s)]=\beta$. 
It is well-known that the 
 2-functor $\mM_X(\beta)$ is an algebraic 
stack locally of finite type, 
though it is neither of finite type nor separated. 

For $m\in \mathbb{Z}$ and 
$\sigma=\sigma_{B, \omega} \in U(X)$, 
let
$v=(\beta, m) \in \Gamma_X$ and 
\begin{align}\label{sub:bn}
\mM_{\sigma}(v) \subset \mM_X(\beta)
\end{align}
be the substack of $\sigma$-semistable objects
$E \in \Coh_{\le 1}(X)$ satisfying 
\begin{align}\label{cond:E}
\ch(E)=v=(\beta, m). 
\end{align}
The stack $\mM_{\sigma}(v)$
is a finite type open substack of $\mM_X(\beta)$. 
Indeed, the stack $\mM_{\sigma}(v)$
is constructed as a GIT quotient stack (see~\cite[Lemma~7.4]{Todstack}), 
hence we have the 
projective coarse moduli space
$M_{\sigma}(v)$ together with the natural 
morphism
\begin{align}\label{p:natural}
p_M \colon \mM_{\sigma}(v) \to M_{\sigma}(v). 
\end{align}

For $\beta \in N_1(X)$, 
the Chow functor 
\begin{align}\label{funct:chow}
\cC how_X(\beta) \colon 
Sch^{red}/\mathbb{C} \to Set
\end{align}
is defined in~\cite{Ryd} by associating 
a reduced $\mathbb{C}$-scheme $S$ 
to the set of  
relative 
cycles on $X \times S$ over $S$, 
whose restriction to $X \times \{s\}$ for 
any closed point $s \in S$ is 
pure one dimensional with homology class 
$\beta$ (see~\cite[Section~4]{Ryd}). 
The functor (\ref{funct:chow})
is represented by a reduced projective scheme 
\begin{align*}
\Chow_X(\beta)
\end{align*}
 called \textit{Chow variety}, 
whose closed points correspond to 
effective one or zero cycles on $X$ with homology class $\beta$. 

Let $S$ be a reduced $\mathbb{C}$-scheme 
and $\eE \in \Coh(X \times S)$ be a 
$S$-valued point of $\mM_X(\beta)$. 
Then by~\cite[Theorem~7.14]{Ryd}
there is a canonical relative 
cycle on $X \times S$ whose support is $\Supp(\eE)$. 
It induces a morphism
\begin{align*}
S \to \Chow_X(\beta)
\end{align*}
which sends $s \in S$ to the 
fundamental cycle of $\eE_s$. The above morphism only 
depends on the isomorphism class of the sheaf $\eE$, thus 
induces the morphism of reduced stacks
\begin{align}\label{HC:stack}
\pi_{\mM} \colon \mM^{\rm{red}}_X(\beta) \to \Chow_X(\beta). 
\end{align}
The above morphism is called \textit{Hilbert-Chow (HC) map}. 
By restricting the above morphism 
to the open substack $\mM_{\sigma}(v) \subset \mM_X(\beta)$
for $\sigma \in U(X)$
and $v=(\beta, m) \in \Gamma_X$, 
we obtain the morphism
\begin{align}\label{HC:stack2}
\pi_{\mM} \colon
\mM_{\sigma}^{\rm{red}}(v)\to 
\Chow_X(\beta). 
\end{align}
By the universality of 
the coarse moduli space (see~\cite[Definition~2.2.1, Theorem~4.3.4]{MR1450870}), the morphism
 $\pi_{\mM}$ uniquely factors through 
the (reduced part of the) morphism $p_{\mM}$, 
where $p_M$ is the natural morphism (\ref{p:natural}). 
So we have the commutative diagram
\begin{align}\label{dia:chow}
\xymatrix{
\mM_{\sigma}^{\rm{red}}(v)  \ar@<-0.3ex>@{^{(}->}[r]  \ar[d]_-{p_M} &
\mM^{\rm{red}}_X(\beta) \ar[d]^-{\pi_{\mM}} \\
M_{\sigma}^{\rm{red}}(v) \ar[r]_-{\pi_M} & \Chow_X(\beta).
}
\end{align}

\subsection{$d$-critical schemes}
We recall the notion of $d$-critical schemes and 
$d$-critical
stacks introduced in~\cite{JoyceD}. 
For any complex scheme $T$, 
Joyce~\cite{JoyceD} shows that 
there exists a canonical sheaf of 
$\mathbb{C}$-vector spaces 
$\sS_{T}$ on $T$
satisfying the following
property:
for any Zariski open subset $R \subset T$
and a closed embedding 
$i \colon R \hookrightarrow V$
into a smooth scheme $V$, 
there is an exact sequence
\begin{align}\label{S:property}
0 \longrightarrow \sS_{T}|_{R}
 \longrightarrow \oO_V/I^2 \stackrel{d_{\rm{DR}}}{\longrightarrow}
\Omega_V/I \cdot \Omega_V. 
\end{align}
Here $I \subset \oO_V$ is the ideal sheaf
which defines $R$
and $d_{\rm{DR}}$ is the de-Rham differential. 
Moreover there is a natural decomposition 
\begin{align*}
\sS_T=\sS_T^0 \oplus \mathbb{C}_T
\end{align*}
where $\mathbb{C}_T$ is the constant sheaf
on $T$. 
The sheaf $\sS_T^{0}$ restricted to $R$ is 
the kernel of the 
composition 
\begin{align*}
\sS_T|_{R} \hookrightarrow \oO_V/I^2 \twoheadrightarrow \oO_{R^{\rm{red}}}. 
\end{align*}
For example, 
suppose that  
$f \colon V \to \mathbb{A}^1$ is
a regular function such that 
\begin{align}\label{R=df}
R=\{df=0\}, \ 
f|_{R^{\rm{red}}}=0.
\end{align}
Then we have 
$I=(df) \cneq \Imm(T_V \stackrel{df}{\to} \oO_{V})$ and 
$f+(df)^2$ is an element of
$\Gamma(R, \sS_{T}^0|_{R})$. 

\begin{defi}\emph{(\cite{JoyceD})}
A pair $(T, s)$
for a complex scheme $T$
and $s \in \Gamma(T, \sS_T^0)$
is called a $d$-critical 
scheme 
if for any 
$x \in T$, there is an open 
neighborhood $x \in R \subset T$,  
a closed embedding $i \colon R \hookrightarrow V$
into a smooth scheme $V$, 
a regular function $f \colon V \to \mathbb{A}^1$
satisfying (\ref{R=df})
such that 
$s|_{R}=f+(df)^2$
holds. 
In this case, 
the data
\begin{align}\label{crit:chart}
(R, V, f, i)
\end{align}
is called a $d$-critical chart.
The section $s$ is called a $d$-critical 
structure of $T$.  
\end{defi} 
Given a $d$-critical scheme $(T, s)$, there exists a line bundle
$K_{T, s}$ on $T^{\rm{red}}$
called \textit{virtual canonical line bundle}
(see~\cite[Section~2.4]{JoyceD} \footnote{In~\cite[Section~2.4]{JoyceD}, this
was just called canonical bundle.}),
such that for any
$d$-critical chart (\ref{crit:chart})
there is a natural isomorphism 
\begin{align}\label{nat:K0}
K_{T, s}|_{R^{\rm{red}}} \stackrel{\cong}{\to} K_V^{\otimes 2}|_{R^{\rm{red}}}. 
\end{align}
\begin{defi}\emph{(\cite{JoyceD})}
An orientation of a $d$-critical scheme $(T, s)$
is a
 choice of a square root line bundle 
$K_{T, s}^{1/2}$ for $K_{T, s}$ on $T^{\rm{red}}$
and an isomorphism
\begin{align}\label{isom:orient}
(K_{T, s}^{1/2})^{\otimes 2} \stackrel{\cong}{\to}
K_{T, s}. 
\end{align}
A $d$-critical scheme with an orientation is called
an oriented $d$-critical scheme. 
\end{defi}
\subsection{$d$-critical stacks}
Let $\mM$ be an algebraic stack over $\mathbb{C}$. 
The category
 of sheaves of $\mathbb{C}$-vector spaces on 
$\mM$ is defined in the lisse-\'etale site of $\mM$. 
This is equivalent to the category 
$\mathrm{Sh}(\mM)$ defined as follows (see~\cite{GL} for details):
an object $\fF$ of $\mathrm{Sh}(\mM)$
consists of data 

(i) 
For each 
$\mathbb{C}$-scheme $T$ and a smooth 1-morphism 
$t \colon T \to \mM$, we are given a sheaf of 
$\mathbb{C}$-vector spaces $\fF(T, t)$
on $T$ in \'etale topology. 

(ii) For 
$\mathbb{C}$-schemes $T$, $U$
and smooth 1-morphisms 
$t \colon T \to \mM$, $u \colon U \to \mM$
with a 2-commutative diagram
\begin{align*}
\xymatrix{
T \ar[dr]_{t}  \ar[rr]^{\phi} & &  U \ar[ld]^{u} \\
   &   \mM  & 
}
\end{align*}
we are given a morphism 
\begin{align}\label{sheaf:rest}
\phi^{-1} \fF(U, u) \to \fF(T, t)
\end{align}
of sheaves of $\mathbb{C}$-vector spaces on 
$T$ in \'etale topology. 

The above data should satisfy several compatibility conditions. 
A global section $s \in H^0(\fF)$ of an object 
$\fF \in \mathrm{Sh}(\mM)$ consists of 
global sections $s(T, t) \in H^0(\fF(T, t))$
for each $\mathbb{C}$-scheme $T$ and a smooth 
morphism $t \colon T \to \mM$, 
such that the morphism (\ref{sheaf:rest})
sends $s(U, u)$ to $s(T, t)$. 

By~\cite[Corollary~2.52]{JoyceD}, there is a canonical 
sheaf of $\mathbb{C}$-vector spaces 
$\sS_{\mM}^0$ on 
an algebraic stack $\mM$, 
such that for any 
scheme $T$ and a smooth 1-morphism 
$t \colon T \to \mM$ we have
\begin{align*}
\sS_{\mM}^0(T, t)=\sS_{T}^0. 
\end{align*}
\begin{defi}\emph{(\cite{JoyceD})}\label{def:dstack}
A pair $(\mM, s)$ for an algebraic stack $\mM$ over 
$\mathbb{C}$ and a global section $s \in H^0(\sS_{\mM}^0)$ is called 
a $d$-critical stack if for any $\mathbb{C}$-scheme $T$ and 
a smooth 1-morphism $t \colon T \to \mM$, 
the pair $(T, s(T, t))$
is a $d$-critical scheme. 
\end{defi}

Given a $d$-critical stack $(\mM, s)$, there exists a 
line bundle $K_{\mM, s}$ on $\mM^{\rm{red}}$, called
\textit{virtual canonical line bundle}, such that 
for any $\mathbb{C}$-scheme
$T$ and a smooth 1-morphism $t \colon T \to \mM$, 
so that $t^{\rm{red}} \colon T^{\rm{red}} \to \mM^{\rm{red}}$ is 
also smooth, there is a natural isomorphism
\begin{align}\label{nat:kstack}
K_{\mM, s}(T^{\rm{red}}, t^{\rm{red}}) \stackrel{\cong}{\to}
K_{T, s(T, t)} \otimes \mathrm{det}(\Omega_{T/\mM}|_{T^{\rm{red}}})^{\otimes -2}. 
\end{align}
Here $(T, s(T, t))$ is the $d$-critical scheme in Definition~\ref{def:dstack}. 

\begin{defi}\emph{(\cite{JoyceD})}
An orientation of a $d$-critical stack $(\mM, s)$
is a
 choice of a square root line bundle 
$K_{\mM, s}^{1/2}$ for $K_{\mM, s}$ on $\mM^{\rm{red}}$
and an isomorphism
\begin{align}\label{isom:orient2}
(K_{\mM, s}^{1/2})^{\otimes 2} \stackrel{\cong}{\to}
K_{\mM, s}. 
\end{align}
A $d$-critical stack with an orientation is called
an oriented $d$-critical stack. 
\end{defi}
For an oriented $d$-critical stack $(\mM, s, K_{\mM, s}^{1/2})$, 
let $(T, t)$ be as in Definition~\ref{def:dstack}. 
Then we have the line bundle on $T^{\rm{red}}$
\begin{align*}
K_{T, s(T, t)}^{1/2}=K_{\mM, s}^{1/2}(T^{\rm{red}}, t^{\rm{red}}) \otimes \mathrm{det}(\Omega_{T/\mM}|_{T^{\rm{red}}}). 
\end{align*}
Then the isomorphism (\ref{nat:kstack}), 
induces the isomorphism
\begin{align}\label{induce:orient}
(K_{T, s(T, t)}^{1/2})^{\otimes 2} \stackrel{\cong}{\to}
K_{T, s(T, t)}
\end{align}
which gives an orientation of the $d$-critical 
scheme $(T, s(T, t))$. 

As mentioned in~\cite{JoyceD}, 
the above 
notions of $d$-critical structures on schemes 
and stacks are
naturally extended to those
of analytic $d$-critical structures
for complex analytic spaces and stacks
respectively. 
Moreover given an algebraic $d$-critical 
structure on an algebraic stack $\mM$, 
it naturally gives an analytic 
$d$-critical structure on the 
analytification of $\mM$.

\begin{exam}\label{exam:dstack}
Suppose that an algebraic $\mathbb{C}$-group $G$ acts on 
a complex analytic space $R$, and 
set $\mM=[R/G]$. 
Then as in~\cite[Example~2.55]{JoyceD}, 
we have $H^0(\sS_{\mM}^0)=H^0(\sS_{R}^0)^G$. 
Let $V$ be a complex manifold with $G$-action, 
$f \colon V \to \mathbb{C}$ be a $G$-invariant 
analytic function such that $R=\{df=0\}$. 
Then we have
\begin{align*}
s=f+(df)^2 \in H^0(\sS_R^0)^G. 
\end{align*}
The pair $(\mM, s)$ is an example of an analytic 
 $d$-critical stack. 
\end{exam}

\subsection{Perverse sheaves of vanishing cycles}
Let $f \colon V \to \mathbb{C}$ be 
a holomorphic function on a complex manifold $V$, 
and set 
$R=\{df=0\}$.
Suppose that $f|_{R^{\rm{red}}} =0$
and set $V_0=f^{-1}(0)$.  
We have the associated vanishing cycle functor
(see~\cite[Theorem~5.2.21]{Dimbook})
\begin{align*}
\phi_f \colon \Perv(V) \to \Perv(V_0). 
\end{align*}
Let $\IC(V) \in \Perv(V)$ be the intersection complex 
on $V$, which 
coincides with $\mathbb{Q}_V[\dim V]$ 
since $V$ is smooth. 
We have the perverse sheaf 
of vanishing cycles
supported on $R^{\rm{red}} \subset V_0$
\begin{align}\label{vsheaf}
\phi_f(\IC(V)) \in \Perv(R) \subset \Perv(V_0). 
\end{align}
Let $(T, s)$ be a $d$-critical scheme or 
$d$-critical analytic space. 
For a $d$-critical chart 
$(R, V, f, i)$ as in 
(\ref{crit:chart}), 
we have the perverse sheaf of vanishing cycles 
(\ref{vsheaf}) on $R$. 
In~\cite{MR3353002} it is proved that 
if $(T, s)$ is oriented, then the perverse
sheaves of vanishing cycles (\ref{vsheaf}) glue 
to give a global perverse sheaf on $T$. 
Let
\begin{align}\label{isom:KK}
(K_{T, s}^{1/2}|_{R^{\rm{red}}})^{\otimes 2}
\cong K_V^{\otimes 2}|_{R^{\rm{red}}}
\end{align}
be the isomorphism given by
the composition of (\ref{nat:K0})
and (\ref{isom:orient}). 
Then there is a 
$\mathbb{Z}/2\mathbb{Z}$-principal bundle 
$
\tau_R \colon 
\widetilde{R}^{\rm{red}} \to R^{\rm{red}}
$
which parametrizes local square roots 
of the isomorphism (\ref{isom:KK}). 
We have the decomposition
\begin{align*}
\tau_{R\ast} \mathbb{Q}_{\widetilde{R}^{\rm{red}}}
=\mathbb{Q}_{R^{\rm{red}}} \oplus \lL
\end{align*}
for a rank one local system $\lL$ on $R^{\rm{red}}$. 
The following result is proved in~\cite{MR3353002}
(also see~\cite{KL} for the similar result 
in the framework of virtual critical structures): 
\begin{thm}\emph{(\cite[Theorem~6.9]{MR3353002}, \cite[Theorem~4.12]{BBBJ})}\label{thm:MR3353002}
\begin{enumerate}
\item
For an oriented 
$d$-critical scheme or an oriented $d$-critical 
analytic space $(T, s, K_{T, s}^{1/2})$, 
there exists a natural 
perverse sheaf 
$\phi_{T}$ on $T$ such that for any 
$d$-critical chart (\ref{crit:chart})
there is a natural isomorphism\footnote{In~\cite[Theorem~6.9]{MR3353002}, 
the tensor product in the right hand side of (\ref{isom:IC}) is denoted as 
$\otimes_{\mathbb{Z}/2\mathbb{Z}}\widetilde{R}^{\rm{red}}$. 
The latter tensor product is defined as the one given in the right hand 
side of (\ref{isom:IC}) in~\cite[Definition~2.9]{MR3353002}.
}
\begin{align}\label{isom:IC}
\phi_{T}|_{R} \stackrel{\cong}{\to}
\phi_f(\IC(V)) \otimes \lL. 
\end{align}
\item
For an oriented $d$-critical algebraic or analytic stack 
$(\mM, s, K_{\mM, s}^{1/2})$, 
there exists a natural perverse sheaf $\phi_{\mM}$ on 
$\mM$ such that for any $(T, t)$ as in Definition~\ref{def:dstack}
we have
\begin{align*}
\phi_{\mM}(T, t)=\phi_T[-d_t]. 
\end{align*}
Here $d_t$ is the relative dimension of 
$t \colon T \to \mM$
and $\phi_T$ is the perverse sheaf in (i) 
for
$(T, s(T, t))$ with orientation given by (\ref{induce:orient}). 
\end{enumerate}
\end{thm}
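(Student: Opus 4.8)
The plan is to prove (1) by gluing the locally-defined vanishing cycle complexes attached to $d$-critical charts, and to deduce (2) from (1) by descent along smooth atlases.

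For (1): on a $d$-critical chart $(R, V, f, i)$ one has the perverse sheaf $\phi_f(\IC(V))$ supported on $R^{\mathrm{red}}$, and I would twist it by the rank one local system $\lL_R$ classifying local square roots of the isomorphism $(\ref{isom:KK})$ determined by the orientation. The real content is to produce, for any two $d$-critical charts, a canonical isomorphism over the overlap $R_{\alpha\beta}$,
\[
\theta_{\alpha\beta}\colon \bigl(\phi_{f_\alpha}(\IC(V_\alpha))\otimes \lL_\alpha\bigr)\big|_{R_{\alpha\beta}} \simto \bigl(\phi_{f_\beta}(\IC(V_\beta))\otimes \lL_\beta\bigr)\big|_{R_{\alpha\beta}},
\]
and to verify the cocycle identity $\theta_{\alpha\gamma}=\theta_{\beta\gamma}\circ\theta_{\alpha\beta}$ on triple overlaps; since perverse sheaves satisfy descent for the analytic (resp.\ \'etale) topology, such data then glue to a global perverse sheaf $\phi_T$ for which $(\ref{isom:IC})$ holds by construction. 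To build $\theta_{\alpha\beta}$ I would use the stabilization comparison of $d$-critical charts: after shrinking, $V_\alpha$ and $V_\beta$ embed compatibly into a common smooth $W$ so that, \'etale-locally, $f_\alpha$ and $f_\beta$ are pulled back from a single function on $W$ after adding nondegenerate quadratic forms $q_\alpha, q_\beta$ in the normal directions. The Thom--Sebastiani isomorphism then reduces the comparison to identifying $\phi_q(\IC(\mathbb{C}^n))$ for nondegenerate $q$; this is a shifted skyscraper whose $\mathbb{Z}/2$-monodromy is governed by a square root of $\det q$, which is exactly the datum packaged into the local system $\lL$. Matching these square roots through the chosen global square root $K_{T,s}^{1/2}$ produces $\theta_{\alpha\beta}$, and functoriality of Thom--Sebastiani together with that of the chart stabilization gives the cocycle identity.

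For (2): for every scheme $T$ equipped with a smooth $1$-morphism $t\colon T\to\mM$, the orientation of $\mM$ induces via $(\ref{induce:orient})$ an orientation of the $d$-critical scheme $(T, s(T,t))$, so part (1) supplies a perverse sheaf $\phi_T$; I set $\phi_{\mM}(T,t)\cneq\phi_T[-d_t]$. For a $2$-morphism $\phi\colon(T,t)\to(U,u)$ over $\mM$ the map $\phi$ is smooth of relative dimension $d=d_t-d_u$, and compatibility of the vanishing cycle construction with smooth pullback gives a canonical isomorphism between $\phi^{-1}\phi_U$ and $\phi_T[-d]$, hence $\phi^{-1}\phi_{\mM}(U,u)\simto\phi_{\mM}(T,t)$ compatibly with $(\ref{sheaf:rest})$. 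These isomorphisms assemble into an object $\phi_{\mM}\in\mathrm{Sh}(\mM)$ which is perverse on every chart, i.e.\ a perverse sheaf on the stack $\mM$.

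The step I expect to be the main obstacle is the cocycle verification in (1): exhibiting the isomorphisms $\theta_{\alpha\beta}$ is not the issue, but proving that they are canonical and compose correctly requires a meticulous account of how orientations transform under both Thom--Sebastiani and the stabilization of $d$-critical charts. The sign and square-root ambiguities that arise are precisely what the orientation data is designed to kill, but making this rigorous — as carried out in \cite{MR3353002, MR3352237} — is where essentially all the work lies; the remaining ingredients (descent for perverse sheaves, the degree shift in the stacky setting) I expect to be formal by comparison.
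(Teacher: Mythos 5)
Your sketch accurately summarizes the proof strategy of the cited references: in this paper the statement is simply quoted from \cite[Theorem~6.9]{MR3353002} and \cite[Theorem~4.12]{MR3352237} without proof, and what you have written is a faithful outline of the argument there — local vanishing cycle sheaves twisted by the orientation local system $\lL$, comparison isomorphisms across charts built via stabilization plus Thom--Sebastiani, cocycle verification, descent for perverse sheaves, and then smooth descent to pass from schemes to stacks. You have also correctly located the genuinely hard part, namely the canonicity and compatibility of the $\theta_{\alpha\beta}$ under chart stabilization, which is where the orientation data enters to cancel the square-root ambiguities from $\phi_q(\IC(\mathbb{C}^n))$.
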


\subsection{CY properties of $d$-critical stacks}
Let $X$ be a smooth projective CY 3-fold.
For $\beta \in N_1(X)$, let  
 $\mM_X(\beta)$ be
 the moduli stack defined as in (\ref{moduli:2funct}). 
By the result of~\cite{BBBJ},
we have the following: 
\begin{thm}\emph{(\cite{BBBJ})}\label{thm:CYdcrit}
There is a canonical $d$-critical stack structure 
$s \in H^0(\sS_{\mM}^0)$ on $\mM_X(\beta)$, whose 
virtual canonical line bundle is given by
\begin{align}\label{vir:K}
K_{\mM_X(\beta), s}=
K_{\mM_X(\beta)}^{\rm{vir}} \cneq
\det \dR \hH om_{pr_{\mM}}(\eE, \eE). 
\end{align} 
Here $\eE$ is the universal family on $X \times \mM_{X}(\beta)$ and 
$pr_{\mM} \colon X \times \mM_X(\beta) \to \mM_X(\beta)$ is the projection. 
\end{thm}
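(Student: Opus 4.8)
The plan is to realize $\mM_X(\beta)$ as the classical truncation of a $(-1)$-shifted symplectic derived Artin stack and then invoke the ``Darboux theorem'' of~\cite{MR3352237}. First I would recall that $\mM_X(\beta)$ admits a canonical derived enhancement $\mM_X(\beta)^{\mathrm{der}}$: it is an open substack of the derived moduli stack $\mathbf{Perf}(X)$ of perfect complexes on $X$ (equivalently, of objects in a dg-enhancement of $D^b(\Coh(X))$), which is locally of finite presentation (To\"{e}n--Vaqui\'{e}), and whose classical truncation recovers $\mM_X(\beta)$; that the relevant locus is open follows since being a sheaf is an open condition, while the dimension of the support and the cycle class $[l(E)]$ are locally constant in flat families. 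Its cotangent complex is
\begin{align*}
\mathbb{L}_{\mM_X(\beta)^{\mathrm{der}}} \simeq \bigl(\dR\hH om_{pr_{\mM}}(\eE, \eE)[1]\bigr)^{\vee},
\end{align*}
where $\eE$ is the universal complex on $X \times \mM_X(\beta)^{\mathrm{der}}$ and $pr_{\mM}$ is the projection.

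Next, the trivialization $K_X \cong \oO_X$ together with the fundamental class of $X$ endows the dg-category $\mathrm{Perf}(X)$ with a left Calabi--Yau structure of dimension $3$, so by Pandharipande--To\"{e}n--Vaqui\'{e}--Vezzosi~\cite{PTVV} the stack $\mathbf{Perf}(X)$ --- hence its open substack $\mM_X(\beta)^{\mathrm{der}}$ --- carries a canonical $(2-3)=(-1)$-shifted symplectic structure $\omega$. Applying the Darboux theorem for $(-1)$-shifted symplectic derived Artin stacks~\cite{MR3352237} to $(\mM_X(\beta)^{\mathrm{der}}, \omega)$ produces a canonical global section $s \in H^0(\sS_{\mM}^0)$ making $\mM_X(\beta)$ a $d$-critical stack, étale-locally modelled on $\mathrm{Crit}(f)$ for $f$ a function on a smooth scheme. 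Since the $d$-critical condition is smooth-local on $\mM_X(\beta)$, the fact that $\mM_X(\beta)$ is only locally of finite type and non-separated causes no trouble.

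It remains to compute $K_{\mM_X(\beta), s}$. In a $d$-critical chart $(R, V, f, i)$ coming from the Darboux presentation one has $R = \mathrm{Crit}(f)$ for $f \colon V \to \mathbb{A}^1$ with $V$ smooth, and the corresponding derived critical locus $\mathbf{Crit}(f)$ has cotangent complex $\bigl[\,T_V \xrightarrow{\mathrm{Hess}(f)} \Omega_V\,\bigr]$ in cohomological degrees $[-1,0]$ on $R$; hence $\det \mathbb{L}_{\mathbf{Crit}(f)}|_R \cong \det(\Omega_V) \otimes \det(T_V)^{-1} \cong K_V^{\otimes 2}|_R$. Comparing with the defining isomorphism~(\ref{nat:K0}) --- and accounting for the relative-dimension twist of~(\ref{nat:kstack}) in the stacky setting --- one identifies $K_{\mM_X(\beta), s}$ with the restriction to $\mM_X^{\rm{red}}(\beta)$ of $\det \mathbb{L}_{\mM_X(\beta)^{\mathrm{der}}}$. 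Finally, Serre duality on the CY $3$-fold gives $\dR\hH om_{pr_{\mM}}(\eE, \eE)^{\vee} \simeq \dR\hH om_{pr_{\mM}}(\eE, \eE)[3]$, so $\mathbb{L}_{\mM_X(\beta)^{\mathrm{der}}} \simeq \dR\hH om_{pr_{\mM}}(\eE, \eE)[2]$, and since a shift by an even integer leaves the determinant line bundle unchanged we obtain
\begin{align*}
K_{\mM_X(\beta), s} \cong \det \dR\hH om_{pr_{\mM}}(\eE, \eE)\big|_{\mM_X^{\rm{red}}(\beta)} = K_{\mM_X(\beta)}^{\rm{vir}}\big|_{\mM_X^{\rm{red}}(\beta)},
\end{align*}
which is the asserted identity.

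The main obstacle is to make this last step precise and global rather than chart-by-chart: one must check that the local identifications $K_{\mM_X(\beta), s}|_R \cong K_V^{\otimes 2}|_R$ are compatible with the canonical transition data of $K_{\mM_X(\beta), s}$ and patch to the stated global isomorphism with $\det \dR\hH om_{pr_{\mM}}(\eE, \eE)$. This is precisely the delicate cotangent-complex bookkeeping --- signs, the factor of two, and the relative-dimension twist on a stack --- carried out in~\cite{MR3352237}, and it is also where one genuinely needs the Artin-stack version of the Darboux theorem rather than its scheme-theoretic form. A minor additional point is the openness claim used above, namely that the locus of sheaves supported in dimension $\le 1$ with $[l(E)] = \beta$ is open in $\mathbf{Perf}(X)$, so that the $(-1)$-shifted symplectic structure restricts to $\mM_X(\beta)^{\mathrm{der}}$.
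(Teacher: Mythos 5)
Your proposal is correct and reconstructs the argument of the cited reference: the paper states this theorem as a direct quotation of~\cite{MR3352237}, and your three steps --- realizing $\mM_X(\beta)$ as the truncation of an open derived substack of $\mathbf{Perf}(X)$, invoking the $(-1)$-shifted symplectic structure of~\cite{PTVV}, and applying the stacky Darboux theorem together with the identification $K_{\mM,s}\cong(\det\mathbb{L}_{\mM^{\mathrm{der}}})|_{\mM^{\mathrm{red}}}$ --- are exactly those of that source, with the Serre-duality computation $\mathbb{L}\simeq\dR\hH om_{pr_{\mM}}(\eE,\eE)[2]$ giving the stated formula for $K^{\mathrm{vir}}$. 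One small slip: the ``P'' in PTVV is Pantev, not Pandharipande.
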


We next consider CY conditions on our moduli stacks of 
one dimensional sheaves. 
In general, we introduce the following definition:
\begin{defi}\label{def:vir}
Let $(\mM, s)$ be a $d$-critical stack with a 
morphism 
\begin{align*}
\pi_{\mM} \colon \mM^{\rm{red}} \to \Chow_X(\beta)
\end{align*}
for $\beta \in N_1(X)$. 
We say $\mM$ is Calabi-Yau (CY)
at $\gamma \in \Chow_X(\beta)$ if 
there is an analytic open neighborhood 
$\gamma \in U \subset \Chow_X(\beta)$
such that $K_{\mM, s}$ is trivial 
on $\pi_{\mM}^{-1}(U)$. 
\end{defi}

We propose the following conjecture: 
\begin{conj}\label{conj:vir}
The stack $\mM_X(\beta)$ is CY 
at any point $\gamma \in \Chow_X(\beta)$
for the HC map (\ref{HC:stack}). 
\end{conj}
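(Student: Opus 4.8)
The plan is to reduce to a local computation around each closed point of the fibre $\pi_{\mM}^{-1}(\gamma)$, where the virtual canonical line bundle is trivial, and then to glue these trivializations over the preimage of a small analytic neighbourhood of $\gamma$. The gluing step is the main obstacle.

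\textbf{Reduction.} By definition, $\mM_X(\beta)$ is CY at $\gamma$ precisely when the class of $K^{\mathrm{vir}}\cneq K_{\mM_X(\beta), s}=\det \dR\hH om_{pr_{\mM}}(\eE, \eE)$, see (\ref{vir:K}), vanishes in $\Pic(\pi_{\mM}^{-1}(U))$ for some analytic open $\gamma\in U\subset \Chow_X(\beta)$. So it is enough to produce, after shrinking $U$, an open cover of $\pi_{\mM}^{-1}(U)$ on which $K^{\mathrm{vir}}$ is trivialized, together with a vanishing of the resulting gluing cocycle.

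\textbf{Local triviality.} By~\cite{MR3352237} and the analytic local structure theory of~\cite{JoyceD}, and by~\cite{Todstack} on the semistable loci, around a closed point $[E]$ the stack $\mM_X(\beta)$ is analytic-locally of the form $[\mathrm{Crit}(f)/\Aut(E)]$ with $f\colon \Ext^1(E,E)\to \mathbb{C}$ a formal but convergent super-potential; this is an analytic $d$-critical chart as in Example~\ref{exam:dstack}. Combining the isomorphism (\ref{nat:K0}) for the $d$-critical scheme chart $(\mathrm{Crit}(f), \Ext^1(E,E), f, \iota)$ with (\ref{nat:kstack}) for the smooth atlas $\mathrm{Crit}(f)\to [\mathrm{Crit}(f)/\Aut(E)]$, one identifies $K^{\mathrm{vir}}$ on the chart with the line bundle associated to the determinant character of the virtual $\Aut(E)$-representation $\sum_{i}(-1)^i\Ext^i(E,E)$. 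Since Serre duality on the CY $3$-fold gives $\Aut(E)$-equivariant isomorphisms $\Ext^i(E,E)\cong \Ext^{3-i}(E,E)^{\vee}$, this character equals
\[
\big(\det \Hom(E,E)\otimes (\det \Ext^1(E,E))^{-1}\big)^{\otimes 2}.
\]
To see it is trivial, decompose $E=\bigoplus_{\chi}E^{(\chi)}$ into weight subsheaves under a maximal torus $T\subset \Aut(E)$. For any $F, G\in \Coh_{\le 1}(X)$ on a smooth projective $3$-fold one has $\chi(F, G)=0$, since the degree-$6$ part of $\ch(F)^{\vee}\ch(G)\,\td(X)$ vanishes for dimension reasons; applied to the pairs $E^{(\chi)}, E^{(\chi')}$ and combined with Serre duality this yields $\ext^0(E^{(\chi)},E^{(\chi')})-\ext^1(E^{(\chi)},E^{(\chi')})=\ext^0(E^{(\chi')},E^{(\chi)})-\ext^1(E^{(\chi')},E^{(\chi)})$. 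Since the $T$-weight of both $\Hom(E^{(\chi)},E^{(\chi')})$ and $\Ext^1(E^{(\chi)},E^{(\chi')})$ is $\chi'-\chi$, which is antisymmetric in $(\chi,\chi')$ whereas $\ext^0-\ext^1$ is symmetric, the determinant characters of $T$ on $\Hom(E,E)$ and on $\Ext^1(E,E)$ agree; as $\Aut(E)$ is connected with torsion-free character lattice on its reductive quotient, $\det \Hom(E,E)=\det \Ext^1(E,E)$ as $\Aut(E)$-characters, and hence $K^{\mathrm{vir}}$ is trivial on the chart.

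\textbf{The main obstacle: gluing.} It remains to glue the local trivializations over $\pi_{\mM}^{-1}(U)$: on overlaps they differ by units whose \v{C}ech class is $[K^{\mathrm{vir}}]\in H^1(\pi_{\mM}^{-1}(U), \oO^{\ast})$, and one must prove this dies after shrinking $U$ about the single Chow point $\gamma$. This requires a globally coherent choice of the data entering the local computation --- in effect a compatible family of the Serre-duality isomorphisms $\dR\hH om_{pr_{\mM}}(\eE, \eE)^{\vee}[3]\simeq \dR\hH om_{pr_{\mM}}(\eE, \eE)$, i.e.\ an orientation-type datum trivializing the associated gerbe --- and I expect this to be the crux: it is precisely what is not available for a general compact CY $3$-fold, which is why the statement remains conjectural. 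Heuristically it should follow from the local geometry of sheaves supported near the curve $\Supp(\gamma)$, where adjunction together with $K_X=\oO_X$ makes the determinant of the normal bundle of the curve equal to its canonical bundle and the Serre-duality contributions cancel; this mechanism is realized unconditionally for $X=\mathrm{Tot}_S(K_S)$ via dimensional reduction along $X\to S$ --- which turns $\dR\hH om_{pr_{\mM}}(\eE, \eE)$ into a complex on $S\times \mM$ with explicitly trivializable determinant --- yielding Theorem~\ref{intro:thm:loc}(1). The reduction and the local triviality are formal; the global gluing is the only substantial point.
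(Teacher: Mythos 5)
The statement you were asked to address is Conjecture~\ref{conj:vir}, which the paper states without proof for a general projective CY $3$-fold; the only place it is established unconditionally is the local-surface case $X=\mathrm{Tot}_S(K_S)$ (Theorem~\ref{thm:CYsurface}). You correctly recognize this and do not claim a proof, so there is no gap beyond the one you explicitly flag (the global gluing over $\pi_{\mM}^{-1}(U)$), which is indeed exactly what is missing in general and why the statement is a conjecture.

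Your local analysis is sound and is essentially the same mechanism the paper uses implicitly. The claim that $K^{\rm{vir}}$ is, on the residual gerbe, the character $(\det\Hom(E,E)\otimes(\det\Ext^1(E,E))^{-1})^{\otimes 2}$ of $\Aut(E)$, together with the vanishing of that character via $\chi(F,G)=0$ for $F,G\in\Coh_{\le 1}(X)$ and Serre duality, is the same symmetry argument that makes the Ext-quiver symmetric (the unnamed lemma just before Theorem~\ref{thm:compare}) and underlies the paper's Proposition~\ref{prop:vir:tri}, where $K_{\mathrm{Rep}_Q(\vec{m})}$ is shown to be a trivial $G$-equivariant line bundle for a symmetric quiver. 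You also correctly locate the source of the unconditional result for local surfaces in dimensional reduction along $X\to S$: the paper's proof of Theorem~\ref{thm:CYsurface} pushes $\eE$ down to $S\times T$, uses a two-curve trivialization $\omega_S\cong\oO_S(C_2-C_1)$ to reduce $\det\dR\hH om_{pr_{\mM}}(\eE,\eE)$ to determinants on $C_1,C_2$, and concludes by the curve case (Lemma~\ref{lem:curve}). So your proposal is a faithful map of the state of the conjecture and of the one verified case; it is an analysis rather than a proof, as you yourself say, and it matches the paper's own understanding of where the difficulty lies.

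One small remark on your local step: pointwise triviality of the determinant character of $\Aut(E)$ gives triviality of $K^{\rm{vir}}$ at $[E]$, but to obtain a trivialization on a whole $d$-critical chart $[\mathrm{Crit}(f)/\Aut(E)]$ one should, as in Proposition~\ref{prop:vir:tri}, work with $K_{\mathrm{Rep}_Q(\vec{m})}$ as a $G$-equivariant line bundle on the affine space $\mathrm{Rep}_Q(\vec{m})$ (where any such bundle is given by a single character), not merely at the origin. Your argument gives the same conclusion, but phrasing it on the ambient affine chart makes the passage from pointwise to chart-wise triviality automatic rather than an extra step.
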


Note that the $d$-critical structure on $\mM_X(\beta)$ in 
Theorem~\ref{thm:CYdcrit} 
induces the one on $\mM_{\sigma}(v)$ by the open embedding (\ref{sub:bn}).
Moreover if Conjecture~\ref{conj:vir} is true, then 
the open substack $\mM_{\sigma}(v) \subset \mM_X(\beta)$
is also CY at $\gamma \in \Chow_X(\beta)$
for the HC map (\ref{HC:stack2}). 

 Suppose that $\mM_{\sigma}(v)$ is CY at 
$\gamma \in \Chow_X(\beta)$, and take 
an open subset $\gamma \in U \subset \Chow_X(\beta)$
as in Definition~\ref{def:vir}. 
Below 
we denote the pull-back of the diagram 
(\ref{dia:chow}) 
to $U$ by (omitting `red' for simplicity) 
\begin{align}\label{dia:chow2}
\xymatrix{
\mM_{\sigma}(v)|_{U} \ar@<-0.3ex>@{^{(}->}[r]  \ar[d]_{p_M} &
\mM_X(\beta)|_{U} \ar[d]^{\pi_{\mM}} \\
M_{\sigma}(v)|_{U} \ar[r]_{\pi_M} & U.
}
\end{align}
By the CY condition of $\mM_{\sigma}(v)$, 
there is an orientation data 
of $\mM_{\sigma}(v)|_{U}$ satisfying
\begin{align}\label{isom:half}
(K_{\mM_{\sigma}(v)|_{U}}^{\rm{vir}})^{1/2}
\cong \oO_{\mM_{\sigma}(v)|_{U}}
\end{align}
as line bundles on the stack $\mM_{\sigma}(v)|_{U}$. 
Such an orientation data is called 
a \textit{Calabi-Yau (CY) orientation data}. 
Given a CY orientation data of $\mM_{\sigma}(v)|_{U}$, 
by Theorem~\ref{thm:MR3353002} (ii) 
we have the 
associated perverse sheaf of vanishing cycles 
\begin{align}\label{per:MU}
\phi_{\mM_{\sigma}(v)|_{U}} \in \Perv(\mM_{\sigma}(v)|_{U}). 
\end{align}
\subsection{Definition of generalized GV invariants}
We keep the situation and notation in the previous 
subsection. 
Using the perverse sheaf (\ref{per:MU}), we 
give the following definition:
\begin{defi}\label{def:def:phiM}
For $v=(\beta, m) \in \Gamma_X$
and $\sigma \in U(X)$, 
suppose that the $d$-critical 
stack $\mM_{\sigma}(v)$
is CY at $\gamma \in \Chow_X(\beta)$. 
Then
we define the perverse sheaf
$\phi_{M_{\sigma}(v)|_{U}}$ on $M_{\sigma}(v)|_{U}$ 
in the diagram (\ref{dia:chow2}) by 
\begin{align}\label{def:phiM}
\phi_{M_{\sigma}(v)|_{U}}\cneq 
\pH^1(\dR p_{M\ast} \phi_{\mM_{\sigma}(v)|_{U}})
\in \Perv(M_{\sigma}(v)|_{U}). 
\end{align}
\end{defi}
As we will see in the next subsection, 
the perverse 
sheaf (\ref{dia:chow2}) 
is an analogue of BPS sheaves introduced in~\cite{DaMe}
for representations of quivers with super-potentials. 
The following lemma implies that (\ref{dia:chow2})
is a generalization of the 
perverse sheaf in Theorem~\ref{thm:MR3353002} (i), and explains 
the reason of taking the first perverse cohomology 
(which is due to 
the shift convention of perverse sheaves by $\dim B\mathbb{C}^{\ast}=-1$): 
\begin{lem}\label{rmk:gerbe}
Suppose that $\mM_{\sigma}(v)|_{U}$ is a trivial 
$\mathbb{C}^{\ast}$-gerbe over $M_{\sigma}(v)|_{U}$, i.e. 
$\mM_{\sigma}(v)|_{U}=M_{\sigma}(v)|_{U} \times B\mathbb{C}^{\ast}$, 
e.g. the case of $\sigma=i\omega$ and
$v=(\beta, 1)$. 
In this case, $M_{\sigma}(v)|_{U}$ itself is a $d$-critical scheme, 
so we have the associated perverse sheaf
$\phi_{M_{\sigma}(v)|_{U}}'$ on $M_{\sigma}(v)|_{U}$
defined in Theorem~\ref{thm:MR3353002} (i)
using a CY orientation data. 
In this case, we have
\begin{align}\label{id:phi}
\phi_{M_{\sigma}(v)|_{U}}=\phi_{M_{\sigma}(v)|_{U}}'.
\end{align}
\end{lem}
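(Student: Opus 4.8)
The plan is to exploit the product structure: under the hypothesis we have $\mM_{\sigma}(v)|_{U}=M_{\sigma}(v)|_{U}\times B\mathbb{C}^{\ast}$, so writing $\mM\cneq\mM_{\sigma}(v)|_{U}$ and $M\cneq M_{\sigma}(v)|_{U}$ the morphism $p_M\colon\mM\to M$ is the projection, and the identity (\ref{id:phi}) should follow by reducing everything to the (explicit) cohomology of the classifying stack $B\mathbb{C}^{\ast}$. Throughout I use the normalization of the perverse $t$-structure with the shift by dimension, so $\dim B\mathbb{C}^{\ast}=-1$, $\IC(B\mathbb{C}^{\ast})=\mathbb{Q}_{B\mathbb{C}^{\ast}}[-1]$, and $t^{\ast}[d_t]$ is perverse $t$-exact for a smooth $1$-morphism $t$ of relative dimension $d_t$.

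I would first identify $\phi_{\mM}\cneq\phi_{\mM_{\sigma}(v)|_{U}}$ as an exterior product. Consider the smooth atlas $a=\id_M\times b\colon M\to\mM$, where $b\colon\Spec\mathbb{C}\to B\mathbb{C}^{\ast}$ is the universal $\mathbb{C}^{\ast}$-torsor; it has relative dimension $d_a=\dim\mathbb{C}^{\ast}=1$ and $\Omega_{M/\mM}\cong\oO_M\otimes\mathrm{Lie}(\mathbb{C}^{\ast})^{\vee}$, so $\det(\Omega_{M/\mM})\cong\oO_M$ canonically. By (\ref{nat:kstack}) the $d$-critical structure on $\mM$ restricts along $a$ to a $d$-critical scheme structure $s(M,a)$ on $M$ (this is the one used in the statement), with $K_{M,s(M,a)}\cong K_{\mM,s}(M^{\rm{red}},a^{\rm{red}})$; combined with (\ref{induce:orient}) and $\det(\Omega_{M/\mM})\cong\oO_M$, a CY orientation data of $\mM$ induces a CY orientation data of $(M,s(M,a))$. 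Hence Theorem~\ref{thm:MR3353002}~(ii), applied to $(T,t)=(M,a)$, gives $\phi_{\mM}(M,a)\cong\phi_{M_{\sigma}(v)|_{U}}'[-1]$. Since the gerbe is trivial, $\Perv(B\mathbb{C}^{\ast})$ is equivalent to the category of finite-dimensional $\mathbb{Q}$-vector spaces, so $P\mapsto a^{\ast}P[1]$ is an equivalence $\Perv(\mM)\simeq\Perv(M)$ with quasi-inverse $Q\mapsto p_M^{\ast}Q[-1]=Q\boxtimes\IC(B\mathbb{C}^{\ast})$ (using $p_M\circ a=\id_M$). Applying the quasi-inverse to $\phi_{M_{\sigma}(v)|_{U}}'=a^{\ast}\phi_{\mM}[1]$ yields
\begin{align*}
\phi_{\mM}\cong\phi_{M_{\sigma}(v)|_{U}}'\boxtimes\IC(B\mathbb{C}^{\ast}).
\end{align*}

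Then I would compute the pushforward. Writing $\phi'\cneq\phi_{M_{\sigma}(v)|_{U}}'$, the projection formula for $p_M$ together with $H^{\ast}(B\mathbb{C}^{\ast};\mathbb{Q})=\mathbb{Q}[u]$, $\deg u=2$, gives
\begin{align*}
\dR p_{M\ast}\phi_{\mM}\cong\phi'\otimes\dR\Gamma(B\mathbb{C}^{\ast},\IC(B\mathbb{C}^{\ast}))\cong\phi'\otimes\Bigl(\bigoplus_{i\ge0}\mathbb{Q}[-2i-1]\Bigr)\cong\bigoplus_{i\ge0}\phi'[-2i-1].
\end{align*}
Since $\phi'$ is perverse, the summand $\phi'[-2i-1]$ is concentrated in perverse degree $2i+1$, so taking the first perverse cohomology selects the $i=0$ term: $\pH^1(\dR p_{M\ast}\phi_{\mM})\cong\phi'$. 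By the definition (\ref{def:phiM}) this is exactly (\ref{id:phi}); it also accounts for the appearance of $\pH^1$ (rather than $\pH^0$), which is forced by the convention $\dim B\mathbb{C}^{\ast}=-1$.

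The main obstacle will be the bookkeeping in identifying $\phi_{\mM}$: verifying that the intrinsic $d$-critical scheme structure together with its CY orientation data used to define $\phi_{M_{\sigma}(v)|_{U}}'$ coincides with the data inherited from the atlas $a$ — this is where (\ref{nat:kstack}), (\ref{induce:orient}) and the triviality of $\det(\Omega_{M/\mM})$ do the real work — and that $\phi_{\mM}$ is genuinely the ``constant'' perverse extension of $\phi_{M_{\sigma}(v)|_{U}}'[-1]$ across the trivial gerbe. Once these identifications are in place, the pushforward computation is the standard $\mathbb{C}^{\ast}$-equivariant cohomology calculation and presents no difficulty.
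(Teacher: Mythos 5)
Your argument is correct and follows essentially the same route as the paper: identify $\phi_{\mM_{\sigma}(v)|_{U}}$ with $\phi'_{M_{\sigma}(v)|_{U}}[-1]$ viewed on the trivial gerbe, then push forward using $H^{\ast}(B\mathbb{C}^{\ast};\mathbb{Q})=\mathbb{Q}[t]$ and read off the first perverse cohomology. You just fill in more of the bookkeeping (the atlas $a=\id_M\times b$, the matching of $d$-critical structures and orientation data via (\ref{nat:kstack}) and (\ref{induce:orient}), and the projection formula) that the paper leaves implicit.
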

\begin{proof}
 The perverse sheaf $\phi_{\mM_{\sigma}(v)|_{U}}$
regarded as a $\mathbb{C}^{\ast}$-equivariant 
perverse sheaf on 
$M_{\sigma}(v)|_{U}$ coincides with 
$\phi_{M_{\sigma}(v)|_{U}}'[-1]$, 
because $\dim B \mathbb{C}^{\ast}=-1$. 
Since $H^{\ast}(B\mathbb{C}^{\ast}, \mathbb{Q})=\mathbb{Q}[t]$
where $t$ is of degree two, we have 
\begin{align*}
\dR p_{M\ast}\phi_{\mM_{\sigma}(v)|_{U}} \cong 
\phi_{M_{\sigma}(v)|_{U}}'[-1] \otimes \mathbb{Q}[t].
\end{align*}
By taking the first perverse cohomologies, 
we obtain (\ref{id:phi}). 
\end{proof}

We then introduce our invariant 
$\Phi_{\sigma}(\gamma, m)$ as follows (see the diagram (\ref{dia:chow2})): 
\begin{defi}\label{def:def:phiM2}
In the situation of Definition~\ref{def:def:phiM}, 
we define the following Laurent polynomial in $y$
\begin{align}\label{def:Phi:sigma}
\Phi_{\sigma}(\gamma, m) \cneq 
\sum_{i \in \mathbb{Z}}
\chi(\pH^i(\dR \pi_{M\ast}\phi_{M_{\sigma}(v)|_{U}})|_{\gamma})y^i
\in \mathbb{Z}[y^{\pm 1}].
\end{align}
\end{defi}

We have the following lemma:
\begin{lem}\label{lem:inde}
The Laurent polynomial 
$\Phi_{\sigma}(\gamma, m)$ is 
independent of a choice of 
a CY orientation data
of $\mM_{\sigma}(v)|_{U}$. 
\end{lem}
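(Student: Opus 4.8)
The plan is to show that $\phi_{M_{\sigma}(v)|_{U}}$ depends on the chosen CY orientation data only up to a twist by a rank-one local system that becomes trivial near $\pi_{M}^{-1}(\gamma)$, which is all that enters $\Phi_{\sigma}(\gamma,m)$. To this end I would first record the dependence on the orientation: by the gluing construction of Theorem~\ref{thm:MR3353002} (see~\cite{MR3353002}), replacing the square-root line bundle of an oriented $d$-critical stack by another one twists the resulting perverse sheaf by the rank-one local system attached to the $\mathbb{Z}/2$-torsor comparing the two square roots. Let $o_{1}$, $o_{2}$ be two CY orientation data of $\mM_{\sigma}(v)|_{U}$ and let $\rho$ be the $\mathbb{Z}/2$-torsor comparing them. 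The crucial point is that, since both square-root line bundles are \emph{trivial as line bundles} by the CY condition~(\ref{isom:half}), the $\oO$-line bundle associated with $\rho$ is trivial; equivalently $\rho$ lies in the kernel of $H^{1}(\mM_{\sigma}(v)|_{U},\mathbb{Z}/2)\to\Pic(\mM_{\sigma}(v)|_{U})$. Writing $\phi^{1}$, $\phi^{2}$ for the two perverse sheaves and $\mathcal{L}_{\rho}$ for the rank-one local system of $\rho$, we have $\phi^{2}\cong\phi^{1}\otimes\mathcal{L}_{\rho}$.

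Next I would descend the twist to the coarse moduli space. The fibres of $p_{M}$ are of the form $BG$ with $G$ a connected group (it is $B\mathbb{C}^{\ast}$ over the stable locus and $B\prod_{i}\GL_{m_{i}}$ in general), so $H^{0}(BG,\mathbb{Z}/2)=\mathbb{Z}/2$ and $H^{1}(BG,\mathbb{Z}/2)=0$; hence $p_{M}^{\ast}$ is a bijection on $\mathbb{Z}/2$-torsors and $\rho=p_{M}^{\ast}\rho'$ for a $\mathbb{Z}/2$-torsor $\rho'$ on $M_{\sigma}(v)|_{U}$ whose associated $\oO$-line bundle is again trivial (apply $p_{M\ast}$ and $p_{M\ast}\oO_{\mM_{\sigma}(v)|_{U}}=\oO_{M_{\sigma}(v)|_{U}}$). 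By the projection formula and $t$-exactness of tensoring by a rank-one local system, $\phi_{M_{\sigma}(v)|_{U}}=\pH^{1}(\dR p_{M\ast}\phi^{1})$ is changed, on passing from $o_{1}$ to $o_{2}$, exactly to $\phi_{M_{\sigma}(v)|_{U}}\otimes\mathcal{L}_{\rho'}$.

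Finally I would localise at $\gamma$. Because $\dR\pi_{M\ast}$ and the perverse truncation functors commute with restriction to open subsets of $\Chow_X(\beta)$, the Laurent polynomial $\Phi_{\sigma}(\gamma,m)$ depends only on $\phi_{M_{\sigma}(v)|_{U}}|_{\pi_{M}^{-1}(U')}$ for $U'$ an arbitrarily small neighbourhood of $\gamma$. Restricted to the proper $\mathbb{C}$-scheme $\pi_{M}^{-1}(\gamma)$, the $\mathbb{Z}/2$-torsor $\rho'$ has trivial associated line bundle, hence is trivial by the Kummer sequence (squaring is surjective on $H^{0}(\pi_{M}^{-1}(\gamma),\oO^{\ast})$). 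Using the local conical structure of the proper morphism $\pi_{M}$ near $\gamma$ one shrinks $U$ so that $\pi_{M}^{-1}(\gamma)\hookrightarrow\pi_{M}^{-1}(U)$ is a homotopy equivalence; then $\mathcal{L}_{\rho'}$ is trivial on $\pi_{M}^{-1}(U)$, so $\phi_{M_{\sigma}(v)|_{U}}\otimes\mathcal{L}_{\rho'}\cong\phi_{M_{\sigma}(v)|_{U}}$ there and $\Phi_{\sigma}(\gamma,m)$ is unchanged.

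The step I expect to be the main obstacle is this last localisation, namely upgrading ``the twisting torsor is trivial on the fibre $\pi_{M}^{-1}(\gamma)$'' to ``it is trivial on $\pi_{M}^{-1}(U)$ for $U$ small'': this uses the local topological triviality of the proper Hilbert--Chow morphism at $\gamma$, which is morally where the CY hypothesis does its work, guaranteeing that no genuine orientation ambiguity survives over a small base. The descent through $p_{M}$ in the second step is standard but warrants care since $\mM_{\sigma}(v)$ is non-separated.
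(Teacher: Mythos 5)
Your proposal correctly identifies the two mechanisms at work (change of orientation twists the vanishing-cycle sheaf by the rank-one local system of the $\mathbb{Z}/2$-torsor comparing the square roots; only data near $\gamma$ enter $\Phi_{\sigma}$), but the route you take past that point is both harder than necessary and has genuine gaps, and it is not the paper's argument.

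The paper's proof exploits a cleaner structural fact at the very first step. After fixing a trivialization as in (\ref{isom:half}), a CY orientation data is \emph{by definition} an automorphism of the trivial line bundle, i.e.\ an invertible global function $\vartheta\in H^{0}(\oO_{\mM_{\sigma}(v)|_{U}})$. The ratio $\psi=\vartheta^{(1)}\circ(\vartheta^{(2)})^{-1}$ of two such is again a unit. Since $p_{M\ast}\oO_{\mM}=\oO_{M}$ and, writing $\pi_{M}=\pi_{B}\circ\overline{\pi}_{M}$ for the Stein factorization, $\overline{\pi}_{M\ast}\oO_{M}=\oO_{B}$, the unit $\psi$ descends all the way to a unit $\overline{\psi}\in H^{0}(\oO_{B})$. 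The comparison torsor and its local system are then obtained as pullbacks from the square-root $\mu_{2}$-torsor of $\overline{\psi}$ over $B$; there is no torsor descent step to justify. Now the crucial finishing move: $\pi_{B}$ is finite, so $\pi_{B\ast}$ is perverse-$t$-exact, hence $\pH^{i}(\dR\pi_{M\ast}\phi^{(2)})=\pi_{B\ast}\bigl(\pH^{i}(\dR\overline{\pi}_{M\ast}\phi^{(1)})\otimes\lL_{B}\bigr)$, and restricting to $\gamma$ and taking Euler characteristic, the twist by the rank-one local system $\lL_{B}$ simply drops out because Euler characteristics of constructible complexes are insensitive to rank-one twists. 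No trivialization of the torsor is needed, anywhere.

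Your proof, by contrast, tries to show the twisting torsor is \emph{actually trivial} on $\pi_{M}^{-1}(U')$, which is a stronger and unnecessary claim, and the two places you rely on are where the gaps sit. First, the descent of $\rho$ through $p_{M}$: the fibres of $p_{M}$ over strictly polystable points are not of the form $BG$ for a connected group $G$; they are quotient stacks $[Z/G]$ with $Z$ a (typically singular, possibly topologically nontrivial) variety, so the assertion that $H^{1}(\text{fibre},\mathbb{Z}/2)=0$ and hence that $p_{M}^{\ast}$ is a bijection on $\mathbb{Z}/2$-torsors is not justified. (This is precisely the issue the paper sidesteps by descending the \emph{function} $\psi$, for which $p_{M\ast}\oO=\oO$ suffices.) Second, you yourself flag the extension of triviality from $\pi_{M}^{-1}(\gamma)$ to $\pi_{M}^{-1}(U')$ as the main obstacle; the local-conical-structure argument you sketch also implicitly assumes $\pi_{M}^{-1}(\gamma)$ is connected for the Kummer-sequence step, and neither this nor the homotopy equivalence is established. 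The paper's Euler-characteristic argument renders the entire third step moot, which is a good indication that trying to trivialize the torsor is the wrong move here.
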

\begin{proof}
The proof is similar to~\cite[Lemma~2.7]{MT}. 
By fixing an isomorphism (\ref{isom:half}), 
a CY orientation data of 
$\mM_{\sigma}(v)|_{U}$ is regarded as an isomorphism
\begin{align}\label{isom:theta}
\vartheta \colon 
\oO_{\mM_{\sigma}(v)|_{U}} \otimes_{\oO_{\mM_{\sigma}(v)|_{U}}} \oO_{\mM_{\sigma}(v)|_{U}} \stackrel{\cong}{\to}
\oO_{\mM_{\sigma}(v)|_{U}}. 
\end{align}
We have the natural isomorphism
\begin{align*}
\oO_{\mM_{\sigma}(v)|_{U}} \otimes_{\oO_{\mM_{\sigma}(v)|_{U}}} \oO_{\mM_{\sigma}(v)|_{U}} \stackrel{\cong}{\to}
\oO_{\mM_{\sigma}(v)|_{U}}, \ x \otimes y \mapsto xy. 
\end{align*}
Therefore an isomorphism (\ref{isom:theta})
is regarded as an invertible element
$\vartheta \in H^0(\oO_{\mM_{\sigma}(v)|_{U}})$. 

Let us take two invertible elements 
$\vartheta^{(1)}, \vartheta^{(2)} \in H^0(\oO_{\mM_{\sigma}(v)|_{U}})$, 
and 
\begin{align*}
\phi_{\mM_{\sigma}(v)|_{U}}^{(i)} \in \Perv(\mM_{\sigma}(v)|_{U}), \ 
\phi_{M_{\sigma}(v)|_{U}}^{(i)} \in \Perv(M_{\sigma}(v)|_{U})
\end{align*}
the associated perverse sheaves (\ref{per:MU}), (\ref{def:phiM})
w.r.t. the orientation data $\vartheta^{(i)}$
respectively. 
We consider the following commutative diagram 
\begin{align*}
\xymatrix{
\mM_{\sigma}(v)|_{U}
 \ar[r]^{p_M}\ar[dr]_{\overline{\pi}_{\mM}} & 
M_{\sigma}(v)|_{U} \ar[d]^{\overline{\pi}_M}
\ar[dr]^{\pi_M} & \\
&  B \ar[r]_{\pi_B}  &  U. 
}
\end{align*}
Here
$\overline{\pi}_M$ is the Stein factorization of 
$\pi_M$ and 
 $\pi_{\mM}=\pi_M \circ p_M=\pi_B \circ \overline{\pi}_M$.
 Since we have 
\begin{align*}
p_{M\ast}\oO_{\mM_{\sigma}(v)|_{U}}=\oO_{M_{\sigma}(v)|_{U}}, \ 
\overline{\pi}_{M\ast}\oO_{M_{\sigma}(v)|_{U}}=\oO_{B}, 
\end{align*}
we have
$\overline{\pi}_{\mM\ast}\oO_{\mM_{\sigma, U}}(v)=\oO_{B}$.
Therefore the element
\begin{align*}
\psi \cneq \vartheta^{(1)} \circ (\vartheta^{(2)})^{-1}
\in H^0(\oO_{\mM_{\sigma}(v)|_{U}})
\end{align*}
is written as 
$\psi=\overline{\pi}_{\mM}^{\ast}\overline{\psi}$ for 
some invertible 
element $\overline{\psi} \in H^0(\oO_B)$. 
Let 
\begin{align*}
\iota \colon \widetilde{U} \to B
\end{align*}
 be the 
$\mu_2$-torsor given by local square roots of $\overline{\psi}$, 
and $\lL_B$ a rank one local system on $B$
given by 
$\iota_{\ast}\mathbb{Q}_{\widetilde{U}}
=\mathbb{Q}_{B} \oplus \lL_B$. 
Then by (\ref{isom:IC}), we have 
\begin{align*}
\phi_{\mM_{\sigma}(v)|_{U}}^{(2)}=
\phi_{\mM_{\sigma}(v)|_{U}}^{(1)}
 \otimes \overline{\pi}_{\mM}^{\ast}\lL_B, \ 
\phi_{M_{\sigma}(v)|_{U}}^{(2)}=
\phi_{M_{\sigma}(v)|_{U}}^{(1)} \otimes \overline{\pi}_M^{\ast}\lL_B. 
\end{align*}
Since $\pi_B$ is a finite map, $\pi_{B\ast}$ preserves the 
perverse t-structures. 
It follows that
\begin{align*}
\pH^i(\dR \pi_{M\ast}\phi_{M_{\sigma}(v)|_{U}}^{(2)})=
\pH^i(\pi_{B\ast}(\dR \overline{\pi}_{M\ast} \phi_{M_{\sigma}(v)|_{U}}^{(1)}
\otimes \lL_B)). 
\end{align*}
Since $\lL_B$ is a rank one local system, 
we have
\begin{align*}
\chi(\pH^i(\pi_{B\ast}(\dR \overline{\pi}_{M\ast} \phi_{M_{\sigma}(v)|_{U}}^{(1)}\otimes \lL_B))|_{\gamma})=
\chi(\pH^i(\dR \pi_{M\ast}\phi_{M_{\sigma}(v)|_{U}}^{(1)}|_{\gamma}). 
\end{align*}
Therefore the lemma follows. 
\end{proof}

The following is the main conjecture we address in this paper. 
\begin{conj}\label{conj:main}
For $v=(\beta, m) \in \Gamma_X$ with 
$\beta \neq 0$ and $\sigma \in U(X)$, suppose that 
$\mM_{\sigma}(v)$ is CY at $\gamma \in \Chow_X(\beta)$. 
Then the Laurent polynomial $\Phi_{\sigma}(\gamma, m)$
is independent of $\sigma$ and $m$. 
\end{conj}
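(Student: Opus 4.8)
The plan is to prove the two assertions in turn — independence of the stability parameter $\sigma$, and then independence of the Euler characteristic $m$ — using the first as an ingredient for the second, and to flag where the argument for general $m$ must currently break down.

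For the \emph{independence of $\sigma$} I would argue as in Theorem~\ref{intro:thm1}. Assume, as there, that the bigger stack $\mM_X(\beta)$ is CY at $\gamma$, so that over a small analytic neighborhood $\gamma\in U\subset\Chow_X(\beta)$ there is a CY orientation data on $\mM_X(\beta)|_U$ restricting to one on every $\mM_\sigma(v)|_U$; by Lemma~\ref{lem:inde} the resulting $\Phi_\sigma(\gamma,m)$ is insensitive to that choice. The manifold $U(X)$ is cut into chambers by a locally finite wall-and-chamber structure and is connected, so it suffices to compare $\Phi_\sigma$ across a single wall. The crucial input is the companion paper~\cite{Todstack}: analytic-locally on $M_\sigma(v)$ and on $U$, the stack $\mM_\sigma(v)$ is isomorphic, compatibly with $d$-critical structures, to the moduli stack of representations of the Ext-quiver $Q$ of the relevant polystable sheaf equipped with the convergent super-potential $W$ coming from the minimal $A_\infty$-structure on $D^b(\Coh(X))$. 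Under this dictionary, and using the CY orientation data, the perverse sheaf $\phi_{M_\sigma(v)}$ of Definition~\ref{def:def:phiM} is identified with the BPS sheaf of Davison--Meinhardt~\cite{DaMe} for $(Q,W)$, the Hilbert--Chow map $\pi_M$ with their map to the coarse dimension-vector space, and wall-crossing in $U(X)$ with variation of the stability parameter for $(Q,W)$. One then quotes the results of~\cite{DaMe} that the BPS sheaf — hence its pushforward along the ($\sigma$-independent) Hilbert--Chow map — is unchanged by such wall-crossing. Gluing these local statements over $U$ and taking fibrewise Euler characteristics of perverse cohomology gives that $\Phi_\sigma(\gamma,m)$ is independent of $\sigma$; write it $\Phi_X(\gamma,m)$.

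For the \emph{independence of $m$} I would first dispatch the primitive case, $\gamma=\sum a_i[C_i]$ with $\gcd(a_i)=1$, as in Theorem~\ref{intro:thm2}: use the $\sigma$-independence just proved to move to a generic $\sigma$, where semistability equals stability on a neighborhood of $\gamma$ so that $\phi_{M_\sigma(v)}$ is a genuine vanishing-cycle sheaf (Lemma~\ref{rmk:gerbe}), and produce a geometric identification of the relevant local moduli for $m$ and $m+1$ — via twisting by a line bundle combined with $\sigma$-invariance, supplemented where necessary by wall-crossing in $\Stab_{\le1}(X)$ against a point sheaf of class $(0,1)$ — compatible with $\pi_M$, the $d$-critical structure, and the CY orientation data. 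This yields $\Phi_X(\gamma,m)=\Phi_X(\gamma,m+1)$ for primitive $\gamma$.

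The hard part is the \emph{general case of $m$-independence}, which is precisely why Conjecture~\ref{conj:main} stays a conjecture. When $\gamma$ is not primitive there is no line-bundle twist, Fourier--Mukai equivalence, or master space relating $M_\sigma(\beta,m)$ and $M_\sigma(\beta,m')$, and the moduli spaces genuinely differ; a point-sheaf wall-crossing shifts the holomorphic Euler characteristic but also alters the curve decomposition in a way one cannot cancel. An honest proof would need a cohomological wall-crossing formula valid across all of $\Stab_{\le1}(X)$ — pushing $\sigma$ out of the geometric region $U(X)$ into chambers where $(\beta,m)$ decomposes so as to be matched with $(\beta,m')$ — together with the integrality/BPS structure of~\cite{DaMe} in families over $\Chow_X(\beta)$; equivalently, a direct proof that the BPS sheaves of $\mM_\sigma(\beta,m)$ and $\mM_\sigma(\beta,m')$ have the same pushforward to $\Chow_X(\beta)$. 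I expect this to be the genuine obstacle: the techniques here settle the conjecture exactly in the two regimes — $\sigma$ varying with $m$ fixed, and $\gamma$ primitive — where a geometric identification of the moduli spaces is available.
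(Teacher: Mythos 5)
The statement you are asked about is stated in the paper as a \emph{conjecture}, not a theorem, and the paper does not prove it in full. Your write-up correctly recognizes this: you prove $\sigma$-independence (the paper's Theorem~\ref{thm:inde}) under the strengthened CY hypothesis on the bigger stack $\mM_X(\beta)$, then deduce $m$-independence for primitive $\gamma$ (the paper's Theorem~\ref{thm:indeE}), and then honestly identify the general $m$-case as the open content of the conjecture. Your routes to the two provable theorems essentially coincide with the paper's.

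A few points of calibration. For $\sigma$-independence, you sketch the reduction to Ext-quivers with convergent super-potential via~\cite{Todstack} and then cite the Davison--Meinhardt results; the paper's actual proof carries this out by producing a canonical map $\phi_{M_{\sigma}(v)|_{U}}\to \dR q_{M\ast}\phi_{M_{\sigma^{+}}(v)|_{U}}$ whose isomorphy is checked \emph{analytic-locally} after identification with the quiver picture, relying on Lemmas~\ref{lem:phivanish} and~\ref{lem:cmor:phi} (which in turn use semismallness of $q_Q^\xi$). Your sketch is compatible with this but omits that one needs the perversity and isomorphy claims to hold \emph{locally on $M_\sigma(v)$}, not just over $U$. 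You should also note explicitly that the conjecture as stated assumes only that $\mM_\sigma(v)$ is CY at $\gamma$, whereas the paper's $\sigma$-independence theorem requires the stronger CY hypothesis on $\mM_X(\beta)$ — the paper flags this distinction in Remark~\ref{rmk:weaker}, and you silently replace the conjecture's hypothesis with the stronger one, which is fine for a proof attempt but should be stated. For the primitive-$\gamma$ case, the paper's mechanism is exactly the local line-bundle twist $F\mapsto F(D)$ with $D\cdot\gamma=1$ combined with $\sigma$-independence; your extra hedge about ``wall-crossing against a point sheaf of class $(0,1)$'' is not used and is not needed — the twist alone suffices, and invoking a stability wall at which one destabilizes against $\oO_x$ would be a genuinely different (and unsubstantiated) argument. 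Finally, your observation that there is no geometric identification of moduli for non-primitive $\gamma$ and varying $m$ is the correct diagnosis of why the statement remains conjectural.
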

By Lemma~\ref{rmk:gerbe}, 
the local GV invariants $n_{g, \gamma}$ defined in~\cite{MT} is characterized 
by the identity
\begin{align}\label{id:GV1}
\Phi_{\sigma=i\omega}(\gamma, 1)=\sum_{g\ge 0} n_{g, \gamma}(y^{1/2}+y^{-1/2})^{2g}.  
\end{align}
Suppose that Conjecture~\ref{conj:main} holds. 
Then $\Phi_{\sigma}(\gamma, m)=\Phi_{\sigma=i\omega}(\gamma, 1)$, so 
if we define $n_{g, \gamma, m}(\sigma) \in \mathbb{Z}$ by the identity
\begin{align}\label{id:GV2}
\Phi_{\sigma}(\gamma, m)=\sum_{g\ge 0} n_{g, \gamma, m}(\sigma)(y^{1/2}+y^{-1/2})^{2g}
\end{align}
then $n_{g, \gamma, m}(\sigma)$ is independent of 
$(\sigma, m)$. 

\begin{rmk}
In the case of $\sigma=i\omega$ and $m=1$, 
the perverse sheaf $\phi_{M_{\sigma}(v)|_{U}}'$ in 
Lemma~\ref{rmk:gerbe} is known to be self dual, so 
$\Phi_{\sigma=i\omega}(\gamma, 1)$ is 
invariant under $y \mapsto 1/y$. 
However in other cases, 
it is not clear whether
$\Phi_{\sigma}(\gamma, m)$ is invariant under
$y \mapsto 1/y$.
So a priori, 
 we cannot define $n_{g, \gamma, m}(\sigma)$
as in (\ref{id:GV2})
without assuming Conjecture~\ref{conj:main}.
\end{rmk}

\begin{rmk}\label{rmk:weaker}
In Definition~\ref{def:def:phiM2}, it was enough 
to assume that the stack $\mM_{\sigma}(v)$ is CY at $\gamma$
to define the invariant $\Phi_{\sigma}(\gamma, m)$.  
The CY condition of the bigger stack 
$\mM_X(\beta)$ in Conjecture~\ref{conj:vir} will play a role 
in showing the independence of $\sigma$ of the invariant
 (\ref{def:Phi:sigma})
(see the proof of Theorem~\ref{thm:inde}). 
\end{rmk}

\subsection{Relation to BPS sheaves}\label{subsec:BPSsheaf}
The definition of the perverse sheaf (\ref{def:phiM})
was suggested to the author by Ben Davison, 
as it is an analogue 
of BPS sheaves 
introduced in~\cite{DaMe}
in the case of representations of quivers with super-potentials.
Here we explain its relationship. 
Let $Q$ be a symmetric quiver (see Definition~\ref{def:symmetric} below)
with super-potential $W$. 
Let $\mM_{Q}$, 
$\mM_{(Q, \partial W)}$ be the moduli stack of finite dimensional 
$Q$-representations, 
$(Q, W)$-representations
respectively,  
and $M_{Q}$, $M_{(Q, \partial W)}$ their coarse moduli spaces. 
We have the commutative diagram
\begin{align*}
\xymatrix{
\mM_{(Q, \partial W)} \ar[d]_-{p_{(Q, \partial W)}}
\ar@<-0.3ex>@{^{(}->}[r] & \mM_{Q} \ar[d]_-{p_Q} \ar[rd]^-{\tr W} &  \\
M_{(Q, \partial W)}
\ar@<-0.3ex>@{^{(}->}[r] & M_Q \ar[r]_-{\tr \overline{W}} & \mathbb{C}.
}
\end{align*}
Here the horizontal arrows are closed immersions, 
the vertical arrows are natural morphisms to the coarse 
moduli spaces. 
The function 
$\tr W$ is defined from the super-potential $W$ 
as in Subsection~\ref{subsec:conv} below, 
and $\tr \overline{W}$ is the induced function 
by $p_{Q\ast}\oO_{\mM_Q}=\oO_{M_Q}$. 
We have 
\begin{align*}
\mM_{(Q, \partial W)}=\{d (\tr W)=0\}.
\end{align*}
Since $\mM_Q$ is a smooth stack, 
the stack $\mM_{(Q, \partial W)}$ has a $d$-critical 
structure whose virtual canonical line bundle is
trivial by Proposition~\ref{prop:vir:tri} below. 
Using a CY orientation data, we 
obtain the associated perverse sheaf on the stack 
$\mM_{(Q, \partial W)}$
\begin{align*}
\phi_{\mM_{(Q, \partial W)}} \in \Perv(\mM_{(Q, \partial W)}). 
\end{align*}
Then by~\cite[Theorem~4.7]{DaMe}, we have
\begin{align}\label{id:BPS}
\pH^1(\dR p_{(Q, \partial W)\ast}\phi_{\mM_{(Q, \partial W)}})=
\phi_{\tr \overline{W}}(j_{!\ast}^s\IC_{M_Q^s}). 
\end{align}
Here $j^s \colon M_Q^s \subset M_Q$ is the 
open immersion of the simple 
part.
The RHS of (\ref{id:BPS})
is supported on $M_{(Q, \partial W)}$, and defined 
to be the BPS sheaf in~\cite{DaMe}. 
The identity (\ref{id:BPS})
explains that the perverse sheaf (\ref{def:phiM})
is an analogue of the BPS sheaf introduced in~\cite{DaMe}. 

In Remark~\ref{rmk:BPS}, 
we will return to this point of view 
and see that (under some matching of $d$-critical structures) 
the perverse sheaf (\ref{def:phiM}) is analytic locally the BPS 
sheaf for the representations of a quiver 
with a (formal but convergent) super-potential. 

\section{Examples}\label{sec:exam}
In this section, we compute the invariants 
$\Phi_{\sigma}(\gamma, m)$ in some cases. 
\subsection{The case of zero-cycles}
In this subsection, we compute 
$\Phi_{\sigma}(\gamma, m)$ when $\gamma$ is a zero cycle, i.e.
$\gamma \in S^m(X)$. 
For a (not necessary projective) CY 3-fold $X$, 
let $\mM_0(m)$ be the moduli stack of 
zero dimensional coherent sheaves on $X$ with length $m$. 
Since any zero dimensional sheaf has a compact support, 
the stack $\mM_0(m)$ has a $d$-critical structure\footnote{Although $\mM_0(m)$ is non-compact, 
its derived enhancement admits a $(-1)$-shifted symplectic structure~\cite{Preygel} and the associated 
$d$-critical 
structure by~\cite[Theorem~1.2]{BBBJ}.}
with the natural morphism to the coarse moduli space
\begin{align}\label{HC:zero}
p \colon \mM_0^{\rm{red}}(m) \to S^m(X). 
\end{align}
If $X$ is furthermore projective, 
$\mM_0(m)=\mM_{\sigma}(v)$
for $v=(0, m)$
and any $\sigma \in U(X)$. 

We first show the CY condition of $\mM_0(m)$
as in Definition~\ref{def:vir}. 
Indeed in this case, we have the following
global CY property:
\begin{prop}\label{lem:M0vir}
For any CY 3-fold $X$, the 
virtual canonical line bundle 
$K_{\mM_0(m)}^{\rm{vir}}$ of the $d$-critical stack 
$\mM_0(m)$ is trivial, i.e. 
$K_{\mM_0(m)}^{\rm{vir}} \cong \oO_{\mM_0^{\rm{red}}(m)}$. 
\end{prop}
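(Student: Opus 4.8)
The plan is to show that the virtual canonical line bundle $K_{\mM_0(m)}^{\rm{vir}} = \det \dR\hH om_{pr_{\mM}}(\eE, \eE)$ is canonically trivialized using the Calabi-Yau structure of $X$ and Serre duality on the fibers. The key point is that for a compactly supported perfect complex $\eE$ on $X \times \mM_0(m)$, relative Serre duality for $pr_{\mM}$ together with $K_X \cong \oO_X$ gives an isomorphism
\begin{align*}
\dR\hH om_{pr_{\mM}}(\eE, \eE) \cong \dR\hH om_{pr_{\mM}}(\eE, \eE)^{\vee}[-3].
\end{align*}
Taking determinants of both sides, and using that $\det(F^\vee) \cong (\det F)^{\vee}$ and that the shift by $[-3]$ (an odd shift) inverts the determinant, one finds $K_{\mM_0(m)}^{\rm{vir}} \cong (K_{\mM_0(m)}^{\rm{vir}})^{\vee}$, i.e. $(K_{\mM_0(m)}^{\rm{vir}})^{\otimes 2} \cong \oO$. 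This alone only gives a square-root triviality, so the real work is to promote this to an actual (canonical) trivialization.

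First I would set up the universal family: since every length-$m$ sheaf is zero-dimensional, the universal object $\eE$ is supported on $Z \times \mM_0(m)$ for a fixed compact $Z \subset X$ (or one works locally on $\mM_0(m)$ with such a $Z$), so $pr_{\mM}$ restricted to the support is proper and $\dR\hH om_{pr_{\mM}}(\eE, \eE)$ is a perfect complex. Next I would invoke relative Serre duality in the form $\dR pr_{\mM \ast}\dR\hH om(\eE, \eE) \cong \dR pr_{\mM\ast}\dR\hH om(\eE, \eE \otimes \omega_{pr_{\mM}})^{\vee}[-3]$, and then use $\omega_{pr_{\mM}} = pr_X^{\ast}K_X \cong \oO$ to simplify the right-hand side. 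Rather than just taking determinants, the cleaner route is to observe that the trace map $\dR\hH om(\eE, \eE) \to \oO_{\mM}$ and its dual (the unit, built from Serre duality) split off the "reduced" part, and the determinant of the full complex differs from that of $\dR\hH om(\eE, \eE)_0$ by a trivial factor; the self-duality then pins down $\det$ of the reduced part. To get an honest trivialization, I would argue that the self-duality isomorphism above is \emph{symmetric} (this is the standard fact underlying the $(-1)$-shifted symplectic / d-critical structure of Theorem~\ref{thm:CYdcrit}), so that locally one can extract a canonical square root, and the obstruction to a global square root vanishes because $\mM_0(m)$ carries a natural orientation coming from the CY structure.

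Concretely, the most robust argument is probably to reduce to a local model: analytic-locally on $\mM_0(m)$, the companion paper's description (or the quiver-with-potential description used later in the paper) presents $\mM_0(m)$ as a moduli stack of representations of a quiver with a convergent super-potential, for which Proposition~\ref{prop:vir:tri} (cited in Subsection~\ref{subsec:BPSsheaf}) already asserts that the virtual canonical bundle is trivial — with the trivialization coming from the fact that the ambient smooth stack $\mM_Q$ has canonical bundle whose square appears as $K^{\rm{vir}}$, and the potential's Hessian gives a nowhere-vanishing section. One then checks these local trivializations are compatible: the transition functions between two such quiver presentations are gauge equivalences, which act on $K_{\mM_Q}^{\otimes 2}$ compatibly with the potential, so the induced isomorphisms of $K^{\rm{vir}}$ with $\oO$ agree up to a coboundary. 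Since $\mM_0^{\rm{red}}(m)$ is what matters and the ambiguity is a $1$-cocycle valued in $\oO^{\ast}$ that is itself a coboundary, the local trivializations glue.

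The main obstacle I anticipate is precisely this gluing/compatibility step: self-duality of $K^{\rm{vir}}$ is formal, but producing a \emph{canonical} global trivialization (not merely a square root) requires knowing that the local trivializations from the quiver description — equivalently, from the local critical-chart structure — patch without a nontrivial $\mathbb{Z}/2$ or $\oO^\ast$ monodromy. I would handle this by tracking the trace/cotrace decomposition $\dR\hH om(\eE,\eE) = \oO_{\mM} \oplus \dR\hH om(\eE,\eE)_0$ and showing the Serre-duality pairing restricts to a perfect \emph{symmetric} pairing on the reduced summand, whose discriminant is the desired canonical section of $(K^{\rm{vir}})^{-1}$; symmetry is what guarantees the square root exists globally rather than only up to sign. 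This is morally the same computation that establishes the $(-1)$-shifted symplectic structure, so it should go through, but writing it carefully — keeping track of signs from the odd shift and the Koszul rule on determinants — is where the care is needed.
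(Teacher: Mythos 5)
Your reading of the difficulty is correct: Serre duality with $K_X\cong\oO_X$ only gives $(K_{\mM_0(m)}^{\rm{vir}})^{\otimes 2}\cong\oO$, and the content of the proposition is the honest trivialization. You also correctly identify that Proposition~\ref{prop:vir:tri} supplies a local trivialization via the quiver-with-potential presentation. The gap is in how you pass from local to global. You assert that ``the ambiguity is a $1$-cocycle valued in $\oO^{\ast}$ that is itself a coboundary, the local trivializations glue,'' but you give no argument for why that cocycle is a coboundary --- that statement is literally equivalent to the triviality of $K^{\rm{vir}}_{\mM_0(m)}$, so at this point the argument is circular. Likewise, invoking ``a natural orientation coming from the CY structure'' to kill the $\mathbb{Z}/2$ obstruction is circular, since a CY orientation in the sense of this paper \emph{is} a trivialization of a square root, and its existence is what the proposition is trying to establish. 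The proposed ``discriminant of a symmetric pairing'' also does not obviously yield a section: the cyclic pairing relates $\Ext^i$ to $\Ext^{3-i}$, not $\Ext^i$ to itself, so there is no intrinsic Gram determinant to write down.

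The paper's actual argument resolves the gluing problem differently and does not use the self-duality at all. It runs in two steps. First, it shows that the natural map $p^{\ast}p_{\ast}K^{\rm{vir}}_{\mM_0(m)}\to K^{\rm{vir}}_{\mM_0(m)}$ is an isomorphism of line bundles (so that $K^{\rm{vir}}$ descends along $p\colon\mM_0^{\rm{red}}(m)\to S^m(X)$); this is a local statement on $S^m(X)$ and is checked for $X=\mathbb{C}^3$ using the quiver model $(Q_{(3)},W_{(3)})$ and Proposition~\ref{prop:vir:tri}, which is the only place the quiver description enters. Second, to trivialize the descended bundle on $S^m(X)$ it pulls back along the map $r\colon [X^{\times m}/S_m]\to\mM_0^{\rm{red}}(m)$, $(x_1,\ldots,x_m)\mapsto\bigoplus_i\oO_{x_i}$, and computes $r^{\ast}K^{\rm{vir}}_{\mM_0(m)}=\bigotimes_{i,j}L_{ij}$ with $L_{ij}=\det\dR\hH om_{pr}(\oO_{\Delta_i},\oO_{\Delta_j})$; each $L_{ij}$ is a trivial line bundle and the induced $S_m$-equivariant structure on the tensor product is trivial. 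Pushing this forward along $q\colon[X^{\times m}/S_m]\to S^m(X)$ (which has $q_{\ast}\oO=\oO_{S^m(X)}$) gives $p_{\ast}K^{\rm{vir}}\cong\oO_{S^m(X)}$, and then Step~1 finishes the proof. The key move you are missing is this reduction to the locus of direct sums of structure sheaves of points, where a direct global computation of the determinant line bundle is possible; this sidesteps the patching problem entirely rather than trying to argue that an arbitrary analytic cocycle trivializes.
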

\begin{proof}
For the morphism (\ref{HC:zero}), 
we have the natural morphism
\begin{align}\label{nat:K}
p^{\ast}p_{\ast}K^{\rm{vir}}_{\mM_0(m)} \to K^{\rm{vir}}_{\mM_0(m)}. 
\end{align}
We show that 
$p_{\ast}K^{\rm{vir}}_{\mM_0(m)}$ is a line bundle on $S^m(X)$ and 
the morphism (\ref{nat:K}) is an isomorphism of line bundles on 
$\mM_0^{\rm{red}}(m)$. 
It is enough to show these claims analytic locally\footnote{
Since the virtual canonical line bundle is determined by a universal 
sheaf (see Theorem~\ref{thm:CYdcrit}), 	
it is enough to have analytic 
local isomorphism on $S^m(X)$, i.e. don't have to compare these d-critical structures.
} on $S^m(X)$, so 
we may assume that $X=\mathbb{C}^3$. 
In this case, the stack $\mM_0(m)$ is the moduli stack of 
$m$-dimensional representations of a quiver with a super-potential
\begin{align}\label{quiver:3}
(Q_{(3)}, W_{(3)})
\end{align}
where the quiver $Q_{(3)}$ has 
one vertex, three loops $A, B, C$, 
and the  
super-potential $W_{(3)}$ is given by 
$W_{(3)}=A[B, C]$. 
Therefore by Proposition~\ref{prop:vir:tri} below,  
we see that 
$K^{\rm{vir}}_{\mM_0(m)}$ is a trivial line bundle
when $X=\mathbb{C}^3$. 
Since $S^m(X)$ is the coarse moduli space of $\mM_0^{\rm{red}}(m)$, 
we have $p_{\ast}\oO_{\mM_0^{\rm{red}}(m)} \cong \oO_{S^m(X)}$, 
therefore the isomorphism of
 (\ref{nat:K}) follows when $X=\mathbb{C}^3$. 
Hence (\ref{nat:K}) is an isomorphism for any CY 3-fold $X$. 

Next we show that $K_{\mM_0(m)}^{\rm{vir}}$ is 
trivial for any CY 3-fold $X$. 
We consider the following morphisms
\begin{align*}
\xymatrix{
[X^{\times m}/S_m] 
\ar[r]^-{r} \ar[dr]_-{q} & \mM_0^{\rm{red}}(m) \ar[d]^-{p} \\
& S^m(X).
} 
\end{align*}
Here $S_m$ acts on $X^{\times m}$ by permutation, 
$r$ sends $(x_1, \ldots, x_m)$ to 
$\oplus_{i=1}^m \oO_{x_i}$ and $q$ is the natural morphism 
to the coarse moduli space. 
We will show that $r^{\ast}K_{\mM_0(m)}^{\rm{vir}}$ is a trivial 
line bundle on $[X^{\times m}/S_m]$. 
If this is true, then by pulling the isomorphism (\ref{nat:K}) back by $r$
we have the isomorphism
\begin{align*}
q^{\ast}p_{\ast}K_{\mM_0(m)}^{\rm{vir}} \stackrel{\cong}{\to}
\oO_{[X^{\times m}/S_m]}.
\end{align*}
Since $q_{\ast}\oO_{[X^{\times m}/S_m]} \cong 
\oO_{S^m(X)}$, 
by pushing forward the above isomorphism to $S^m(X)$
we obtain $p_{\ast}K_{\mM_0(m)}^{\rm{vir}} \cong \oO_{S^m(X)}$. 
Therefore $K_{\mM_0(m)}^{\rm{vir}}$ is trivial
 by the isomorphism(\ref{nat:K}). 

Let $p_i \colon X^{\times m} \to X$ be the projection to the $i$-th component, 
and set $\Delta_i=(\id_X \times p_i)^{\ast}\Delta$
for $\id_X \times p_i \colon X \times X^{\times m} \to X \times X$ and $\Delta$
is the diagonal in $X \times X$. 
The line bundle $r^{\ast}K_{\mM_0(m)}^{\rm{vir}}$ is a $S_m$-equivariant line 
bundle on $X^{\times m}$ given by 
\begin{align}\notag
r^{\ast}K_{\mM_0(m)}^{\rm{vir}} &=
\det \dR \hH om_{pr} \left(\oplus_{i=1}^m \oO_{\Delta_i}, \oplus_{i=1}^m \oO_{\Delta_i} \right) \\
\label{r*K}
& = \bigotimes_{1\le i, j \le m} L_{ij}. 
\end{align}
Here $pr \colon X \times X^{\times m} \to X^{\times m}$ is the projection, 
and $L_{ij}$ is the line bundle
\begin{align*}
L_{ij} \cneq \det \dR \hH om_{pr} \left(\oO_{\Delta_i}, \oO_{\Delta_j} \right).
\end{align*}
It is easy to see that the line bundle $L_{ij}$ is 
a trivial line bundle on $X^{\times m}$ (either by direct calculation using Hochschild resolution of the diagonal, or 
using a dimension filtration of K-theory as in~\cite[Proposition~3.13]{MT}). 
Let $e_{ij}$ be a no-where vanishing global section of $L_{ij}$. 
Then 
$\sigma \in S_m$ acts on the trivial line bundle (\ref{r*K})
by sending 
the basis $\prod_{i, j} e_{ij}$ to 
$\prod_{i, j} e_{\sigma(i)\sigma(j)}=\prod_{i, j}e_{ij}$. 
Thus 
the $S_m$-equivariant structure of the trivial line bundle 
(\ref{r*K}) is also trivial, and the proposition holds. 
\end{proof}
By Proposition~\ref{lem:M0vir}, 
we have a global orientation data of $\mM_0(m)$
satisfying
\begin{align*}
(K_{\mM_{0}(m)}^{\rm{vir}})^{1/2}
 \cong \oO_{\mM_{0}^{\rm{red}}(m)}. 
\end{align*}
Using the above orientation data, 
by Theorem~\ref{thm:MR3353002} we 
have the global perverse sheaf
\begin{align*}
\phi_{\mM_{0}(m)} \in \Perv(\mM_{0}(m)). 
\end{align*}
Then the perverse sheaf
$\phi_{S^m(X)}$ on $S^m(X)$ is defined by 
\begin{align}\label{def:Sm(X)}
\phi_{S^m(X)} \cneq \pH^1(\dR p_{\ast} 
\phi_{\mM_{0}(m)}) \in \Perv(S^m(X)). 
\end{align}
\begin{lem}\label{lem:delta}
The perverse sheaf $\phi_{S^m(X)}$ is isomorphic to 
$\Delta_{X\ast}L[3]$
for a rank one local system $L$ on $X$, where 
$\Delta_X \colon X \hookrightarrow S^m(X)$ is the 
diagonal embedding. 
\end{lem}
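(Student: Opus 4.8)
The plan is to reduce the computation to the local model $X=\mathbb{C}^3$, where $\mM_0(m)$ is the moduli stack of $m$-dimensional representations of the quiver $(Q_{(3)}, W_{(3)})$ with one vertex, three loops, and super-potential $W_{(3)}=A[B,C]$, and then invoke the Davison–Meinhardt computation of the BPS sheaf for this quiver. First I would note that the problem is analytic-local on $S^m(X)$: the perverse sheaf $\phi_{S^m(X)}$ is defined by taking $\pH^1$ of a proper pushforward along $p\colon \mM_0^{\rm red}(m)\to S^m(X)$, and proper pushforward commutes with restriction to analytic opens, so it suffices to identify $\phi_{S^m(X)}$ on a neighborhood of each point $\gamma=\sum n_i [x_i]\in S^m(X)$. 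Away from the small diagonal, étale-locally $S^m(X)$ splits as a product $\prod_i S^{n_i}(U_i)$ for disjoint opens $U_i\ni x_i$, and the stack $\mM_0(m)$ splits correspondingly as a product of the stacks $\mM_0(n_i)$; by the Thom–Sebastiani property of vanishing cycle perverse sheaves this exhibits $\phi_{S^m(X)}$ as an external product, so its support is forced to lie on the small diagonal $\Delta_X$. This already pins down the \emph{support}: $\phi_{S^m(X)}$ is of the form $\Delta_{X\ast}\fF$ for some perverse sheaf $\fF$ on $X$.

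Next I would identify $\fF$ itself by working at a single point and using the comparison with the BPS sheaf. Near a point $m[x]$ of the small diagonal, choosing local coordinates identifies a neighborhood in $\mM_0(m)$ analytically with (a neighborhood in) the moduli stack $\mM_{(Q_{(3)},\partial W_{(3)})}$, and the identity \eqref{id:BPS} from \cite[Theorem~4.7]{DaMe} computes $\pH^1$ of the pushforward to the coarse space as $\phi_{\tr \overline{W}}(j^s_{!\ast}\IC_{M_{Q_{(3)}}^s})$, i.e.\ the BPS sheaf of the quiver $(Q_{(3)}, W_{(3)})$. The simple part of the representation space is where the three matrices $A,B,C$ generate the full matrix algebra, and for the commuting-matrices-type super-potential $W_{(3)}=A[B,C]$ one sees directly that simple $(Q_{(3)},\partial W_{(3)})$-representations exist only in dimension $m=1$, where the BPS sheaf is $\IC$ of a point, contributing a copy of $\mathbb{Q}$ at each point of $X$; in particular $\fF$ is a rank one local system $L$ on $X$ (placed in the correct perverse degree, so that $\Delta_{X\ast}L[3]$ is perverse, since $\dim X=3$). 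The twist by a rank one local system rather than the constant sheaf comes precisely from the choice of orientation data: the $\mu_2$-gerbe of square roots of the virtual canonical bundle produces, after pushforward, a rank one local system on $X$, exactly as in the gluing formalism of Theorem~\ref{thm:MR3353002} and the local system $\lL$ appearing in \eqref{isom:IC}.

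The main obstacle I expect is the gluing/globalization step: one must check that the local analytic identifications with the $\mathbb{C}^3$-model are compatible enough that the locally obtained perverse sheaves $\Delta_{U\ast}(\text{rank one})[3]$ patch to a \emph{global} $\Delta_{X\ast}L[3]$ with $L$ a genuine rank one local system on all of $X$ (as opposed to something only locally of this form). This is where one uses that $\phi_{\mM_0(m)}$ is already a globally defined perverse sheaf on the stack, constructed via a fixed (global, by Proposition~\ref{lem:M0vir}) orientation data, so that $\dR p_\ast\phi_{\mM_0(m)}$ and hence its $\pH^1$ are globally defined; the local computations above then merely identify this global object, and the rank one local system $L$ is whatever global datum the orientation/gerbe produces. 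The remaining bookkeeping—that the external-product decomposition off the diagonal is compatible with the global structure, and that the perverse degree works out so that $L[3]$ sits in the heart after $\Delta_{X\ast}$—is routine given the earlier results.
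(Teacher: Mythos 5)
Your overall strategy agrees with the paper's: reduce to the local model $X=\mathbb{C}^3$, identify $\mM_0(m)$ with the moduli stack of representations of the quiver with super-potential $(Q_{(3)},W_{(3)})$, and invoke the BPS sheaf identity \eqref{id:BPS}. The treatment of the local system $L$ as an artifact of the global orientation data is also the right picture. The difference is at the crucial computational step, and it is where your argument has a gap.

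The paper's proof at this point simply cites the explicit computation of Behrend--Davison (\cite[Section~5.1]{BeDa}), namely that $\phi_{\tr \overline{W}_{(3)}}(\IC_{M_{Q_{(3)}}(m)})=\Delta_{\mathbb{C}^3}\IC_{\mathbb{C}^3}$ for all $m$. You instead try to rederive this from first principles, and your derivation does not close. The observation that simple $(Q_{(3)},\partial W_{(3)})$-representations exist only in dimension $m=1$ constrains the \emph{support} of the BPS sheaf to the small diagonal, but it does not tell you its \emph{rank} there when $m>1$. The BPS sheaf is defined as $\phi_{\tr\overline{W}}(j^{s}_{!\ast}\IC_{M^{s}_{Q_{(3)}}})$, where $j^s$ is the inclusion of the locus of simple $Q_{(3)}$-representations — a large, open dense locus in $M_{Q_{(3)}}(m)$ for every $m$, not the one-dimensional Jacobi-simple locus — followed by a vanishing cycle functor. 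That the result of applying $\phi_{\tr\overline{W}}$ to this large intersection complex has rank exactly one along the small diagonal is a genuine theorem (it is the sheaf-theoretic incarnation of the MacMahon/integrality computation for $\mathbb{C}^3$), and "one sees directly" does not cover it. You have conflated the $Q$-simple locus appearing in the intermediate extension with the Jacobi-simple locus of $(Q,\partial W)$-representations.

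Your Thom--Sebastiani step is a reasonable independent route to the support statement, but it also needs more care than you give it: taking $\pH^1$ of an external product does not directly yield an external product, and to get vanishing off the small diagonal one has to combine the K\"unneth decomposition for perverse cohomology of a box product with the fact (Theorem~\ref{thm:IC1} and its vanishing cycle analogue) that $\dR p_{i\ast}\phi_{\mM_0(n_i)}$ has no nonzero $\pH^{\le 0}$. In any case the paper sidesteps all of this by quoting \cite[Section~5.1]{BeDa}, which gives both support and rank at once. To fix your proof, replace the heuristic rank argument with the citation to that computation (or reproduce it), and make the K\"unneth/degree-bound argument explicit if you want to keep the Thom--Sebastiani reduction.
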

\begin{proof}
The question is local on $X$, 
so we may assume that $X=\mathbb{C}^3$. 
In this case, the stack $\mM_0(m)$ is 
the moduli stack of $m$-dimensional 
representations of $(Q_{(3)}, W_{(3)})$
defined in (\ref{quiver:3}), and in 
this case the BPS sheaf 
is computed in~\cite[Section~5.1]{BeDa}:
\begin{align}\label{id:bpssheaf}
\pH^1(\dR p_{\ast}\phi_{\mM_0(m)})=
\phi_{\tr \overline{W}_{(3)}}(\IC_{M_{Q_{(3)}}(m)}) = \Delta_{\mathbb{C}^3} \IC_{\mathbb{C}^3}.
\end{align}
Here
$M_{Q_{(3)}}(m)$ is the 
coarse moduli space of
$Q_{(3)}$-representations of dimension $m$,  
 the first identity is due to (\ref{id:BPS}) and the second one 
is computed in~\cite[Section~5.1]{BeDa}. 
The above local result shows the lemma. 
\end{proof}
For the case of zero cycles, 
the identity 
map $S^m(X) \to S^m(X)$ is the 
HC map from the coarse moduli 
space of $\mM_0^{\rm{red}}(m)$ 
to the Chow variety of zero cycles. 
Therefore Lemma~\ref{lem:delta} immediately 
shows the following:
\begin{thm}\label{thm:zerocycle}
For a smooth projective CY 3-fold
and $\gamma \in S^m(X)$,  
the invariant 
$\Phi_{\sigma}(\gamma, m)$ is 
zero unless 
$\gamma=\Delta_X(x)$ for 
$x \in X$ and $\Delta_X \colon X \hookrightarrow S^m(X)$
is the diagonal embedding. 
When $\gamma=\Delta_X(x)$, we have
\begin{align*}
\Phi_{\sigma}(\gamma, m)=\chi(\Delta_{X\ast}L[3]|_{\Delta_X(x)})=-1.
\end{align*}
\end{thm}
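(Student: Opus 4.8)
The plan is to deduce the statement from Lemma~\ref{lem:delta} together with the observation, already recorded above, that for zero cycles the Hilbert--Chow map is the identity. First I would note that for $v=(0,m)$ one has $\mM_{\sigma}(v)=\mM_0(m)$ for every $\sigma\in U(X)$, the coarse moduli space $M_{\sigma}(v)$ is $S^m(X)$, and the relevant Chow variety is again $S^m(X)$ with $\pi_M\colon M_{\sigma}(v)\to\Chow_X(0)$ the identity map $S^m(X)\to S^m(X)$. By Proposition~\ref{lem:M0vir} the virtual canonical line bundle $K^{\rm{vir}}_{\mM_0(m)}$ is globally trivial, so in Definition~\ref{def:def:phiM2} we may take $U=S^m(X)$, equip $\mM_0(m)$ with the CY orientation data attached to a trivialization $(K^{\rm{vir}}_{\mM_0(m)})^{1/2}\cong\oO_{\mM_0^{\rm{red}}(m)}$, and identify $\phi_{M_{\sigma}(v)}$ with the globally defined perverse sheaf $\phi_{S^m(X)}=\pH^1(\dR p_{\ast}\phi_{\mM_0(m)})$ of (\ref{def:Sm(X)}); by Lemma~\ref{lem:inde} the resulting invariant does not depend on these choices.

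With these identifications and $\pi_M=\id$, Definition~\ref{def:def:phiM2} reads
\[
\Phi_{\sigma}(\gamma,m)=\sum_{i\in\mathbb{Z}}\chi\bigl(\pH^i(\phi_{S^m(X)})|_{\gamma}\bigr)y^i .
\]
Next I would invoke Lemma~\ref{lem:delta}, which gives $\phi_{S^m(X)}\cong\Delta_{X\ast}L[3]$ for a rank one local system $L$ on $X$. Since $X$ is smooth of dimension $3$, $L[3]$ is perverse on $X$; and since $\Delta_X$ is a closed immersion, hence finite, $\Delta_{X\ast}$ is t-exact, so $\Delta_{X\ast}L[3]$ is perverse on $S^m(X)$. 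Therefore $\pH^i(\phi_{S^m(X)})=0$ for $i\neq0$ and $\pH^0(\phi_{S^m(X)})=\Delta_{X\ast}L[3]$, and the sum collapses to $\Phi_{\sigma}(\gamma,m)=\chi\bigl((\Delta_{X\ast}L[3])|_{\gamma}\bigr)$, a Laurent polynomial concentrated in the $y^0$-coefficient.

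It then remains to evaluate the stalk. For the closed immersion $\Delta_X$ the complex $(\Delta_{X\ast}L[3])|_{\gamma}$ is acyclic unless $\gamma$ lies in $\Delta_X(X)$, which yields $\Phi_{\sigma}(\gamma,m)=0$ for $\gamma\notin\Delta_X(X)$. If $\gamma=\Delta_X(x)$ then $(\Delta_{X\ast}L[3])|_{\gamma}\cong L_x[3]$, a one dimensional $\mathbb{Q}$-vector space placed in cohomological degree $-3$, whose Euler characteristic is $(-1)^{-3}=-1$; hence $\Phi_{\sigma}(\gamma,m)=-1$, as claimed.

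I do not expect a genuine obstacle here: the one substantive input is the local computation of the BPS sheaf for the quiver with super-potential $(Q_{(3)},W_{(3)})$ on $\mathbb{C}^3$, which is already packaged into Lemma~\ref{lem:delta}, and everything else is bookkeeping --- identifying the Hilbert--Chow map with the identity, using Proposition~\ref{lem:M0vir} to make the construction of Definition~\ref{def:def:phiM2} global, and noting that $\Delta_{X\ast}L[3]$ is already perverse so that only the $y^0$-coefficient of $\Phi_{\sigma}(\gamma,m)$ can be nonzero. The only point requiring a little care is keeping the shift conventions consistent (the $\pH^1$ appearing in (\ref{def:Sm(X)}) versus the $[3]$ in Lemma~\ref{lem:delta}), but this is exactly what Lemma~\ref{lem:delta} records.
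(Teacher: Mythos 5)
Your proposal is correct and follows the same route as the paper: the paper's proof is a single remark that the Hilbert--Chow map is the identity for zero cycles, after which Lemma~\ref{lem:delta} gives the result immediately. You have simply spelled out the bookkeeping (globality of the CY orientation via Proposition~\ref{lem:M0vir}, perversity of $\Delta_{X\ast}L[3]$ forcing concentration in the $y^0$-coefficient, and the stalk computation $\chi(L_x[3])=-1$) that the paper leaves implicit.
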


\subsection{The case of elliptic fibrations}
Here we give an example of the invariant (\ref{def:Phi:sigma})
in the case of a CY 3-fold with an elliptic fibration, 
and see that Conjecture~\ref{conj:main} holds in this case. 

Let $S=\mathbb{P}^2$ and take 
general elements $u \in H^0(S, \oO_S(-4K_S))$,
$v \in H^0(S, \oO_S(-6K_S))$. 
Then as in~\cite[Section~6.4]{Tsurvey}, 
we have a simply connected CY 3-fold 
$X$ with a flat elliptic fibration
\begin{align}\label{pi:XS}
\pi_X \colon X \to S
\end{align}
defined by the equation 
$zy^2=uxz^2+vz^3$
in the projective bundle 
\begin{align*}
\mathbb{P}_S(\oO_S(-2K_S) \oplus \oO_S(-3K_S) \oplus \oO_S) \to S. 
\end{align*}
Here $[x:y:z]$ is the homogeneous coordinate
of the above projective bundle. 
Note that $\pi_X$ admits a section
\begin{align*}
\iota \colon S \to X
\end{align*}
whose image correspond to the fiber point 
$[0:1:0]$. 
By the construction,  
every scheme theoretic fiber $X_s=\pi_X^{-1}(s)$ 
for $s \in S$
is an integral curve, 
which is either a smooth elliptic curve, or 
nodal rational curve with one node, or a cuspidal 
rational curve.  
Let $[F] \in N_1(X)$ be a fiber class of $\pi_X$, and 
set $\beta=d[F]$
for $d\in \mathbb{Z}_{\ge 1}$, and $v=(\beta, m) \in \Gamma_X$. 
Let $k \in \mathbb{Z}_{\ge 1}$ be the greatest
common divisor of $(d, m)$, and 
set $d'=d/k$, $m'=m/k$. 
For $B+i\omega \in A(X)_{\mathbb{C}}$, let 
\begin{align*}
\pi_Y \colon 
Y \to S
\end{align*}
be the $\pi_X$-relative moduli space of 
$(B, \omega)$-stable 
sheaves $E$ on the fibers of (\ref{pi:XS})
such that $v(E)=(d'[F], m')$. 
\begin{lem}\label{lem:isomXY}
We have an isomorphism 
$X \stackrel{\cong}{\to} Y$ over $S$. 
\end{lem}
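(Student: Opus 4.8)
The plan is to identify the fibers of $\pi_Y$ with those of $\pi_X$ and then upgrade this to an isomorphism of $S$-schemes by means of a relative Fourier--Mukai transform.

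First I would identify the fibers. A closed point of $Y$ over $s\in S$ is a $(B,\omega)$-stable torsion-free sheaf $E$ of rank $d'$ and Euler characteristic $m'$ on the integral arithmetic-genus-one curve $X_s=\pi_X^{-1}(s)$. Any subsheaf $G\subset E$ is again supported on $X_s$, so $l(G)=r[F]$ with $r=\rk(G)$, and then $\mu_{B,\omega}(G)=\chi(G)/(r\,\omega\cdot F)-(B\cdot F)/(\omega\cdot F)$; since the last term is independent of $G$ and $\omega\cdot F>0$, the twisted slope order on $X_s$ is governed by $\chi(G)/\rk(G)$. Hence $(B,\omega)$-stability on $X_s$ coincides with ordinary slope stability of torsion-free sheaves, and as $\ggcd(d',m')=1$ we have slope-stable $=$ slope-semistable $=$ Gieseker-stable, so $\pi_Y^{-1}(s)$ is the fine moduli space of stable torsion-free sheaves of rank $d'$ and Euler characteristic $m'$ on $X_s$. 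Now I would invoke the classification of such sheaves on an integral Gorenstein curve $C$ of arithmetic genus one: when rank and Euler characteristic are coprime, this moduli space is isomorphic to $C$ itself --- for smooth $C$ by Atiyah's classification of semistable bundles on elliptic curves (the determinant identifies it with a $\Pic$-torsor $\cong C$), and for the nodal and cuspidal rational fibers because it is the compactified Jacobian of $C$, which for $p_a(C)=1$ is again $C$ (Altman--Kleiman; concretely every such sheaf is a twist of $\oO_C$ or of the ideal sheaf of a smooth point). Thus $\pi_Y\colon Y\to S$ has fibers isomorphic to $X_s$, compatibly with $\pi_X$.

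To promote this to an actual morphism over $S$, I would use the relative Poincar\'e sheaf $\mathcal P$ on $X\times_S X$ normalized along the section $\iota$, and the associated relative integral functor $\Phi_{\mathcal P}\colon D^b(X)\to D^b(X)$, which is an equivalence for an elliptic fibration with integral fibers admitting a section (Bridgeland--Maciocia, Friedman--Morgan--Witten). Tensoring by $\oO_X(\iota(S))$ and applying $\Phi_{\mathcal P}$ act on the fiberwise numerical class $([l],\chi)\in\mathbb Z^{2}$ (identifying the fiber direction with $\mathbb Z$ via $[F]$) through matrices that generate $\SL_2(\mathbb Z)$; since $\ggcd(d',m')=1$, there is a composition $\Phi$ of these operations sending the class $(0,1)$ of a skyscraper sheaf $\oO_x$ with $x\in X_s$ to $(d',m')$, and $\Phi$ carries fiberwise (semi)stable objects to fiberwise (semi)stable objects. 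Then $x\mapsto\Phi(\oO_x)$ defines a morphism $X\to Y$ over $S$, which is bijective on closed points by the previous paragraph; since $\Phi$ is an equivalence with an explicit quasi-inverse, this morphism is an isomorphism (alternatively, a bijective morphism between smooth projective $S$-schemes with $\pi_Y$ flat is an isomorphism by Zariski's main theorem).

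The step I expect to be the main obstacle is the bookkeeping needed to match the two descriptions of $Y$: reconciling the $(B,\omega)$-twisted Gieseker moduli functor with the output of the relative Fourier--Mukai construction (existence of a relative universal family, compatibility of the stability notions, and pinning down the $\SL_2(\mathbb Z)$-element realizing $(0,1)\mapsto(d',m')$), together with handling the nodal and cuspidal fibers uniformly --- which is precisely where the integral-fiber hypothesis and the theory of relative compactified Jacobians of integral genus-one curves are needed.
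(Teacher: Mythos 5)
Your route is genuinely different from the paper's. The paper constructs a birational map $X \dashrightarrow Y$ over $S$ by passing through the relative compactified Jacobian $J$ of rank-one torsion-free sheaves on the fibers with class $([F],m')$: a fiberwise determinant gives $\phi_1\colon Y \dashrightarrow J$, an Abel--Jacobi map using the section $\iota$ gives $\phi_2\colon X \dashrightarrow J$, and $\phi_1^{-1}\circ\phi_2$ is birational over $S$. It then appeals to the structure theory of birational Calabi--Yau $3$-folds: any such birational map decomposes into flops, but since $\rho(X)=2$ and $\overline{NE}(X)$ is spanned by the fiber class $[F]$ and the section class, neither a flopping ray, no flop is possible and the map must already be biregular. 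Your argument instead runs a relative Fourier--Mukai transform, which is closer in spirit to the cited Bridgeland--Maciocia result that the paper uses only to deduce that $Y$ is a smooth projective CY $3$-fold; you trade their short Mori-cone computation and MMP input for the fiberwise bookkeeping you correctly flag (matching twisted Gieseker stability with fiberwise stability, pinning down the $\SL_2(\mathbb{Z})$-element, and the singular fibers). One imprecision worth noting in your first paragraph: for singular fibers with $d'>1$ the moduli space $M(d',m')$ is not itself a compactified Jacobian, so Altman--Kleiman does not apply directly there --- the identification $M(d',m')\cong C$ on an integral genus-one curve already requires a Fourier--Mukai argument (e.g.\ Burban--Kreussler), which is the very tool you invoke in your second paragraph, so your ``fiberwise classification'' and ``globalization'' steps are not as independent as the presentation suggests. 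The paper's approach sidesteps all of this because it only needs a birational map, not an identification of moduli functors on the nose.
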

\begin{proof}
By the result of~\cite{B-M2}, the moduli space $Y$ is a 
smooth projective CY 3-fold. 
Let $J \to S$ be the $\pi_X$-relative moduli 
space of rank one torsion free sheaves $E$ on the fibers of 
$\pi_X$ satisfying $v(E)=([F], m')$. 
We have birational maps over $S$
\begin{align*}
\phi_1 \colon 
Y \dashrightarrow J, \ 
\phi_2 \colon X \dashrightarrow J. 
\end{align*}
Here the birational map $\phi_1$ is given by 
sending a general point $y \in Y$ to 
$\det (E_y)$, where 
$E_y$ is the sheaf on $X_{\pi_Y(y)}$ corresponding to 
$y$, and the determinant is taken in the fiber 
$X_{\pi_Y(y)}$. 
The birational map $\phi_2$
is given by sending a general point $x \in X$ to 
$\oO_{X_{\pi_X(x)}}(x+(m-1) \cdot \iota \circ \pi_X(x))$. 
It follows that we have a birational map 
\begin{align}\label{bir:dash}
\phi_1^{-1} \circ \phi_2 \colon
X \dashrightarrow Y
\end{align}
 over $S$. 
Since both of $X, Y$ are smooth projective CY 3-folds, 
the birational map (\ref{bir:dash}) 
has to be decomposed into flops over $S$. 
However as
$\rho(X)=2$ and 
the Mori cone $\overline{NE}(X)$
of $X$ is spanned by 
the fiber $[F]$ and $[\iota(l)]$ for a line $l \subset S$, 
there is no extremal ray on $\overline{NE}(X)$
corresponding to a flop. 
So the birational map (\ref{bir:dash}) extends to the isomorphism 
$X \stackrel{\cong}{\to} Y$ over $S$. 
\end{proof}
Note that if $\beta=d[F]$ we have the isomorphism
\begin{align*}
\pi_X^{\ast} \colon S^d(S) \stackrel{\cong}{\to} \Chow_X(d[F])
\end{align*}
by sending a zero cycle $Z \subset S$ to 
$\pi_X^{-1}(Z)$. 
We identity $\Chow_X(d[F])$ with $S^d(S)$ by the 
above isomorphism. 
Now we show the main result in this subsection: 
\begin{thm}
In the above situation, 
for
$\sigma \in U(X)$, 
 $\gamma \in \Chow_X(d[F])=S^d(S)$ and 
$m \in \mathbb{Z}$, 
we have
$\Phi_{\sigma}(\gamma, m) \neq 0$
only if $\gamma$ is in the image of 
the diagonal map 
$\Delta_S \colon S \hookrightarrow S^d(S)$. 
If $\gamma=\Delta_S(s)$
for $s \in S$, we have 
\begin{align}\label{Phi:exm}
\Phi_{\sigma}(\gamma, m)=y^{-1}+(2-e(X_s))+y.
\end{align}
Here $e(-)$ is the topological euler number. 
In particular Conjecture~\ref{conj:main} holds in this case. 
\end{thm}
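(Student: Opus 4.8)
The plan is to move the whole computation, via a relative Fourier--Mukai transform, to the case of zero cycles already settled in Theorem~\ref{thm:zerocycle}. First I would record two reductions. Since $d[F]$ lies on the extremal ray $\mathbb{R}_{\ge 0}[F]$ of $\overline{NE}(X)$ and the only irreducible curves on $X$ with class in $\mathbb{R}_{\ge 0}[F]$ are the fibers $X_s$, any pure one dimensional $E$ with $[l(E)]=d[F]$ has $l(E)=\pi_X^{-1}(Z)$ for a length $d$ zero cycle $Z$ on $S$; so $\mM_\sigma(v)$ parametrizes only sheaves set-theoretically supported on fibers, and $\pi_M\colon M_\sigma(v)\to\Chow_X(d[F])=S^d(S)$ is the map $E\mapsto Z$. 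Moreover, for such an $E$ the inequality $\mu_{B,\omega}(F)\le\mu_{B,\omega}(E)$ amounts to $\chi(F)/a_F\le\chi(E)/a_E$, where $a_F,a_E$ are the multiplicities of $F,E$ along the supporting fibers, the twist $(B,\omega)$ cancelling; hence $\mM_\sigma(v)$, and likewise the relative moduli space $\pi_Y\colon Y\to S$ of $(d'[F],m')$-stable sheaves on the fibers, are independent of $\sigma\in U(X)$, and $Y\cong X$ over $S$ by Lemma~\ref{lem:isomXY}.

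Next I would bring in the relative Fourier--Mukai transform $\Phi_{X\to Y}\colon D^b(X)\xrightarrow{\sim}D^b(Y)$ with kernel the universal sheaf on $X\times_S Y$ (cf.~\cite{B-M2}), characterised by $\Phi_{X\to Y}(E_y)=\oO_y$ for the stable sheaf $E_y$ on $X_{\pi_Y(y)}$ attached to $y\in Y$. A $\sigma$-semistable $E$ of class $v=k\cdot(d'[F],m')$ supported on fibers is fibrewise a successive extension of such $E_y$'s, so $\Phi_{X\to Y}(E)$, after the standard cohomological shift, is a rank zero torsion, hence zero dimensional, sheaf on $Y$ of length $k$; the converse being analogous, $\Phi_{X\to Y}$ induces an isomorphism of stacks $\mM_\sigma(v)\xrightarrow{\sim}\mM_0(k)$, with $\mM_0(k)$ the moduli stack of length $k$ zero dimensional sheaves on $Y$, and of coarse moduli spaces $M_\sigma(v)\xrightarrow{\sim}S^k(Y)$, intertwining $p_M$ with the natural morphism $p\colon\mM_0(k)\to S^k(Y)$ (as in (\ref{HC:zero})) and intertwining $\pi_M$ with $S^k(Y)\xrightarrow{S^k(\pi_Y)}S^k(S)\xrightarrow{\,\cdot d'\,}S^d(S)$, the second arrow multiplying cycles by $d'$ (a closed immersion). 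Being an integral transform with a sheaf kernel, $\Phi_{X\to Y}$ respects the $(-1)$-shifted symplectic, hence $d$-critical, structures on these stacks and carries $K_{\mM_\sigma(v)}^{\rm{vir}}$ to $K_{\mM_0(k)}^{\rm{vir}}$; the latter is trivial by Proposition~\ref{lem:M0vir}, so $\mM_\sigma(v)$ is CY at $\gamma$ and $\Phi_\sigma(\gamma,m)$ is defined for every $\sigma$ and $m$. It likewise identifies $\phi_{\mM_\sigma(v)}$ with $\phi_{\mM_0(k)}$ up to a twist by a rank one local system coming from a mismatch of CY orientation data, which does not affect $\Phi_\sigma(\gamma,m)$ by Lemma~\ref{lem:inde}. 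Hence, using Lemma~\ref{rmk:gerbe} and Lemma~\ref{lem:delta}, I may replace $\phi_{M_\sigma(v)}$ by $\phi_{S^k(Y)}=\Delta_{Y\ast}L[3]$ for a rank one local system $L$ on $Y$ (taken trivial, again by Lemma~\ref{lem:inde}), where $\Delta_Y\colon Y\hookrightarrow S^k(Y)$ is the diagonal.

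The computation is then short. Since $\pi_M\circ\Delta_Y=\Delta_S\circ\pi_Y$ (both send $y$ to $d[\pi_Y(y)]$), we get
\begin{align*}
\dR\pi_{M\ast}\phi_{M_\sigma(v)}\cong\Delta_{S\ast}\bigl(\dR\pi_{Y\ast}\mathbb{Q}_Y[3]\bigr),
\end{align*}
which is supported on $\Delta_S(S)\subset S^d(S)$; this yields the vanishing of $\Phi_\sigma(\gamma,m)$ unless $\gamma\in\Delta_S(S)$. For $\gamma=\Delta_S(s)$ one has $\Phi_\sigma(\gamma,m)=\sum_i\chi\bigl(\pH^i(\dR\pi_{Y\ast}\mathbb{Q}_Y[3])|_s\bigr)y^i$, the perverse cohomology now taken on the surface $S$. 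As $\pi_Y$ is projective of relative dimension one, $\pH^i=0$ for $|i|\ge 2$; as all fibers of $\pi_Y$ are connected integral curves, $R^0\pi_{Y\ast}\mathbb{Q}_Y=R^2\pi_{Y\ast}\mathbb{Q}_Y=\mathbb{Q}_S$, whence by the decomposition theorem $\pH^{\pm1}(\dR\pi_{Y\ast}\mathbb{Q}_Y[3])\cong\mathbb{Q}_S[2]$, contributing $\chi(\mathbb{Q}_S[2]|_s)=1$ each. Additivity of the Euler characteristic, together with $\chi(\dR\Gamma(Y_s,\mathbb{Q})[3])=-e(Y_s)$ from proper base change, then forces $\chi(\pH^0(\dR\pi_{Y\ast}\mathbb{Q}_Y[3])|_s)=2-e(Y_s)=2-e(X_s)$, using $X_s\cong Y_s$. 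Therefore $\Phi_\sigma(\gamma,m)=y^{-1}+(2-e(X_s))+y$, independent of $\sigma$ and of $m$, so Conjecture~\ref{conj:main} holds in this case.

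The main obstacle is the input to the second paragraph: that the relative Fourier--Mukai transform respects the $(-1)$-shifted symplectic (hence $d$-critical) structures of the moduli stacks and the associated perverse sheaves of vanishing cycles, with the orientation-data ambiguity absorbed through Lemma~\ref{lem:inde}. A lesser technical point is checking that $\pH^{\pm1}(\dR\pi_{Y\ast}\mathbb{Q}_Y[3])$ carry no extra summands supported on the discriminant of $\pi_Y$; the remaining steps are bookkeeping with Hilbert--Chow maps and an Euler characteristic count.
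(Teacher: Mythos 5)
Your proposal is correct and follows essentially the same strategy as the paper: transport the moduli problem via a Fourier--Mukai equivalence (your $\Phi_{X\to Y}$ is the inverse of the paper's auto-equivalence $\Psi$, both obtained from Lemma~\ref{lem:isomXY} and the fibrewise moduli theory of~\cite{B-M2}) to the zero-cycle case of Theorem~\ref{thm:zerocycle}, invoke Proposition~\ref{lem:M0vir} and Lemma~\ref{lem:delta} to get $\Delta_{X\ast}\IC_X$, and read off the answer from the perverse decomposition of $\dR\pi_{X\ast}\IC_X$ for the elliptic fibration. The only cosmetic variations are that you trivialise the local system $L$ via Lemma~\ref{lem:inde} where the paper appeals to simple connectedness of $X$, and you flag (correctly) that the preservation of $d$-critical structures by derived equivalences and the absence of extra summands in $\pH^{\pm 1}$ are background facts being invoked --- both of which the paper also uses without further comment.
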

\begin{proof}
By Lemma~\ref{lem:isomXY} and the result of~\cite{B-M2}, 
there is an auto-equivalence
\begin{align*}
\Psi \colon 
D^b(\Coh(X)) \stackrel{\sim}{\to} D^b(\Coh(X))
\end{align*}
sending $\oO_x$ for a closed point $x \in X$
to a stable sheaf $E_x$ with $v(E_x)=(d'[F], m')$. 
Let $\mM_0(k)$ be the moduli stack of zero dimensional 
sheaves on $X$ with length $k$. 
Then as in~\cite[Section~6.4]{Tsurvey}, 
the equivalence $\Psi$ induces the isomorphism of stacks 
\begin{align*}
\Psi_{\ast} \colon \mM_0(k) \stackrel{\cong}{\to}
\mM_{\sigma}(v)
\end{align*}
for any $\sigma \in U(X)$. 
Since the above isomorphism is induced by the 
derived equivalence $\Psi$, it preserves the 
$d$-critical structures and the virtual canonical 
line bundles (see~\cite[Remark~8.6]{MT}, where the 
announced work~\cite[Theorem~1.2]{BrDy} is now available in~\cite[Theorem~5.5]{BrDy2}.). 
Moreover we have the commutative diagram
\begin{align}\label{com:chow}
\xymatrix{
& \mM_0^{\rm{red}}(k) \ar[r]^-{\Psi_{\ast}}_-{\cong} \ar[d]_-{p} & 
\mM_{\sigma}^{\rm{red}}(v) \ar[d]^-{p_{\mM}} \\
X\ar@<-0.3ex>@{^{(}->}[r]^-{\Delta_X} \ar[d]_-{\pi_X} &
S^k(X) \ar[r]^-{\Psi_{\ast}}_-{\cong} 
\ar[d]_-{h} & M_{\sigma}^{\rm{red}}(v) \ar[d] \\
S \ar@<-0.3ex>@{^{(}->}[r]^-{\Delta_S} &
S^d(S) \ar[r]_-{\cong}^-{\pi_X^{\ast}} & \Chow_X(d[F]).
}
\end{align}
Here the 
maps $p, p_{\mM}$ are natural maps to the coarse moduli spaces, the 
middle horizontal arrow is the induced 
isomorphism on the coarse moduli spaces, and
the map $h$ is given by
\begin{align*}
(x_1, \ldots, x_k) \mapsto 
(d' \pi_X(x_1), \ldots, d' \pi_X(x_k)).
\end{align*}

By Proposition~\ref{lem:M0vir}
and the diagram (\ref{com:chow}), 
we have a global orientation data of $\mM_{\sigma}(v)$
satisfying
\begin{align*}
(K_{\mM_{\sigma}(v)}^{\rm{vir}})^{1/2}
 \cong \oO_{\mM_{\sigma}^{\rm{red}}(v)}. 
\end{align*}
Using the above orientation data, we 
have the global perverse sheaf
\begin{align*}
\phi_{\mM_{\sigma}(v)} \in \Perv(\mM_{\sigma}(v)). 
\end{align*}
Then we have the perverse sheaf
on $S^k(X)$ as in (\ref{def:Sm(X)}): 
\begin{align*}
\phi_{S^k(X)} = \pH^1(\dR p_{\ast} 
\Psi_{\ast}^{-1} \phi_{\mM_{\sigma}(v)}) \in \Perv(S^k(X)). 
\end{align*}
By Lemma~\ref{lem:delta} and the simply connectedness of 
$X$, we have 
$\phi_{S^k(X)}=\Delta_{X\ast} \IC_X$. 
By the commutative diagram (\ref{com:chow})
and the definition of $\Phi_{\sigma}(\gamma, m)$, 
we have 
\begin{align*}
\Phi_{\sigma}(\gamma, m)=
\sum_{i\in \mathbb{Z}}
\chi(\pH^i(\dR h_{\ast} \Delta_{X\ast}\IC_X)|_{\gamma}) y^i.
\end{align*}
Therefore $\Phi_{\sigma}(\gamma, m)=0$ if 
$\gamma$ is not in the image of $\Delta_S$. 
If $\gamma=\Delta_S(s)$ for $s\in S$, 
then by the left bottom diagram of (\ref{com:chow}), we have 
\begin{align*}
\Phi_{\sigma}(\gamma, m)=
\sum_{i\in \mathbb{Z}}
\chi(\pH^i(\dR \pi_{X\ast}\IC_X)|_{s}) y^i. 
\end{align*}
Since by our assumption 
each $X_s$ is either a smooth elliptic curve, or 
a rational nodal curve with one node, or a cuspidal rational curve, 
the perverse decomposition of $\dR \pi_{X\ast}\IC_X$ becomes
\begin{align*}
\dR \pi_{X\ast}\IC_X=\IC_S[1] \oplus V \oplus \IC_S[-1].
\end{align*}
Here $V$ is a constructible sheaf 
on $S$ such that for $s \in S$, 
we have $V|_{s}=\mathbb{Q}^{2-e(X_s)}$. 
Therefore the identity (\ref{Phi:exm}) holds. 
\end{proof}

\section{Wall-crossing formula for quivers with convergent super-potentials}\label{sec:wcf}
The wall-crossing formula of cohomological DT invariants for representations of
quivers with super-potentials was studied in~\cite{DaMe}. 
Our approach toward Conjecture~\ref{conj:main}
is to reduce the problem to the similar 
problem for representations of quivers with 
formal but convergent super-potentials, using the result of~\cite{Todstack}.
In this section, we review the work of~\cite{DaMe} and 
prove some necessary results in the case of 
quivers with
convergent super-potentials.
\subsection{Representations of quivers}
Recall that a \textit{quiver} $Q$ consists data
\begin{align*}
Q=(V(Q), E(Q), s, t)
\end{align*}
where $V(Q), E(Q)$ are finite sets 
and $s, t$ are maps
\begin{align*}
s, t \colon E(Q) \to V(Q).
\end{align*}
The set $V(Q)$ is the set of vertices and
$E(Q)$ is the set of edges. 
For $e \in E(Q)$, 
$s(e)$ is the source of $e$
and $t(e)$ is the target of $e$. 
For $i, j \in V(Q)$, we use the following notation
\begin{align}\label{Eab}
E_{i, j} \cneq \{e \in E(Q) : 
s(e)=i, t(e)=j\}
\end{align}
i.e. $E_{i, j}$ is the set of edges 
from $i$ to $j$. 
 
A \textit{$Q$-representation} consists of
data
\begin{align}\label{rep:Q}
\mathbb{V}=\{
(V_i, u_e) : \ i \in V(Q),  \ e \in E(Q), \ 
u_e \colon V_{s(e)} \to V_{t(e)}\}
\end{align}
where $V_i$ is a finite dimensional 
$\mathbb{C}$-vector space 
and $u_e$ is a linear map. 
For a $Q$-representation (\ref{rep:Q}), the vector
\begin{align}\label{m:vect}
\vec{m}=(m_i)_{i \in V(Q)}, \ 
m_i=\dim V_i
\end{align}
is called the \textit{dimension vector}. 

Given a dimension vector (\ref{m:vect}), 
let $V_i$ be a $\mathbb{C}$-vector space with 
dimension $m_i$. 
Let us set 
\begin{align*}
G \cneq \prod_{i \in Q(V)} \GL(V_i), \ 
\mathrm{Rep}_Q(\vec{m}) \cneq \prod_{e \in E(V)} \Hom(V_{s(e)}, V_{t(e)}).
\end{align*}
The algebraic group $G$ acts on $\mathrm{Rep}_Q(\vec{m})$ by 
\begin{align}\label{G:act}
g \cdot u=\{g_{t(e)}^{-1} \circ u_e \circ g_{s(e)}\}_{e\in E(Q)}
\end{align}
for $g=(g_i)_{i \in V(Q)} \in G$ 
and $u=(u_e)_{e\in E(Q)}$. 
A $Q$-representation with dimension vector $\vec{m}$ is 
determined by a point in $\mathrm{Rep}_Q(\vec{m})$
up to $G$-action. 
The moduli stack of $Q$-representations with 
dimension vector $\vec{m}$ is given by the 
quotient stack 
\begin{align*}
\mM_{Q}(\vec{m}) \cneq \left[ \mathrm{Rep}_Q(\vec{m})/G \right]. 
\end{align*}
It has the coarse moduli space, given by
\begin{align}\label{mor:coarse}
M_{Q}(\vec{m}) \cneq \mathrm{Rep}_Q(\vec{m}) \sslash G. 
\end{align}
Here in general, if a reductive algebraic group $G$ acts on 
an affine scheme $Y=\Spec R$, its GIT quotient is given by
$Y\sslash G \cneq \Spec (R^G)$. 
A closed point of $M_{Q}(\vec{m})$ corresponds to a
semi-simple $Q$-representation, i.e. 
direct sum of simple $Q$-representations. 
We have the natural commutative diagram
\begin{align}\label{dia:quiver}
\xymatrix{
\mathrm{Rep}_Q(\vec{m}) \ar[r] \ar[rd]_{\pi_Q} & \mM_{Q}(\vec{m}) \ar[d]^{p_Q} \\
& M_{Q}(\vec{m}). 
}
\end{align}
In what follows, we consider only 
symmetric quivers defined below. 
\begin{defi}\label{def:symmetric}
A quiver $Q$ is called symmetric if 
$\sharp E_{i, j}=\sharp E_{j, i}$ for any 
$i, j \in V(Q)$. 
Here $E_{i, j}$ is defined as in (\ref{Eab}). 
\end{defi}
Let $\mM_Q$ and $M_Q$ be defined by
\begin{align*}
\mM_Q \cneq \coprod_{\vec{m}>0}
\mM_Q(\vec{m}), \ 
M_Q \cneq \coprod_{\vec{m}>0}
M_Q(\vec{m}). 
\end{align*}
Here $\vec{m}>0$ means $m_i \ge 0$ for all $i$ 
and $\vec{m} \neq 0$. 
For each $n\ge 1$, there 
is a natural map
\begin{align}\label{plus}
\oplus \colon \overbrace{M_Q \times \ldots \times M_Q}^{n} \to M_Q
\end{align}
by taking the direct sum of the corresponding 
semi-simple $Q$-representations.
Then we have the map
\begin{align}\label{per:box}
\boxtimes_{\oplus} \colon \Perv(M_Q)^{\times n} 
\to \Perv(M_Q)
\end{align}
by sending $(\fF_1, \ldots, \fF_n)$ to 
\begin{align*}
\boxtimes_{\oplus}(\fF_1, \ldots, \fF_n) 
\cneq \oplus_{\ast}(\fF_1 \boxtimes \cdots \boxtimes \fF_n). 
\end{align*}
Here we note that, since the map (\ref{plus}) is a finite
map, 
we have $\oplus_{\ast}=\dR \oplus_{\ast}$ and it
takes perverse sheaves to perverse sheaves.  
Then the map
\begin{align}\label{per:sym}
\mathrm{Sym}^{\bullet} \colon 
\Perv(M_Q) \to \Perv(M_Q)
\end{align}
is defined by sending 
$\fF$ to 
\begin{align*}
\mathrm{Sym}^{\bullet}(\fF) \cneq 
\bigoplus_{n\ge 1}( \boxtimes_{\otimes}
(\overbrace{\fF, \ldots, \fF}^n))^{S_n}. 
\end{align*}
It is easy to see that 
$\mathrm{Sym}^{\bullet}(\fF)$ is a 
finite sum on each component
$M_Q(\vec{m})$, so it is 
well-defined (see~\cite[Section~3.2]{DaMe}). 
The following result was proved in~\cite{DaMe}.
\begin{thm}\emph{(\cite[Theorem~4.7]{DaMe})}\label{thm:IC1}
For a symmetric quiver $Q$, we 
have an isomorphism of 
perverse sheaves on $M_Q$
\begin{align}
\bigoplus_{i\in \mathbb{Z}}
\pH^i(\dR p_{Q\ast}\IC_{\mM_Q})
[-i] \cong
\Sym^{\bullet}(j_{!\ast}^s \IC_{M_Q^{s}} \otimes 
H^{\ast}(\mathbb{P}^{\infty})_{\rm{vir}}). 
\end{align}
Here $j^s \colon M_Q^s \subset M_Q$ is the open immersion 
of the simple part, and 
$H^{\ast}(\mathbb{P}^{\infty})_{\rm{vir}}$ is defined by
\begin{align*}
H^{\ast}(\mathbb{P}^{\infty})_{\rm{vir}}
\cneq \bigoplus_{k\ge 0} \mathbb{Q}[-2k-1].
\end{align*}
\end{thm}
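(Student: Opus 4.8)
The plan is to deduce the isomorphism by reducing it to a ``cohomological integrality'' statement at the numerical level together with the purity of both sides, so that the isomorphism follows because a pure semisimple complex of perverse sheaves is pinned down by its class in the Grothendieck group of pure Hodge modules. For the left-hand side, since $\mM_Q(\vec m)=[\mathrm{Rep}_Q(\vec m)/G]$ is smooth, $\IC_{\mM_Q(\vec m)}$ is the shifted constant sheaf, and I would prove that $\dR p_{Q\ast}\IC_{\mM_Q(\vec m)}$ is \emph{pure}, i.e. each $\pH^i(\dR p_{Q\ast}\IC_{\mM_Q(\vec m)})$ is a semisimple perverse sheaf underlying a pure Hodge module and the complex is the direct sum of its shifted perverse cohomologies. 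The inputs are: (a) the cohomology of a quotient stack $[\,\text{affine space}/GL_{\vec n}\,]$ is pure of Tate type, being $H^\ast(\prod_i BGL_{n_i})$; and (b) the \'etale-local structure of $p_Q$ from Luna's slice theorem: around a polystable representation $\Sigma=\bigoplus_k S_k^{\oplus a_k}$ the morphism $p_Q$ is modelled, up to a smooth factor, on $[\Ext^1(\Sigma,\Sigma)/\Aut(\Sigma)]\to \Ext^1(\Sigma,\Sigma)\sslash \Aut(\Sigma)$ with $\Aut(\Sigma)=\prod_k GL_{a_k}$, together with the $\mathbb{C}^\ast$-action scaling $\Ext^1(\Sigma,\Sigma)$ with positive weights and contracting it to the origin. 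Combining (a), (b) and this contraction yields the purity. Purity of the right-hand side is formal: $j_{!\ast}^s\IC_{M_Q^{s}}$ is an intermediate extension of a pure object hence pure, and tensoring with $H^{\ast}(\mathbb{P}^{\infty})_{\rm{vir}}$ followed by $\Sym^\bullet$ (a locally finite operation by the remark after (\ref{per:sym}), using that $\oplus$ in (\ref{plus}) is finite) preserves purity and semisimplicity.

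Next I would extract the generators. The direct-sum maps (\ref{plus}) make $\bigoplus_{\vec m}\dR p_{Q\ast}\IC_{\mM_Q(\vec m)}$ an associative algebra object in perverse sheaves on $M_Q$ (the relative cohomological Hall algebra), graded-commutative because $Q$ is symmetric, so by a sheafified Poincar\'e--Birkhoff--Witt argument it is $\Sym^\bullet(\mathcal P)$ for a perverse subobject $\mathcal P=\bigoplus_{\vec m}\mathcal P_{\vec m}$, the ``primitive'' part in each dimension vector. The crucial geometric input is the support lemma: $\mathcal P_{\vec m}$ is supported on the closure of the simple locus $M_Q^s(\vec m)$. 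Over $M_Q^s(\vec m)$ every automorphism of a simple representation is scalar, so there $p_Q$ is the trivial $\mathbb{C}^\ast$-gerbe $M_Q^s(\vec m)\times B\mathbb{C}^\ast$; hence $\dR p_{Q\ast}\IC_{\mM_Q(\vec m)}|_{M_Q^s(\vec m)}\cong \IC_{M_Q^s(\vec m)}\otimes H^\ast(B\mathbb{C}^\ast)$, which after the degree shift built into $H^{\ast}(\mathbb{P}^{\infty})_{\rm{vir}}$ (the $\dim B\mathbb{C}^\ast=-1$ convention already used in Lemma~\ref{rmk:gerbe}) matches the restriction of the right-hand side, all higher symmetric powers being supported off $M_Q^s(\vec m)$. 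This identifies $\mathcal P_{\vec m}|_{M_Q^s(\vec m)}\cong \IC_{M_Q^s(\vec m)}$; combined with purity and the support lemma (so that $\mathcal P_{\vec m}$ has no sub- or quotient object supported off $M_Q^s(\vec m)$) it forces $\mathcal P_{\vec m}=j_{!\ast}^s\IC_{M_Q^s(\vec m)}$, the intermediate extension.

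Both sides of the asserted identity are then pure semisimple complexes of perverse sheaves on $M_Q$, and the previous step identifies the primitive generators of the left-hand side with exactly the building blocks of the right-hand side, so expanding $\Sym^\bullet$ on both sides gives the isomorphism; equivalently, once the equality of classes in the Grothendieck group of pure Hodge modules is known, the isomorphism is automatic because no cancellation among shifted simple perverse sheaves is possible when the weight refinement is recorded. The main obstacle is the purity statement for the non-proper, stacky morphism $p_Q$ together with the support lemma confining the primitive generators $\mathcal P_{\vec m}$ to the simple locus: these are precisely the technical core, whereas the cohomological Hall algebra formalism, the $\mathbb{C}^\ast$-gerbe computation on the simple locus, and the passage to the sheaf-level isomorphism are comparatively formal once purity and the support lemma are in hand.
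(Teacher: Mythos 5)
The paper does not prove this theorem; it is cited directly from Davison--Meinhardt as \cite[Theorem~4.7]{DaMe}, so there is no ``paper's own proof'' to compare against, only the argument in the cited reference.

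Several ingredients you name are genuinely present in the Davison--Meinhardt proof: the purity of $\dR p_{Q\ast}\IC_{\mM_Q}$ obtained from the framed/proper approximation of the stack together with a contracting $\mathbb{C}^{\ast}$-action on the Luna slice, the $\mathbb{C}^{\ast}$-gerbe computation identifying the restriction of both sides over the simple locus $M_Q^s$ (which accounts for the factor $H^{\ast}(\mathbb{P}^{\infty})_{\rm{vir}}$), and a ``support lemma'' confining the would-be primitive part to $\overline{M_Q^s}$. You are also right that once everything is pure and semisimple, identifying both sides reduces to pinning down the simple constituents with multiplicities.

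The genuine gap is the step where you assert that the relative CoHA $\mathcal{A}=\bigoplus_{\vec m}\dR p_{Q\ast}\IC_{\mM_Q(\vec m)}$ ``is graded-commutative because $Q$ is symmetric, so by a sheafified Poincar\'e--Birkhoff--Witt argument it is $\Sym^{\bullet}(\mathcal{P})$ for a perverse subobject $\mathcal{P}$.'' Graded-commutativity of an algebra does not imply that it is free supercommutative; $\mathbb{Q}[x]/(x^2)$ is a trivial counterexample, and in the perverse-sheaf setting there is no general Milnor--Moore type theorem to invoke, since $\mathcal{A}$ is not a Hopf algebra object. This freeness claim is precisely the nontrivial content of the cohomological integrality theorem, not a formal consequence of it, so the proposal is circular at its central step. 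Davison--Meinhardt do not argue via PBW for this result: in the potential-free case they establish freeness and the identification of generators directly from the support lemma (a delicate stratum-by-stratum dimension estimate), purity, and a careful analysis of the $\oplus$-pushforward and perverse filtration, then transport the statement through vanishing cycle functors for the case with a superpotential. Filling the gap would essentially mean reproducing the support-lemma argument, which your sketch acknowledges as ``the technical core'' but does not supply; as written, the PBW reduction cannot be salvaged without it.
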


\subsection{Semistable quiver representations}
For a quiver $Q$, let $K(Q)$ be the Grothendieck group of the 
abelian category of 
finite dimensional 
$Q$-representations. 
For each $i \in V(Q)$ let 
$S_i$ be the one dimensional
 $Q$-representation corresponding to 
the vertex $i$, whose dimension vector is denoted by 
$\mathbf{dim} (S_i)$. By taking the dimension vectors
of $Q$-representations, we have the
group homomorphism
\begin{align}\label{K:dim}
\mathbf{dim} \colon 
K(Q) \to \Gamma_Q \cneq \bigoplus_{i \in V(Q)} \mathbb{Z} \cdot 
\mathbf{dim} (S_i).
\end{align}
Let 
$\hH \subset \mathbb{C}$ be the upper half plane, and take 
\begin{align}\label{xi}
\xi=(\xi_i)_{i \in V(Q)}, \ \xi_i \in \hH.  
\end{align}
Let 
$Z_{\xi}$ be the group homomorphism
\begin{align*}
Z_{\xi} \colon K(Q) \stackrel{\mathbf{dim}}{\to} \Gamma_Q \to
 \mathbb{C}, \ 
[S_i] \mapsto \xi_i. 
\end{align*}
Then $Z_{\xi}$ defines a Bridgeland stability 
condition on the category of finite dimensional $Q$-representations
w.r.t. the group homomorphism (\ref{K:dim}).  
The associated (semi)stable representations
are described in terms of the slope function $\mu_{\xi}$
defined by
\begin{align*}
\mu_{\xi}(-) \cneq -\frac{\Ree Z_{\xi}(-)}{\Imm Z_{\xi}(-)}. 
\end{align*}
\begin{defi}
A $Q$-representation $\mathbb{V}$ is called $\mu_{\xi}$-(semi)stable 
if for any non-zero 
sub $Q$-representation $\mathbb{V}' \subsetneq \mathbb{V}$, we have the inequality
\begin{align*}
\mu_{\xi}(\mathbb{V}')<(\le) \mu_{\xi}(\mathbb{V}).
\end{align*}
\end{defi}
For a choice of $\xi$ as in (\ref{xi}), let 
\begin{align*}
\mathrm{Rep}_Q^{\xi}(\vec{m}) \subset \mathrm{Rep}_Q(\vec{m})
\end{align*}
be the open locus consisting of $\mu_{\xi}$-semistable objects. 
We 
take the associated GIT quotients: 
\begin{align*}
\mM_{Q}^{\xi}(\vec{m}) \cneq [\mathrm{Rep}_Q^{\xi}(\vec{m})/G], \ 
M_{Q}^{\xi}(\vec{m}) \cneq \mathrm{Rep}_Q^{\xi}(\vec{m})\sslash G.
\end{align*}
We have the commutative diagram
\begin{align}\label{com:MQ}
\xymatrix{
\mM_{Q}^{\xi}(\vec{m}) \ar@<-0.3ex>@{^{(}->}[r]^{j_Q^{\xi}} \ar[d]_-{p_Q^{\xi}}
\ar[rd]^-{r_Q^{\xi}} & 
\mM_{Q}(\vec{m}) \ar[d]^-{p_Q} \\
M_Q^{\xi}(\vec{m}) \ar[r]_-{q^{\xi}_Q} & M_Q(\vec{m}).
}
\end{align}
Here $j_Q^{\xi}$ is an open immersion, 
$p_Q$, $p_Q^{\xi}$ are natural morphisms to the 
coarse moduli spaces, and $q_Q^{\xi}$ is the induced 
morphism by the universality of the coarse moduli spaces. 

Recall that a morphism of algebraic varieties 
$f \colon S \to T$ is \textit{semismall} if $f$ is 
proper, surjective, and 
there is a stratification 
$\{S_{\theta}\}_{\theta}$ of $T$ such that 
for any $x \in T$ we have 
\begin{align}\label{cond:semismall}
	\dim f^{-1}(x) \le \frac{1}{2}\codim S_{\theta}.
\end{align}
We call $f$ a \textit{quasi semismall} if 
there is a stratification 
$\{S_{\theta}\}_{\theta}$ of $T$ such that 
the condition (\ref{cond:semismall}) holds (i.e. $f$ is not assumed 
to be proper nor surjective).
We will use the following lemma: 
\begin{lem}\label{lem:small}
For a symmetric quiver $Q$, the 
morphism 
$q^{\xi}_Q \colon M_Q^{\xi}(\vec{m}) \to M_Q(\vec{m})$ is 
a semismall map if $M_Q^{\xi}(\vec{m}) \neq \emptyset$. 
\end{lem}
\begin{proof}
It is well-known that $q_Q^{\xi}$ is projective (in particular proper) and 
surjective (see~\cite[Theorem~4.1]{Hille}). 
It is enough to show that $q_Q^{\xi}$ is quasi semismall. 
For a dimension vector $\vec{w}$ of $Q$, 
let $M_{Q}^{\vec{w}}(\vec{m})$ be the 
moduli space of 
$Q$-representations $\mathbb{V}$ as in (\ref{rep:Q}) with dimension 
vector $\vec{m}$ together with 
linear maps
\begin{align}\label{generate}
W_i \to V_i, \ i \in V(Q)
\end{align}
where $W_i$ is a $w_i$-dimensional vector space, 
such that the image of (\ref{generate}) generates
$\mathbb{V}$ as $\mathbb{C}[Q]$-module, where 
$\mathbb{C}[Q]$ is the path algebra of $Q$ 
(see Subsection~\ref{subsec:conv} below). 
The moduli space $M_{Q}^{\vec{w}}(\vec{m})$ is 
a non-singular variety, and we 
have the forgetting morphism
\begin{align}\notag
\pi_{Q}^{\vec{w}} \colon 
M_{Q}^{\vec{w}}(\vec{m}) \to
\mM_{Q}(\vec{m}) \stackrel{p_Q}{\to} M_Q(\vec{m}).
\end{align} 
Here the first arrow is a smooth morphism of 
relative dimension 
$\vec{w} \cdot \vec{m}$, 
which is surjective when $\vec{w} \gg 0$. 
Moreover by~\cite[Theorem~1.4]{MeRe}, 
the morphism $\pi_{Q}^{\vec{w}}$ satisfies 
the following: 
there is a stratification 
$\{S_{\theta}\}_{\theta}$ of $M_Q(\vec{m})$ such that 
for any $x \in S_{\theta}$ we have 
\begin{align*}
\dim (\pi_{Q}^{\vec{w}})^{-1}(x) 
\le \frac{1}{2}\codim S_{\theta} + 
\vec{w} \cdot \vec{m}-1. 
\end{align*}
Therefore for $x \in S_{\theta}$,
we have
\begin{align}\label{p:small}
\dim (p_{Q})^{-1}(x) 
\le \frac{1}{2}\codim S_{\theta}-1.
\end{align}
Let $\mM_Q^{\xi, s}(\vec{m})$, 
$M_Q^{\xi, s}(\vec{m})$ be the stable locus 
of $\mM_Q^{\xi}(\vec{m})$, $M_Q^{\xi}(\vec{m})$ respectively. 
Then $\mM_Q^{\xi, s}(\vec{m})$ is a $\mathbb{C}^{\ast}$-gerbe
over $M_Q^{\xi, s}(\vec{m})$, so the diagram (\ref{com:MQ})
and (\ref{p:small}) imply 
that 
\begin{align*}
q_{Q}^{\xi}|_{M_Q^{\xi, s}(\vec{m})}
\colon M_Q^{\xi, s}(\vec{m})
\to M_Q(\vec{m})
\end{align*}
is a quasi semismall map. 
For dimension vectors 
$\vec{m}_1, \ldots, \vec{m}_s$
whose sum equals to $\vec{m}$
and $\mu_{\xi}(\vec{m}_i)=\mu_{\xi}(\vec{m})$, 
we have the commutative diagram
\begin{align*}
\xymatrix{
M_Q^{\xi, s}(\vec{m}_1) \times \cdots 
\times M_Q^{\xi, s}(\vec{m}_s) 
\ar[d]_{(q_Q^{\xi}, \ldots, q_Q^{\xi})} \ar[r]^-{\oplus} & 
M_Q^{\xi}(\vec{m}) \ar[d]^{q_{Q}^{\xi}} \\
M_Q(\vec{m}_1) \times \cdots \times 
M_Q(\vec{m}_s) \ar[r]_-{\oplus} & M_Q(\vec{m}).
}
\end{align*}
Since the images of the top horizontal arrows
for various $(\vec{m}_1, \ldots, \vec{m}_s)$
give a stratification of $M_Q^{\xi}(\vec{m})$, and the 
left vertical arrow is a quasi semismall map, 
we conclude that $q_{Q}^{\xi}$ is quasi semismall. 
\end{proof}

\subsection{Wall-crossing formula for IC sheaves}
We keep the notation in the previous subsection. 
For each $\mu \in (-\infty, \infty)$, 
let 
$\mM_Q^{\xi}(\mu), M_Q^{\xi}(\mu)$ be defined by
\begin{align*}
\mM_Q^{\xi}(\mu)
\cneq 
\coprod_{\mu_{\xi}(\vec{m})=\mu}
\mM_Q^{\xi}(\vec{m}), \ 
M_Q^{\xi}(\mu)
\cneq 
\coprod_{\mu_{\xi}(\vec{m})=\mu}
M_Q^{\xi}(\vec{m}). 
\end{align*}
Similarly to $M_Q$, 
a closed point of $M_Q^{\xi}(\mu)$ 
corresponds to a $\mu_{\xi}$-polystable 
$Q$-representation, i.e. 
direct sum of $\mu_{\xi}$-stable $Q$-representations with 
slope $\mu$. 
Therefore we have the natural maps
\begin{align*}
&\boxtimes_{\oplus} \colon 
\Perv(M_Q^{\xi}(\mu))^{\times n} \to 
\Perv(M_Q^{\xi}(\mu)), \\
&\Sym^{\bullet} \colon 
\Perv(M_Q^{\xi}(\mu)) \to \Perv(M_Q^{\xi}(\mu))
\end{align*}
defined similarly to (\ref{per:box}), (\ref{per:sym}). 
We refer to the following results 
proved in~\cite{DaMe}.
\begin{thm}\emph{(\cite[Theorem~4.11]{DaMe})}\label{thm:IC2}
For a symmetric quiver $Q$, 
we have the following: 

\begin{enumerate}
\item We have an isomorphism of 
perverse sheaves on $M_Q^{\xi}(\mu)$
\begin{align}
\bigoplus_{i\in \mathbb{Z}}
\pH^i(\dR p_{Q\ast}^{\xi}\IC_{\mM_Q^{\xi}(\mu)})
[-i] \cong
\Sym^{\bullet}(j_{!\ast}^{\xi, s} \IC_{M_Q^{\xi, s}(\mu)} \otimes 
H^{\ast}(\mathbb{P}^{\infty})_{\rm{vir}}). 
\end{align}
Here $j^{\xi, s} \colon M_Q^{\xi, s}(\mu) \subset M_Q^{\xi}(\mu)$
is the open immersion of the stable part. 
\item We have an isomorphism of perverse sheaves on $M_Q$
\begin{align*}
&\bigoplus_{i\in \mathbb{Z}}
\pH^i(\dR p_{Q\ast}\IC_{\mM_Q})
[-i] \\
&\cong
\scalebox{1.5}{$\boxtimes$}_{\oplus, \infty \stackrel{\mu}{\to} -\infty}
\dR q_{Q \ast}^{\xi}\Sym^{\bullet}(j_{!\ast}^{\xi, s} \IC_{M_Q^{\xi, s}(\mu)} 
\otimes H^{\ast}(\mathbb{P}^{\infty})_{\rm{vir}}). 
\end{align*}
\end{enumerate}
\end{thm}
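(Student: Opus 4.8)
The plan is to deduce both statements from the ``cohomological integrality'' machinery behind Theorem~\ref{thm:IC1}, the new ingredient being the Harder--Narasimhan recursion associated to $\xi$.

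For part (i) I would first note that, for a fixed slope $\mu$, the full subcategory $\aA_\mu$ of finite-dimensional $Q$-representations that are $\mu_\xi$-semistable of slope $\mu$ is an abelian category of finite length whose simple objects are exactly the $\mu_\xi$-stable representations of slope $\mu$; its moduli stack is $\mM_Q^\xi(\mu)$, its coarse moduli space is $M_Q^\xi(\mu)$, and $p_Q^\xi$ is the projection between them. Since $Q$ is symmetric its Euler pairing is symmetric, and because $\mathrm{Hom}(S,S')=0$ for distinct $\mu_\xi$-stable $S,S'$ of slope $\mu$, the Ext-quiver of any $\mu_\xi$-polystable representation of slope $\mu$ is again symmetric; hence $\mM_Q^\xi(\mu)$ is, \'etale-locally around a polystable point, the moduli stack of representations of a symmetric quiver. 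The argument proving Theorem~\ref{thm:IC1} then applies in this setting: the direct-sum maps make $\bigoplus_{\mu_\xi(\vec{m})=\mu}\dR p^\xi_{Q\ast}\IC_{\mM_Q^\xi(\vec{m})}$ a commutative monoid object in $\Perv(M_Q^\xi(\mu))$; an induction on $\vec{m}$ shows that $\dR p^\xi_{Q\ast}\IC_{\mM_Q^\xi(\vec{m})}$ is pure, so the decomposition theorem splits it into shifted intersection complexes, and a support estimate identifies the only new ``primitive'' summand in degree $\vec{m}$ as $j^{\xi,s}_{!\ast}\IC_{M_Q^{\xi,s}(\vec{m})}$, each copy of a stable summand contributing its automorphism $B\mathbb{G}_m$ and hence the factor $H^\ast(\mathbb{P}^\infty)_{\mathrm{vir}}=\bigoplus_{k\ge 0}\mathbb{Q}[-2k-1]$; freeness of the monoid on its primitive part is then the $\Sym^\bullet$ formula of (i).

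For part (ii) I would invoke the Harder--Narasimhan stratification of $\mM_Q(\vec{m})$: every $Q$-representation has a unique filtration with $\mu_\xi$-semistable subquotients of strictly decreasing slopes $\mu_1>\cdots>\mu_r$, so $\mM_Q(\vec{m})$ is stratified by the locally closed substacks of fixed HN type $(\vec{m}_1,\dots,\vec{m}_r)$ with $\mu_\xi(\vec{m}_j)=\mu_j$, and the stack of representations equipped with such a filtration is an iterated affine-space bundle over $\prod_j\mM_Q^\xi(\vec{m}_j)$, the fibres recording the filtration data. Pushing $\IC_{\mM_Q(\vec{m})}$ forward along $p_Q$ stratum by stratum, using that pushforward along an affine bundle only shifts, that $\dR q^\xi_{Q\ast}$ is perverse-exact by the semismallness of Lemma~\ref{lem:small}, and that the direct-sum map on coarse moduli spaces is finite, one assembles the HN contributions into the ordered convolution $\scalebox{1.5}{$\boxtimes$}_{\oplus, \infty \stackrel{\mu}{\to} -\infty}\dR q^\xi_{Q\ast}(-)$ of the slope-$\mu$ pieces computed in (i); this is the perverse-sheaf lift of Reineke's HN recursion for quiver moduli, and the ordering of the factors records the HN order of the slopes.

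The main obstacle is the support lemma underlying part (i): one must rule out, in the decomposition theorem for $p_Q^\xi$, any unexpected simple summand, namely the $\IC$-sheaf of a deeper polystable stratum carrying a possibly nontrivial local system, so that the above monoid really is free on its primitive part. This needs the sharp fibre-dimension estimates for the semismall map $q_Q^\xi$ refined by polystable type, together with triviality of the relevant monodromy, which follows either from the $\mathbb{C}^\ast$-action rescaling representations or from the explicit local Ext-quiver model. A secondary point of care, in part (ii), is verifying that the affine-bundle contributions of the HN strata are exactly bookkept by the $\dR q^\xi_{Q\ast}$ and $\boxtimes_\oplus$ normalisations; after the substitution from (i) this is purely combinatorial, being the quiver analogue of the bookkeeping in Reineke's recursion.
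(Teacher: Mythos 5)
This statement is cited in the paper from [DaMe, Theorem~4.11] with no proof, so there is no intrinsic argument in the paper to compare against; your sketch is a reconstruction of the Davison--Meinhardt argument and it captures the right overall strategy. Part (i) is the cohomological integrality theorem applied to the abelian subcategory of slope-$\mu$ semistable representations (a finite-length abelian category whose simple objects are the $\mu_\xi$-stables, locally modelled at a polystable point on representations of a symmetric Ext-quiver), yielding the free $\Sym^\bullet$ structure on the primitive part $j_{!\ast}^{\xi,s}\IC\otimes H^\ast(\mathbb{P}^\infty)_{\rm vir}$; part (ii) then assembles the global answer from the slope-$\mu$ pieces via the Harder--Narasimhan stratification, using perverse exactness of $\dR q^\xi_{Q\ast}$ (this is exactly the role of Lemma~\ref{lem:small}) and finiteness of the direct-sum map.

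The one step that does not go through as written is the claim that ``an induction on $\vec{m}$ shows that $\dR p^\xi_{Q\ast}\IC_{\mM_Q^\xi(\vec{m})}$ is pure.'' Purity of this pushforward is not an inductive statement; the mechanism in Davison--Meinhardt --- and the mechanism the paper itself uses in the proof of Lemma~\ref{lem:small} --- is approximation by framed moduli spaces. For a large framing vector $\vec{w}$ the framed moduli $M_Q^{\vec{w}}(\vec{m})$ is a smooth variety, the forgetful map to $\mM_Q^\xi(\vec{m})$ is an affine fibration, and the composite $\pi_Q^{\vec{w}}$ to the coarse space is projective with fibre dimensions controlled by [MeRe]; one applies the decomposition theorem to this proper map and extracts both the purity of $\dR p^\xi_{Q\ast}\IC$ and the fibre-dimension estimates you flag as the ``main obstacle'' in the support step. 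Your sketch never introduces this framed approximation, even though it is the device underlying everything downstream: without it the decomposition theorem does not apply to $p^\xi_Q$ (which is not proper), the splitting into shifted $\IC$'s is not available, and the support estimate identifying the primitive summand cannot be run. You do mention the scaling $\mathbb{C}^\ast$-action, and one can in principle substitute hyperbolic localization for the framing construction, but as written the purity assertion is stated rather than substantiated, and that is precisely the hinge on which both parts of the theorem turn.
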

We will use the following lemma: 
\begin{lem}\label{lem:perv}
In the diagram (\ref{com:MQ}), 
for any $i\in \mathbb{Z}$ we have
\begin{align}\label{perv:M}
\dR q_{Q \ast}^{\xi} \pH^i(\dR p_{Q\ast}^{\xi} \IC_{\mM_Q^{\xi}(\vec{m})})
\in \Perv(M_Q(\vec{m})). 
\end{align}
\end{lem}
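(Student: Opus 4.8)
The plan is to deduce this from Theorem~\ref{thm:IC2}(1) together with the semismallness of $q_Q^\xi$ established in Lemma~\ref{lem:small}. Set $\mu=\mu_\xi(\vec m)$. Both $\dR p_{Q\ast}^\xi$ and $\Sym^\bullet$ respect the decomposition by dimension vector, and $\Sym^\bullet$ contributes only finitely many terms on each component, so restricting the isomorphism of Theorem~\ref{thm:IC2}(1) to the component of dimension vector $\vec m$ shows that $\bigoplus_i\pH^i(\dR p_{Q\ast}^\xi\IC_{\mM_Q^\xi(\vec m)})[-i]$ is a finite direct sum of intersection complexes, up to shift; in particular each individual $\pH^i(\dR p_{Q\ast}^\xi\IC_{\mM_Q^\xi(\vec m)})$ is a finite direct sum of simple perverse sheaves $\IC_{\overline Z}(L)$ on $M_Q^\xi(\vec m)$. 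Unravelling the operations $\Sym^\bullet$ and $\boxtimes_\oplus$ on the right hand side, every support $\overline Z$ that occurs is the closure, inside $M_Q^\xi(\vec m)$, of the image of a finite direct-sum map
\begin{align*}
\oplus\colon M_Q^{\xi,s}(\vec m_1)\times\cdots\times M_Q^{\xi,s}(\vec m_k)\longrightarrow M_Q^\xi(\vec m),\qquad \vec m_1+\cdots+\vec m_k=\vec m,\ \ \mu_\xi(\vec m_l)=\mu,
\end{align*}
and each $L$ is a direct summand of the pushforward of a constant sheaf along a finite \'etale covering of a dense open subset of $\overline Z$.

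Since $\dR q_{Q\ast}^\xi$ is additive, it is enough to prove that $\dR q_{Q\ast}^\xi\IC_{\overline Z}(L)$ is perverse for each such pair $(\overline Z,L)$. Replacing $\overline Z$ by its normalisation in the finite \'etale covering that splits off $L$, one reduces to the case $L=\mathbb{Q}$, and then the claim follows from the standard fact that a proper semismall morphism carries the intersection complex of an irreducible variety to a perverse sheaf, provided one shows that
\begin{align*}
q_Q^\xi|_{\overline Z}\colon \overline Z\longrightarrow M_Q(\vec m)
\end{align*}
is semismall. To see this I would run the argument of Lemma~\ref{lem:small} relative to $\overline Z$: writing $\overline Z=\oplus\bigl(\overline{M_Q^{\xi,s}(\vec m_1)}\times\cdots\times\overline{M_Q^{\xi,s}(\vec m_k)}\bigr)$ and using the commutative square built from this $\oplus$-map and the finite direct-sum map $M_Q(\vec m_1)\times\cdots\times M_Q(\vec m_k)\to M_Q(\vec m)$, the semismallness of $q_Q^\xi|_{\overline Z}$ reduces to that of each $q_Q^\xi|_{\overline{M_Q^{\xi,s}(\vec m_l)}}\colon \overline{M_Q^{\xi,s}(\vec m_l)}\to M_Q(\vec m_l)$; and this in turn is obtained as in Lemma~\ref{lem:small} from the fibre-dimension estimate (\ref{p:small}) for $p_Q$ — which carries a spare $-1$ coming from $\dim B\mathbb{C}^\ast$ — now verified to persist on the closed substack $\overline{M_Q^{\xi,s}(\vec m_l)}$, on which the stable locus is dense.

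The step I expect to be the main obstacle is exactly this last persistence claim: Lemma~\ref{lem:small} only asserts semismallness of $q_Q^\xi$ on the whole of $M_Q^\xi(\vec m)$ and on the open stable locus, whereas here the codimensions entering the semismallness inequality must be measured inside the proper closed subvarieties $\overline{M_Q^{\xi,s}(\vec m_l)}$ and $\overline Z$, so one needs a stratified, relative form of the fibre-dimension estimate of \cite[Theorem~1.4]{MeRe} ensuring the fibres of $q_Q^\xi$ restricted to $\overline Z$ drop in dimension at least as fast as the codimension inside $\overline Z$. One can also organize the argument as an induction on $\vec m$ via Theorem~\ref{thm:IC2}(2): restricted to dimension vector $\vec m$, that identity expresses the tautologically perverse-by-degree complex $\bigoplus_i\pH^i(\dR p_{Q\ast}\IC_{\mM_Q})[-i]$ as the direct sum of $\bigoplus_i\dR q_{Q\ast}^\xi\pH^i(\dR p_{Q\ast}^\xi\IC_{\mM_Q^\xi(\vec m)})[-i]$ and of $\boxtimes_\oplus$-type contributions coming from strictly smaller dimension vectors, which are perverse-by-degree by the inductive hypothesis, so a Krull--Schmidt argument forces the first summand to be perverse-by-degree; isolating the individual cohomological degrees to conclude (\ref{perv:M}) again calls on the same semismallness input, so the two routes meet at the same essential point.
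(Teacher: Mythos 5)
Your approach is essentially the same as the paper's. The paper's proof is very short: by Lemma~\ref{lem:small} and the general fact that semismall pushforward of IC complexes is perverse, it establishes directly that $\dR q_{Q\ast}^{\xi} j_{!\ast}^{\xi,s}\IC_{M_Q^{\xi,s}(\mu)}$ is perverse, and then combines this with Theorem~\ref{thm:IC2}(i) and the commutative square with the $\oplus$-maps — exactly the three ingredients you invoke. You additionally unwind the $\Sym^{\bullet}$ and normalise to reduce the local system to $\mathbb{Q}$, which the paper avoids by pushing the $\oplus$-diagram directly; this costs you a bit of extra work but changes nothing essential.

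The subtlety you flag is a genuine one about the paper's exposition, and you state it correctly. The ``general fact'' that semismall maps push IC complexes to perverse sheaves is, as usually stated, a fact about $\IC_X$ for $X$ the \emph{source}; here the relevant complex $j_{!\ast}^{\xi,s}\IC_{M_Q^{\xi,s}(\mu)}$ is the IC of a closed subvariety $\overline{M_Q^{\xi,s}(\mu)}\subset M_Q^{\xi}(\mu)$ that need not be all of the source, so what one actually needs is semismallness of $q_Q^{\xi}$ \emph{restricted to} that closure, with codimensions measured there. Lemma~\ref{lem:small} is stated with codimensions in $M_Q(\vec m)$, so there is a step being suppressed. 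That said, the proof of Lemma~\ref{lem:small} does establish the fibre-dimension estimates stratum by stratum — first on the stable locus via the $\mathbb{C}^{\ast}$-gerbe observation and the spare $-1$ in~(\ref{p:small}), then propagated through the $\oplus$-maps — and this stratified form is exactly what is needed on $\overline{M_Q^{\xi,s}(\mu)}$, so the lemma is correct; the paper is just not explicit about the point you raise. Your alternative route via Theorem~\ref{thm:IC2}(ii) and induction is, as you note yourself, ultimately reliant on the same semismallness input and so does not circumvent it. In short: same proof, with the one terse step of the paper correctly identified and reasonably defended.
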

\begin{proof}

By Lemma~\ref{lem:small}
and a general fact that 
the derived push-forward of 
semismall maps take intersection complexes to 
perverse sheaves (for example see~\cite{ACLM}), 
we have
\begin{align}\label{IC:P}
\dR q_{Q \ast}^{\xi}
j_{!\ast}^{\xi, s} \IC_{M_Q^{\xi, s}(\mu)}
\in \Perv(M_Q(\vec{m})). 
\end{align}
Then the lemma follows from Theorem~\ref{thm:IC2} (i), 
the condition (\ref{IC:P}) and 
the commutative diagram
\begin{align*}
\xymatrix{
M_Q^{\xi}(\mu) \times \cdots \times M_Q^{\xi}(\mu)
\ar[d]_{(q_Q^{\xi}, \ldots, q_Q^{\xi})} \ar[r]^-{\oplus} & 
M_Q^{\xi}(\mu) \ar[d]^{q_{Q}^{\xi}} \\
M_Q \times \cdots \times 
M_Q \ar[r]_-{\oplus} & M_Q. 
}
\end{align*}
\end{proof}
In the diagram (\ref{com:MQ}), 
we have the canonical morphism
\begin{align*}
\IC_{\mM_Q(\vec{m})} \to \dR j_{Q\ast}^{\xi}
j_Q^{\xi\ast}\IC_{\mM_Q(\vec{m})}
=\dR j_{Q\ast}^{\xi}
\IC_{\mM_Q^{\xi}(\vec{m})}
\end{align*}
by adjunction. 
By pushing forward it to $M_Q(\vec{m})$, we have
the morphism
\begin{align*}
\dR p_{Q\ast}\IC_{\mM_Q(\vec{m})}
\to \dR p_{Q\ast}\dR j_{Q\ast}^{\xi}\IC_{\mM_Q^{\xi}(\vec{m})}
=\dR q_{Q \ast}^{\xi} \dR p_{Q\ast}^{\xi}\IC_{\mM_Q^{\xi}(\vec{m})}. 
\end{align*}
By Lemma~\ref{lem:perv}, taking the first perverse 
cohomology of the above morphism gives
the morphism of perverse sheaves on $M_Q(\vec{m})$: 
\begin{align}\label{per:j}
\pH^1(\dR p_{Q\ast}\IC_{\mM_Q(\vec{m})})
\to \dR q_{Q \ast}^{\xi} 
\pH^1(\dR p_{Q\ast}^{\xi}\IC_{\mM_Q^{\xi}(\vec{m})}). 
\end{align}
\begin{lem}\label{isom:perj}
The morphism (\ref{per:j}) is an isomorphism. 
\end{lem}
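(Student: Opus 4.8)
The plan is to identify the source and target of (\ref{per:j}) with the first perverse cohomologies, on the component $M_Q(\vec{m})$, of the two sides of the wall-crossing formula of Theorem~\ref{thm:IC2}(ii), and then to read off from that formula that (\ref{per:j}) is forced to be an isomorphism.

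First I would do the perverse-amplitude bookkeeping in the $\Sym^{\bullet}$-formulae. Every summand of $\Sym^{\bullet}(j_{!\ast}^s\IC_{M_Q^s}\otimes H^{\ast}(\mathbb{P}^{\infty})_{\rm{vir}})$ supported on $M_Q(\vec{m})$ is indexed by a decomposition $\vec{m}=\vec{m}_1+\cdots+\vec{m}_n$ together with shifts $\mathbb{Q}[-2k_a-1]$, and since $\boxtimes_{\oplus}$ is formed using the finite map $\oplus$ it adds perverse amplitudes; such a summand therefore lies in perverse degree $\sum_a(2k_a+1)\ge n\ge 1$. By Theorem~\ref{thm:IC1} this gives $\pH^{i}(\dR p_{Q\ast}\IC_{\mM_Q(\vec{m})})=0$ for $i\le 0$ and $\pH^1(\dR p_{Q\ast}\IC_{\mM_Q(\vec{m})})=j_{!\ast}^s\IC_{M_Q^s(\vec{m})}$, the unique $n=1$, $k_1=0$ term. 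The same argument applied to Theorem~\ref{thm:IC2}(i) gives $\pH^1(\dR p_{Q\ast}^{\xi}\IC_{\mM_Q^{\xi}(\vec{m})})=j_{!\ast}^{\xi,s}\IC_{M_Q^{\xi,s}(\vec{m})}$ and vanishing below; since Lemma~\ref{lem:perv} shows $\dR q_{Q\ast}^{\xi}$ carries each perverse cohomology sheaf of $\dR p_{Q\ast}^{\xi}\IC_{\mM_Q^{\xi}(\vec{m})}$ to a perverse sheaf, $\dR q_{Q\ast}^{\xi}$ commutes with $\pH^i$ on these objects and the target of (\ref{per:j}) is $\dR q_{Q\ast}^{\xi}j_{!\ast}^{\xi,s}\IC_{M_Q^{\xi,s}(\vec{m})}$. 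Feeding these into Theorem~\ref{thm:IC2}(ii): in the ordered box product over slopes, each factor $\dR q_{Q\ast}^{\xi}\Sym^{\bullet}(\cdots)$ has vanishing perverse cohomology in degrees $\le 0$ on each of its components, so on $M_Q(\vec{m})$ only the single-slope term (all the weight $\vec{m}$ at slope $\mu_{\xi}(\vec{m})$) survives in perverse degree $1$, contributing exactly $\dR q_{Q\ast}^{\xi}j_{!\ast}^{\xi,s}\IC_{M_Q^{\xi,s}(\vec{m})}$. Thus taking $\pH^1$ of the isomorphism of Theorem~\ref{thm:IC2}(ii) on the $\vec{m}$-component produces an abstract isomorphism between the source and target of (\ref{per:j}).

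To upgrade this to the statement that (\ref{per:j}) itself is an isomorphism, I would use that these perverse sheaves are simple. If $M_Q^s(\vec{m})=\emptyset$ both sides vanish by the identification just made, so assume otherwise; then $M_Q^s(\vec{m})$ is a dense open subset of the irreducible variety $M_Q(\vec{m})$ (as $\mathrm{Rep}_Q(\vec{m})$ is a vector space), so $j_{!\ast}^s\IC_{M_Q^s(\vec{m})}=\IC_{M_Q(\vec{m})}$ is simple, and by the previous paragraph so is the target. It then suffices to show (\ref{per:j}) is nonzero, since a nonzero morphism out of a simple perverse sheaf is injective with image a nonzero — hence the whole — subobject of a simple perverse sheaf. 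Nonvanishing I would check by restricting to $M_Q^s(\vec{m})$: a simple $Q$-representation is $\mu_{\xi}$-stable for every $\xi$, so $q_Q^{\xi}$ restricts to an isomorphism over $M_Q^s(\vec{m})$ and $j_Q^{\xi}$ to an isomorphism over $p_Q^{-1}(M_Q^s(\vec{m}))$; hence the adjunction morphism defining (\ref{per:j}) restricts over $M_Q^s(\vec{m})$ to the identity of $\IC_{M_Q^s(\vec{m})}$.

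The main obstacle is the abstract identification in the second paragraph: semismallness of $q_Q^{\xi}$ (Lemma~\ref{lem:small}) and the decomposition theorem only tell us that $\dR q_{Q\ast}^{\xi}j_{!\ast}^{\xi,s}\IC_{M_Q^{\xi,s}(\vec{m})}$ is a direct sum of $\IC$-sheaves containing $\IC_{M_Q(\vec{m})}$ with multiplicity one; ruling out extra summands supported on the strictly semistable locus is exactly the statement that the BPS sheaf does not wall-cross, and this is what one extracts from the full formula of Theorem~\ref{thm:IC2}(ii). The amplitude bookkeeping in $\Sym^{\bullet}$ and the compatibility of $\dR q_{Q\ast}^{\xi}$ with $\pH^i$ through Lemma~\ref{lem:perv} are routine.
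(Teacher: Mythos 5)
Your argument follows the same route as the paper's proof: identify $\pH^1(\dR p_{Q\ast}\IC_{\mM_Q(\vec{m})})\cong j_{!\ast}^s\IC_{M_Q^s(\vec{m})}$ and $\pH^1(\dR p_{Q\ast}^{\xi}\IC_{\mM_Q^{\xi}(\vec{m})})\cong j_{!\ast}^{\xi,s}\IC_{M_Q^{\xi,s}(\vec{m})}$ from Theorems~\ref{thm:IC1} and~\ref{thm:IC2}(i), use Theorem~\ref{thm:IC2}(ii) together with (\ref{IC:P}) to identify source and target of (\ref{per:j}) with the same simple perverse sheaf, and conclude by simplicity once the morphism is known to be nonzero. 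Your explicit check of nonvanishing by restricting to the simple locus $M_Q^s(\vec{m})$ (over which $q_Q^{\xi}$ and $j_Q^{\xi}$ become isomorphisms, so the adjunction morphism restricts to the identity) fills in a step that the paper's proof asserts without argument, and your amplitude bookkeeping in $\Sym^{\bullet}$ makes the passage from the two theorems to the first perverse cohomologies fully explicit.
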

\begin{proof}
By taking the first perverse 
cohomologies of the isomorphisms in 
Theorem~\ref{thm:IC1}
and Theorem~\ref{thm:IC2} (i),  
we have
\begin{align*}
&\pH^1(\dR p_{Q\ast}\IC_{\mM_Q(\vec{m})})
\cong j_{!\ast}^s\IC_{M_Q^s(\vec{m})}, \\ 
&\pH^1(\dR p_{Q\ast}^{\xi}\IC_{\mM_Q^{\xi}(\vec{m})})
\cong 
j_{!\ast}^{\xi, s} \IC_{M_Q^{\xi, s}(\vec{m})}.
\end{align*}
Moreover by Theorem~\ref{thm:IC2} (ii) and (\ref{IC:P}), 
we have an isomorphism
\begin{align*}
\pH^1(\dR p_{Q\ast}\IC_{\mM_Q(\vec{m})})
\cong \dR q_{Q \ast}^{\xi} 
j_{!\ast}^{\xi, s} \IC_{M_Q^{\xi, s}(\vec{m})}.
\end{align*}
Therefore  if $M_Q^s(\vec{m}) = \emptyset$, 
then both sides of (\ref{per:j}) are zero. 
Otherwise 
the morphism (\ref{per:j}) is a non-zero
endomorphism of the simple 
perverse sheaf
$j_{!\ast}^{s} \IC_{M_Q^{s}(\vec{m})}$, 
so it is an isomorphism. 
\end{proof}
\subsection{Quivers with convergent super-potentials}\label{subsec:conv}
Recall that a \textit{path}
of a quiver $Q$ 
is a composition of edges in $Q$
\begin{align*}
e_1 e_2 \ldots e_n, \ e_i \in E(Q), \ t(e_i)=s(e_{i+1}). 
\end{align*}
The number $n$ above is called the \textit{length} of the path. 
The \textit{path algebra} of 
a quiver $Q$ is
a $\mathbb{C}$-vector space spanned by 
paths in $Q$:
\begin{align*}
\mathbb{C}[Q] \cneq 
\bigoplus_{n\ge 0}
\bigoplus_{e_1, \ldots, e_n \in E(Q), t(e_i)=s(e_{i+1})} \mathbb{C} \cdot e_1 e_2 \ldots e_n.
\end{align*}
Here a path of length zero is a trivial path 
at each vertex of $Q$, and 
the product on $\mathbb{C}[Q]$ is defined by the 
composition of paths. 
By taking the completion of $\mathbb{C}[Q]$ with respect to the 
length of the path, 
we obtain the formal 
path algebra:
\begin{align*}
\mathbb{C}\lkakko Q \rkakko \cneq 
\prod_{n\ge 0}
\bigoplus_{e_1, \ldots, e_n \in E(Q), t(e_i)=s(e_{i+1})} \mathbb{C} \cdot e_1 e_2 \ldots e_n.
\end{align*}
Note that an element $f \in \mathbb{C}\lkakko Q \rkakko$
is written as 
\begin{align}\label{f:element}
f=\sum_{n\ge 0, \{1, \ldots, n+1\} \stackrel{\psi}{\to} V(Q)}
\sum_{e_i \in E_{\psi(i), \psi(i+1)}}
a_{\psi, e_{\bullet}} \cdot e_1 e_2\ldots e_{n}. 
\end{align}
Here 
$a_{\psi, e_{\bullet}} \in \mathbb{C}$, 
$e_{\bullet}=(e_1, \ldots, e_n)$ and 
$E_{\psi(i), \psi(i+1)}$ is defined as in (\ref{Eab}). 
The above element $f$ lies in $\mathbb{C}[Q]$ iff
$a_{\psi, e_{\bullet}}=0$ for $n\gg 0$. 

The subalgebra
\begin{align*}
\mathbb{C}\{ Q\} \subset \mathbb{C}\lkakko Q \rkakko
\end{align*}
is defined 
to be elements (\ref{f:element}) 
such that $\lvert a_{\psi, e_{\bullet}} \rvert <C^n$ for 
some constant $C>0$ which is independent of $n$. 
Note that $\mathbb{C}\{Q\}$ contains $\mathbb{C}[Q]$ as 
a subalgebra. 
A \textit{convergent super-potential} of a quiver $Q$ is an element 
\begin{align*}
W \in \mathbb{C}\{ Q \}/[\mathbb{C}\{ Q \}, \mathbb{C}\{ Q \}]. 
\end{align*}
A convergent super-potential $W$ of $Q$ is represented by 
a formal sum
\begin{align}\notag
W=\sum_{n\ge 1}
\sum_{\begin{subarray}{c}
\{1, \ldots, n+1\} \stackrel{\psi}{\to} V(Q), \\
\psi(n+1)=\psi(1)
\end{subarray}}
\sum_{e_i \in E_{\psi(i), \psi(i+1)}}
a_{\psi, e_{\bullet}} \cdot e_1 e_2\ldots e_{n}
\end{align}
with $\lvert a_{\psi, e_{\bullet}} \rvert <C^n$
for a constant $C>0$. 

For a dimension vector $\vec{m}$
of $Q$, let $\tr W$ be the formal function 
of $u=(u_e)_{e\in E(Q)} \in \mathrm{Rep}_Q(\vec{m})$ defined by
\begin{align*}
\tr W(u) \cneq \sum_{n\ge 1}
\sum_{\begin{subarray}{c}
\{1, \ldots, n+1\} \stackrel{\psi}{\to} V(Q) \\
\psi(n+1)=\psi(1)
\end{subarray}}
\sum_{e_i \in E_{\psi(i), \psi(i+1)}}
a_{\psi, e_{\bullet}} \cdot \tr(u_n \circ u_{n-1} 
\circ \cdots \circ u_1).
\end{align*}
The above formal function on 
$\mathrm{Rep}_Q(\vec{m})$ is $G$-invariant. 
By the argument of~\cite[Lemma~2.10]{Todstack}, 
there is an 
analytic open neighborhood 
\begin{align}\label{V:open}
0 \in V \subset M_Q(\vec{m})
\end{align}
such that the formal function 
$\tr W$
absolutely converges on 
$\pi_Q^{-1}(V)$ to give a 
$G$-invariant analytic function
\begin{align}\label{tr:W}
\tr W \colon \pi_Q^{-1}(V) \to \mathbb{C}. 
\end{align}
Here $\pi_Q$ is given in the diagram (\ref{dia:quiver}). 
Then we set
\begin{align}\label{def:MW}
\mathrm{Rep}_{(Q, \partial W)}(\vec{m})|_{V} &\cneq 
\{ d(\tr W)=0\}, \\ 
\notag
\mM_{(Q, \partial W)}(\vec{m})|_{V}& \cneq \left[\{ d(\tr W)=0\}/G \right], \\
\notag
M_{(Q, \partial W)}(\vec{m})|_{V}& \cneq \{ d(\tr W)=0\} \sslash G. 
\end{align}
Here $(-)\sslash G$ above is an analytic Hilbert quotient 
(see~\cite{MR1631577, MR3394374, Todstack}). 
We have the natural commutative diagram
\begin{align}\label{diagram:MQ}
\xymatrix{
\mathrm{Rep}_{(Q, \partial W)}(\vec{m})|_{V} \ar[d] \ar@/_40pt/[dd]_-{\pi_{(Q, \partial W)}}
\ar@<-0.3ex>@{^{(}->}[r] & \pi_Q^{-1}(V) \ar[d] \ar@<-0.3ex>@{^{(}->}[r] 
\ar@{}[dr]|\square
& \mathrm{Rep}_Q(\vec{m}) \ar[d] \ar@/^40pt/[dd]^-{\pi_Q} \\
\mM_{(Q, \partial W)}(\vec{m})|_{V} \ar[d]^{p_{(Q, \partial W)}}
\ar@<-0.3ex>@{^{(}->}[r] & p_Q^{-1}(V) \ar[d] \ar@<-0.3ex>@{^{(}->}[r] 
\ar@{}[dr]|\square
& \mM_Q(\vec{m}) \ar[d]^{p_Q} \\
M_{(Q, \partial W)}(\vec{m})|_{V}
\ar@<-0.3ex>@{^{(}->}[r] & V  \ar@<-0.3ex>@{^{(}->}[r] 
& M_Q(\vec{m}).
}
\end{align}
Here the right horizontal arrows are open immersions
and the left horizontal arrows are closed immersions.

Let $\xi$ be as in (\ref{xi}) which defines the 
$\mu_{\xi}$-stability on the category of $Q$-representations, and 
\begin{align}\label{rep:xi}
\mathrm{Rep}_{(Q, \partial W)}^{\xi}(\vec{m})|_{V}
\subset \mathrm{Rep}_{(Q, \partial W)}(\vec{m})|_{V}
\end{align}
be the open locus consisting of $\mu_{\xi}$-semistable 
$Q$-representations. Similarly to (\ref{def:MW}), we define
\begin{align*}
&\mM_{(Q, \partial W)}^{\xi}(\vec{m})|_{V}
\cneq [\mathrm{Rep}_{(Q, \partial W)}^{\xi}(\vec{m})|_{V}/G], \\ 
&M_{(Q, \partial W)}^{\xi}(\vec{m})|_{V}
\cneq \mathrm{Rep}_{(Q, \partial W)}^{\xi}(\vec{m})|_{V}\sslash G. 
\end{align*}
Then we have the commutative diagram
\begin{align}\label{dia:arrow}
\xymatrix{
 \mM_{(Q, \partial W)}^{\xi}(\vec{m})|_{V} 
\ar@<-0.3ex>@{^{(}->}[r]^{j_{(Q, \partial W)}^{\xi}}
\ar[d]_-{p_{(Q, \partial W)}^{\xi}} \ar[rrd]
& \mM_{(Q, \partial W)}(\vec{m})|_{V} \ar[rrd] \ar[d]^-{p_{(Q, \partial W)}}|\hole  & & \\
M_{(Q, \partial W)}^{\xi}(\vec{m})|_{V} \ar[r]^-{q_{(Q, \partial W)}^{\xi}} \ar[rrd] & M_{(Q, \partial W)}(\vec{m})|_{V} \ar[rrd]|(.55)\hole
& \mM_{Q}^{\xi}(\vec{m}) \ar@<-0.3ex>@{^{(}->}[r]_{j_Q^{\xi}} \ar[d]_-{p_Q^{\xi}} & \mM_{Q}(\vec{m}) \ar[d]^-{p_Q} \\
& & M_Q^{\xi}(\vec{m}) \ar[r]_-{q^{\xi}_Q} & M_Q(\vec{m}).
}
\end{align}
Here $j_{(Q, \partial W)}^{\xi}$ is an open immersion, the morphisms 
$p_{(Q, \partial W)}^{\xi}$, $p_{(Q, \partial W)}$ are the natural morphisms to the 
coarse moduli spaces, 
$q_{(Q, \partial W)}^{\xi}$ is the induced morphism by the universality 
of analytic Hilbert quotients
and the slanting 
arrows are locally closed embeddings. 

\subsection{Vanishing cycles for quivers with convergent super-potentials}
We have the following proposition on the analytic stack 
$\mM_{(Q, \partial W)}(\vec{m})|_{V}$
given in (\ref{def:MW}):
\begin{prop}\label{prop:vir:tri}
The analytic stack $\mM_{(Q, \partial W)}(\vec{m})|_{V}$
has an analytic $d$-critical structure 
given by 
\begin{align*}
d(\tr W) +(d(\tr W))^2 \in H^0(\sS_{\mM_{(Q, \partial W)}(\vec{m})|_{V}}^{0}).
\end{align*}
Here $\tr W$ is the function (\ref{tr:W}). 
Moreover if $Q$ is symmetric, there is an 
orientation data of 
$\mM_{(Q, \partial W)}(\vec{m})|_{V}$ 
which is trivial as a line bundle. 
\end{prop}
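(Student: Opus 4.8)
The plan is first to realise $\mM_{(Q,\partial W)}(\vec{m})|_{V}$ as a critical locus, so that the $d$-critical structure is a direct instance of Example~\ref{exam:dstack}, and then to compute its virtual canonical line bundle through the resulting global $d$-critical chart, where the symmetry hypothesis on $Q$ will force triviality. For the first assertion, recall that $\mathrm{Rep}_Q(\vec{m})=\prod_{e\in E(Q)}\Hom(V_{s(e)},V_{t(e)})$ is an affine space, so $\pi_Q^{-1}(V)\subset\mathrm{Rep}_Q(\vec{m})$ is a complex manifold carrying a holomorphic $G$-action, and by construction (\ref{tr:W}) the function $\tr W\colon\pi_Q^{-1}(V)\to\mathbb{C}$ is $G$-invariant with $\mathrm{Rep}_{(Q,\partial W)}(\vec{m})|_{V}=\{d(\tr W)=0\}$. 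Applying Example~\ref{exam:dstack} with this complex manifold, the reductive group $G$, and the $G$-invariant function $\tr W$ then yields the asserted analytic $d$-critical structure $\tr W+(d(\tr W))^2$ on $[\{d(\tr W)=0\}/G]=\mM_{(Q,\partial W)}(\vec{m})|_{V}$; here, as usual, one first subtracts from $\tr W$ its locally constant restriction to the reduced critical locus so that the section genuinely lies in $\sS^{0}$.

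For the orientation I would use that the previous step provides a single global $d$-critical chart. Set $T\cneq\mathrm{Rep}_{(Q,\partial W)}(\vec{m})|_{V}$, with the smooth morphism $t\colon T\to\mM_{(Q,\partial W)}(\vec{m})|_{V}$, which is a $G$-torsor; then $(T,\pi_Q^{-1}(V),\tr W,i)$ is a $d$-critical chart for the induced $d$-critical scheme $(T,s(T,t))$, so (\ref{nat:K0}) gives $K_{T,s(T,t)}\cong K_{\pi_Q^{-1}(V)}^{\otimes2}|_{T^{\mathrm{red}}}$. Moreover the relative cotangent $\Omega_{T/\mM_{(Q,\partial W)}(\vec{m})|_{V}}$ is pulled back from $BG$, with associated determinant line $\det(\mathfrak{g}^{\vee})$ ($\mathfrak{g}=\mathrm{Lie}\,G$) carrying the coadjoint action, which is canonically trivial since $G$ is reductive; feeding this and the displayed isomorphism into (\ref{nat:kstack}) identifies $K_{\mM_{(Q,\partial W)}(\vec{m})|_{V}}$, as a line bundle on the reduced stack, with the descent along $t$ of the $G$-equivariant line bundle $K_{\pi_Q^{-1}(V)}^{\otimes2}$ on $\pi_Q^{-1}(V)$. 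As $\pi_Q^{-1}(V)$ is an open subset of an affine space, $K_{\pi_Q^{-1}(V)}$ is trivial as a line bundle, so the whole question reduces to whether the $G$-equivariant structure on it --- i.e.\ a character $\chi\colon G\to\mathbb{C}^{\times}$ --- is trivial.

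Computing $\chi$ is the crux, and it is exactly here that Definition~\ref{def:symmetric} is used. Writing $g=(g_i)_{i\in V(Q)}$, the action (\ref{G:act}) on the factor $\Hom(V_{s(e)},V_{t(e)})$ of $\mathrm{Rep}_Q(\vec{m})$ has Jacobian $\det(g_{s(e)})^{m_{t(e)}}\det(g_{t(e)})^{-m_{s(e)}}$, so $g$ scales a generating holomorphic volume form of $\mathrm{Rep}_Q(\vec{m})$ by $\prod_{k\in V(Q)}\det(g_k)^{N_k}$ with $N_k=\sum_{j\in V(Q)}\bigl(\sharp E_{k,j}-\sharp E_{j,k}\bigr)m_j$; when $Q$ is symmetric one has $\sharp E_{k,j}=\sharp E_{j,k}$ for all $k,j$, hence every $N_k=0$ and $\chi$ (so also $\chi^{2}$) is trivial. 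Therefore $K_{\mM_{(Q,\partial W)}(\vec{m})|_{V}}$ is the trivial line bundle, and one obtains the required CY orientation data by taking the square root to be the structure sheaf of the reduced stack together with the tautological isomorphism. I expect the only real care is the bookkeeping with Joyce's gluing isomorphisms (\ref{nat:K0}) and (\ref{nat:kstack}) along the $G$-torsor $t$ --- ensuring the triviality is $G$-equivariant and so descends --- rather than any substantive difficulty; the Jacobian computation is routine linear algebra, and its content is precisely that the symmetry of $Q$ is the cancellation that kills the character.
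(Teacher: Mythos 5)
Your proof is correct and takes essentially the same route as the paper: realize $\mM_{(Q,\partial W)}(\vec{m})|_{V}$ as the critical locus of the $G$-invariant function $\tr W$ on $\pi_Q^{-1}(V)$ and invoke Example~\ref{exam:dstack}, then identify $K^{\rm vir}_{\mM_{(Q,\partial W)}(\vec{m})|_{V}}$ with (the descent of) $K_{\mathrm{Rep}_Q(\vec{m})}^{\otimes 2}$ as a $G$-equivariant line bundle and observe that the character $(g_i)\mapsto\prod_e(\det g_{s(e)})^{m_{t(e)}}(\det g_{t(e)})^{-m_{s(e)}}$ is trivial precisely when $Q$ is symmetric. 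Your argument spells out the identification of $K^{\rm vir}$ more explicitly through (\ref{nat:K0}) and (\ref{nat:kstack}) and the triviality of $\det\mathfrak{g}^{\vee}$, where the paper simply asserts the $G$-equivariant isomorphism directly, but the mathematical content and the crucial character computation are the same.
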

\begin{proof}
The first statement follows 
from the definition of $\mM_{(Q, \partial W)}(\vec{m})|_{V}$
in (\ref{def:MW})
and Example~\ref{exam:dstack}. 
Below we prove the second statement. 
The virtual canonical line bundle
of $\mM_{(Q, \partial W)}(\vec{m})|_{V}$, 
regarded as 
a $G$-equivariant line bundle on 
$\mathrm{Rep}_{(Q, \partial W)}^{\rm{red}}(\vec{m})|_{V}$, 
 is given by
\begin{align*}
K_{\mM_{(Q, \partial W)}(\vec{m})|_{V}}^{\rm{vir}}
=K_{\mathrm{Rep}_Q(\vec{m})}^{\otimes 2}
|_{\mathrm{Rep}_{(Q, \partial W)}^{\rm{red}}(\vec{m})|_{V}}
\in \Pic_G(\mathrm{Rep}_{(Q, \partial W)}^{\rm{red}}(\vec{m})|_{V}). 
\end{align*}
Here $\Pic_G(-)$ is the group of 
$G$-equivariant line bundles of $(-)$. 
Therefore we have an orientation 
data of $\mM_{(Q, \partial W)}(\vec{m})|_{V}$ given by
\begin{align}\label{orient:quiver}
K_{\mM_{(Q, \partial W)}(\vec{m})|_{V}}^{\rm{vir}, 1/2}
=K_{\mathrm{Rep}_Q(\vec{m})}
|_{\mathrm{Rep}_{(Q, \partial W)}^{\rm{red}}(\vec{m})|_{V}}. 
\end{align}
So the second statement follows if 
for a symmetric quiver $Q$ we have 
\begin{align}\label{Kvir:Q}
K_{\mathrm{Rep}_Q(\vec{m})}\cong 
\oO_{\mathrm{Rep}_{Q}(\vec{m})}
\end{align}
as $G$-equivariant line bundles on 
$\mathrm{Rep}_{Q}(\vec{m})$. 
Since $\mathrm{Rep}_Q(\vec{m})$ is an affine space, 
we have an isomorphism (\ref{Kvir:Q}) as line bundles. 
The $G$-equivariant structure on 
$K_{\mathrm{Rep}_Q(\vec{m})}$ 
is given by the following character 
$G \to \mathbb{C}^{\ast}$
\begin{align*}
(g_i)_{i \in V(Q)} \mapsto 
\prod_{e \in E(Q)} (\det g_{s(e)})^{m_{t(e)}}
 \cdot (\det g_{t(e)})^{-m_{s(e)}}. 
\end{align*}
The symmetric condition of $Q$, 
$\sharp E_{i, j}=\sharp E_{j, i}$ implies that the 
above character is 
trivial $G \to 1 \in \mathbb{C}^{\ast}$. 
Therefore we have an isomorphism 
(\ref{Kvir:Q}) as $G$-equivariant line bundles. 
\end{proof}

Using the orientation data (\ref{orient:quiver})
of $\mM_{(Q, \partial W)}(\vec{m})|_{V}$,
which is trivial as a line bundle,  
we have the 
perverse sheaf of vanishing cycles
\begin{align}\label{phi:mM}
\phi_{\mM_{(Q, \partial W)}(\vec{m})|_{V}} \in 
\Perv(\mM_{(Q, \partial W)}(\vec{m})|_{V}). 
\end{align}
The above perverse sheaf, regarded 
as a $G$-equivariant perverse sheaf on $\mathrm{Rep}_{(Q, \partial{W}), V}(\vec{m})$, 
is  
nothing but the perverse sheaf of vanishing cycles
of the $G$-invariant function (\ref{tr:W}). 
The above oriented 
$d$-critical stack structure 
on $\mM_{(Q, \partial W)}(\vec{m})|_{V}$
induces the one on its open 
substack $\mM_{(Q, \partial W)}^{\xi}(\vec{m})|_{V}$. 
The associated perverse sheaf of vanishing cycles
\begin{align*}
\phi_{\mM_{(Q, \partial W)}^{\xi}(\vec{m})|_{V}}
\in \Perv(\mM_{(Q, \partial W)}^{\xi}(\vec{m})|_{V})
\end{align*}
is a pull-back of (\ref{phi:mM}) 
by $j_{(Q, \partial W)}^{\xi}$ in the diagram (\ref{dia:arrow}). 
\begin{lem}\label{lem:phivanish}
For any $i\in \mathbb{Z}$, 
in the diagram (\ref{dia:arrow})
we have 
\begin{align*}
\dR q_{(Q, \partial W)\ast}^{\xi}\pH^i(\dR p_{(Q, \partial W)\ast}^{\xi}\phi_{\mM_{(Q, \partial W)}^{\xi}(\vec{m})|_{V}} ) 
\in \Perv(M_{(Q, \partial W)}(\vec{m})|_{V}). 
\end{align*}
\end{lem}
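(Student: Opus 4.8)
The plan is to deduce the Lemma from its IC-sheaf counterpart, Lemma~\ref{lem:perv}, by applying the vanishing cycle functor of the descended super-potential, using three properties of $\phi$ (taken in the perverse-normalized convention of~\cite[Theorem~5.2.21]{Dimbook} adopted here): that $\phi_g$ is perverse t-exact for any holomorphic $g$, that $\phi$ commutes with proper pushforward, and --- the one substantial point --- that $\phi$ commutes with the good moduli space morphism $p_Q^\xi$.

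First I would rewrite the object in question purely in terms of $\mM_Q^\xi(\vec{m})$ and the function $\tr W$. By Theorem~\ref{thm:MR3353002}(ii) and Proposition~\ref{prop:vir:tri}, with the trivial orientation data (\ref{orient:quiver}) the perverse sheaf $\phi_{\mM_{(Q,\partial W)}^\xi(\vec{m})|_V}$ is, under the closed embedding into the smooth stack $\mM_Q^\xi(\vec{m})|_V$ (the preimage of $V$), the vanishing cycle sheaf $\phi_{\tr W}(\IC_{\mM_Q^\xi(\vec{m})})$ of the $G$-invariant analytic function (\ref{tr:W}), up to the standard shift conventions for stacks. Since $p_{Q\ast}\oO_{\mM_Q(\vec{m})}=\oO_{M_Q(\vec{m})}$, the function $\tr W$ descends to $\tr\overline W$ on $V\subset M_Q(\vec{m})$ and pulls back to $M_Q^\xi(\vec{m})|_V$. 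As the morphisms $p_{(Q,\partial W)}^\xi$, $q_{(Q,\partial W)}^\xi$ in (\ref{dia:arrow}) are the restrictions of $p_Q^\xi$, $q_Q^\xi$ to the closed sub-objects cut out by $d(\tr W)=0$, and closed immersions are perverse t-exact, the object of the Lemma is identified (over $V$, up to shift) with
\[
\dR q_{Q\ast}^\xi\,\pH^i\!\bigl(\dR p_{Q\ast}^\xi(\phi_{\tr W}\IC_{\mM_Q^\xi(\vec{m})})\bigr),
\]
which is automatically supported on the closed subspace $M_{(Q,\partial W)}(\vec{m})|_V\subset V$.

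Next I would commute $\phi$ past the two pushforwards. The morphism $q_Q^\xi$ is projective, hence proper, so $\phi_{\tr\overline W}$ commutes with $\dR q_{Q\ast}^\xi$; this is formal. The key input is that $\phi$ also commutes with the stacky, non-proper morphism $p_Q^\xi$, i.e.
\[
\dR p_{Q\ast}^\xi\bigl(\phi_{\tr W}\IC_{\mM_Q^\xi(\vec{m})}\bigr)\cong \phi_{\tr\overline W}\bigl(\dR p_{Q\ast}^\xi\IC_{\mM_Q^\xi(\vec{m})}\bigr).
\]
This is the framed-moduli approximation argument of Davison--Meinhardt~\cite{DaMe}: $p_Q^\xi$ is approximated by the proper maps $\pi_Q^{\vec{w},\xi}$ from the smooth framed moduli used in the proof of Lemma~\ref{lem:small} (whose total spaces carry lifts of $\tr W$), $\phi$ commutes with these proper maps, and one passes to the limit in each fixed cohomological degree, using the purity of the relevant stacks. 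Granting this and the t-exactness of $\phi_{\tr\overline W}$, taking $\pH^i$ and commuting $\dR q_{Q\ast}^\xi$ past $\phi$ identifies the displayed object with
\[
\phi_{\tr\overline W}\Bigl(\dR q_{Q\ast}^\xi\,\pH^i\!\bigl(\dR p_{Q\ast}^\xi\IC_{\mM_Q^\xi(\vec{m})}\bigr)\Bigr)\Big|_V .
\]

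Finally, Lemma~\ref{lem:perv} gives that $\dR q_{Q\ast}^\xi\,\pH^i(\dR p_{Q\ast}^\xi\IC_{\mM_Q^\xi(\vec{m})})$ is a perverse sheaf on $M_Q(\vec{m})$; applying the perverse t-exact functor $\phi_{\tr\overline W}$ and restricting to the open set $V$ gives a perverse sheaf on $V$, which --- being supported on the closed subspace $M_{(Q,\partial W)}(\vec{m})|_V$ --- is perverse there, as claimed. I expect the only genuinely non-formal step to be the commutation of $\phi$ with the good moduli space morphism $p_Q^\xi$: this requires the Davison--Meinhardt approximation by framed moduli together with the attendant purity statements, whereas the remaining steps are bookkeeping with t-exactness, properness of $q_Q^\xi$, and the structure of the diagram (\ref{dia:arrow}).
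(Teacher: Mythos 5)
Your proposal is correct and takes essentially the same route as the paper: both reduce to Lemma~\ref{lem:perv} by commuting the vanishing cycle functor of the descended function $\tr\overline{W}$ past the pushforwards $q_Q^{\xi}$ (formal, by properness) and $p_Q^{\xi}$ (the substantial step, which the paper also handles by citing~\cite[Proposition~4.3]{DaMe} and noting the argument extends to convergent super-potentials), together with perverse t-exactness of $\phi$. The only difference is presentational --- you run the chain of isomorphisms from the lemma's object back to the IC-sheaf statement, whereas the paper runs it forward from $\phi_{\tr\overline{W}}$ applied to the object of Lemma~\ref{lem:perv} --- and you add some useful detail on why the $p_Q^{\xi}$-commutation holds.
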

\begin{proof}
Since we have 
$p_{Q\ast}\oO_{\mM_Q(\vec{m})}=\oO_{M_Q(\vec{m})}$, which 
also holds after analytification (see~\cite[Lemma~2.7]{Todstack}), 
the function (\ref{tr:W}) descends to the 
analytic function $\tr \overline{W}$ on $V$
\begin{align*}
\tr W \colon \pi_Q^{-1}(V) \to V \stackrel{\tr \overline{W}} \to \mathbb{C}. 
\end{align*}
We restrict the perverse sheaf (\ref{perv:M})
to $V$ and apply the vanishing cycle functor 
of $\tr \overline{W}$. 
Since the vanishing cycle functor preserves the perverse
t-structures, for any $i \in \mathbb{Z}$ we have 
\begin{align*}
\phi_{\tr \overline{W}}((\dR q_{Q \ast}^{\xi} \pH^i(\dR p_{Q\ast}^{\xi} \IC_{\mM_Q^{\xi}(\vec{m})}))|_{V}) \in \Perv(V). 
\end{align*}
On the other hand, by pulling back the diagram (\ref{com:MQ}) to $V$, 
we obtain the diagram
\begin{align}\label{com:MQ2}
\xymatrix{
(r_Q^{\xi})^{-1}(V)
\ar@<-0.3ex>@{^{(}->}[r]^-{j_Q^{\xi}} \ar[d]_-{p_Q^{\xi}}
\ar[rd]^-{r_Q^{\xi}} & 
p_Q^{-1}(V) \ar[d]^-{p_Q} \ar@/^20pt/[rdd]^{\tr W} & \\
(q_Q^{\xi})^{-1}(V) \ar[r]_-{q^{\xi}_Q} \ar@/_20pt/[rrd]^{\tr^{\xi} 
\overline{W}} & V 
\ar[rd]^-{\tr \overline{W}} & \\
  & & \mathbb{C}. 
}
\end{align}
We have isomorphisms
\begin{align*}
\phi_{\tr \overline{W}}((\dR q_{Q \ast}^{\xi} \pH^i(\dR p_{Q\ast}^{\xi} \IC_{\mM_Q^{\xi}(\vec{m})}))|_{V}) 
&\cong \dR q_{Q \ast}^{\xi}(\phi_{\tr^{\xi} \overline{W}}
\pH^i(\dR p_{Q\ast}^{\xi} \IC_{(r_Q^{\xi})^{-1}(V)})) \\
& \cong \dR q_{Q \ast}^{\xi} \pH^i(\phi_{\tr^{\xi} \overline{W}}
\dR p_{Q\ast}^{\xi} \IC_{(r_Q^{\xi})^{-1}(V)}) \\
&\cong \dR q_{Q \ast}^{\xi}
\pH^i(\dR p_{Q\ast}^{\xi}\phi_{\tr W}(\IC_{(r_Q^{\xi})^{-1}(V)})). 
\end{align*}
Here the first isomorphism 
follows from the compatibility of
vanishing cycle functors with proper push-forward 
(see~\cite[Proposition~4.2.11]{Dimbook}). 
The second isomorphism follows from 
that the vanishing cycle functor preserves the 
perverse t-structure. 
The third isomorphism is proved in~\cite[Proposition~4.3]{DaMe}
when $W$ is a usual super-potential 
$W \in \mathbb{C}[Q]/[\mathbb{C}[Q], \mathbb{C}[Q]]$, and 
the same argument applies for the 
convergent super-potential case. 
Therefore the lemma holds. 
\end{proof}
In the diagram (\ref{dia:arrow}), we
have the canonical morphism
\begin{align*}
\phi_{\mM_{(Q, \partial W)}(\vec{m})|_{V}}
\to
\dR j_{(Q, \partial W)\ast}^{\xi} \phi_{\mM_{(Q, \partial W)}^{\xi}(\vec{m})|_{V}}. 
\end{align*}
By pushing forward it to $M_{(Q, \partial W)}(\vec{m})|_{V}$, 
we obtain the morphism
\begin{align*}
\dR p_{(Q, \partial W)\ast}
\phi_{\mM_{(Q, \partial W)}(\vec{m})|_{V}}
\to \dR q_{(Q, \partial W)\ast}^{\xi}\dR p_{(Q, \partial W)\ast}^{\xi}
\phi_{\mM_{(Q, \partial W)}^{\xi}(\vec{m})|_{V}}. 
\end{align*}
By Lemma~\ref{lem:phivanish}, 
taking the first perverse cohomologies above gives
the morphism
\begin{align}\label{cmor:phi}
\pH^1(
\dR p_{(Q, \partial W)\ast}
&\phi_{\mM_{(Q, \partial W)}(\vec{m})|_{V}}) \\ \notag
&\to \dR q_{(Q, \partial W)\ast}^{\xi} \pH^1(\dR p_{(Q, \partial W)\ast}^{\xi}
\phi_{\mM_{(Q, \partial W)}^{\xi}(\vec{m})|_{V}}).
\end{align}
\begin{lem}\label{lem:cmor:phi}
The morphism (\ref{cmor:phi}) is an isomorphism. 
\end{lem}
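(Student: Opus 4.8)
The plan is to deduce the lemma from its IC-sheaf counterpart, Lemma~\ref{isom:perj}, by applying the vanishing cycle functor of the descended function $\tr \overline{W}$ on $V$. First I would restrict the morphism (\ref{per:j}) to the open subset $V \subset M_Q(\vec{m})$ of (\ref{V:open}); by Lemma~\ref{isom:perj} this restriction is an isomorphism of perverse sheaves on $V$. Since $\phi_{\tr \overline{W}}$ is exact for the perverse t-structure, applying it produces an isomorphism of perverse sheaves on $M_{(Q, \partial W)}(\vec{m})|_{V}$.

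Second, I would identify the target of this isomorphism with the right-hand side of (\ref{cmor:phi}). This is exactly the content of the chain of isomorphisms established inside the proof of Lemma~\ref{lem:phivanish}: using compatibility of vanishing cycles with proper push-forward (\cite[Proposition~4.2.11]{Dimbook}), perverse t-exactness of $\phi$, and the convergent super-potential version of \cite[Proposition~4.3]{DaMe}, one obtains
\begin{align*}
\phi_{\tr \overline{W}}\bigl((\dR q_{Q \ast}^{\xi} \pH^1(\dR p_{Q\ast}^{\xi}\IC_{\mM_Q^{\xi}(\vec{m})}))|_{V}\bigr) \cong \dR q_{(Q, \partial W)\ast}^{\xi} \pH^1(\dR p_{(Q, \partial W)\ast}^{\xi}\phi_{\mM_{(Q, \partial W)}^{\xi}(\vec{m})|_{V}}).
\end{align*}
For the source I would run the same argument after deleting $j_Q^{\xi}$ everywhere, i.e. replacing the diagram (\ref{com:MQ2}) by its analogue built from $p_Q^{-1}(V) \to V \xrightarrow{\tr \overline{W}} \mathbb{C}$, which gives
\begin{align*}
\phi_{\tr \overline{W}}\bigl(\pH^1(\dR p_{Q\ast}\IC_{\mM_Q(\vec{m})})|_{V}\bigr) \cong \pH^1(\dR p_{(Q, \partial W)\ast}\phi_{\mM_{(Q, \partial W)}(\vec{m})|_{V}}).
\end{align*}

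Third, I would verify that under these identifications $\phi_{\tr \overline{W}}$ carries the restricted morphism (\ref{per:j}) to the morphism (\ref{cmor:phi}). Both morphisms are obtained by applying $\pH^1 \circ \dR p_{\ast}$ to the $(j^{\xi\ast}, \dR j^{\xi}_{\ast})$-adjunction unit for the relevant open immersion, so this reduces to the compatibility of the vanishing cycle functor with adjunction along an open immersion and with restriction to the open set $V$, both of which are standard. Granting this, (\ref{cmor:phi}) is an isomorphism because it is $\phi_{\tr \overline{W}}$ applied to the isomorphism of Lemma~\ref{isom:perj}.

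The main obstacle is the bookkeeping in the third step rather than any new geometric input: one must track the various base-change and adjunction $2$-morphisms through $\phi_{\tr \overline{W}}$ and check that the composite computing the right-hand side of (\ref{cmor:phi}) is genuinely compatible with the adjunction unit, not merely isomorphic to it up to an automorphism. A secondary point to check is that $p_{(Q, \partial W)}^{\xi}$ and $q_{(Q, \partial W)}^{\xi}$ are proper enough --- as already used in Lemma~\ref{lem:phivanish} --- for the proper push-forward form of the vanishing-cycle compatibility to apply directly.
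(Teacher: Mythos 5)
Your proposal is correct and follows essentially the same route as the paper: restrict the morphism (\ref{per:j}) to $V$, apply the perverse-t-exact functor $\phi_{\tr \overline{W}}$, identify source and target with the two sides of (\ref{cmor:phi}) via the isomorphisms established in (the proof of) Lemma~\ref{lem:phivanish} (proper base change for vanishing cycles, t-exactness, and the convergent analogue of \cite[Proposition~4.3]{DaMe}), and conclude from Lemma~\ref{isom:perj}. The paper leaves the compatibility of $\phi_{\tr\overline{W}}$ with the adjunction unit implicit; you rightly flag it as the point requiring care, which is a reasonable level of skepticism but no more than the standard bookkeeping the paper takes for granted.
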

\begin{proof}
We use the notation in the proof of Lemma~\ref{lem:phivanish}. 
By the functoriality of vanishing cycle functors, we have 
the following natural commutative diagram
\begin{align*}
	\xymatrix{
\phi_{\tr \overline{W}}(\dR p_{Q\ast}\IC_{\mM_Q(\vec{m})}|_{V})	
\ar[r] \ar[d] & \dR p_{Q\ast}\phi_{\tr W}(\IC_{p_Q^{-1}(V)}) \ar[d] \\
\phi_{\tr \overline{W}}(\dR p_{Q\ast}j_{Q\ast}^{\xi}\IC_{\mM_Q^{\xi}(\vec{m})}|_{V}) \ar[r] & 
\dR p_{Q\ast}j_{Q\ast}^{\xi}\phi_{\tr W}(\IC_{(r_Q^{\xi})^{-1}(V)}).
}
	\end{align*}
We take the first perverse cohomologies of the 
above diagram. 
Noting that the vanishing cycle functor commutes 
with perverse cohomology functors, 
the natural 
isomorphism $\dR p_{Q\ast} j_{Q\ast}^{\xi} \cong \dR q_{Q\ast}^{\xi}\dR p_{Q\ast}^{\xi}$ by the diagram (\ref{com:MQ2}), 
and using Lemma~\ref{lem:phivanish}, 
the above diagram gives the following commutative diagram 
\begin{align*}
	\xymatrix{
		\phi_{\tr \overline{W}}(\pH^1(\dR p_{Q\ast}\IC_{\mM_Q(\vec{m})})|_{V})
		\ar[r] \ar[d] &
		\pH^1(\dR p_{Q\ast} \phi_{\tr W}(\IC_{p_Q^{-1}(V)}))
		\ar[d] \\
		\phi_{\tr \overline{W}}((\dR q_{Q\ast}^{\xi} 
		\pH^1(\dR p_{Q\ast}^{\xi}\IC_{\mM_Q^{\xi}(\vec{m})}))|_{V}) 
		\ar[r] & 
		\dR q_{Q\ast}^{\xi} 
		\pH^1(\dR p_{Q\ast}^{\xi} \phi_{\tr W}(\IC_{(r_Q^{\xi})^{-1}(V)})).
		}
\end{align*}
The left vertical arrow is an isomorphism by Lemma~\ref{isom:perj}. 
Moreover the horizontal arrows can be shown 
to be isomorphisms by the same argument of 
Lemma~\ref{lem:phivanish}. Therefore the right vertical 
arrow is also an isomorphism, which implies that (\ref{cmor:phi})
 is an isomorphism. 
\end{proof}

\section{Wall-crossing formula for GV type invariants}\label{sec:wcf:gv}
In this section, using the results in the previous 
sections and the results in~\cite{Todstack}, we prove Theorem~\ref{intro:thm1}
and Theorem~\ref{intro:thm2}. 
\subsection{Ext-quiver}
Let $X$ be a smooth projective CY 3-fold. 
For $\sigma=\sigma_{B, \omega} \in U(X)$
and $v=(\beta, m) \in \Gamma_X$, 
we consider the stack 
$\mM_{\sigma}(v)$ and its coarse moduli space
$M_{\sigma}(v)$
given 
in Subsection~\ref{subsec:twist}.
We have the natural
morphism
\begin{align*}
p_M \colon \mM_{\sigma}(v) \to M_{\sigma}(v).
\end{align*} 
For a closed 
point $p\in M_{\sigma}(v)$, it 
is represented by a $(B, \omega)$-polystable 
sheaf $E$ of the form
\begin{align}\label{poly}
E=\bigoplus_{i=1}^k V_i \otimes E_i
\end{align}
where $E_i \in \Coh_{\le 1}(X)$ is $(B, \omega)$-stable 
with $\mu_{B, \omega}(E_i)=\mu_{B, \omega}(E)$, 
and $E_i \not\cong E_j$ for $i\neq j$. 

For each $1\le i, j \le k$, 
we fix a finite subset
\begin{align}
E_{i, j} \subset \Ext^1(E_i, E_j)^{\vee}
\end{align}
giving a basis of $\Ext^1(E_i, E_j)^{\vee}$. 
The \textit{Ext-quiver} $Q_{E_{\bullet}}$ 
of $E_{\bullet}$
is defined as follows. 
The set of vertices and edges are given by 
\begin{align*}
V(Q_{E_{\bullet}})=\{1, 2, \ldots, k\}, \ 
E(Q_{E_{\bullet}})=\coprod_{1\le i, j \le k}
E_{i,j}. 
\end{align*}
The maps $s, t \colon 
E(Q_{E_{\bullet}}) \to V(Q_{E_{\bullet}})$
are given by 
\begin{align*}
s|_{E_{i, j}}=i, \ t|_{E_{i, j}}=j. 
\end{align*}
\begin{lem}
The quiver $Q_{E_{\bullet}}$ is symmetric. 
\end{lem}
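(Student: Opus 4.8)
The claim is that $\sharp E_{i,j}=\sharp E_{j,i}$ for all $i,j\in\{1,\dots,k\}$, and since $E_{i,j}$ was chosen to be a basis of $\Ext^1(E_i,E_j)^{\vee}$, this is equivalent to
\[
\dim \Ext^1(E_i,E_j)=\dim \Ext^1(E_j,E_i).
\]
The plan is to derive this from Serre duality on the Calabi--Yau 3-fold $X$ together with the vanishing of the Euler pairing between objects of $\Coh_{\le 1}(X)$.

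First I would record the two inputs. Since $K_X=0$, Serre duality gives natural isomorphisms $\Ext^{\ell}(E_a,E_b)\cong\Ext^{3-\ell}(E_b,E_a)^{\vee}$ for all $a,b$ and all $\ell$; in particular $\dim\Hom(E_a,E_b)=\dim\Ext^3(E_b,E_a)$ and $\dim\Ext^1(E_a,E_b)=\dim\Ext^2(E_b,E_a)$, and $\Ext^{\ell}(E_a,E_b)=0$ unless $0\le\ell\le3$. Next I would observe that the Euler pairing $\chi(E_i,E_j)\cneq\sum_{\ell}(-1)^{\ell}\dim\Ext^{\ell}(E_i,E_j)$ vanishes. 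Indeed, by Hirzebruch--Riemann--Roch it equals $\int_X\ch(E_i)^{\vee}\ch(E_j)\td(X)$, and since $E_i,E_j\in\Coh_{\le1}(X)$ their Chern characters lie in $\bigoplus_{p\ge2}H^{2p}(X,\mathbb{Q})$, so the product $\ch(E_i)^{\vee}\ch(E_j)$ lies in $\bigoplus_{p\ge4}H^{2p}(X,\mathbb{Q})=0$ because $\dim X=3$; hence $\chi(E_i,E_j)=0$. (This is the familiar fact that the Euler form is identically zero on $\Coh_{\le1}$ of a 3-fold.)

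Finally I would combine the two. The case $i=j$ is trivial, so fix $i\neq j$. Since $E_i$ and $E_j$ are $(B,\omega)$-stable with $\mu_{B,\omega}(E_i)=\mu_{B,\omega}(E_j)$ and $E_i\not\cong E_j$, we have $\Hom(E_i,E_j)=\Hom(E_j,E_i)=0$, and by Serre duality also $\Ext^3(E_i,E_j)\cong\Hom(E_j,E_i)^{\vee}=0$. Therefore
\[
0=\chi(E_i,E_j)=-\dim\Ext^1(E_i,E_j)+\dim\Ext^2(E_i,E_j),
\]
and since $\dim\Ext^2(E_i,E_j)=\dim\Ext^1(E_j,E_i)$ by Serre duality, we obtain $\dim\Ext^1(E_i,E_j)=\dim\Ext^1(E_j,E_i)$, i.e. $\sharp E_{i,j}=\sharp E_{j,i}$. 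The argument is essentially formal; the only step requiring any care is the Chern-character bookkeeping in the vanishing $\chi(E_i,E_j)=0$, and even that presents no real obstacle.
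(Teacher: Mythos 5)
Your proof is correct and takes essentially the same approach as the paper: Serre duality on the CY 3-fold to convert $\Ext^2(E_i,E_j)$ to $\Ext^1(E_j,E_i)$, the vanishing of $\Hom$ and $\Ext^3$ for non-isomorphic stable sheaves of equal slope, and the vanishing of the Euler pairing by Riemann--Roch. The only difference is that you spell out the Chern-character bookkeeping behind $\chi(E_i,E_j)=0$, where the paper simply cites the Riemann--Roch theorem.
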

\begin{proof}
For $1\le i, j \le k$ with $i\neq j$, 
we have $\Hom(E_i, E_j)=0$. 
Therefore we have 
\begin{align*}
\dim \Ext^1(E_j, E_i)-\dim \Ext^1(E_i, E_j) &=
 \sum_{k \in \mathbb{Z}}
(-1)^k \dim \Ext^k(E_i, E_j) \\
&=0. 
\end{align*}
Here the first equality follows from the Serre duality and 
the second equality follows from the Riemann-Roch theorem. 
Therefore the lemma holds. 
\end{proof}

\subsection{Moduli spaces of semistable sheaves via Ext-quivers}
Let us take another stability condition 
\begin{align}\label{sigma+}
\sigma^{+}=\sigma_{B^{+}, \omega^{+}}=
(Z_{B^{+}, \omega^{+}}, \Coh_{\le 1}(X)) \in U(X).
\end{align} 
We
take $\sigma^{+}$
sufficiently close to $\sigma$. 
Then by wall-chamber structure on the space of 
stability conditions, any $\sigma^{+}$-semistable object $E$
with $\ch(E)=v$ is $\sigma$-semistable. 
Then
we have the commutative diagram
\begin{align}\label{dia:M+}
\xymatrix{
\mM_{\sigma^{+}}(v) \ar@<-0.3ex>@{^{(}->}[r] \ar[dr]_{r_M}
 \ar[d]_{p_M^{+}}
 & \mM_{\sigma}(v) 
\ar[d]^{p_M} \\
M_{\sigma^{+}}(v) \ar[r]_{q_M}  & M_{\sigma}(v). 
}
\end{align}
Here the top arrow is an open immersion, the vertical 
arrows are natural morphisms to the coarse moduli spaces 
and the bottom arrow is induced by the universality of the 
coarse moduli spaces. 

Locally on $M_{\sigma}(v)$, we 
can compare the above diagram with a similar diagram for 
representations of the Ext-quiver with a convergent super-potential. 
For a closed point $p \in M_{\sigma}(v)$
corresponding to a polystable sheaf (\ref{poly}), 
let $Q_{E_{\bullet}}$ be the associated Ext-quiver. 
We take data (\ref{xi}) for the 
Ext-quiver $Q_{E_{\bullet}}$ by 
\begin{align*}
\xi=(\xi_i)_{1\le i\le k}, \ 
\xi_i= Z_{B^{+}, \omega^{+}}(E_i), \ 1\le i\le k. 
\end{align*}
Then we have the associated 
$\mu_{\xi}$-stability condition on
the category of 
$Q_{E_{\bullet}}$-representations. 
Let $\vec{m}$ be the dimension vector of $Q_{E_{\bullet}}$ given by
\begin{align}\label{dimvec:m}
m_i=\dim V_i, \ 1\le i\le k. 
\end{align}
We have the following result: 
\begin{thm}\label{thm:compare}\emph{(\cite[Theorem~7.7]{Todstack})}
For a closed point 
$p\in M_{\sigma}(v)$ corresponding to a polystable 
sheaf (\ref{poly}),
let $Q=Q_{E_{\bullet}}$ be the associated 
Ext-quiver. Then there exist
a convergent super-potential $W$ of $Q$ and 
 analytic 
open neighborhoods 
\begin{align*}
p \in T \subset M_{\sigma}(v), \ 
0 \in V \subset M_{Q}(\vec{m})
\end{align*}
where $\vec{m}$ is the dimension vector (\ref{dimvec:m}), 
such that 
the commutative diagram (\ref{dia:M+}) pulled back to $T$
\begin{align*}
\xymatrix{
r_M^{-1}(T) \ar@<-0.3ex>@{^{(}->}[r] 
 \ar[d]_{p_M^{+}}
 & p_M^{-1}(T)
\ar[d]^{p_M} \\
q_M^{-1}(T) \ar[r]_{q_M}  & T 
}
\end{align*}
is isomorphic to the commutative diagram
(see the diagram (\ref{dia:arrow})): 
\begin{align*}
\xymatrix{
 \mM_{(Q, \partial W)}^{\xi}(\vec{m})|_{V} 
\ar@<-0.3ex>@{^{(}->}[r]^{j_{(Q, \partial W)}^{\xi}}
\ar[d]_-{p_{(Q, \partial W)}^{\xi}} 
& \mM_{(Q, \partial W)}(\vec{m})|_{V} 
 \ar[d]^-{p_{(Q, \partial W)}} \\
M_{(Q, \partial W)}^{\xi}(\vec{m})|_{V} \ar[r]_-{q_{(Q, \partial W)}^{\xi}}
 & M_{(Q, \partial W)}(\vec{m})|_{V}.
}
\end{align*}
\end{thm}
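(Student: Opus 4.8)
The plan is to prove this by an analytic slice argument at the polystable point $E$, reducing the local geometry of $\mM_\sigma(v)$ near $[E]$ to critical-locus geometry on the representation space of the Ext-quiver $Q=Q_{E_\bullet}$; this follows~\cite{Todstack}, whose steps I outline. First I would analyze the automorphism group: since each $E_i$ is $(B,\omega)$-stable of the same slope as $E$ and $E_i\not\cong E_j$ for $i\neq j$, we have $\Hom(E_i,E_j)=\delta_{ij}\mathbb{C}$, so $\Aut(E)=G=\prod_i\GL(V_i)$ with $m_i=\dim V_i$. Using that $\mM_\sigma(v)$ is a GIT quotient stack, so that analytic-locally near $[E]$ it is an analytic Hilbert quotient by $G$ (cf.~\cite{MR1631577,MR3394374}), an analytic version of Luna's slice theorem identifies an analytic neighborhood of $[E]$ in $\mM_\sigma(v)$ with $[\kura(E)/G]$, where $\kura(E)$ is the analytic germ of the deformation space of $E$ with its $G$-action, and correspondingly $M_\sigma(v)$ near $p$ with $\kura(E)\sslash G$.

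Next I would identify $\kura(E)$ through a cyclic $A_\infty$-structure. Deformations of $E$ are governed by the DG-algebra $\RHom_X(E,E)$, whose minimal $A_\infty$-model is a structure $\{m_n\}$ on $\Ext^\ast_X(E,E)=\bigoplus_{i,j}\Ext^\ast(E_i,E_j)\otimes\Hom(V_i,V_j)$. Since $X$ is a Calabi-Yau $3$-fold, Serre duality gives $\RHom_X(E,E)$ a cyclic pairing of degree $-3$, so the minimal model may be taken cyclic; its degree-$1$ part is precisely $\mathrm{Rep}_Q(\vec{m})$, and the cyclic products assemble into a formal super-potential $W_0$ on $\mathrm{Rep}_Q(\vec{m})$, with the Maurer-Cartan equation identifying $\kura(E)$ $G$-equivariantly with the formal critical locus $\{d(\tr W_0)=0\}$.

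The technical heart is to promote $W_0$ from a formal to a \emph{convergent} super-potential, i.e. to show that its coefficients satisfy $\lvert a_{\psi,e_\bullet}\rvert<C^n$ for a uniform constant $C$. For this I would construct the minimal model by homotopy transfer along a contraction whose homotopy operator has controlled operator norm (for instance via a harmonic, Hodge-theoretic splitting of the Dolbeault complex computing $\RHom_X(E,E)$), so that the sum-over-trees formula for $m_n$ yields geometric bounds. Granting convergence, $W_0$ defines a convergent super-potential $W$ of $Q$ in the sense of Subsection~\ref{subsec:conv}; then $\tr W$ converges on $\pi_Q^{-1}(V)$ for some analytic neighborhood $0\in V\subset M_Q(\vec{m})$ (cf.~\cite[Lemma~2.10]{Todstack}), and the formal identification above can be upgraded to an analytic isomorphism of an analytic neighborhood $p\in T\subset M_\sigma(v)$ with $M_{(Q,\partial W)}(\vec{m})|_V$, lifting to $p_M^{-1}(T)\cong\mM_{(Q,\partial W)}(\vec{m})|_V$.

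Finally I would match the open loci in the diagram~(\ref{dia:M+}) coming from $\sigma^{+}$. Choosing $\sigma^{+}$ close enough to $\sigma$, any $\sigma^{+}$-semistable $E'$ with $\ch(E')=v$ near $E$ is $\sigma$-semistable with all its simple factors among the $E_i$, so its $\sigma^{+}$-(semi)stability is determined by the sublattice of classes of the form $\sum_i n_i[E_i]$, which under the slice identification is exactly the lattice of dimension subvectors of $Q$-representations. Taking $\xi_i=Z_{B^{+},\omega^{+}}(E_i)$, the restriction of $Z_{B^{+},\omega^{+}}$ to classes $\sum_i n_i[E_i]$ corresponds to $Z_\xi$ under $\mathbf{dim}$, so $\sigma^{+}$-(semi)stability of the sheaves corresponds to $\mu_\xi$-(semi)stability of the representations; hence $\mM_{\sigma^{+}}(v)\subset\mM_\sigma(v)$ matches $\mM^\xi_{(Q,\partial W)}(\vec{m})|_V\subset\mM_{(Q,\partial W)}(\vec{m})|_V$, and the same on the coarse spaces, yielding the asserted isomorphism of the two diagrams. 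I expect the convergence step to be the main obstacle: formal deformation theory produces only a formal $W_0$, and the whole content of the statement is that $W$ may be taken convergent, which requires genuine analytic control of the $A_\infty$-minimal model rather than purely homological input.
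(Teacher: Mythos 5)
The paper does not prove this theorem; it is cited verbatim as \cite[Theorem~7.7]{Todstack}, so there is no in-paper proof to compare against. What is visible in the present paper about the companion paper's argument appears in Appendix~\ref{sec:append} and Subsection~\ref{subsec:conv}: $W$ is defined from the cyclic minimal $A_\infty$-structure on $\Ext^\ast(E,E)$ with $f_E(u)=\sum_{n\ge 2}\frac{1}{n+1}(m_n(u,\ldots,u),u)$ (cf.\ the remark that ``$\tr W$ coincides with $f_E$'' by \cite[Subsection~5.4]{Todstack}); the identification $\iota$ is the Maurer--Cartan correspondence $I_\ast$ of (\ref{MC:equivalence}) (cf.\ \cite[Proposition~5.3]{Todstack}); and the passage from a formal to an actual function uses \cite[Lemma~2.10]{Todstack} for the convergence of $\tr W$ on $\pi_Q^{-1}(V)$. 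Your proposal hits each of these points: the Luna/analytic-Hilbert-quotient slice, the cyclic $A_\infty$-super-potential, the MC equivalence, the estimate $|a_{\psi,e_\bullet}|<C^n$, and the matching of $\sigma^{+}$-stability with $\mu_\xi$-stability via $\xi_i=Z_{B^{+},\omega^{+}}(E_i)$. So far as it can be checked against the material visible here, the route is the same.

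The one place where you go beyond what the present paper reveals is the convergence mechanism: you propose to establish $|a_{\psi,e_\bullet}|<C^n$ by choosing a homotopy-transfer contraction with bounded operator norm (e.g.\ a Hodge-theoretic splitting of the Dolbeault complex) and estimating the sum-over-trees. That is a natural way to do it, and you correctly flag it as the genuine technical content; the present paper simply does not say how \cite{Todstack} proves it, so this step cannot be checked against the source. Two small points you may wish to make explicit if you write this out: (i) the slice/étale argument has to be phrased in the analytic category because $W$ is only a germ of a holomorphic (not algebraic) function, and the analytic Hilbert quotient $\{d(\tr W)=0\}\sslash G$ must be shown to agree with the coarse moduli space $M_\sigma(v)$ locally (this is nontrivial since analytic GIT is more delicate than algebraic GIT); and (ii) for the stability matching you should invoke that for $\sigma^{+}$ close enough to $\sigma$ every $\sigma^{+}$-Jordan--Hölder factor of a nearby $\sigma^{+}$-semistable sheaf is itself $\sigma$-semistable with all $\sigma$-stable factors among the $E_i$, so that the relevant numerical data really lives on the sublattice spanned by $[E_1],\ldots,[E_k]$; this is exactly where the shrinking of $T$ and the genericity of the wall structure enter.
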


By Theorem~\ref{thm:CYdcrit}, the stack 
$\mM_{\sigma}(v)$ has a canonical 
$d$-critical structure 
$s \in H^0(\sS_{\mM_{\sigma}(v)}^0)$, 
which induces 
the one on its open substack $p_M^{-1}(T)$.
The stack $\mM_{(Q, \partial W)}(\vec{m})|_{V}$
also has a $d$-critical structure by 
Proposition~\ref{prop:vir:tri}.
As we will mention in Remark~\ref{rmk:BPS0}, these $d$-critical 
structures should be the same. 
However we give a weaker statement for the
comparison of $d$-critical structures, which is 
enough for our purpose. 

\begin{prop}\label{prop:compare:d}
	Under the isomorphism 
\begin{align}\label{isom:tau}
\iota\colon \mM_{(Q, \partial W)}(\vec{m})|_{V}
\stackrel{\cong}{\to} p_M^{-1}(T)
\end{align}
in Theorem~\ref{thm:compare}, 
by shrinking $V$ if necessary, 
there is a $G$-invariant analytic function 
$\tr'(W) \colon \pi_Q^{-1}(V) \to \mathbb{C}$
and the identity in 
$H^0(\sS^0_{\mM_{(Q, \partial W)}(\vec{m})|_{V}})
=H^0(\sS^0_{\mathrm{Rep}_{(Q, \partial W)}(\vec{m})|_{V}})^G$ 
\begin{align}\label{id:dst}
\iota^{\ast}(s|_{p_M^{-1}(T)})=\tr' W+(d(\tr' W))^2. 
\end{align}
Here we have used the notation in (\ref{diagram:MQ}) for $Q=Q_{E_{\bullet}}$. 
\end{prop}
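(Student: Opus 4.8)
The plan is to deduce (\ref{id:dst}) from the defining compatibility of the canonical $d$-critical structure on $\mM_X(\beta)$ (Theorem~\ref{thm:CYdcrit}) with $d$-critical charts, after recognizing the quiver presentation of Theorem~\ref{thm:compare} as such a chart with defining function $\tr W$. Write $R\cneq\{d(\tr W)=0\}=\mathrm{Rep}_{(Q,\partial W)}(\vec m)|_V$, regarded as a closed analytic $G$-subscheme of the smooth space $U\cneq\pi_Q^{-1}(V)\subset\mathrm{Rep}_Q(\vec m)$, so that $\mM_{(Q,\partial W)}(\vec m)|_V=[R/G]$ and the composite $a$ of the $G$-torsor $R\to[R/G]$ with $\iota$ from (\ref{isom:tau}) and the open inclusion $p_M^{-1}(T)\hookrightarrow\mM_\sigma(v)\hookrightarrow\mM_X(\beta)$ is a smooth atlas. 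By Example~\ref{exam:dstack}, $H^0(\sS^0_{[R/G]})=H^0(\sS^0_R)^G$, and the section $s(R,a)$ cut out by the $d$-critical structure $s$ of $\mM_X(\beta)$ is the pullback of $s|_{p_M^{-1}(T)}$ along $a$; so (\ref{id:dst}) is equivalent to the equality of $G$-invariant sections of $\sS^0_R$
\begin{align*}
s(R,a)=\tr W+(d(\tr W))^2.
\end{align*}

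First I would check that $(R,U,\tr W,\mathrm{incl})$ is a genuine $d$-critical chart: $U$ is smooth, $\tr W$ is an honest analytic (not merely formal) function by the convergence estimates of \cite{Todstack} used in (\ref{tr:W}), $R=\{d(\tr W)=0\}$ by (\ref{def:MW}), and $\tr W|_{R^{\mathrm{red}}}=0$ — true after shrinking $V$, since $\tr W$ vanishes together with its differential at the origin and is locally constant along $R^{\mathrm{red}}$, so it vanishes on the union of components of $R^{\mathrm{red}}$ through the origin, which a small $V$ exhausts. Granting this, the very definition of a $d$-critical structure gives $s(R,a)|_R=\tr W+(d(\tr W))^2$ \emph{provided} this chart is one of the local models for the $d$-critical scheme $(R,s(R,a))$ transported from $\mM_X(\beta)$; this compatibility is the one remaining point.

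The heart of the argument will be that this compatibility is built into the construction of $\iota$ in \cite{Todstack}. There $\iota$ is produced, following the derived Darboux theorem of \cite{MR3352237}, from the local identification near the point $[E]$ of the derived enhancement of $\mM_\sigma(v)$ with $[\,\mathbf{dCrit}(\tr W)/G\,]$, where $\tr W$ is the trace of the cyclic $A_\infty$-superpotential on the minimal model of $\RHom(E,E)$ (which, being minimal, has $m_1=0$, so $\tr W$ has no linear term); and the $(-1)$-shifted symplectic form furnished on $\mM_\sigma(v)$ by \cite{PTVV}, whose determinant realizes (\ref{vir:K}), corresponds under $\iota$ to the standard $(-1)$-shifted symplectic form on $\mathbf{dCrit}(\tr W)$ because the underlying pairing on $\Ext^\bullet(E,E)$ is the Serre/Calabi--Yau pairing compatible with the cyclic structure. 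Now \cite{MR3352237} identifies the $d$-critical structure canonically attached to a $(-1)$-shifted symplectic derived scheme, presented as $\mathbf{dCrit}(f)$ on a smooth scheme, with the section $f+(df)^2$ of $\sS^0$; applying this with $f=\tr W$ on $U$ (the $G$-invariance being built into Example~\ref{exam:dstack}) yields $s(R,a)=\tr W+(d(\tr W))^2$, i.e.\ (\ref{id:dst}).

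I expect the main obstacle to be the passage from formal to analytic data: the Darboux presentation and the $A_\infty$-potential are a priori defined only on the formal completion at $[E]$, whereas (\ref{id:dst}) asserts an identity of analytic sections over an honest neighborhood. This is handled by the convergence of $\tr W$ proved in \cite{Todstack} together with the fact that $\sS^0$ is a sheaf whose sections are detected, via the exact sequence (\ref{S:property}), by their images in $\oO_U/I^2$, so the equality may be verified after formal completion at each point of $R$. A secondary, purely bookkeeping difficulty is matching the normalizations of the $(-1)$-shifted symplectic form of \cite{PTVV}, the Serre pairing, the cyclic $A_\infty$-potential, and the trace function $\tr W$; once conventions are fixed so that $\iota$ is symplectic these identifications are forced, and no further computation is required.
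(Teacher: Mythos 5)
Your overall strategy coincides with the paper's: interpret the quiver presentation produced by Theorem~\ref{thm:compare} as a Darboux $d$-critical chart via the formal derived deformation theory of $E$ and the derived Darboux theorem, reduce the claim to an equality of $G$-invariant sections of $\sS^0_R$ on $R=\{d(\tr W)=0\}$ by Example~\ref{exam:dstack}, check that $(R,\,\pi_Q^{-1}(V),\,\tr W,\,\mathrm{incl})$ is a genuine $d$-critical chart with $\tr W|_{R^{\mathrm{red}}}=0$, and verify the identity at the formal completion at the origin. That part of the argument is sound and matches the paper (which explicitly identifies $\tr W$ with the potential $f_E$ of the cyclic $A_\infty$-minimal model of $\RHom(E,E)$, and $\iota$ with the Maurer--Cartan correspondence $I_\ast$, so that the formal completion of $\iota$ at the origin is the truncation of the standard-Darboux-form atlas).

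There is, however, a genuine gap in your final passage from the formal statement at the origin to the analytic identity on all of $R$. You assert that ``the equality may be verified after formal completion at each point of $R$,'' but the derived Darboux presentation of $\mM_\sigma(v)$ that you have at hand --- via $\mathfrak{g}_E^{\mathrm{min},\ast}$ and $f_E$ --- is centered only at the point $[E]$, i.e.\ at $0\in\mathrm{Rep}_Q(\vec m)$. At a point $p\neq 0$ of $R$, corresponding to a different sheaf $E'$, the analogous presentation involves the $A_\infty$-model of $\RHom(E',E')$ and its potential, and relating this to $\tr W$ near $p$ is precisely the kind of nontrivial compatibility that needs an argument. Formal agreement at $0$, together with Noetherianity, does give you the equality of sections on a small analytic neighborhood $\uU$ of $0$, but that is all. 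The paper's resolution of this gap is the $G$-invariance step you are missing: both sides of (\ref{id:dst}) are $G$-invariant sections of $\sS^0_R$ (this is exactly what Example~\ref{exam:dstack} provides), and after shrinking $V\subset M_Q(\vec m)$ one can arrange $\pi_Q^{-1}(V)\subset G\cdot\uU$ (a standard fact about open $G$-invariant neighborhoods of the closed orbit in such local models; the paper cites \cite[Lemma~5.1]{Todstack}). Then the identity on $\uU\cap R$, being $G$-invariant, propagates to $G\cdot\uU\cap R\supset R$. Without this step your argument terminates prematurely, and the sheaf-theoretic detection by $\oO_U/I^2$ that you invoke does not by itself supply the needed open cover of $R$ by sets on which the identity has been checked.
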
 
\begin{proof}
	Note that $\iota^{\ast}(s|_{p_M^{-1}(T)})$ is a $G$-invariant 
	global section of $\oO_{\pi_Q^{-1}(V)}/I^2$
	where $I=(d\tr{W}) \subset \oO_{\pi_Q^{-1}(V)}$. 
	Since $\pi_Q^{-1}(V)$ is Stein and $G$ is reductive, 
	the map $H^0(\oO_{\pi_Q^{-1}(V)})^G \to H^0(\oO_{\pi_Q^{-1}(V)}/I^2)^G$
	is surjective. Therefore there is a $G$-invariant 
	analytic function $f \colon \pi_Q^{-1}(V) \to \mathbb{C}$
	such that $\iota^{\ast}(s|_{p_M^{-1}(T)})=f+I^2$. 
On the other hand, since we can take a minimum Darboux chart of a derived 
	enhancement of $\mM_{\sigma}(v)$ at $E$ (see~\cite[Theorem~2.10]{BBBJ}), 
	we can take a $d$-critical chart for 
	$\iota^{\ast}(s|_{p_M^{-1}(T)})$
	at $0 \in \pi_Q^{-1}(V)$
	of the form $(\{d\tr W=0\} \cap U, U, g, i\})$
	for an analytic open neighborhood $0 \in U \subset \pi_Q^{-1}(V)$
	and an analytic function $g \colon U \to \mathbb{C}$ which has 
	vanishing order at $0 \in \pi_{Q}^{-1}(V)$ at least three. 
	In particular on $U$, we have $I=(dg)$
	and 
	$\iota^{\ast}(s|_{p_M^{-1}(T)})=g+I^2$. 
	It follows that $f=g+I^2$ holds on $U$. 
	Then noting that $g$ has vanishing order at least three at $0$, 
	by shrinking $U$ if necessary, the above
	identity implies that $(df)=(dg)=I$ on $U$. 
	Since both of $f$, $\tr W$ are $G$-invariant, 
	it follows that $(df)=I$ holds 
	on $\cup_{\iota \in G}\iota^{-1}(U)$. 
	We take an analytic open neighborhood $0 \in V' \subset V$
	such that 
	$\pi_Q^{-1}(V') \subset \cup_{\iota \in G}\iota^{-1}(U)$ holds, 
	which is possible by~\cite[Lemma~5.1]{Todstack}. 
	By 
	setting $\tr'W=f|_{\pi_Q^{-1}(V')}$ and 
	replacing $V$ with $V'$, we obtain the proposition. 
	\end{proof}

\begin{rmk}\label{rmk:BPS0}
	It should be true that we can take 
	$\tr'W=\tr W$. 
	Indeed $(-1)$-shifted symplectic structure 
	for $\mM_{\sigma}(v)$
	is canonically determined by a left Calabi-Yau structure
	for a derived enhancement of $D^b \Coh(X)$,  
	which is a choice of a non-zero element of $H^0(X, K_X)=\mathbb{C}$
	(see~\cite{BrDy}). 
	It gives a $d$-critical structure in the left hand side of (\ref{id:dst}). 
	On the other hand, the left Calabi-Yau structure 
	 also determines cyclic $L_{\infty}$-structure on 
	a minimal model of $\RHom(E, E)[1]$, 
	which should give a standard Darboux form 
	for the formal completion of $\mM_{\sigma}(v)$ at $[E]$
	giving a $d$-critical structure 
	determined by $\tr W$
	(see~\cite[Appendix~A]{CosSch}). 
	By the canonicity of constructions of $(-1)$-shifted 
	symplectic structures, they should give the same $d$-critical 
	structures. 
	Some details may be pursued elsewhere. 	
	\end{rmk}

\begin{rmk}\label{rmk:BPS}
By the proof of Lemma~\ref{lem:cmor:phi}, 
under the isomorphism 
\begin{align*}
\overline{\tau} \colon M_{(Q, \partial W)}(\vec{m})|_{V} 
\stackrel{\cong}{\to} T
\end{align*}
in Theorem~\ref{thm:compare}, we have 
\begin{align*}
\overline{\tau}^{\ast}(\phi_{M_{\sigma}(v)|_{U}}|_{T})
=\phi_{\tr' \overline{W}}(j_{!\ast}\IC(V^s)).
\end{align*} 
Here $j \colon V^s \subset V$ is the 
simple part, 
$\phi_{M_{\sigma}(v)|_{U}}$ is given in Definition~\ref{def:def:phiM}, 
and we have used the notation of the diagram (\ref{com:MQ2}).
If $\tr'W=\tr W$, then the right hand side is 
nothing but the BPS sheaf defined in~\cite{DaMe}. 
 Therefore in this case, 
 the perverse sheaf $\phi_{M_{\sigma}(v)|_{U}}$
is interpreted as a gluing of BPS sheaves. 
\end{rmk}

\begin{rmk}\label{rmk:BPS2}
	We can replace $\tr W$ with $\tr'W$ in 
	Lemma~\ref{lem:phivanish}, Lemma~\ref{lem:cmor:phi} 
	so that we have the same results for vanishing
	cycle sheaves associated with $\tr'W$. 
	Namely let 
	\begin{align}\notag
		&\phi'_{\mM_{(Q, \partial W)}(\vec{m})|_{V}}
		\in \Perv(\mM_{(Q, \partial W)}(\vec{m})|_{V}), \\
		\notag&\phi'_{\mM_{(Q, \partial W)}^{\xi}(\vec{m})|_{V}}
		\in \Perv(\mM_{(Q, \partial W)}^{\xi}(\vec{m})|_{V})
	\end{align}
	be the perverse sheaves determined by the 
	d-critical structure (\ref{id:dst})
	and its restriction to $\mu_{\xi}$-semistable locus. 
	Then the same conclusions in Lemma~\ref{lem:phivanish}, Lemma~\ref{lem:cmor:phi} hold after replacing 
	$\phi_{\mM_{(Q, \partial W)}(\vec{m})|_{V}}, 
	\phi_{\mM_{(Q, \partial W)}^{\xi}(\vec{m})|_{V}}$
	with $\phi'_{\mM_{(Q, \partial W)}(\vec{m})|_{V}}, \phi'_{\mM_{(Q, \partial W)}^{\xi}(\vec{m})|_{V}}$
	respectively.  
	Indeed the only required property for $\tr W$ in 
	the above lemmas is a $G$-invariance so that it descends 
	to an analytic function on $V$. 
	As $\tr'W$ is also $G$-invariant, the same arguments apply. 
	\end{rmk}

\subsection{Independence of stability conditions}
We now prove the wall-crossing formula 
of the invariant $\Phi_{\sigma}(\gamma, m)$ 
defined in Definition~\ref{def:def:phiM}. 
Namely we show that it is independent of $\sigma$.  
\begin{thm}\label{thm:inde}
In the situation of Definition~\ref{def:def:phiM}, 
the Laurent polynomial 
$\Phi_{\sigma}(\gamma, m)$
 is independent of 
$\sigma \in U(X)$. 
\end{thm}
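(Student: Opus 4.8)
The plan is to reduce Theorem~\ref{thm:inde} to a single wall-crossing and then, analytic locally on the coarse moduli space, to the wall-crossing formula for representations of the Ext-quiver with a convergent super-potential established in Section~\ref{sec:wcf}. First I would use that $U(X)\cong A(X)_{\mathbb{C}}$ carries a locally finite wall-and-chamber structure: $\mM_{\sigma}(v)$, hence (by Lemma~\ref{lem:inde}) the invariant $\Phi_{\sigma}(\gamma,m)$, is constant on each chamber, and any two stability conditions are joined by a path meeting finitely many walls transversally. It therefore suffices to prove $\Phi_{\sigma}(\gamma,m)=\Phi_{\sigma^{+}}(\gamma,m)$ for $\sigma$ lying on a wall (or generic) and $\sigma^{+}$ a nearby stability condition chosen so close to $\sigma$ that every $\sigma^{+}$-semistable $E$ with $\ch(E)=v$ is already $\sigma$-semistable. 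This produces the diagram (\ref{dia:M+}), with $j\colon\mM_{\sigma^{+}}(v)\hookrightarrow\mM_{\sigma}(v)$ an open immersion and $p_{M}\circ j=q_{M}\circ p_{M}^{+}$, and the Hilbert--Chow maps compatible with $q_{M}$.

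Next, since $\mM_{X}(\beta)$ is assumed CY at $\gamma$, I would fix an analytic open $\gamma\in U\subset\Chow_{X}(\beta)$ together with a trivialization of $K^{\mathrm{vir}}_{\mM_{X}(\beta)}$ over $\pi_{\mM}^{-1}(U)$; restricting it to the finite-type open substacks $\mM_{\sigma}(v)|_{U}$ and $\mM_{\sigma^{+}}(v)|_{U}$ produces CY orientation data on both, with the datum on the $\sigma^{+}$-side the restriction of the one on the $\sigma$-side. This is precisely the point where the CY property of the bigger stack $\mM_{X}(\beta)$ is used rather than merely that of $\mM_{\sigma}(v)$ (cf.\ Remark~\ref{rmk:weaker}). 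In particular $\phi_{\mM_{\sigma^{+}}(v)|_{U}}=j^{*}\phi_{\mM_{\sigma}(v)|_{U}}$, so that the adjunction unit gives $\phi_{\mM_{\sigma}(v)|_{U}}\to\dR j_{*}\phi_{\mM_{\sigma^{+}}(v)|_{U}}$, which pushed forward along $p_{M}$ (using $p_{M}\circ j=q_{M}\circ p_{M}^{+}$) yields
\begin{align*}
\dR p_{M*}\phi_{\mM_{\sigma}(v)|_{U}}\longrightarrow \dR q_{M*}\dR p^{+}_{M*}\phi_{\mM_{\sigma^{+}}(v)|_{U}}.
\end{align*}

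The two things I would then check are: \emph{(a)} $\dR q_{M*}\pH^{i}(\dR p^{+}_{M*}\phi_{\mM_{\sigma^{+}}(v)|_{U}})\in\Perv$ for all $i$, so that the perverse Leray spectral sequence $\pH^{p}(\dR q_{M*}\pH^{q}(-))\Rightarrow\pH^{p+q}(\dR q_{M*}(-))$ degenerates and gives $\pH^{1}(\dR q_{M*}\dR p^{+}_{M*}\phi_{\mM_{\sigma^{+}}(v)|_{U}})=\dR q_{M*}\phi_{M_{\sigma^{+}}(v)|_{U}}$; and \emph{(b)} the induced morphism $\phi_{M_{\sigma}(v)|_{U}}\to\dR q_{M*}\phi_{M_{\sigma^{+}}(v)|_{U}}$ on first perverse cohomology is an isomorphism. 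Both statements are local on $M_{\sigma}(v)|_{U}$, so at a closed point $p$ I would invoke Theorem~\ref{thm:compare} to identify, analytic locally, the diagram (\ref{dia:M+}) with the quiver diagram (\ref{dia:arrow}) for the Ext-quiver $Q_{E_{\bullet}}$ with a convergent super-potential $W$, and Proposition~\ref{prop:compare:d} to match the $d$-critical structures, hence (together with the CY orientations and Lemma~\ref{lem:inde}) the perverse sheaves of vanishing cycles up to a harmless rank-one local system (cf.\ Remark~\ref{rmk:BPS}). Under this identification (a) becomes Lemma~\ref{lem:phivanish} and (b) becomes Lemma~\ref{lem:cmor:phi}, twisting by a rank-one local system affecting neither perversity nor being an isomorphism.

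Finally, since the Hilbert--Chow maps satisfy $\pi_{M}=\pi_{M}\circ q_{M}$ over $U$, steps (a)--(b) give $\dR \pi_{M*}\phi_{M_{\sigma^{+}}(v)|_{U}}\cong\dR\pi_{M*}\dR q_{M*}\phi_{M_{\sigma^{+}}(v)|_{U}}\cong\dR\pi_{M*}\phi_{M_{\sigma}(v)|_{U}}$, and applying $\pH^{i}(-)$, restricting to $\gamma$, taking $\chi$ and forming the generating polynomial in $y$ yields $\Phi_{\sigma^{+}}(\gamma,m)=\Phi_{\sigma}(\gamma,m)$, which by the reduction completes the proof. The hard part will be the comparison step: genuinely importing the local model of~\cite{Todstack} (Theorem~\ref{thm:compare}) together with the precise matching of $d$-critical structures and CY orientation data (Proposition~\ref{prop:compare:d}), and verifying that the comparison morphism above is compatible with this local identification so that Lemma~\ref{lem:cmor:phi} applies; the need for the orientations to agree across $\sigma$ and $\sigma^{+}$ is what forces the CY hypothesis on the ambient stack $\mM_{X}(\beta)$ rather than on $\mM_{\sigma}(v)$ alone.
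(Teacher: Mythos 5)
Your proposal is correct and follows essentially the same route as the paper's proof of Theorem~\ref{thm:inde}: reduce to a single wall-crossing via the chamber structure, use the CY orientation of $\mM_X(\beta)|_U$ to get compatible orientations across $j_M$, form the adjunction morphism, and verify the two claims (perversity of $\dR q_{M\ast}\pH^i$ and isomorphism on $\pH^1$) analytic-locally by transporting them to Lemmas~\ref{lem:phivanish} and~\ref{lem:cmor:phi} via Theorem~\ref{thm:compare} and Proposition~\ref{prop:compare:d}. The only slip is the typo $\pi_M=\pi_M\circ q_M$ (should be $\pi_M^{+}=\pi_M\circ q_M$), and your explicit invocation of the perverse Leray spectral sequence and of the rank-one local-system ambiguity in matching orientations across the local model are both correct elaborations of steps the paper leaves implicit.
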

\begin{proof}
By the wall-chamber structure on $U(X)$, it is enough to show 
the following: 
for a fixed $\sigma \in U(X)$, if we take 
$\sigma^{+} \in U(X)$ to be sufficiently close to $\sigma$, 
then  
we have the identity
\begin{align}\label{id:GV}
\Phi_{\sigma}(\gamma, m)=\Phi_{\sigma^{+}}(\gamma, m).
\end{align}
Let $\gamma \in U \subset \Chow_X(\beta)$ be an open subset as in 
Definition~\ref{def:vir},
and consider stacks in the diagram (\ref{dia:chow2}).  
By pulling the diagram (\ref{dia:M+}) back to $U$, 
we obtain the commutative diagram
\begin{align}\notag
\xymatrix{
\mM_{\sigma^{+}}(v)|_{U} \ar@<-0.3ex>@{^{(}->}[r]^{j_M}
 \ar[dr]_{r_M}
 \ar[d]_{p_M^{+}}
 & \mM_{\sigma}(v)|_{U} 
\ar[d]^{p_M} \\
M_{\sigma^{+}}(v)|_{U} \ar[r]_{q_M}  & M_{\sigma}(v)|_{U}. 
}
\end{align}
We take a CY orientation data of $\mM_X(\beta)|_{U}$, 
which induces CY orientation data 
of its open substacks 
$\mM_{\sigma}(v)|_{U}, \mM_{\sigma^{+}}(v)|_{U}$. 
Let $\phi_{\mM_{\sigma}(v)|_{U}}$, 
$\phi_{\mM_{\sigma^{+}}(v)|_{U}}$
be the associated perverse sheaves (\ref{per:MU}) 
respectively. 
Then by our choice of orientation data, we have 
\begin{align*}
\phi_{\mM_{\sigma^{+}}(v)|_{U}}=
j_M^{\ast}\phi_{\mM_{\sigma}(v)|_{U}}. 
\end{align*}
Therefore we have the canonical morphism
\begin{align*}
\phi_{\mM_{\sigma}(v)|_{U}} \to
\dR j_{M\ast} j_M^{\ast}\phi_{\mM_{\sigma}(v)|_{U}} \to
 \dR j_{M\ast}\phi_{\mM_{\sigma^{+}}(v)|_{U}}.
\end{align*}
By pushing forward it to $M_{\sigma}(v)|_{U}$, we obtain the morphism
\begin{align}\label{mor:phiM}
\dR p_{M\ast}
\phi_{\mM_{\sigma}(v)|_{U}} \to 
\dR p_{M\ast}\dR j_{M\ast}\phi_{\mM_{\sigma^{+}}(v)|_{U}}
=\dR q_{M\ast} \dR p_{M\ast}^{+}\phi_{\mM_{\sigma^{+}}(v)|_{U}}. 
\end{align}
We claim that, for any $i \in \mathbb{Z}$
we have
\begin{align}\label{claim:M}
\dR q_{M\ast}\pH^i(\dR p_{M\ast}^{+}\phi_{\mM_{\sigma^{+}}(v)|_{U}})
\in \Perv(M_{\sigma}(v)|_{U}). 
\end{align}
Suppose that (\ref{claim:M}) holds. 
Then by taking the first perverse cohomologies of (\ref{mor:phiM}), 
we obtain the morphism
\begin{align*}
\pH^1(\dR p_{M\ast}
\phi_{\mM_{\sigma}(v)|_{U}})
\to 
\dR q_{M\ast} \pH^1(\dR p_{M\ast}^{+}\phi_{\mM_{\sigma^{+}}(v)|_{U}}).
\end{align*}
By Definition~\ref{def:def:phiM}, the above morphism is 
\begin{align}\label{phi:M:c}
\phi_{M_{\sigma}(v)|_{U}} \to 
\dR q_{M\ast}\phi_{M_{\sigma^{+}}(v)|_{U}}. 
\end{align}
We also claim that the morphism (\ref{phi:M:c}) 
is an isomorphism. 
In order to show the condition (\ref{claim:M}) and 
(\ref{phi:M:c}) is an isomorphism, it is enough 
to show these properties analytic locally on 
$M_{\sigma}(v)|_{U}$. 
By Theorem~\ref{thm:compare}, Proposition~\ref{prop:compare:d}
and Remark~\ref{rmk:BPS2}, 
we can essentially reduce these claims to claims 
for representations of 
symmetric quivers with convergent super-potentials. 
By Lemma~\ref{lem:phivanish} and 
Lemma~\ref{lem:cmor:phi}, and also noting Remark~\ref{rmk:BPS2}, 
we obtain the desired claims, i.e. 
(\ref{claim:M}) holds and (\ref{phi:M:c}) is an isomorphism. 

We have the commutative diagram
\begin{align*}
\xymatrix{
M_{\sigma^{+}}(v)|_{U} \ar[rr]^{q_M}
\ar[rd]_{\pi_M^{+}} & 
 & M_{\sigma}(v)|_{U} \ar[ld]^{\pi_M} \\
&  U  &
}
\end{align*}
where $\pi_M$ and $\pi_M^{+}$ are HC maps. 
By the isomorphism (\ref{phi:M:c}), we have the isomorphism
\begin{align*}
\dR \pi_{M\ast} \phi_{M_{\sigma}(v)|_{U}}
\stackrel{\cong}{\to}
\dR \pi_{M\ast}^{+} \phi_{M_{\sigma^{+}}(v)|_{U}}. 
\end{align*}
Therefore the
identity (\ref{id:GV}) holds
by the definition of $\Phi_{\sigma}(\gamma, m)$. 
\end{proof}

By Theorem~\ref{thm:inde}, we can 
define $\Phi_X(\gamma, m)$ by 
\begin{align*}
\Phi_X(\gamma, m) \cneq \Phi_{\sigma}(\gamma, m).
\end{align*}
for $\sigma \in U_X$. 
\subsection{Independence of Euler characteristics}
As an application of Theorem~\ref{thm:inde}, we show that 
$\Phi_X(\gamma, m)$ is also independent of $m$
when $\gamma$ is primitive. 
Here a one cycle $\gamma$ on $X$ is called \textit{primitive}
if it is written as 
\begin{align}\label{write:gamma}
\gamma=\sum_{i=1}^k a_i[C_i]
\end{align}
for irreducible curves $C_i$
with $C_i \neq C_j$ for $i\neq j$, 
 and $a_i \in \mathbb{Z}_{>0}$
such that 
the greatest common divisor of $(a_1, \ldots, a_k)$ is one. 

\begin{thm}\label{thm:indeE}
For a primitive one cycle $\gamma \in \Chow_X(\beta)$, suppose 
that $\mM_X(\beta)$ is CY at $\gamma$. 
Then $\Phi_X(\gamma, m)$ is independent of $m$. 
\end{thm}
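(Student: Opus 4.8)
The plan is to exploit Theorem~\ref{thm:inde} (independence of $\sigma$) together with the structure of the moduli spaces near a wall, reducing the comparison of $\Phi_X(\gamma, m)$ for different values of $m$ to a local computation for representations of the Ext-quiver with a convergent super-potential, in the spirit of the proof of Theorem~\ref{thm:inde}. The key point is that when $\gamma$ is primitive, one can choose the stability condition $\sigma = B + i\omega$ so that every $(B,\omega)$-semistable sheaf $E$ with $[l(E)] = \beta$ and $\chi(E) = m$ is automatically $(B,\omega)$-\emph{stable}: indeed, a strictly semistable $E$ would have a Jordan--H\"older factor $F$ with $[l(F)] = \beta' $ a nonzero effective class strictly smaller than $\beta$, hence $\beta' = c\beta$ is impossible for generic $B$ unless the slope equality $\mu_{B,\omega}(F) = \mu_{B,\omega}(E)$ forces a nontrivial numerical coincidence; since $\gamma$ is primitive, for a suitably generic choice of $B$ (a ``$(\beta,m)$-generic'' polarization, which exists precisely because $\gcd$ of the $a_i$ is one) there is no such coincidence. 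First I would make this genericity statement precise: choose $\sigma$ in the interior of a chamber so that $M_\sigma(v) = M_\sigma^s(v)$ and $\mM_\sigma(v)$ is a trivial $\mathbb{C}^\ast$-gerbe over $M_\sigma(v)$, for \emph{every} $m$ in whatever finite range we need to compare (this works for all $m$ at once since the walls depend only on $\beta$ and the denominators of slopes are bounded).

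Next, having reduced to the stable locus for all $m$, I would compare $M_\sigma(\beta, m)$ and $M_\sigma(\beta, m')$ via the Ext-quiver description of Theorem~\ref{thm:compare}. A closed point of $M_\sigma(\beta,m)$ is a stable sheaf $E$, so the Ext-quiver $Q_{E}$ has a single vertex (taking $k=1$ in (\ref{poly})), and the associated dimension vector is $\vec m = (1)$. The crucial observation is that the Ext-quiver and its convergent super-potential $W$ \emph{depend only on the stable sheaf $E$ and not on $m$} in the following sense: tensoring by a line bundle, or twisting the $B$-field, gives an identification between the moduli of stable sheaves with $\chi = m$ and those with $\chi = m'$ whenever the two live over the same chamber — more precisely, the local model $(\mM_{(Q,\partial W)}(\vec m)|_V, s)$ at $E$ is governed by the minimal $A_\infty$-structure on $\bigoplus_i \Ext^\bullet(E_i, E_j)$, which is a derived-categorical invariant insensitive to the numerical shift $m \mapsto m'$. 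So I would argue that for primitive $\gamma$ the perverse sheaf $\phi_{M_\sigma(v)}$ on $M_\sigma(\beta,m)$ and the Hilbert--Chow map $\pi_M$ are, locally over $\Chow_X(\beta)$, literally the same data for all $m$ — hence $\Phi_\sigma(\gamma,m) = \Phi_\sigma(\gamma,m')$, and by Theorem~\ref{thm:inde} this equals $\Phi_X(\gamma,m)$.

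An alternative and perhaps cleaner route, if the ``same data'' claim needs more care, is to produce an explicit derived auto-equivalence or a Fourier--Mukai-type identification $M_\sigma(\beta, m) \cong M_{\sigma'}(\beta, m')$ commuting with the Hilbert--Chow maps and preserving $d$-critical structures and virtual canonical bundles — as was done in the elliptic fibration example in Section~\ref{sec:exam} via Bridgeland--Maciocia, and more generally one can use the spherical-twist or large-volume shift that changes $\chi$ by a fixed amount on a primitive class without changing the fundamental cycle; the CY condition at $\gamma$ is preserved because it is a statement about the virtual canonical bundle which transforms compatibly. Then $\Phi_\sigma(\gamma,m) = \Phi_{\sigma'}(\gamma,m')$ directly, and Theorem~\ref{thm:inde} removes the dependence on $\sigma, \sigma'$.

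The main obstacle I anticipate is the genericity step: one must be sure that a single choice of chamber (equivalently of $B$) can be made to work simultaneously for all $m$ so that $M_\sigma(\beta,m)$ is everywhere stable, and that this chamber exists \emph{given} that $\gamma$ is primitive but $\beta = [\gamma]$ may itself fail to be primitive as a numerical class (the hypothesis is on the cycle, not the homology class). Resolving this requires the observation that the relevant destabilizing subobjects have fundamental one-cycles which are effective sub-cycles of $\gamma$, and primitivity of $\gamma$ — together with boundedness of the families involved — guarantees the needed avoidance of walls for a dense set of $B$. After that, identifying the local $d$-critical models across different $m$ is essentially formal given Theorem~\ref{thm:compare} and Proposition~\ref{prop:compare:d}.
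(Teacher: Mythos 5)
Your primary route (Route 1) has a real gap: choosing $\sigma$ generic so that $M_\sigma(\beta,m)=M_\sigma^s(\beta,m)$ is a trivial gerbe for all $m$ does not by itself yield a comparison between the spaces for different $m$. The Ext-quiver local model at a stable sheaf $E$ is intrinsic to $E$, hence implicitly fixes $m=\chi(E)$; saying it is ``a derived-categorical invariant insensitive to the numerical shift $m\mapsto m'$'' is vacuous without a map from sheaves with Euler characteristic $m$ to those with Euler characteristic $m'$. Moreover the genericity claim you flag as the main obstacle is both doubtful (primitivity of the cycle $\gamma$ does not make the numerical class $v=(\beta,m)$ primitive, and avoiding all walls simultaneously for all $m$ is unclear) and unnecessary: the paper's proof never needs $M_\sigma(v)$ to be stable, since it goes through a comparison of the perverse sheaf $\phi_{M_\sigma(v)}$ directly, which is defined in the strictly semistable setting as well.

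Your Route 2 is the right idea, and it is essentially what the paper does, but the precise mechanism is missing. Concretely: write $\gamma=\sum a_i[C_i]$, choose analytic divisors $D_i$ near the support of $\gamma$ with $D_i\cdot C_j=\delta_{ij}$, and use $\gcd(a_i)=1$ to find integers $d_i$ with $\sum d_i a_i=1$; then $D=\sum d_i D_i$ is a \emph{local} divisor with $D\cdot\gamma=1$, and the twist $F\mapsto F(D)$ gives, over a sufficiently small $U\ni\gamma$ in $\Chow_X(\beta)$, an isomorphism of $d$-critical stacks $\mM_{\sigma_{B,\omega}}(\beta,m)|_U\to\mM_{\sigma_{B+D,\omega}}(\beta,m+1)|_U$ commuting with the Hilbert--Chow map. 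This gives $\Phi_{\sigma}(\gamma,m)=\Phi_{\sigma'}(\gamma,m+1)$, and Theorem~\ref{thm:inde} then erases the $\sigma$-dependence. The essential use of primitivity is exactly in producing $D$ with $D\cdot\gamma=1$; the generic-chamber step, Bridgeland--Maciocia-type equivalences, and spherical twists play no role, and there is no need to return to the Ext-quiver description, since the argument reduces entirely to Theorem~\ref{thm:inde}.
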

\begin{proof}
We write $\gamma$ as (\ref{write:gamma}) and 
take divisors $D_i$ for $1\le i\le k$ defined 
in an analytic neighborhood of the support of $\gamma$
satisfying $D_i \cdot C_j=\delta_{ij}$. 
Since $\gamma$ is primitive, there is a divisor 
\begin{align*}
D=\sum_{i=1}^k d_i D_i,  \ d_i \in \mathbb{Z}
\end{align*}
such that $D \cdot \gamma=1$. 
Then for a sufficiently small 
analytic open neighborhood $\gamma \in U \subset \Chow_X(\beta)$, 
the map $F \mapsto F(D)$
gives an isomorphism
\begin{align}\label{isom:OD}
\otimes \oO(D) \colon 
\mM_{\sigma}(v)|_{U} \stackrel{\cong}{\to} \mM_{\sigma'}(v')|_{U}
\end{align}
which commutes with the HC maps to $U$. 
Here $\sigma=\sigma_{B, \omega}$, 
$\sigma'=\sigma_{B+D, \omega}$, 
$v=(\beta, m)$ and $v'=(\beta, m+1)$. 
The isomorphism (\ref{isom:OD}) is an isomorphism as 
$d$-critical 
stacks (see~\cite[Remark~8.6]{MT}). 
We take CY orientation data of $\mM_{\sigma}(v)|_{U}$
which induces the one on $\mM_{\sigma'}(v)|_{U'}$ 
by the isomorphism (\ref{isom:OD}). 
Then the isomorphism (\ref{isom:OD}) induces an isomorphism
\begin{align*}
M_{\sigma}(v)|_{U} \stackrel{\cong}{\to} M_{\sigma'}(v')|_{U}
\end{align*}
which commutes with HC maps to $U$, and 
sends $\phi_{M_{\sigma}(v)|_{U}}$ to 
$\phi_{M_{\sigma'}(v')|_{U}}$. 
Therefore we have 
$\Phi_{\sigma}(\gamma, m)=\Phi_{\sigma'}(\gamma, m+1)$. 
By Theorem~\ref{thm:inde}, 
we have $\Phi_X(\gamma, m)=\Phi_X(\gamma, m+1)$
and the theorem follows. 
\end{proof}

\section{Flop invariance of GV invariants}\label{sec:flop}
As an application of Theorem~\ref{thm:inde}, we show 
the flop invariance of generalized GV invariants. 
A similar result was already obtained for irreducible one cycles in~\cite{MT}, 
and the result here generalizes this result to arbitrary one cycles. 
\subsection{3-fold flops}
Let $X$, $X^{\dag}$ be smooth projective CY 3-folds. 
A diagram
\begin{align}\label{flop:dia}
\xymatrix{
X \ar[dr]_{f}  \ar@{-->}[rr]^{\phi} &  & X^{\dag} \ar[dl]^{f^{\dag}} \\
&  Y   &
}
\end{align}
is called a \textit{flop} if 
$f, f^{\dag}$ are projective 
birational morphisms 
which are isomorphic in codimension one
with relative Picard number one,  
$Y$ has only Gorenstein singularities, and 
$\phi$ is a non-isomorphic birational map.  
The exceptional loci of $f, f^{\dag}$ are 
chains of smooth rational curves.
In~\cite{Br1}, 
Bridgeland
showed that 
there is an equivalence of derived categories
(also see~\cite[Proposition~5.2]{ToBPS})
\begin{align}\label{D:equiv}
\Phi \colon D^b(\Coh_{\le 1}(X)) \stackrel{\sim}{\to}
 D^b(\Coh_{\le 1}(X^{\dag})). 
\end{align}
The above equivalence is 
given by the Fourier-Mukai transform 
whose kernel is  
$\oO_{X \times_Y X^{\dag}}$, 
and it fits into 
the commutative diagram
\begin{align*}
\xymatrix{
D^b(\Coh_{\le 1}(X)) \ar[r]^-{\Phi} \ar[d]_-{\ch} & D^b(\Coh_{\le 1}(X^{\dag})) \ar[d]^-{\ch} \\
\Gamma_X \ar[r]_{\Phi_{\Gamma}} & \Gamma_{X^{\dag}}.
}
\end{align*}
Here
$\Phi_{\Gamma}$ takes 
$(\beta, n)$ to $(\phi_{\ast}\beta, n)$, where 
$\phi_{\ast}\beta$ is characterized as 
$\phi_{\ast}\beta \cdot D=\beta \cdot \phi_{\ast}^{-1}D$
for any divisor $D$ on $X^{\dag}$ 
and $\phi_{\ast}^{-1}D$ is the strict 
transform. 
\begin{lem}
For any one cycle $\gamma$ on $X$, there is a 
one cycle $\phi_{\ast}\gamma$ on $X^{\dag}$ such that 
for any object $F \in D^b(\Coh_{\le 1}(X))$ with 
$l(F)=\gamma$, we have 
$l(\Phi(F))=\phi_{\ast}\gamma$.
\end{lem}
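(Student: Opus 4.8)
The plan is to show that the cycle $l(\Phi(F))$ depends only on $l(F)$, and then simply to \emph{define} $\phi_{\ast}\gamma\cneq l(\Phi(F))$ for an arbitrary $F\in D^b(\Coh_{\le 1}(X))$ with $l(F)=\gamma$; the content of the lemma is exactly the well-definedness of this. First I would extend $l$ to objects of $D^b(\Coh_{\le 1}(X))$ by the alternating sum $l(F)\cneq\sum_i(-1)^i l(\hH^i(F))$. Since $l$ is additive in short exact sequences it descends to $K_0(\Coh_{\le 1}(X))$, and since $\Phi$ is exact, the assignment $F\mapsto l(\Phi(F))$ is the composite of $F\mapsto[F]\in K_0(\Coh_{\le 1}(X))$, an induced isomorphism $\Phi_{\ast}\colon K_0(\Coh_{\le 1}(X))\to K_0(\Coh_{\le 1}(X^{\dag}))$, and the cycle map $l_{X^{\dag}}\colon K_0(\Coh_{\le 1}(X^{\dag}))\to\mathrm{Cyc}_1(X^{\dag})$, where $\mathrm{Cyc}_1(-)$ denotes the group of algebraic one cycles. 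Now the dimension filtration on $K$-theory gives a right exact sequence $K_0(\Coh_{\le 0}(X))\to K_0(\Coh_{\le 1}(X))\xrightarrow{l_X}\mathrm{Cyc}_1(X)\to 0$, so $l_X$ is surjective and its kernel is generated by the classes $[\oO_p]$ of structure sheaves of closed points $p\in X$. Hence it suffices to prove $l(\Phi(\oO_p))=0$ for every closed $p\in X$: granted this, $F\mapsto l(\Phi(F))$ descends to a homomorphism $\mathrm{Cyc}_1(X)\to\mathrm{Cyc}_1(X^{\dag})$ whose value at $\gamma$ is the cycle we want. (Its homology class is then the class $\phi_{\ast}\beta$ already introduced, by the commutative square relating $\Phi$ with $\ch$, so the notation is consistent.)

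For $p$ lying off the exceptional locus $\Ex(f)$, the kernel $\oO_{X\times_Y X^{\dag}}$ restricts over $Y\setminus f(\Ex(f))$ to the structure sheaf of the graph of the isomorphism $X\setminus\Ex(f)\cong X^{\dag}\setminus\Ex(f^{\dag})$, so $\Phi(\oO_p)=\oO_{\phi(p)}$ is a zero dimensional sheaf and $l(\Phi(\oO_p))=0$. It remains to treat $p\in\Ex(f)$. Since $\Ex(f)$ is a chain of smooth rational curves contracted by $f$ onto the finite set $f(\Ex(f))\subset Y$, this is a local statement there, and I would pass to an analytic neighbourhood of a point of $f(\Ex(f))$ and invoke Bridgeland's explicit description of the flop equivalence $\Phi$ \cite{Br1} (see also \cite[Proposition~5.2]{ToBPS}). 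If $p$ lies on a component $C_i\cong\mathbb{P}^1$ of $\Ex(f)$, the exact sequence $0\to\oO_{C_i}(-1)\to\oO_{C_i}\to\oO_p\to 0$ (pushed forward to $X$) gives $[\oO_p]=[\oO_{C_i}]-[\oO_{C_i}(-1)]$ in $K_0(\Coh_{\le 1}(X))$, independently of $p$, so the computation reduces to evaluating $\Phi_{\ast}$ on the classes of the line bundles $\oO_{C_i}$ and $\oO_{C_i}(-1)$. In Bridgeland's model each of these is sent to an object of $D^b(\Coh(X^{\dag}))$ whose cohomology sheaves are supported on $\Ex(f^{\dag})$, and the explicit formulas --- this is the computation of \cite{MT} for an irreducible exceptional curve, now applied one component at a time --- show that $\Phi_{\ast}[\oO_p]$ is the class of a closed point of $X^{\dag}$, hence $l(\Phi(\oO_p))=0$.

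The hard part is this last step. Numerically one only learns that $l(\Phi(\oO_p))$ is a combination $\sum_j n_j[C^{\dag}_j]$ of the flopped curves with $\sum_j n_j[C^{\dag}_j]=0$ in $N_1(X^{\dag})$; since $f^{\dag}$ has relative Picard number one the classes $[C^{\dag}_j]$ are all proportional, and this constraint does not force the $n_j$ to vanish once $\Ex(f^{\dag})$ is reducible. One therefore genuinely needs the explicit local form of $\Phi$ on the flopping curves, which is where the structure of three dimensional flops and Bridgeland's construction enter; everything else in the argument is formal.
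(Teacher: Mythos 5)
Your reduction to the single claim $l(\Phi(\oO_p))=0$ for all closed points $p$, and the disposal of the case $p\notin\Ex(f)$, coincide with the paper's proof. Where the two part ways is the case $p\in\Ex(f)$: you assert that because $[C_1^\dagger],\ldots,[C_r^\dagger]$ are proportional in $N_1(X^\dagger)$, the vanishing of the homology class of $l(\Phi(\oO_p))$ cannot force the cycle itself to vanish, and that one therefore ``genuinely needs'' the explicit component-by-component computation in Bridgeland's local model (which your proposal sketches but does not carry out). This is the gap. The paper does not use the class in $N_1(X^\dagger)$; it uses the class in the homology of a \emph{small analytic neighborhood} of $\Ex(f^\dagger)$. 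If $V\subset Y$ is a small Stein neighborhood of a point of $f(\Ex(f))$ and $U^\dagger=(f^\dagger)^{-1}(V)$, then $U^\dagger$ has the homotopy type of the fiber $(f^\dagger)^{-1}(y_0)$, a tree of $\mathbb{P}^1$'s, so $H_2(U^\dagger,\mathbb{Z})\cong\bigoplus_j\mathbb{Z}[C_j^\dagger]$ is \emph{free} on the flopping curves — they are independent there, not proportional. Since the Fourier--Mukai kernel is supported on $X\times_Y X^\dagger$, the equivalence $\Phi$ restricts over $V$; deforming $p$ inside $f^{-1}(V)$ to a point $p'\notin\Ex(f)$ and using local constancy of Chern classes (compactly supported in $U^\dagger$) along the resulting flat family gives $\ch_2(\Phi(\oO_p))=\ch_2(\oO_{\phi(p')})=0$ in $H_2(U^\dagger)$, and the freeness of $H_2(U^\dagger)$ on the $[C_j^\dagger]$ then kills the cycle. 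So the ``numerical'' objection you raise applies to $N_1(X^\dagger)$ but not to the local $H_2$, and the explicit flop computation you fall back on is neither needed nor actually completed in your argument.
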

\begin{proof}
For $F, F' \in D^b(\Coh_{\le 1}(X))$, suppose that 
we have $l(F)=l(F')$. 
Then $[F]-[F']$ in 
$K(\Coh_{\le 1}(X))$ is 
represented by 
a linear combination of 
skyscraper sheaves. 
Therefore it is enough to show that 
$l(\Phi(\oO_x))=0$ for any $x \in X$. 
If $x \notin \Ex(f)$, then 
$\Phi(\oO_x)=\oO_{\phi(x)}$, so 
$l(\Phi(\oO_x))=0$ holds. 
If $x \in \Ex(f)$, then 
$l(\Phi(\oO_x))$ is a one cycle supported on 
$\Ex(f)$ whose homology class equals to zero 
in a neighborhood of $\Ex(f)$. 
Therefore $l(\Phi(\oO_x))=0$ and the lemma holds. 
\end{proof}

\subsection{Perverse coherent sheaves}
The equivalence (\ref{D:equiv}) 
restricts to the equivalence of perverse coherent sheaves, 
defined below.
\begin{defi}\emph{(\cite[Section~3]{Br1})}
The subcategories $\pPPer_{\le 1}(X/Y) \subset D^b(\Coh_{\le 1}(X))$
are defined by 
\begin{align}\label{p:heart}
\left\{ E \in D^b (\Coh_{\le 1}(X)) \colon 
\begin{array}{c}
\dR f_{\ast} E \in \Coh(Y) \\
\Hom^{<-p}(E, \cC_X)=\Hom^{<p}(\cC_X, E)=0 
\end{array} \right\}.
\end{align}
Here $\cC_X \cneq \{ F \in \Coh(X) : \dR f_{\ast} F=0\}$.
\end{defi}
The subcategories (\ref{p:heart}) are 
known to be the hearts of bounded t-structures
on $D^b(\Coh_{\le 1}(X))$. In particular they
are abelian categories. 
Indeed when $p \in \{-1, 0\}$, 
they are obtained as a tilting of $\Coh_{\le 1}(X)$, 
so any object in $\pPPer_{\le 1}(X/Y)$ is concentrated on 
$[-1, 0]$. 
Below we always take $p\in \{-1, 0\}$. 
These t-structures fit into Bridgeland stability conditions 
on the boundary points of $U(X)$ as follows: 
\begin{lem}\label{stabp}
Let us take data
\begin{align}\label{data:Hw}
\omega \in \mathrm{NS}(Y)_{\mathbb{R}}, \ H \in \mathrm{NS}(X)_{\mathbb{R}}
\end{align}
such that $\omega$ is ample and $H$ is $f$-ample. 
Then for $p\in \{-1, 0\}$,
there is $\delta_0>0$ such that 
for $0<\delta<\delta_0$ we have
\begin{align}\label{sigmap}
\ptau_{(-1)^{p+1} \delta H, \omega} 
\cneq (Z_{(-1)^{p+1}\delta H, f^{\ast}\omega}, 
\pPPer_{\le 1}(X/Y)) 
\in \overline{U(X)}. 
\end{align}
Here $Z_{-, -}$ is 
the group homomorphism 
$\Gamma_X \to \mathbb{C}$ 
defined as in (\ref{ZBw}). 
\end{lem}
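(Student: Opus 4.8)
The plan is first to verify that $\ptau_{(-1)^{p+1}\delta H,\omega}$ is a genuine point of $\Stab_{\le 1}(X)$ once $\delta$ is small enough, and then to exhibit it as a limit of points of $U(X)$ by Bridgeland's deformation theorem \cite{Brs1}. Write $B\cneq (-1)^{p+1}\delta H$ and $Z\cneq Z_{B,f^{\ast}\omega}$, so that by \eqref{ch:Gamma} and \eqref{ZBw}
\begin{align*}
Z(\ch(E))=-\ch_3(E)+B\cdot\ch_2(E)+i\left(f^{\ast}\omega\cdot\ch_2(E)\right),\qquad E\in D^b(\Coh_{\le 1}(X)).
\end{align*}

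The first step is to check that $Z$ is a stability function on the abelian category $\pPPer_{\le 1}(X/Y)$. For a nonzero $E$ in this heart one has $H^{-1}(E)$ supported on $\Ex(f)$ (this uses $\dR f_{\ast}E\in\Coh(Y)$ from \eqref{p:heart}), so $f^{\ast}\omega\cdot\ch_2(E)=f^{\ast}\omega\cdot[l(H^0(E))]\ge 0$ by nefness of $f^{\ast}\omega$, with equality exactly when $\ch_2(E)$ is numerically $f$-trivial; in the latter case $E$ lies in the full subcategory $\nN_p\subset\pPPer_{\le 1}(X/Y)$ of objects with $f$-trivial $\ch_2$, which by \cite{Br1} is a finite-length category. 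Its simple objects are the skyscrapers $\oO_y$ at points not lying over $\Ex(f)$ together with the finitely many (up to numerical equivalence) perverse point sheaves supported on the exceptional curves $C_i$ — for a single curve of $(-1,-1)$-type these are $\oO_{C_i}$ and $\oO_{C_i}(-1)[1]$ when $p=-1$, and $\oO_{C_i}(-1)$ and $\oO_{C_i}(-2)[1]$ when $p=0$. Evaluating $\Re Z(\ch(-))=-\ch_3(-)+(-1)^{p+1}\delta\,H\cdot\ch_2(-)$ on these simples gives $-1$ on $\oO_y$, a strictly negative multiple of $H\cdot[C_i]$ on those of class $\pm[C_i]$ with $\ch_3=0$ (here the sign $(-1)^{p+1}$ and the $f$-ampleness of $H$, i.e.\ $H\cdot[C_i]>0$, are exactly what is needed), and $-1+\delta\,H\cdot[C_i]$ on the remaining one; so with $\delta_0\cneq\min_i(H\cdot[C_i])^{-1}$ over the finitely many exceptional curve classes, $\Re Z(\ch(-))<0$ on every simple of $\nN_p$ for $0<\delta<\delta_0$, hence on every nonzero object of $\nN_p$ by additivity of $\ch$. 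Thus $Z$ is a stability function on $\pPPer_{\le 1}(X/Y)$.

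Next, the Harder--Narasimhan property follows from Bridgeland's Noetherian criterion \cite{Brs1}, since $\pPPer_{\le 1}(X/Y)$ is Noetherian (a tilt of the Noetherian category $\Coh_{\le 1}(X)$ at a torsion pair involving the finite-length category $\nN_p$), and the support property with respect to a quadratic form on the finite-rank lattice $\Gamma_X=N_1(X)\oplus\mathbb{Z}$ is verified as for the points of $U(X)$; hence $\ptau_{(-1)^{p+1}\delta H,\omega}=(Z,\pPPer_{\le 1}(X/Y))\in\Stab_{\le 1}(X)$. To see it lies in $\overline{U(X)}$, fix an ample class $A$ on $X$ and set $\omega_t\cneq f^{\ast}\omega+tA$, ample for $t\in(0,1]$, so $\sigma_{B,\omega_t}=(Z_{B,\omega_t},\Coh_{\le 1}(X))\in U(X)$ and $Z_{B,\omega_t}\to Z$ as $t\to 0^{+}$. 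Applying the deformation theorem at the point just constructed gives a neighbourhood $\uU$ of it in $\Stab_{\le 1}(X)$ homeomorphic, via the forgetful map, to a neighbourhood of $Z$ in $(\Gamma_X)_{\mathbb{C}}^{\vee}$; for $t$ small $Z_{B,\omega_t}$ lies in the latter, hence is the central charge of a unique $\sigma_t'\in\uU$, whose heart is a tilt of $\pPPer_{\le 1}(X/Y)$ at the objects whose phase has left $(0,1]$. For $t>0$ this affects only the simples of $\nN_p$ of class $\pm[C_i]$, which cross the real axis since $\Im Z_{B,\omega_t}(\ch)=t\,A\cdot\ch_2\neq 0$ on them, and the resulting tilt is exactly $\Coh_{\le 1}(X)$, being the inverse of the tilt producing $\pPPer_{\le 1}(X/Y)$ from $\Coh_{\le 1}(X)$ in \cite{Br1}. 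Since a stability condition is determined by its central charge and heart, $\sigma_t'=\sigma_{B,\omega_t}\in U(X)$, so $\sigma_{B,\omega_t}\to\ptau_{(-1)^{p+1}\delta H,\omega}$ inside $\uU$ and $\ptau_{(-1)^{p+1}\delta H,\omega}\in\overline{U(X)}$.

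The main obstacle is the first step: determining the simple objects of $\nN_p$ for a general flop — where the exceptional locus is a tree of rational curves, possibly varying in a family over the singular locus of $Y$ — and the precise sign of $\Re Z$ on each, which is what dictates the convention $(-1)^{p+1}$ and the bound $\delta_0$. The heart-identification in the closure step is of the same nature and reduces, via Bridgeland's analysis of the perverse hearts in \cite{Br1}, to the same finite computation with perverse point sheaves.
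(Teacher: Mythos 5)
The paper's own ``proof'' of this lemma is a single citation to \cite[Proposition~5.2]{ToBPS}, so there is no argument in the text to compare against line by line. Your write-up is a plausible reconstruction of the expected argument: verify that $Z_{(-1)^{p+1}\delta H,\,f^{\ast}\omega}$ is a stability function on the abelian category $\pPPer_{\le 1}(X/Y)$, check the Harder--Narasimhan and support properties to land in $\Stab_{\le 1}(X)$, and then exhibit $\ptau$ as the limit of $\sigma_{(-1)^{p+1}\delta H,\,\omega_t}\in U(X)$ as $t\to 0^{+}$ using Bridgeland's local homeomorphism. The numerical check on the simple objects --- in particular that the perverse point sheaves with $\ch_3=0$ have $\Re Z=(-1)^{p+1}\delta(H\cdot(\pm[C_i]))<0$, which is exactly what fixes the sign $(-1)^{p+1}$, and that the one with $\ch_3=1$ gives $-1+\delta(H\cdot[C_i])<0$ for $\delta<\delta_0$ --- is carried out correctly for a single $(-1,-1)$ curve.

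Two points should be flagged, both of which you partially acknowledge. First, the classification of simple objects of $\nN_p$ and the sign of $H\cdot\ch_2$ on each is the mathematical heart of the positivity argument, and it is verified only for a single $(-1,-1)$ curve. For a general Gorenstein $3$-fold flop, $\Ex(f)$ is a chain of rational curves with possible multiplicities and widths, and $\cC_X$ is equivalent to the module category of a finite-dimensional algebra; the simple perverse point sheaves and their Chern characters genuinely need to be worked out there, and it is precisely this that determines $\delta_0$ and justifies the $(-1)^{p+1}$ sign convention in full generality. Second, the closure step asserts that the heart of the deformed stability condition $\sigma_t'$ is the tilt of $\pPPer_{\le 1}(X/Y)$ back to $\Coh_{\le 1}(X)$, but noting that some simples of $\nN_p$ cross the real axis is not by itself enough: one has to identify the full torsion pair being crossed (that it is generated by exactly these simples, with nothing extra) and match it against the torsion pair in \cite{Br1} producing $\pPPer_{\le 1}(X/Y)$ from $\Coh_{\le 1}(X)$, so that uniqueness of the Bridgeland stability condition with given central charge and heart can be invoked. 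Neither gap is fatal --- both are addressed by the perverse-heart analysis in \cite{Br1} and in the reference actually cited by the paper --- but as written the proof leans on unverified structural claims about $\nN_p$ and the tilt.
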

\begin{proof}
See~\cite[Proposition~5.2]{ToBPS}. 
\end{proof}

The equivalence (\ref{D:equiv}) induces the 
isomorphism on the space of 
stability conditions
\begin{align}\label{isom:stab}
\Phi_{\ast} \colon \Stab_{\le 1}(X) \stackrel{\cong}{\to}
\Stab_{\le 1}(X^{\dag})
\end{align}
sending $(Z, \aA)$
to $(Z \circ \Phi^{-1}, \Phi(\aA))$. 
On the other hand, the equivalence (\ref{D:equiv}) 
restricts to the equivalence (see~\cite{Brs1})
\begin{align}\label{equiv:Per}
\Phi \colon \oPPer_{\le 1}(X/Y) \stackrel{\sim}{\to}
 \iPPer_{\le 1}(X^{\dag}/Y). 
\end{align}
Hence we have the following lemma: 
\begin{lem}\label{lem:Phisom}
Under the isomorphism (\ref{isom:stab}),
we have
\begin{align*}
\Phi_{\ast}\otau_{-\delta H, \omega}=
\itau_{\delta H^{\dag}, \omega}. 
\end{align*} 
Here $H^{\dag}=-\phi_{\ast}H$, which is 
$f^{\dag}$-ample by the definition of a flop. 
\end{lem}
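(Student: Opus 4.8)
The plan is to use that a point of $\Stab_{\le 1}(X^{\dag})$ is by definition the pair consisting of a central charge and a heart, so it suffices to check that $\Phi_{\ast}\otau_{-\delta H,\omega}$ and $\itau_{\delta H^{\dag},\omega}$ have the same heart and the same central charge; since $\Phi_{\ast}$ is an isomorphism of spaces of stability conditions by (\ref{isom:stab}), no separate support condition needs to be verified.

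For the hearts: by the formula in (\ref{isom:stab}), $\Phi_{\ast}$ sends $(Z,\aA)$ to $(Z\circ\Phi^{-1},\Phi(\aA))$, so the heart of $\Phi_{\ast}\otau_{-\delta H,\omega}$ is $\Phi(\oPPer_{\le 1}(X/Y))$. By (\ref{equiv:Per}) this is exactly $\iPPer_{\le 1}(X^{\dag}/Y)$, which is the heart of $\itau_{\delta H^{\dag},\omega}$ (this is Lemma~\ref{stabp} read with $p=-1$ on $X^{\dag}$). So the hearts coincide.

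For the central charges: write $\psi=\phi^{-1}$ for the inverse flopping map, so that $\Phi^{-1}$ acts on $\Gamma_{X^{\dag}}=N_1(X^{\dag})\oplus\mathbb{Z}$ by $(\beta',m)\mapsto(\psi_{\ast}\beta',m)$, fixing the $\chi$-component. By (\ref{ZBw}) the maps $Z_{-\delta H,f^{\ast}\omega}\circ\Phi^{-1}$ and $Z_{\delta H^{\dag},(f^{\dag})^{\ast}\omega}$ agree on the $\mathbb{Z}$-summand (both equal $-m$), so it remains to check, for every curve class $\beta'$ on $X^{\dag}$, the identity $(-\delta H+i\,f^{\ast}\omega)\cdot\psi_{\ast}\beta'=(\delta H^{\dag}+i\,(f^{\dag})^{\ast}\omega)\cdot\beta'$, i.e. to match real and imaginary parts. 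By the defining property of pushforward of curve classes along a flop recalled above (the description of $\Phi_{\Gamma}$), for any divisor $D$ on $X$ one has $D\cdot\psi_{\ast}\beta'=D^{\mathrm{st}}\cdot\beta'$, where $D^{\mathrm{st}}$ is the strict transform of $D$ on $X^{\dag}$. Taking $D=H$, the strict transform of $H$ is $-H^{\dag}$ by the definition $H^{\dag}=-\phi_{\ast}H$, so $H\cdot\psi_{\ast}\beta'=-H^{\dag}\cdot\beta'$ and the real parts match. Taking $D=f^{\ast}\omega$, the strict transform of $f^{\ast}\omega$ is $(f^{\dag})^{\ast}\omega$ because $\phi$ is an isomorphism in codimension one and compatible with the contractions to $Y$ (equivalently $f_{\ast}\psi_{\ast}\beta'=f^{\dag}_{\ast}\beta'$ in $N_1(Y)$, and one applies the projection formula), so $f^{\ast}\omega\cdot\psi_{\ast}\beta'=(f^{\dag})^{\ast}\omega\cdot\beta'$ and the imaginary parts match. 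Hence the central charges coincide, and therefore $\Phi_{\ast}\otau_{-\delta H,\omega}=\itau_{\delta H^{\dag},\omega}$.

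The only step that requires genuine care is the strict-transform bookkeeping: keeping straight the direction of $\psi=\phi^{-1}$, checking that the strict transform of the $Y$-pullback $f^{\ast}\omega$ is $(f^{\dag})^{\ast}\omega$, and, most importantly, confirming that the sign in $H^{\dag}=-\phi_{\ast}H$ is precisely the sign $(-1)^{p+1}$ of Lemma~\ref{stabp} that flips when one passes from $p=0$ on $X$ to $p=-1$ on $X^{\dag}$. Everything else is formal once the equivalence of perverse hearts (\ref{equiv:Per}) is taken as given.
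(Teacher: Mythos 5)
Your proof is correct and fills in what the paper delegates: the paper's "proof" is just a citation of \cite[Proposition~5.2]{ToBPS}, whereas you verify the equality directly. Your approach is the natural one. Since $\Phi_{\ast}(Z,\aA)=(Z\circ\Phi^{-1},\Phi(\aA))$, a stability condition is determined by its heart and central charge, so the check splits in two: the hearts match because (\ref{equiv:Per}) gives $\Phi(\oPPer_{\le 1}(X/Y))=\iPPer_{\le 1}(X^{\dag}/Y)$; the central charges match because $\Phi^{-1}$ acts on $\Gamma_{X^{\dag}}$ by $(\beta',m)\mapsto(\psi_{\ast}\beta',m)$ with $\psi=\phi^{-1}$, and the characterization of one-cycle pushforward via strict transforms of divisors reduces the identity $Z_{-\delta H,f^{\ast}\omega}\circ\Phi^{-1}=Z_{\delta H^{\dag},(f^{\dag})^{\ast}\omega}$ to $\phi_{\ast}H=-H^{\dag}$ (which is just the definition of $H^{\dag}$) and $\phi_{\ast}(f^{\ast}\omega)=(f^{\dag})^{\ast}\omega$ (which follows because $\phi$ is an isomorphism in codimension one over $Y$, or equivalently from $f_{\ast}\psi_{\ast}=f^{\dag}_{\ast}$ and the projection formula). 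You also correctly track the sign $(-1)^{p+1}$ in Lemma~\ref{stabp} flipping between $p=0$ on $X$ and $p=-1$ on $X^{\dag}$, which is exactly where the sign in $H^{\dag}=-\phi_{\ast}H$ is absorbed. No gaps.
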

\begin{proof}
See~\cite[Proposition~5.2]{ToBPS}
for details. 
\end{proof}

\subsection{Moduli stacks of semistable perverse coherent sheaves}
Let us write stability conditions (\ref{sigmap})
as 
\begin{align}\label{ptauZ}
\ptau=(\pZ, \pPPer_{\le 1}(X/Y)), \ p=-1, 0, 
\end{align}
 for simplicity. 
For $v=(\beta, m) \in \Gamma_X$, let 
$\mM_{\ptau}(v)$ be the moduli stack of 
$\ptau$-semistable objects
$E \in \pPPer_{\le 1}(X/Y)$ with 
$\ch(E)=v$. 
\begin{lem}\label{beta:m}
Suppose that $f_{\ast}\beta \neq 0$. Then we have 
$\mM_{\ptau}(v) \subset \mM_X(\beta)$, i.e. 
any object $[E] \in \mM_{\ptau}(v)$ is 
a pure one dimensional sheaf. In particular
if $\mM_{\ptau}(v) \neq \emptyset$, 
then 
$\beta$ is an effective class. 
\end{lem}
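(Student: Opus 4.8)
The plan is to analyze what it means for an object $E \in \pPPer_{\le 1}(X/Y)$ to be $\ptau$-semistable when $f_{\ast}\beta \neq 0$, and to rule out the possibility that $E$ has any contribution concentrated on the exceptional fibers of $f$. Recall that for $p \in \{-1, 0\}$ the category $\pPPer_{\le 1}(X/Y)$ is a tilt of $\Coh_{\le 1}(X)$, so every object is concentrated in cohomological degrees $[-1,0]$ with respect to the standard t-structure, and its standard cohomology sheaves lie in $\Coh_{\le 1}(X)$. First I would use this together with the defining conditions $\Hom^{<-p}(E, \cC_X) = \Hom^{<p}(\cC_X, E) = 0$ to constrain $H^{-1}(E)$ and $H^0(E)$: since the curves contracted by $f$ generate $\cC_X$, an object supported set-theoretically on $\Ex(f)$ lies in $\cC_X$ (up to the tilt), and the $\Hom$-vanishing forces such "$f$-contracted" pieces to sit in a prescribed degree.

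Next I would bring in the numerical hypothesis. The class $\beta$ satisfies $f_{\ast}\beta \neq 0$, so $E$ genuinely maps to a nonzero one-cycle on $Y$; in particular $\dR f_{\ast} E$ is a nonzero sheaf in $\Coh(Y)$ supported in dimension one. The key mechanism is that in $\pPPer_{\le 1}(X/Y)$ the subobjects/quotients that are supported on the $f$-exceptional locus have central charge $\pZ$ with vanishing (or purely "$m$-type") value, since $\pZ$ is pulled back via $f^{\ast}\omega$ and $\omega$ is ample on $Y$ only: $\omega \cdot (f\text{-contracted curve}) = 0$. This means a $\ptau$-semistable $E$ whose cycle class has $f_{\ast}\beta \neq 0$ cannot admit a sub- or quotient object that is purely $f$-exceptional without violating the slope inequality — the argument here parallels the one in \cite[Proposition~5.2]{ToBPS} and the standard analysis of perverse stability on flopping contractions. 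So I would show: any such destabilizing "vertical" piece would be a subobject or quotient with strictly larger or smaller $\ptau$-slope, contradicting semistability, hence $E$ has no $\cC_X$-sub/quotient, hence (combining with the degree constraints above) $H^{-1}(E) = 0$ and $H^0(E) = E$ is a sheaf.

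Finally, once $E \in \Coh_{\le 1}(X)$, I would argue purity: if $E$ had a zero-dimensional torsion subsheaf $T$, then $\dR f_{\ast} T$ would be zero-dimensional, and $T$ (or rather $E/T$) would again violate the $\pPPer$ or the semistability conditions — more directly, a zero-dimensional subsheaf $T \subset E$ has $\pZ(T) \in \mathbb{R}_{<0}$ (pure "$-m$" type), giving it infinite slope, which contradicts semistability of $E$ unless $T = 0$. Thus $E$ is pure one-dimensional with $[l(E)] = \beta$, so $[E] \in \mM_X(\beta)$, and if $\mM_{\ptau}(v) \neq \emptyset$ then $\beta = [l(E)]$ is effective. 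The main obstacle I anticipate is the bookkeeping in the first two steps: carefully tracking how the $\pPPer$-conditions interact with the tilt so as to correctly identify which cohomology sheaf absorbs the $f$-contracted part, and then feeding this into the slope inequality with the right signs for $p = -1$ versus $p = 0$. This is exactly the kind of computation carried out in \cite{ToBPS}, so I would lean on that reference rather than redo it from scratch.
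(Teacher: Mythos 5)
Your proposal is correct and takes essentially the same approach as the paper: the published proof uses the short exact sequence $0 \to \hH^{-1}(E)[1] \to E \to \hH^0(E) \to 0$ in $\pPPer_{\le 1}(X/Y)$, notes that $\hH^{-1}(E)$ (supported on $\Ex(f)$) has $f^*\omega \cdot l = 0$ so $\arg \pZ(\hH^{-1}(E)[1]) = \pi$, and compares with $\arg \pZ(E) < \pi$ (forced by $f_*\beta \neq 0$) to violate $\ptau$-semistability, then disposes of purity by a similar slope argument with a zero-dimensional subsheaf. You identify exactly these ingredients (the $f$-vertical pieces having vanishing $\Im\pZ$, the numerical hypothesis giving $\Im \pZ(E) > 0$, and the infinite-slope zero-dimensional subsheaf), and your hedges about the bookkeeping mirror the paper's own brevity on the same points.
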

\begin{proof}
For $[E] \in \mM_{\ptau}(v)$, we have 
the exact sequence in $\pPPer_{\le 1}(X/Y)$
\begin{align*}
0 \to \hH^{-1}(E)[1] \to E \to \hH^0(E) \to 0. 
\end{align*}
Suppose that $\hH^{-1}(E) \neq 0$. 
Since $\hH^{-1}(E)$ is supported on 
$\Ex(f)$ and 
$\dR f_{\ast}E$ is a one dimensional sheaf on $Y$, we have
\begin{align*}
\pi=
\arg \pZ(\hH^{-1}(E)[1])>\arg \pZ(E). 
\end{align*}
The above inequality contradicts to the $\ptau$-semistability of $E$. 
So
we have $\hH^{-1}(E)=0$, and
$E \in \Coh_{\le 1}(X)$ holds. 
Similar argument shows that $E$ is a pure sheaf. 
\end{proof}
For $\sigma \in U(X)$ which is 
sufficiently close to $\ptau$, we have an open 
embedding
\begin{align}\label{open:m}
\mM_{\sigma}(v) \subset \mM_{\ptau}(v).
\end{align}
We show that the above 
embedding (\ref{open:m}) is an isomorphism 
for suitable choice of data (\ref{data:Hw}). 
We prepare using the following lemma: 
\begin{lem}\label{lem:prepare}
For an effective class $\beta \in N_1(X)$
with $f_{\ast}\beta\neq 0$, 
there exists $(H, \omega)$ in (\ref{data:Hw})
such that, 
after replacing $\delta_0>0$ by a smaller one if necessary
the following holds:
for any decomposition $\beta=\beta_1+\beta_2$
into effective classes $\beta_1$, $\beta_2$
which are not proportional in $N_1(X)_{\mathbb{R}}$,
$(m_1, m_2) \in \mathbb{Z}^2$
and $0<\pm \delta <\delta_0$,  
we have
\begin{align}\label{cond:delta}
\frac{m_1+\delta(H\cdot \beta_1)}{f^{\ast}\omega \cdot \beta_1}
\neq \frac{m_2+\delta(H\cdot \beta_2)}{f^{\ast}\omega \cdot \beta_2}.
\end{align}
 \end{lem}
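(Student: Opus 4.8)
The plan is to choose $(H,\omega)$ generically enough that the finitely many "wall" conditions one has to avoid are all avoided simultaneously, exploiting that $f_\ast\beta\neq 0$ so that $f^\ast\omega$ pairs nontrivially with the relevant classes. First I would observe that, for fixed $\beta$, the set of decompositions $\beta=\beta_1+\beta_2$ into effective classes is finite modulo the ambiguity that matters here: although there are infinitely many such decompositions in $N_1(X)$, the quantities appearing in \eqref{cond:delta} depend on $\beta_i$ only through the pair of real numbers $(H\cdot\beta_i,\ f^\ast\omega\cdot\beta_i)$, and since $\beta_1+\beta_2=\beta$ is fixed, a decomposition is determined by $\beta_1$, whose class lies in the (finite, for projective $X$ with the relevant boundedness) set of effective classes $\le\beta$. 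So only finitely many pairs $(\beta_1,\beta_2)$ occur. For each such pair and each $(m_1,m_2)\in\mathbb Z^2$ the condition \eqref{cond:delta} fails only if
\[
(m_1+\delta(H\cdot\beta_1))(f^\ast\omega\cdot\beta_2)=(m_2+\delta(H\cdot\beta_2))(f^\ast\omega\cdot\beta_1),
\]
which, after expanding, is a linear (in fact affine) equation in $\delta$ whose coefficients are determined by $(m_1,m_2,\beta_1,\beta_2,H,\omega)$.

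Next I would split into cases according to whether this affine equation in $\delta$ is identically zero. The key point is that $f^\ast\omega\cdot\beta_i=\omega\cdot f_\ast\beta_i$, and since $\beta_1,\beta_2$ are \emph{not} proportional in $N_1(X)_{\mathbb R}$ while their $f$-pushforwards could still be proportional, there are two subcases. If $f_\ast\beta_1$ and $f_\ast\beta_2$ are not proportional, then for generic ample $\omega$ on $Y$ the ratio $(\omega\cdot f_\ast\beta_1):(\omega\cdot f_\ast\beta_2)$ is irrational or at least avoids the finitely many rational values $m_1/m_2$ that could cause a collision at $\delta=0$; and the $\delta$-coefficient $(H\cdot\beta_1)(f^\ast\omega\cdot\beta_2)-(H\cdot\beta_2)(f^\ast\omega\cdot\beta_1)$ can be arranged nonzero by choosing $H$ generically $f$-ample, so the equation has at most one root $\delta$, and we shrink $\delta_0$ to exclude it. If $f_\ast\beta_1$ and $f_\ast\beta_2$ \emph{are} proportional (say $f_\ast\beta_2=\lambda f_\ast\beta_1$, $\lambda\in\mathbb Q_{>0}$), then the constant term forces $m_2=\lambda m_1$ for the equation to have a solution at all, and the $\delta$-coefficient becomes $(f^\ast\omega\cdot\beta_1)\big(\lambda(H\cdot\beta_1)-(H\cdot\beta_2)\big)$; since $\beta_1,\beta_2$ are not proportional in $N_1(X)_{\mathbb R}$ but their images are, the difference $\beta_2-\lambda\beta_1$ is a nonzero class killed by $f_\ast$, i.e.\ it lies in the span of $f$-exceptional curves, and a generic $f$-ample $H$ pairs nontrivially with it — so again the $\delta$-coefficient is nonzero and there is a unique bad $\delta$ to exclude.

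Finally, having only finitely many pairs $(\beta_1,\beta_2)$ and, for each, only finitely many "dangerous" values of $(m_1,m_2)$ up to scaling (collisions at $\delta=0$ require $m_1/m_2$ to equal a fixed rational ratio, and collisions at $\delta\neq 0$ contribute one root each, but the roots depend on $(m_1,m_2)$ only through $m_1/m_2$ as well after dividing through), the union of bad $\delta$'s over all data is a \emph{finite} set; shrinking $\delta_0$ below the smallest positive one (and above the largest negative one) gives the claim for all $0<\pm\delta<\delta_0$. The main obstacle I anticipate is the bookkeeping in the second subcase — ensuring that genericity of the single class $H$ (which must remain $f$-ample, an open but not dense condition in $\mathrm{NS}(X)_{\mathbb R}$) suffices to make $H\cdot(\beta_2-\lambda\beta_1)\neq 0$ for \emph{all} the finitely many relevant non-proportional pairs at once; this is where one uses that the $f$-ample cone is open and that each condition $H\cdot v\neq 0$ (for $v$ a fixed nonzero $f$-exceptional class) excludes only a hyperplane, so finitely many such hyperplanes cannot cover the open $f$-ample cone.
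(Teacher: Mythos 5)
There is a genuine gap in the final step, and, worse, the generic choice you make early on actively undermines the conclusion.

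Your mechanism is correct up to a point: for fixed $(\beta_1,\beta_2,m_1,m_2)$ the failure of \eqref{cond:delta} is the vanishing of an affine function of $\delta$, and you want the $\delta$-coefficient
$(H\cdot\beta_1)(f^{\ast}\omega\cdot\beta_2)-(H\cdot\beta_2)(f^{\ast}\omega\cdot\beta_1)$
to be nonzero so that each $(m_1,m_2)$ contributes at most one bad $\delta$. But the unique bad $\delta$ is then
\[
\delta=\frac{m_2(f^{\ast}\omega\cdot\beta_1)-m_1(f^{\ast}\omega\cdot\beta_2)}{(H\cdot\beta_1)(f^{\ast}\omega\cdot\beta_2)-(H\cdot\beta_2)(f^{\ast}\omega\cdot\beta_1)},
\]
which scales linearly in $(m_1,m_2)$; it does not depend only on $m_1/m_2$. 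So your assertion that the bad $\delta$'s form a \emph{finite} set is false: there is (generically) one bad root for each $(m_1,m_2)\in\mathbb{Z}^2$, an infinite set. The question is whether this set accumulates at $\delta=0$. If you arrange $(\omega\cdot f_{\ast}\beta_1):(\omega\cdot f_{\ast}\beta_2)$ to be irrational, as you propose in your first subcase, then $\{m_2(f^{\ast}\omega\cdot\beta_1)-m_1(f^{\ast}\omega\cdot\beta_2):(m_1,m_2)\in\mathbb{Z}^2\}$ is \emph{dense} in $\mathbb{R}$, so the bad $\delta$'s accumulate at $0$ and no $\delta_0>0$ can work. The correct choice is the opposite: take $\omega$ rational. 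Then $f^{\ast}\omega\cdot\beta_i=\omega\cdot f_{\ast}\beta_i$ are rational with bounded denominators, the numerator $m_2(f^{\ast}\omega\cdot\beta_1)-m_1(f^{\ast}\omega\cdot\beta_2)$ ranges over a discrete subset of $\mathbb{R}$, and the nonzero bad $\delta$'s are uniformly bounded away from $0$; one then shrinks $\delta_0$ below the infimum over the finitely many pairs $(\beta_1,\beta_2)$.

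The paper also handles the nonvanishing of the $\delta$-coefficient more directly. Rather than splitting into cases according to whether $f_{\ast}\beta_1$ and $f_{\ast}\beta_2$ are proportional, it notes that if $\alpha=\frac{H\cdot\beta_1}{f^{\ast}\omega\cdot\beta_1}-\frac{H\cdot\beta_2}{f^{\ast}\omega\cdot\beta_2}$ vanished for all choices of $(H,\omega)$, bilinearity would force $(D_1\cdot\beta_1)(D_2\cdot\beta_2)=(D_1\cdot\beta_2)(D_2\cdot\beta_1)$ identically, and elementary linear algebra then gives $\beta_1\parallel\beta_2$, contradicting the hypothesis; since the decompositions of $\beta$ into effective classes are finite, a single generic $(H,\omega)$ (with $\omega$ rational) works for all of them. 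Your case split is not wrong per se, but the first subcase imports the damaging irrationality condition, and the claim that the bad set is finite needs to be replaced by ``discrete and bounded away from $0$ once $\omega$ is rational.''
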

\begin{proof}
We first note that
a general $(H, \omega)$
satisfies the following: 
for any decomposition $\beta=\beta_1+\beta_2$
into effective classes $\beta_1$, $\beta_2$
which are not proportional in $N_1(X)_{\mathbb{R}}$,
we have
\begin{align}\label{choice:Hw}
\alpha \cneq 
\frac{H \cdot \beta_1}{f^{\ast}\omega \cdot \beta_1} - 
\frac{H \cdot \beta_2}{f^{\ast}\omega \cdot \beta_2} \neq 0. 
\end{align}
Indeed, suppose that 
(\ref{choice:Hw}) fails for any $(H, \omega)$. 
Then for any $(D_1, D_2) \in \mathrm{NS}(X)_{\mathbb{R}}^{\times 2}$, 
we have 
\begin{align*}
(D_1 \cdot \beta_1) \cdot (D_2 \cdot \beta_2)=
(D_1 \cdot \beta_2) \cdot (D_2 \cdot \beta_1). 
\end{align*}
An easy linear algebra argument shows that the above 
condition implies that $\beta_1$ and $\beta_2$
are proportional. 
Therefore for a fixed non-proportional $(\beta_1, \beta_2)$, 
a general choice of $(H, \omega)$ satisfies (\ref{choice:Hw}). 
Since the possible pairs $(\beta_1, \beta_2)$ are finite, 
a general $(H, \omega)$ satisfies (\ref{choice:Hw})
for any decomposition $\beta=\beta_1+\beta_2$. 

Let us take $(H, \omega)$ as above such that 
$\omega$ is rational. 
If for $\delta \neq 0$ the condition (\ref{cond:delta}) fails, 
we have 
\begin{align*}
\delta=\frac{1}{\alpha}\left(\frac{m_2}{f^{\ast}\omega \cdot \beta_2}
-\frac{m_1}{f^{\ast}\omega \cdot \beta_1}
  \right). 
\end{align*}
Since the RHS takes discrete values, 
we can find $\delta_0>0$ such that 
for $0<\pm \delta<\delta_0$ the condition (\ref{cond:delta}) holds. 
\end{proof}

\begin{lem}\label{lem:Misom}
For an effective class $\beta \in N_1(X)$
with $f_{\ast}\beta\neq 0$, 
we take $(H, \omega)$ and $\delta_0>0$ as in Lemma~\ref{lem:prepare}. 
Then 
the open embedding (\ref{open:m}) is an isomorphism of stacks. 
\end{lem}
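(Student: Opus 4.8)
The plan is to show that the open immersion (\ref{open:m}) is surjective, whence an isomorphism. By Lemma~\ref{beta:m} both $\mM_{\sigma}(v)$ and $\mM_{\ptau}(v)$ are open substacks of $\mM_X(\beta)$, so it suffices to prove that every closed point of $\mM_{\ptau}(v)$ lies in $\mM_{\sigma}(v)$; again by Lemma~\ref{beta:m} such a point is a pure one dimensional sheaf $E$ with $\ch(E)=v=(\beta,m)$, so the task is to show that any $\ptau$-semistable sheaf $E$ of class $v$ is $\sigma$-semistable for $\sigma=\sigma_{B,\omega'}\in U(X)$ sufficiently close to $\ptau$. Suppose not, and let $F\subsetneq E$ be the first step of the $\mu_{B,\omega'}$-Harder--Narasimhan filtration, so $\mu_{B,\omega'}(F)>\mu_{B,\omega'}(E)$. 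Writing $(\beta_1,m_1)=\ch(F)$ and $(\beta_2,m_2)=\ch(E/F)$, purity of $E$ and the strict destabilization force $\beta_1$ and $\beta_2$ to be nonzero effective classes with $\beta_1+\beta_2=\beta$.

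Next I would compare $\mu_{B,\omega'}$-slopes with the phases of $\pZ$ as $\sigma\to\ptau$. If $\beta_1$ and $\beta_2$ are proportional in $N_1(X)_{\mathbb{R}}$, then $\beta_1$ is proportional to $\beta$, so $f^{\ast}\omega\cdot\beta_1>0$ because $f_{\ast}\beta\neq 0$; letting $\sigma\to\ptau$ the slopes of $F$ and of $E$ converge to finite limits, and a direct computation shows these limits coincide only if $(\beta_1,m_1)$ is proportional to $(\beta,m)$ in $\Gamma_X\otimes\mathbb{Q}$, which is impossible since then $\mu_{B,\omega'}(F)=\mu_{B,\omega'}(E)$ identically, contradicting $\mu_{B,\omega'}(F)>\mu_{B,\omega'}(E)$. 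If instead $\beta_1$ and $\beta_2$ are not proportional, then Lemma~\ref{lem:prepare} (after possibly shrinking $\delta_0$) guarantees that the two $\ptau$-slopes $\frac{m_i-(-1)^{p+1}\delta H\cdot\beta_i}{f^{\ast}\omega\cdot\beta_i}$ of $F$ and $E/F$ are distinct, a vanishing denominator being read as $\pm\infty$; in that degenerate case the maximality of $F$ gives $\mu_{B,\omega'}(F)\to+\infty$, which pins $\pZ(F)$ to the negative real axis. Combining these statements with the see-saw inequality applied to $0\to F\to E\to E/F\to 0$ and with $\mu_{B,\omega'}(F)>\mu_{B,\omega'}(E)$, in every case one obtains the strict inequality of phases $\arg\pZ(F)>\arg\pZ(E)$.

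The remaining point, which I expect to be the main obstacle, is to convert the destabilizing subsheaf $F\subset E$ into a genuinely destabilizing subobject of $E$ inside the tilted heart $\pPPer_{\le 1}(X/Y)$. Indeed the inclusion $F\hookrightarrow E$ in $D^b(\Coh_{\le 1}(X))$ need not be a monomorphism in $\pPPer_{\le 1}(X/Y)$, and conversely subobjects of $E$ in $\pPPer_{\le 1}(X/Y)$ need not be subsheaves; in each direction the discrepancy is a sheaf supported on $\Ex(f)$, on which $f^{\ast}\omega\cdot(-)=0$, so it affects only the real part of $\pZ$. One handles this by replacing $F$ with its largest subsheaf $F'\subseteq F$ lying in the tilting torsion class of $\pPPer_{\le 1}(X/Y)$: then $F'$ is a subobject of $E$ in $\pPPer_{\le 1}(X/Y)$ (since $E$ and $E/F'$ both lie in that torsion class), the quotient $F/F'$ is a sheaf supported on $\Ex(f)$, and one must check, using the defining vanishing conditions $\Hom^{<-p}(E,\cC_X)=\Hom^{<p}(\cC_X,E)=0$ of $\pPPer_{\le 1}(X/Y)$, the purity of $E$, the $f$-ampleness of $H$, and the smallness of $\delta$, that this modification does not lower the $\pZ$-phase below that of $E$. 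Granting this, $F'$ is a subobject of $E$ in $\pPPer_{\le 1}(X/Y)$ with $\arg\pZ(F')>\arg\pZ(E)$, contradicting the $\ptau$-semistability of $E$; hence (\ref{open:m}) is surjective, and being an open immersion it is an isomorphism.
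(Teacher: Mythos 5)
Your proposal follows the same basic strategy as the paper's proof: reduce to showing that a $\ptau$-semistable sheaf $E$ with $\ch(E)=v$ is $\sigma$-semistable for $\sigma$ close to $\ptau$, take a $\sigma$-destabilizing subsheaf $F$, and play off the $\ptau$-slope equality/inequality against Lemma~\ref{lem:prepare} to reach a contradiction. The paper phrases this as ``$\arg\pZ(E_1)=\arg\pZ(E_2)$ forces $(\beta_1,m_1)\propto(\beta_2,m_2)$ by Lemma~\ref{lem:prepare}, contradicting the strict $\sigma$-destabilization,'' whereas you split into the proportional/non-proportional cases (plus a degenerate sub-case where $f^{\ast}\omega\cdot\beta_1=0$, which in the paper's formulation is automatically excluded by the equality $\arg\pZ(E_1)=\arg\pZ(E_2)$ since $\Imm\pZ(E)>0$). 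These are logically equivalent organizations of the same argument, so there is no substantive difference on this side.

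The genuine issue is the one you yourself flag in your third paragraph: in order to contradict $\ptau$-semistability one needs the destabilizing sub to be a subobject of $E$ \emph{inside} $\pPPer_{\le 1}(X/Y)$, i.e.\ the short exact sequence $0\to F\to E\to E/F\to 0$ must be exact in $\pPPer_{\le 1}(X/Y)$ as well as in $\Coh_{\le 1}(X)$. You correctly observe that this is not automatic and propose replacing $F$ by its largest torsion-class subsheaf $F'\subseteq F$; it is indeed true that $F'$ is then a subobject of $E$ in both hearts (since $E$ and $E/F'$ lie in the torsion class). But you then write ``Granting this, \dots,'' leaving unverified that the modification $F\rightsquigarrow F'$ does not destroy the strict $\sigma$-destabilization $\mu_{\sigma}(F')>\mu_{\sigma}(E)$; this is precisely the nontrivial point (the discrepancy $F/F'$ is exceptional, so it drops out of $\Imm\pZ$ but not of $\Ree\pZ$, and the sign of the resulting change in slope is not a priori controlled). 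So as written your proof contains an acknowledged gap. For what it is worth, the paper's proof is extremely terse at exactly the same place: it simply asserts that a destabilizing sequence exact in both hearts with $\arg\pZ(E_1)=\arg\pZ(E_2)$ exists (``Since $\Imm\pZ(E)>0$, there is an exact sequence \dots''), without spelling out why the $\sigma$-HN destabilizer, or a modification of it, can be taken to lie in the tilted heart. You have correctly isolated the crux; you have not closed it, and neither does the paper in any explicit way.
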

\begin{proof}
Let $E \in \pPPer_{\le 1}(X/Y)$ be a 
$\ptau$-semistable object with 
$\ch(E)=(\beta, m)$. 
By Lemma~\ref{beta:m}, we have 
$E \in \Coh_{\le 1}(X)$. 
Suppose that $E$ is not 
$\sigma$-semistable. 
Since $\Imm \pZ(E)>0$, there is an exact sequence
\begin{align}\label{E12}
0 \to E_1 \to E \to E_2 \to 0
\end{align}
in both of $\Coh_{\le 1}(X)$ and $\pPPer_{\le 1}(X/Y)$
which destabilizes $E$ in $\sigma$-stability
and
\begin{align*}
\arg \pZ(E_1)=\arg \pZ(E_2). 
\end{align*}
By setting $\ch(E_i)=(\beta_i, m_i)$, the above condition 
implies that $(\beta_1, m_1)$ and 
$(\beta_2, m_2)$ are proportional by Lemma~\ref{lem:prepare}. 
Then it contradicts to that (\ref{E12})
destabilizes $E$ in $\sigma$-stability. 
\end{proof}

\subsection{Flop invariance formula}
The following is the main result in this section. 
\begin{thm}\label{thm:flop}
Suppose that the stack $\mM_X(\beta)$ is CY 
at $\gamma \in \Chow_X(\beta)$ 
and $\Phi_X(\gamma, m)$ is non-zero. 
Then $\phi_{\ast}\gamma$ is an effective one cycle on $X^{\dag}$. 
If
$\mM_{X^{\dag}}(\phi_{\ast}\beta)$ is also CY 
at $\phi_{\ast}\gamma \in \Chow_{X^{\dag}}(\phi_{\ast}\beta)$,
then we have the identity
\begin{align}\label{id:flop}
\Phi_X(\gamma, m)=\Phi_{X^{\dag}}(\phi_{\ast}\gamma, m). 
\end{align}
\end{thm}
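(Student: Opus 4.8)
The plan is to deform the stability condition to a wall on the boundary $\overline{U(X)}$, use Bridgeland's derived equivalence $\Phi$ of~\eqref{D:equiv} to pass to $X^{\dag}$, and observe that $\Phi$ is compatible with every structure entering the definition of $\Phi_{\sigma}(\gamma,m)$. Throughout I take $f_{\ast}\beta\neq 0$ (as in Theorem~\ref{thm:intro:flop}; when $f_{\ast}\beta=0$ the cycle $\gamma$ is supported on the flopping curves and the statement is checked directly). First I fix data $(H,\omega)$ as in Lemma~\ref{lem:prepare}, taken generic enough that its conclusion holds simultaneously for $(X,\beta)$ and, with $H^{\dag}=-\phi_{\ast}H$ and $\omega^{\dag}=\omega$, for $(X^{\dag},\phi_{\ast}\beta)$ --- possible since $-\phi_{\ast}$ is a linear isomorphism $\mathrm{NS}(X)_{\mathbb{R}}\to \mathrm{NS}(X^{\dag})_{\mathbb{R}}$ and both exceptional loci are proper closed. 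Write $v=(\beta,m)$ and $v'=\Phi_{\Gamma}(v)=(\phi_{\ast}\beta,m)$. By Theorem~\ref{thm:inde}, $\Phi_X(\gamma,m)=\Phi_{\sigma}(\gamma,m)$ for every $\sigma\in U(X)$; choosing $\sigma$ sufficiently close to $\otau\cneq \otau_{-\delta H,\omega}$ with $0<\delta<\delta_0$, Lemma~\ref{lem:Misom} gives an equality of $d$-critical stacks over $\Chow_X(\beta)$, namely $\mM_{\sigma}(v)=\mM_{\otau}(v)$. Symmetrically on $X^{\dag}$, $\Phi_{X^{\dag}}(\phi_{\ast}\gamma,m)=\Phi_{\sigma^{\dag}}(\phi_{\ast}\gamma,m)$ for $\sigma^{\dag}$ close to $\itau\cneq \itau_{\delta H^{\dag},\omega}$, with $\mM_{\sigma^{\dag}}(v')=\mM_{\itau}(v')$, once $\phi_{\ast}\gamma$ is known to be effective.

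Next I would transport the moduli data across the flop. By~\eqref{equiv:Per}, $\Phi$ restricts to a $t$-exact equivalence $\oPPer_{\le 1}(X/Y)\to \iPPer_{\le 1}(X^{\dag}/Y)$, and by Lemma~\ref{lem:Phisom} it intertwines $\otau$ with $\itau$; hence it carries $\otau$-semistable objects of class $v$ to $\itau$-semistable objects of class $v'$ and induces an isomorphism of moduli stacks $\Phi_{\ast}\colon \mM_{\otau}(v)\to \mM_{\itau}(v')$. Because $\Phi$ is a Fourier--Mukai equivalence, $\Phi_{\ast}$ is an isomorphism of $d$-critical stacks for the canonical structures of Theorem~\ref{thm:CYdcrit} and identifies the virtual canonical line bundles $K_{\mM_X(\beta)}^{\rm{vir}}$ and $K_{\mM_{X^{\dag}}(\phi_{\ast}\beta)}^{\rm{vir}}$ on the corresponding open substacks --- this is exactly the mechanism used in~\cite{MT} for irreducible cycles, and rests only on the $(-1)$-shifted symplectic, hence $d$-critical, structure being intrinsic to the Calabi--Yau category (cf.~\cite{PTVV, MR3352237}). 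Moreover, by the lemma $l(\Phi(F))=\phi_{\ast}l(F)$ proved above and the fact that $\Phi$ is an isomorphism away from the (finite, rigid) exceptional curves of $f$, $f^{\dag}$, the map $\phi$ induces an isomorphism between an analytic neighborhood $\gamma\in U\subset \Chow_X(\beta)$ --- which we may take to contain no cycle with a component on those curves, since such a curve has infinite $\otau$-slope and so no such cycle meets the image of $\pi_M$ --- and an analytic neighborhood $\phi_{\ast}\gamma\in U^{\dag}\subset \Chow_{X^{\dag}}(\phi_{\ast}\beta)$, intertwining the Hilbert--Chow maps $\pi_{\mM}$.

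For effectivity: if $\Phi_X(\gamma,m)\neq 0$ then by~\eqref{def:Phi:sigma} the point $\gamma$ lies in the image of $\pi_M\colon M_{\sigma}(v)\to \Chow_X(\beta)$, so $\gamma=l(E)$ for some $\sigma$-semistable (hence $\otau$-semistable) sheaf $E$ with $\ch(E)=v$. Then $\Phi(E)$ is an $\itau$-semistable object of $\iPPer_{\le 1}(X^{\dag}/Y)$ of class $v'$, and since $f^{\dag}_{\ast}(\phi_{\ast}\beta)=f_{\ast}\beta\neq 0$ (pullbacks of divisors from $Y$ agree under the flop), Lemma~\ref{beta:m} applied to $f^{\dag}$ forces $\Phi(E)$ to be a pure one-dimensional sheaf; thus $\phi_{\ast}\gamma=l(\Phi(E))$ is effective and $\Phi_{X^{\dag}}(\phi_{\ast}\gamma,m)$ is defined. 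To conclude, choose a CY orientation data of $\mM_X(\beta)|_{U}$ (by the CY hypothesis at $\gamma$); it restricts to $\mM_{\sigma}(v)|_{U}=\mM_{\otau}(v)|_{U}$ and transports under $\Phi_{\ast}$ to an orientation data of $\mM_{\sigma^{\dag}}(v')|_{U^{\dag}}=\mM_{\itau}(v')|_{U^{\dag}}$ which is trivial as a line bundle --- i.e. a CY orientation data --- by the identification of virtual canonical line bundles and the CY hypothesis at $\phi_{\ast}\gamma$. By Lemma~\ref{lem:inde} one may compute $\Phi_{\sigma}$ and $\Phi_{\sigma^{\dag}}$ with these choices. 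Since $\Phi_{\ast}$ respects the oriented $d$-critical structures, Theorem~\ref{thm:MR3353002} identifies $\phi_{\mM_{\sigma}(v)|_{U}}$ with $\phi_{\mM_{\sigma^{\dag}}(v')|_{U^{\dag}}}$, hence (applying $\dR p_{M\ast}$ and taking first perverse cohomology, cf.\ Definition~\ref{def:def:phiM}) identifies $\phi_{M_{\sigma}(v)|_{U}}$ with $\phi_{M_{\sigma^{\dag}}(v')|_{U^{\dag}}}$, compatibly with $M_{\sigma}(v)|_{U}\cong M_{\sigma^{\dag}}(v')|_{U^{\dag}}$ over $U\cong U^{\dag}$. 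Applying $\dR\pi_{M\ast}$, restricting to $\gamma\leftrightarrow \phi_{\ast}\gamma$, and taking perverse cohomology sheaves and Euler characteristics gives $\Phi_{\sigma}(\gamma,m)=\Phi_{\sigma^{\dag}}(\phi_{\ast}\gamma,m)$, which with the first paragraph is~\eqref{id:flop}.

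The main obstacle is the second paragraph: showing that $\Phi_{\ast}$ is an isomorphism of \emph{oriented} $d$-critical stacks --- equivalently that, after matching CY orientation data, $\Phi_{\ast}$ intertwines the perverse sheaves of vanishing cycles of Theorem~\ref{thm:MR3353002} --- and that it commutes with the Hilbert--Chow maps in a neighborhood of $\gamma$. The first point is the functoriality of the $d$-critical (shifted symplectic) structure on moduli of objects under derived equivalences and needs the appropriate categorical input; the second is a routine but slightly delicate local study around the rigid exceptional curves, using that the flop is an isomorphism in codimension one.
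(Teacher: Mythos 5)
Your proof is correct and takes essentially the same route as the paper: deform to the boundary stability conditions $\otau_X$ and $\itau_{X^{\dag}}$ via Lemma~\ref{lem:Misom}, transport across the Bridgeland equivalence $\Phi$ using Lemma~\ref{lem:Phisom}, derive effectivity of $\phi_{\ast}\gamma$ from the resulting chain of stack isomorphisms, match the canonical $d$-critical structures and virtual canonical bundles across the Fourier--Mukai equivalence, and close both ends with Theorem~\ref{thm:inde}. The two points you flag as obstacles at the end (that $\Phi_{\ast}$ preserves the oriented $d$-critical structure and commutes with the Hilbert--Chow maps) are asserted with comparably little detail in the paper's own proof, so your treatment is if anything more explicit, not less.
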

\begin{proof}
We take $(H, \omega)$
and $\delta_0>0$ as in Lemma~\ref{lem:Misom}. 
We write stability conditions
(\ref{ptauZ}) as $\ptau_X$, and 
let $\sigma_X^{(p)} \in U(X)$ be 
sufficiently close to $\ptau_X$. 
If $\Phi_X(\gamma, m)$ is non-zero, 
then by Theorem~\ref{thm:inde}
there is an object $[E] \in \mM_{\sigma_X^{(0)}}(\beta)$
such that $l(E)=\gamma$. 
By Lemma~\ref{lem:Phisom} and Lemma~\ref{lem:Misom}, 
we have the isomorphisms
\begin{align}\label{isom:MM0}
\mM_{\sigma_X^{(0)}}(\beta) \stackrel{\cong}{\hookrightarrow}
\mM_{\otau_X}(\beta) \stackrel{\Phi_{\ast}}{\to}
\mM_{\itau_X^{\dag}}(\phi_{\ast}\beta) \stackrel{\cong}{\hookleftarrow}
\mM_{\sigma_X^{\dag (-1)}}(\phi_{\ast}\beta). 
\end{align}
By the above isomorphisms, 
the object $\Phi(E)$ is also a sheaf so 
the one cycle $\phi_{\ast}\gamma=l(\Phi(E))$
on $X^{\dag}$ is effective. 
Let $T_X^{(p)}(\beta) \subset \Chow_X(\beta)$ be the image of 
the HC map 
$\mM_{\sigma_X^{(p)}}^{\rm{red}}(\beta) \to \Chow_X(\beta)$. 
We have the commutative diagram
\begin{align}\label{com:XX}
\xymatrix{
\mM_{\sigma_X^{(0)}}^{\rm{red}}(\beta)
 \ar[r]^-{\Phi_{\ast}}_-{\cong} \ar[d] & 
\mM_{\sigma_X^{\dag (-1)}}^{\rm{red}}(\phi_{\ast}\beta) \ar[d] \\
T_X^{(0)}(\beta) \ar[r]^-{\phi_{\ast}}_-{\cong}& T_{X^{\dag}}^{(-1)}(\phi_{\ast}\beta)
}
\end{align}
where the vertical arrows are HC maps. 
The isomorphisms (\ref{isom:MM0})
preserve the $d$-critical structures and 
the virtual canonical line bundles. 
Therefore 
$\mM_{\sigma_X^{\dag(-1)}}(\phi_{\ast}\beta)$
is 
also CY at  
$\phi_{\ast}\gamma$, and
 we have the identity
\begin{align*}
\Phi_{\sigma_X^{(0)}}(\gamma, m)=\Phi_{\sigma_{X^{\dag}}^{(-1)}}(\phi_{\ast}\gamma, m). 
\end{align*}
If we furthermore assume that $\mM_{X^{\dag}}(\phi_{\ast}\beta)$ is CY 
at $\phi_{\ast}\gamma$, then by
 Theorem~\ref{thm:inde} we obtain the identity (\ref{id:flop}). 
\end{proof}
By Theorem~\ref{thm:flop} and the 
identity (\ref{id:GV1}), we have the following corollary: 
\begin{cor}
For $\beta \in N_1(X)$
with $f_{\ast}\beta \neq 0$, suppose that 
Conjecture~\ref{conj:vir} holds for one cycles
$\gamma$ and $\phi_{\ast}\gamma$. 
Then the local GV invariant
$n_{g, \gamma}$
defined in~\cite{MT}
satisfy 
$n_{g, \gamma}=n_{g, \phi_{\ast}\gamma}$. 
In particular if Conjecture~\ref{conj:vir} holds for $X$ and $X^{\dag}$, 
then we have $n_{g, \beta}=n_{g, \phi_{\ast}\beta}$.  
\end{cor}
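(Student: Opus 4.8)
The plan is to deduce the Corollary from Theorem~\ref{thm:flop} applied with $m=1$, together with the defining identity (\ref{id:GV1}) for the invariants $n_{g,\gamma}$. The hypothesis that Conjecture~\ref{conj:vir} holds for $\gamma$ is precisely the statement that $\mM_X(\beta)$ is CY at $\gamma$, and likewise for $\phi_{\ast}\gamma$ on $X^{\dag}$ (in particular $\phi_{\ast}\gamma$ is effective, so that $\Phi_{X^{\dag}}(\phi_{\ast}\gamma,1)$ is defined). If $\Phi_X(\gamma,1)\neq 0$, Theorem~\ref{thm:flop} immediately gives $\Phi_X(\gamma,1)=\Phi_{X^{\dag}}(\phi_{\ast}\gamma,1)$. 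If instead $\Phi_X(\gamma,1)=0$, I would apply Theorem~\ref{thm:flop} to the inverse flop $X^{\dag}\xrightarrow{f^{\dag}}Y\xleftarrow{f}X$ and the cycle $\phi_{\ast}\gamma$: here $(\phi^{-1})_{\ast}(\phi_{\ast}\gamma)=\gamma$, and $f^{\dag}_{\ast}(\phi_{\ast}\beta)\neq 0$ since $f_{\ast}\beta\neq 0$. Were $\Phi_{X^{\dag}}(\phi_{\ast}\gamma,1)$ nonzero, Theorem~\ref{thm:flop} would force it to equal $\Phi_X(\gamma,1)=0$, a contradiction; hence $\Phi_{X^{\dag}}(\phi_{\ast}\gamma,1)=0$ as well. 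In every case $\Phi_X(\gamma,1)=\Phi_{X^{\dag}}(\phi_{\ast}\gamma,1)$.

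Next I would feed this into (\ref{id:GV1}). By Theorem~\ref{thm:inde} we have $\Phi_{\sigma=i\omega}(\gamma,1)=\Phi_X(\gamma,1)$, so (\ref{id:GV1}) on $X$ and on $X^{\dag}$ yields an equality in $\mathbb{Z}[y^{\pm 1}]$, namely $\sum_{g\ge 0}n_{g,\gamma}(y^{1/2}+y^{-1/2})^{2g}=\sum_{g\ge 0}n_{g,\phi_{\ast}\gamma}(y^{1/2}+y^{-1/2})^{2g}$. Since $(y^{1/2}+y^{-1/2})^{2g}=(y+2+y^{-1})^{g}$ has top $y$-degree $g$, the family $\{(y^{1/2}+y^{-1/2})^{2g}\}_{g\ge 0}$ is $\mathbb{Q}$-linearly independent, so comparing coefficients gives $n_{g,\gamma}=n_{g,\phi_{\ast}\gamma}$ for all $g$. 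This step is formal; all the content sits in Theorem~\ref{thm:flop}, whose proof rests on the Bridgeland equivalence (\ref{D:equiv}), the comparison of stability conditions on the two sides of the flop (Lemmas~\ref{lem:Phisom} and~\ref{lem:Misom}), and the wall-crossing invariance of Theorem~\ref{thm:inde}.

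For the final assertion, assume Conjecture~\ref{conj:vir} for all of $X$ and all of $X^{\dag}$, so that $\mM_X(\beta)$ and $\mM_{X^{\dag}}(\phi_{\ast}\beta)$ are CY at every point. By the definition of $n_{g,\beta}$ together with Theorem~\ref{thm:inde}, $\sum_{g\ge 0}n_{g,\beta}(y^{1/2}+y^{-1/2})^{2g}=\int_{\gamma\in\Chow_X(\beta)}\Phi_X(\gamma,1)\,de$, and similarly on $X^{\dag}$. Computing $\Phi_X(-,1)$ with the stability condition $\sigma_X^{(0)}$ used in the proof of Theorem~\ref{thm:flop}, the constructible function $\gamma\mapsto\Phi_X(\gamma,1)$ is supported on the image $T_X^{(0)}(\beta)$ of the Hilbert-Chow map, and likewise $\gamma'\mapsto\Phi_{X^{\dag}}(\gamma',1)$ is supported on $T_{X^{\dag}}^{(-1)}(\phi_{\ast}\beta)$; by diagram (\ref{com:XX}), $\phi_{\ast}$ restricts to an isomorphism $T_X^{(0)}(\beta)\xrightarrow{\sim}T_{X^{\dag}}^{(-1)}(\phi_{\ast}\beta)$ under which these two functions correspond, the identification being exactly the pointwise equality $\Phi_X(\gamma,1)=\Phi_{X^{\dag}}(\phi_{\ast}\gamma,1)$ already proved. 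Since the weighted Euler characteristic is invariant under isomorphism, $\int_{\Chow_X(\beta)}\Phi_X(\gamma,1)\,de=\int_{\Chow_{X^{\dag}}(\phi_{\ast}\beta)}\Phi_{X^{\dag}}(\gamma',1)\,de$, hence $\sum_g n_{g,\beta}(y^{1/2}+y^{-1/2})^{2g}=\sum_g n_{g,\phi_{\ast}\beta}(y^{1/2}+y^{-1/2})^{2g}$, and linear independence again gives $n_{g,\beta}=n_{g,\phi_{\ast}\beta}$. The only mildly delicate points are the symmetric handling of the vanishing case in the first step and the identification of supports in the last, but neither goes beyond unwinding Theorem~\ref{thm:flop}, so I expect no real obstacle here.
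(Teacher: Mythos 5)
Your proof is correct and takes essentially the same approach as the paper, which simply cites Theorem~\ref{thm:flop} and identity (\ref{id:GV1}). You are also more careful than the paper's one-line proof in handling the degenerate case $\Phi_X(\gamma,1)=0$ (where Theorem~\ref{thm:flop} does not directly apply) by the inverse-flop contradiction argument, which is a legitimate and necessary completion of the sketch.
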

\section{The case of local surfaces}\label{sec:local}
The results so far are conditional to Conjecture~\ref{conj:vir}. 
In this section, we prove Conjecture~\ref{conj:vir}
for local surfaces, which shows similar results 
as Theorem~\ref{thm:inde}, Theorem~\ref{thm:indeE}
in this case
without assuming Conjecture~\ref{conj:vir}. 
\subsection{Moduli stacks of one dimensional semistable sheaves on local surfaces}
Let $S$ be a smooth projective surface 
and consider the non-compact CY 3-fold $X$
\begin{align*}
X\cneq \mathrm{Tot}_S(K_S)
\stackrel{p}{\to}S
\end{align*}
where $p$ is the projection. 
The above CY 3-fold is compactified by adding 
the section at the infinity: 
\begin{align*}
X \subset \overline{X} \cneq \mathbb{P}_S(\oO_S \oplus K_S). 
\end{align*}
Let $\Coh_{c, \le 1}(X)$ be the 
category of compactly supported coherent sheaves on 
$X$ whose supports have dimensions less than or equal to one. 
Note that $\Coh_{c, \le 1}(X)$ is the subcategory
\begin{align*}
\Coh_{c, \le 1}(X) \subset \Coh_{\le 1}(\overline{X})
\end{align*}
consisting of sheaves whose supports do not intersect with 
the divisor
$D_{\infty} \cneq \overline{X} \setminus X$. 

Let $s \colon S \to X$ be the zero section of $p$. 
For $\beta \in N_1(S)$, 
let $\mM_{\overline{X}}(s_{\ast}\beta)$ be 
the stack as in (\ref{moduli:2funct}) for $\overline{X}$, and 
\begin{align*}
\mM_{X}(\beta) \subset \mM_{\overline{X}}(s_{\ast}\beta)
\end{align*}
the open substack consisting of 
 sheaves whose supports do not intersect with 
$D_{\infty}$. 
By its construction, the 
stack $\mM_X(\beta)$ is
nothing but
the moduli stack of the following objects:
\begin{align*}
F \in \Coh_{c, \le 1}(X), \ [l(p_{\ast}F)]=\beta.
\end{align*}
By~\cite{PTVV} (also see~\cite[Theorem~5.2]{Bussi} for the noncompact CY 3-fold case),   
the stack $\mM_X(\beta)$ is a truncation of a smooth 
derived scheme with a $(-1)$-shifted symplectic structure. 
Therefore by~\cite{BBBJ}, there is a canonical 
$d$-critical structure on $\mM_X(\beta)$, with 
virtual canonical line bundle given by the same formula (\ref{vir:K})
as in the compact CY 3-fold case. 

Let $\Chow_X(\beta)$ be the open subscheme of $\Chow_{\overline{X}}(s_{\ast}\beta)$
consisting of one cycles $\gamma$ on $\overline{X}$
which do not intersect with $D_{\infty}$. 
The HC map
\begin{align}\label{HC:oX}
\mM_{\overline{X}}^{\rm{red}}(s_{\ast}\beta) \to \Chow_{\overline{X}}(s_{\ast}\beta)
\end{align}
for $\overline{X}$ restricts to the HC map
\begin{align}\label{piM:loc}
\pi_{\mM} \colon 
\mM_{X}^{\rm{red}}(\beta) \to \Chow_X(\beta)
\end{align}
by pulling back (\ref{HC:oX}) to 
the open locus 
$\Chow_X(\beta)$
in $\Chow_{\overline{X}}(s_{\ast}\beta)$. 

\subsection{CY property for local surfaces}
In this subsection, we show 
Conjecture~\ref{conj:vir} for 
the local surface case. 
\begin{thm}\label{thm:CYsurface}
For the local surface $X=\mathrm{Tot}_S(K_S)$, the stack
$\mM_X(\beta)$ is CY at any $\gamma \in \Chow_X(\beta)$. 
\end{thm}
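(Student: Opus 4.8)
The plan is to use the spectral (Beauville--Narasimhan--Ramanan) description of compactly supported sheaves on $X=\mathrm{Tot}_S(K_S)$ to push the computation down to the projective surface $S$, where the virtual canonical line bundle becomes manifestly a square and, moreover, turns out to depend only on the one-cycle.

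Write $q=p\times\id\colon X\times\mM_X(\beta)\to S\times\mM_X(\beta)$ and $\pi\colon S\times\mM_X(\beta)\to\mM_X(\beta)$, so that $pr_{\mM}=\pi\circ q$, and set $\fF\cneq q_{\ast}\eE$: a family over $\mM_X(\beta)$, flat over the base, of one-dimensional sheaves on $S$ equipped with its tautological Higgs field $\theta\colon\fF\to\fF\otimes pr_S^{\ast}K_S$ coming from the $\Sym_{\oO_S}K_S^{\vee}$-module structure. The standard two-term ("Higgs complex") description of $\RHom$ between such modules gives, in families,
\[
\dR\hH om_{pr_{\mM}}(\eE,\eE)\cong\Cone\big(\dR\hH om_{\pi}(\fF,\fF)\xrightarrow{\mathrm{ad}\,\theta}\dR\hH om_{\pi}(\fF,\fF\otimes pr_S^{\ast}K_S)\big)[-1].
\]
Now $\pi$ is proper and smooth of relative dimension two, with relative dualizing complex $pr_S^{\ast}K_S[2]$, so Grothendieck--Serre duality yields $\dR\hH om_{\pi}(\fF,\fF)\cong\dR\hH om_{\pi}(\fF,\fF\otimes pr_S^{\ast}K_S)^{\vee}[-2]$, hence $\det\dR\hH om_{\pi}(\fF,\fF\otimes pr_S^{\ast}K_S)\cong\det\dR\hH om_{\pi}(\fF,\fF)^{-1}$. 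Taking determinants along the triangle above therefore gives
\[
K_{\mM_X(\beta)}^{\mathrm{vir}}\cong N^{\otimes 2},\qquad N\cneq\det\dR\hH om_{\pi}(\fF,\fF),
\]
so it suffices to show $N$ is trivial on $\pi_{\mM}^{-1}(U)$ for a small analytic neighbourhood $U\ni\gamma$ in $\Chow_X(\beta)$.

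Here $\dim S=2$ enters crucially. Filtering $K(S\times\mM_X(\beta))$ by codimension of support in the $S$-direction, both $\fF$ and its K-theoretic dual lie in $F^1$, so $[\dR\hH om_{S\times\mM}(\fF,\fF)]\in F^2$, while $F^3=0$ because $\dim S=2$; in the associated graded the product formula identifies this class with (minus) the relative self-intersection of the relative one-cycle $l_{\mathrm{rel}}(\fF)$ of $\fF$ over $\mM_X(\beta)$. Thus the K-class of $\dR\hH om_{S\times\mM}(\fF,\fF)$, and hence, after applying $\dR\pi_{\ast}$ and $\det$, the line bundle $N$, depends only on $l_{\mathrm{rel}}(\fF)$. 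By the construction of the Hilbert--Chow morphism together with Rydh's representability of the Chow functor~\cite{Ryd}, $l_{\mathrm{rel}}(\eE)$ is the pullback along $\id_X\times\pi_{\mM}$ of the universal relative cycle on $X\times\Chow_X(\beta)$, so $l_{\mathrm{rel}}(\fF)=p_{\ast}l_{\mathrm{rel}}(\eE)$ is the pullback along $\id_S\times\pi_{\mM}$ of a relative cycle on $S\times\Chow_X(\beta)$. Base change along the Cartesian square relating $\pi$ to the projection $S\times\Chow_X(\beta)\to\Chow_X(\beta)$ then identifies $N$ with $\pi_{\mM}^{\ast}N_0$ for a line bundle $N_0$ on $\Chow_X(\beta)^{\mathrm{red}}$.

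Finally, a sufficiently small analytic neighbourhood $U$ of $\gamma$ in the complex analytic space $\Chow_X(\beta)$ may be taken contractible and Stein (local conical structure of analytic sets), so $\Pic(U)\hookrightarrow H^2(U;\mathbb{Z})=0$; hence $N_0|_U\cong\oO_U$, $N|_{\pi_{\mM}^{-1}(U)}\cong\oO$, and $K_{\mM_X(\beta)}^{\mathrm{vir}}|_{\pi_{\mM}^{-1}(U)}\cong\oO$, i.e. $\mM_X(\beta)$ is CY at $\gamma$. For $\beta=0$ the whole argument collapses to $N\cong\oO$ globally, recovering Proposition~\ref{lem:M0vir}. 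The main obstacle is making the third step rigorous: extracting the statement "$N$ depends only on the one-cycle $l_{\mathrm{rel}}(\fF)$" at the level of line bundles on the non-finite-type analytic stack $\mM_X(\beta)$, which requires care with the K-theoretic codimension filtration and with the compatibility of self-intersection and base change along the non-flat morphism $\id_S\times\pi_{\mM}$, as well as the family/analytic forms of the spectral and Serre-duality isomorphisms of the second paragraph; this is the one-dimensional analogue of the dimension-filtration argument of~\cite[Proposition~3.13]{MT}.
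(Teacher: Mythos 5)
Your first two steps — the family Higgs/spectral triangle and the use of relative Serre duality for $\pi\colon S\times\mM_X(\beta)\to\mM_X(\beta)$ to obtain $K_{\mM_X(\beta)}^{\mathrm{vir}}\cong N^{\otimes 2}$ with $N=\det\dR\hH om_{\pi}(\fF,\fF)$ — are correct and a genuinely nice structural observation that the paper does not make explicit. But the argument stalls exactly where you flag it: the passage from ``the K-class of $\dR\hH om_{S\times\mM}(\fF,\fF)$ is controlled by the one-cycle'' to ``$N$ descends to a line bundle $N_0$ on $\Chow_X(\beta)^{\mathrm{red}}$'' is a genuine gap, not a formality. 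A relative cycle on $S\times\Chow_X(\beta)$ in Rydh's sense does not come with a canonical K-theory class (flat families of one-cycles are not flat families of sheaves), the codimension-in-the-$S$-direction filtration of $K$ of the non-finite-type stack $S\times\mM_X(\beta)$ is not something one can manipulate naively, and ``equal in the associated graded'' only controls $N$ up to line bundles coming from $F^3$-type ambiguities in the filtration before one even addresses descent along the non-representable, non-flat $\pi_{\mM}$. You would in effect be re-proving a family version of the much subtler~\cite[Proposition~3.13]{MT}-type statement on a stack of pure one-dimensional sheaves on a surface, which is precisely the sort of thing one wants to avoid.

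The paper's proof is arranged so that this difficulty never arises. Instead of Serre duality, it writes $\omega_S\cong\oO_S(C_2-C_1)$ for two smooth curves $C_1,C_2$ transverse to the cycle, and expands the right-hand side of
\[
K_{\mM_X(\beta)}^{\mathrm{vir}}|_T\cong\det\dR\hH om_{q_T}(\fF,\fF)\otimes\det\dR\hH om_{q_T}(\omega_S^{-1}\boxtimes\fF,\fF)^{\vee}
\]
along the two short exact sequences $0\to\oO_S\to\oO_S(C_1)\to\oO_{C_1}(C_1)\to 0$ and $0\to\omega_S^{-1}\to\oO_S(C_1)\to\oO_{C_2}(C_1)\to 0$. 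The two copies of $\det\dR\hH om_{q_T}(\fF,\fF)$ then \emph{cancel}, leaving only determinant lines built from the restrictions $\dL j_{T,i}^{\ast}\fF$, which are families of zero-dimensional sheaves on the curves $C_i$. Local triviality of those is then a clean, finite-type computation (the paper's Lemma~6.2): after localizing to $\Sym^{a}(\mathbb{A}^1)$, the universal sheaf on $[\Hom(V,V)/\GL(V)]\times\mathbb{A}^1$ has the explicit two-term free resolution $V\otimes\oO\to V\otimes\oO$, so its K-class is zero and the determinant is equivariantly trivial. In other words, the paper never proves (and does not need) local triviality of $N$ itself — only of $N^{\otimes 2}$ — and it reaches that by pushing all the weight onto curves, where the K-theoretic vanishing is elementary. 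If you want to salvage your route, you should either replace the filtration argument by a similar concrete reduction for $N$, or be content to prove local triviality of $N^{\otimes 2}$ by the paper's curve trick and simply record $K^{\mathrm{vir}}=N^{\otimes 2}$ as an additional remark.
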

\begin{proof}
Let us fix a one cycle $\gamma \in \Chow_X(\beta)$. 
We first note that there exist smooth 
curves $C_1, C_2$ on $S$ which
are not contained in the support of $p_{\ast}\gamma$,
 and 
 admit an isomorphism
\begin{align}\label{isom:omega}
\omega_S \cong \oO_S(C_2-C_1).
\end{align}
Indeed it is enough to take a sufficiently ample 
divisor $H$ on $S$ and take 
general sections 
$C_1 \in \lvert H \rvert$ and 
$C_2 \in \lvert H+K_S \rvert$. 
Below we fix such $C_1$, $C_2$ and an isomorphism
(\ref{isom:omega}). 

Let $T$ be a complex analytic space
and $\eE \in \Coh(X \times T)$
a 
$T$-flat family of coherent sheaves on $X$
giving a $T$-valued point 
of (analytification of) $\mM_X(\beta)$, 
i.e. a 1-morphism
$T \to \mM_X(\beta)$. 
Suppose that its composition with
the HC map (\ref{piM:loc}) is contained in a
sufficiently small open neighborhood of $\gamma
\in \Chow_X(\beta)$. 
We use the following commutative diagrams
\begin{align}\label{dia:locS}
\xymatrix{
X_T \cneq X \times T \ar[r]^-{\pi_T} \ar[d]_-{p_T} & T, \\
S_T \cneq S \times T \ar[ur]_-{q_T}
}
\quad
\xymatrix{
(C_i)_T \cneq C_i \times T \ar[r]^-{r_{T, i}} 
\ar@<-0.3ex>@{^{(}->}[d]_-{j_{T, i}}
& T \\
S_T = S \times T \ar[ur]_-{q_T}
}
\end{align}
where $\pi_T$, $p_T$, $q_T$, $r_{T, i}$ are the projections, 
and $j_{T, i}$ is the natural closed
embedding. 
 We have the canonical exact sequence of sheaves on 
$X_T$
\begin{align*}
0 \to p_T^{\ast}(\omega_S^{-1} \boxtimes p_{T\ast} \eE)
\to p_T^{\ast}p_{T\ast}\eE \to \eE \to 0. 
\end{align*}
We apply $\dR \hH om_{\pi_T}(-, \eE)$
to the above exact sequence. 
By setting
$\fF=p_{T\ast}\eE$ and using the adjunction, 
we obtain the 
distinguished triangle in $D^b(\Coh(T))$
\begin{align*}
\dR \hH om_{\pi_T}(\eE, \eE) \to 
\dR \hH om_{q_T}(\fF, \fF)
\to \dR \hH om_{q_T}(\omega_S^{-1} \boxtimes \fF, \fF). 
\end{align*}
Therefore we have the canonical isomorphism
\begin{align*}
K_{\mM_X(\beta)}^{\rm{vir}}|_{T} 
\stackrel{\cong}{\to}
\det \dR \hH om_{q_T}(\fF, \fF)
\otimes \det \dR \hH om_{q_T}(\omega_S^{-1}\boxtimes \fF, \fF)^{\vee}. 
\end{align*}
Using the isomorphism (\ref{isom:omega}), 
we have the distinguished triangle
\begin{align*}
	\oO_S(-C_1) \boxtimes \fF \to \omega_S \boxtimes \fF \to 
	\omega_S|_{C_2}\boxtimes \fF. 
	\end{align*}
By applying $\dR \hH om_{q_T}(\fF, -)$, we obtain 
the distinguished triangle 
\begin{align*}
	\dR \hH om_{q_T}(\oO_S(C_1)\boxtimes \fF, \fF)
	&\to \dR \hH om_{q_T}(\omega_S^{-1}\boxtimes \fF, \fF) \\
	&\to \dR \hH om_{r_{T, 2}}(\dL j_{T, 2}^{\ast}\fF, 
	\omega_S|_{C_2}\boxtimes \dL j_{T, 2}^{\ast}\fF)
		\end{align*}
which gives the isomorphism
\begin{align*}
&\det \dR \hH om_{q_T}(\omega_S^{-1}\boxtimes \fF, \fF)
\stackrel{\cong}{\to} \\
&\det \dR \hH om_{q_T}(\oO_S(C_1)\boxtimes \fF, \fF)
\otimes \det \dR \hH om_{r_{T, 2}}(\dL j_{T, 2}^{\ast}\fF, 
\omega_S|_{C_2}\boxtimes \dL j_{T, 2}^{\ast}\fF).
\end{align*}
Similarly from the distinguished triangle 
\begin{align*}
	\oO_S(-C_1) \boxtimes \fF \to \fF \to \oO_{C_1}\boxtimes \fF
	\end{align*}
we have the 
distinguished triangle
\begin{align*}
	\dR \hH om_{q_T}(\oO_S(C_1)\boxtimes \fF, \fF)
	&\to \dR \hH om_{q_T}(\fF, \fF) \\
	&\to \dR \hH om_{r_{T, 1}}(\dL j_{T, 1}^{\ast}\fF, 
	\dL j_{T, 1}^{\ast}\fF)
	\end{align*}
which gives the 
isomorphism
\begin{align*}
&\det \dR \hH om_{q_T}(\oO_S(C_1)\boxtimes \fF, \fF)  \\
&\stackrel{\cong}{\to} 
\det \dR \hH om_{q_T}(\fF, \fF)
\otimes \det \dR \hH om_{r_{T, 1}}(\dL j_{T, 1}^{\ast}\fF, 
\dL j_{T, 1}^{\ast}\fF)^{\vee}.
\end{align*}
By combining the above isomorphisms, 
we have the isomorphism
\begin{align*}
K_{\mM_X(\beta)}^{\rm{vir}}|_{T} 
\stackrel{\cong}{\to} &
\det \dR \hH om_{r_{T, 1}}(\dL j_{T, 1}^{\ast}\fF, 
 \dL j_{T, 1}^{\ast}\fF) \\
&\otimes
\det \dR \hH om_{r_{T, 2}}(\dL j_{T, 2}^{\ast}\fF, 
\omega_S|_{C_2}\boxtimes \dL j_{T, 2}^{\ast}\fF)^{\vee}.
\end{align*}
By our assumption that $C_i$ is not contained in the 
support of $p_{\ast}\gamma$, 
the object $\dL j_{T, i}^{\ast}\fF$
is a $T$-flat family of zero-dimensional sheaves on 
$C_i$. Moreover the above isomorphism is 
compatible with complex analytic maps $T' \to T$
for other complex analytic space $T'$. 
Therefore the result follows from Lemma~\ref{lem:curve} below. 
\end{proof}
We have used the following lemma: 
\begin{lem}\label{lem:curve}
Let $C$ be a smooth projective curve and 
$L$ a line bundle on it. 
Let $\mM_0$ be the stack of zero dimensional 
sheaves on $C$ 
and consider the HC map
\begin{align*}
\pi_{\mM_0} \colon 
\mM_0 \to \Sym(C)
\end{align*}
sending a zero-dimensional sheaf to its support. 
Let $\uU \in \Coh(C \times \mM_0)$ be 
the universal family, and 
$r_{\mM_0} \colon C \times \mM_0 \to \mM_0$ the projection. 
Let $K^{\rm{vir}}_{\mM_0, L}$ be the line bundle on 
$\mM_0$ defined by 

\begin{align*}
K^{\rm{vir}}_{\mM_0, L} \cneq 
\det \dR \hH om_{r_{\mM_0}}(\uU, L \boxtimes \uU).
\end{align*}
Then for each $[Z] \in \Sym(C)$, 
there is an analytic open neighborhood $[Z] \in U \subset \Sym(C)$
such that $K^{\rm{vir}}_{\mM_0, L}$ is trivial on 
$\pi_{\mM_0}^{-1}(U)$.  
\end{lem}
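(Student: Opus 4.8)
The plan is to localise the whole picture around a fixed cycle $[Z]\in\Sym^m(C)$ and reduce to the case of a disk, where $\mM_0$ becomes a quotient stack by $\GL_m$ and the computation is immediate. Write the support of $[Z]$ as $\{x_1,\dots,x_r\}$ with multiplicities $n_1,\dots,n_r$, and choose pairwise disjoint coordinate disks $x_j\in V_j\subset C$. First I would take $U\subset\Sym^m(C)$ to be the open neighbourhood of $[Z]$ consisting of cycles supported in $\coprod_j V_j$ whose degree on $V_j$ equals $n_j$ for every $j$; then the addition map identifies $U$ with $\prod_j\Sym^{n_j}(V_j)$. Since a sheaf of finite length splits uniquely as a direct sum according to the connected components of its support, one gets a canonical identification $\pi_{\mM_0}^{-1}(U)\cong\prod_j\mM_0^{n_j}(V_j)$, under which the universal sheaf $\uU|_{C\times\pi_{\mM_0}^{-1}(U)}$ is, up to extension by zero, the direct sum of the pullbacks of the universal sheaves $\uU_j$ on $V_j\times\mM_0^{n_j}(V_j)$. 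Because the $\uU_j$ have mutually disjoint supports, $\dR\hH om_{r_{\mM_0}}(\uU,\uU)$ decomposes as $\bigoplus_j\mathrm{pr}_j^{\ast}\dR\hH om(\uU_j,\uU_j)$; and since each disk $V_j$ is contractible, $L|_{V_j}$ is trivial, so in fact $\dR\hH om_{r_{\mM_0}}(\uU,L\boxtimes\uU)\cong\bigoplus_j\mathrm{pr}_j^{\ast}\dR\hH om(\uU_j,\uU_j)$ as well. Passing to determinants, $K^{\rm vir}_{\mM_0,L}|_U\cong\boxtimes_j\,\mathrm{pr}_j^{\ast}\det\dR\hH om(\uU_j,\uU_j)$, so it suffices to trivialise each factor.

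Thus everything reduces to the following local statement: for an analytic disk $D$ with coordinate $t$ and $m\ge1$, the line bundle $\det\dR\hH om(\uU_D,\uU_D)$ on $\mM_0^m(D)$ is trivial. Here I would use the standard local model
\begin{align*}
\mM_0^m(D)&\cong[\End(\mathbb{C}^m)_D/\GL_m], \\
\uU_D&=\mathrm{coker}\bigl(\mathbb{C}^m\otimes\oO\xrightarrow{\,t\cdot\id-F\,}\mathbb{C}^m\otimes\oO\bigr),
\end{align*}
where $\End(\mathbb{C}^m)_D\subset\End(\mathbb{C}^m)$ is the open subset of endomorphisms with spectrum contained in $D$, $F$ is the tautological endomorphism on $D\times\End(\mathbb{C}^m)_D$, and $\GL_m$ acts by conjugation; the identification of an $m$-dimensional sheaf on $D$ with a pair $(\mathbb{C}^m,f)$, with $f$ the action of $t$, is via holomorphic functional calculus. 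This is the one-dimensional analogue of the quiver description $(Q_{(3)},W_{(3)})$ used in the proof of Proposition~\ref{lem:M0vir}, now with the one-loop quiver and no super-potential.

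Given the model, the computation is one line. The displayed resolution of $\uU_D$ is a two-term complex with \emph{equal} terms $\mathbb{C}^m\otimes\oO$; applying $\hH om(-,\uU_D)$ yields a two-term complex with equal terms $(\mathbb{C}^m)^{\vee}\otimes\uU_D$ in degrees $0$ and $1$, and since $\uU_D$ is finite over $\End(\mathbb{C}^m)_D$, the direct image along the projection is exact and turns this into a two-term complex $[\mathcal A\xrightarrow{\,\partial\,}\mathcal A]$ of locally free sheaves with equal terms representing $\dR\hH om(\uU_D,\uU_D)$. Hence $\det\dR\hH om(\uU_D,\uU_D)\cong\det\mathcal A\otimes(\det\mathcal A)^{-1}\cong\oO$ as $\GL_m$-equivariant line bundles, that is, as a line bundle on the stack $\mM_0^m(D)$; restricting to the relevant open substack finishes the argument. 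The point to be careful about is not this bookkeeping but the second paragraph: setting up the complex-analytic local model with its explicit universal family and square resolution, and verifying the flatness and base-change statements that make the direct image exact. The reduction in the first paragraph and the determinant computation in the third are purely formal.
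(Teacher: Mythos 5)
Your proof is correct and follows essentially the same route as the paper: localize near $[Z]$ to a product over the points of the support, trivialize $L$ on disjoint neighborhoods of those points, and reduce to the $\GL_m$-quotient model on a disk (the paper uses $\mathbb{A}^1$) where the universal sheaf has the two-term square resolution by $t\cdot\id - F$. The paper finishes by observing that $\uU$ is therefore zero in equivariant K-theory, whereas you explicitly push forward the resulting $\hH om$ complex to obtain a two-term complex with equal locally free terms; these are the same computation.
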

\begin{proof}
Let $[Z] \in \Sym(C)$ 
be given by $Z=\sum_{i=1}^k a_i[p_i]$ for 
distinct points
$p_1, \ldots, p_k \in C$
and $a_i \in \mathbb{Z}_{>0}$. 
We take analytic open neighborhoods
$p_i \in U_i \subset C$ such that 
$U_i \cap U_j =\emptyset$ for $i\neq j$
and $L|_{U_i}$ is trivial. 
Let $[Z] \in U \subset \Sym(C)$
be an open neighborhood given by 
\begin{align*}
U=\prod_{i=1}^k \Sym^{a_i}(U_i) \subset \Sym(C)
\end{align*}
where the right inclusion is given by 
$(Z_i)_{1\le i\le k} \mapsto \sum Z_i$. 
Then we have the isomorphism
\begin{align*}
\oplus \colon 
\prod_{i=1}^k \pi_{\mM_0}^{-1}(\Sym^{a_i}(U_i))
\stackrel{\cong}{\to} \pi_{\mM_0}^{-1}(U)
\end{align*}
given by taking the direct sum of zero dimensional
sheaves. Under the above isomorphism, we have
\begin{align*}
\oplus^{\ast}(K_{\mM_0, L}^{\rm{vir}}|_{\pi_{\mM_0}^{-1}(U)})
\cong \boxtimes_{i=1}^{k} 
K_{\mM_0, \oO_C}^{\rm{vir}}|_{\pi_{\mM_0}^{-1}(\Sym^{a_i}(U_i))}. 
\end{align*}

Therefore it is enough to show 
that $K_{\mM_0, \oO_C}^{\rm{vir}}$ is trivial 
when $C=\mathbb{A}^1$. 
In this case, the 
stack $\mM_0(k)$ of 
zero dimensional sheaves on $\mathbb{A}^1$
with length $k$ is given by 
\begin{align*}
\mM_0(k)=[\Hom(V, V)/\GL(V)]
\end{align*}
where $V$ is a $k$-dimensional vector space and 
$\GL(V)$ acts on $W \cneq \Hom(V, V)$ by conjugation. 
The universal sheaf $\uU$ is a $\GL(V)$-equivariant 
sheaf on $W \times \mathbb{A}^1$, which admits
a $\GL(V)$-equivariant exact sequence
\begin{align*}
0 \to V \otimes \oO_{W \times \mathbb{A}^1}
\stackrel{i}{\to} V \otimes \oO_{W \times \mathbb{A}^1} \to \uU \to 0. 
\end{align*}
Here the map $i$ corresponds to the 
$\GL(V)$-invariant section
of $\Hom(V, V) \otimes \oO_{W \times \mathbb{A}^1}$ given by
\begin{align*}
\delta \otimes 1-\id_V \otimes 1 \otimes t
\in \Hom(V, V) \otimes \Sym^{\bullet}(\Hom(V, V)^{\vee}) \otimes \mathbb{C}[t]
\end{align*}
where $\delta \in \Hom(V, V) \otimes \Hom(V, V)^{\vee}$ is the 
tautological element.  
Therefore $\uU$ is zero in the 
$\GL(V)$-equivariant K-theory of 
$\Hom(V, V)$, thus 
$K_{\mM_0, \oO_{C}}^{\rm{vir}}$ is trivial 
when $C=\mathbb{A}^1$. 

\end{proof}
\subsection{GV type invariants for local surfaces}
The GV type invariants for the local surface
$X=\mathrm{Tot}_S(K_S)$
is defined similarly to the projective CY 3-fold case. 
For an element
\begin{align*}
B+i\omega \in A(S)_{\mathbb{C}}
\end{align*}
and $F \in \Coh_{c, \le 1}(X)$, let 
$\mu_{B, \omega}(F) \in \mathbb{R} \cup \{\infty\}$ be defined by
\begin{align*}
\mu_{B, \omega}(F) \cneq \frac{\chi(F)-B \cdot l(p_{\ast}F)}
{\omega \cdot l(p_{\ast}F)}. 
\end{align*}
The above slope function defines the 
$(B, \omega)$-stability on 
$\Coh_{c, \le 1}(X)$.
Let $\Gamma_S \cneq N_1(S) \oplus \mathbb{Z}$
and
for $F \in \Coh_{c, \le 1}(X)$
we set
\begin{align}\label{ch:KS}
\ch(F) \cneq ([l(p_{\ast}F)], \chi(F)). 
\end{align}
Let $Z_{B, \omega}$ be the group homomorphism
$\Gamma_S \to \mathbb{C}$ defined by
\begin{align*}
Z_{B, \omega}(\beta, m) \cneq -m+(B+i\omega)\beta. 
\end{align*}
Then the pair 
\begin{align}\label{S:sigmaB}
\sigma_{B, \omega}  \cneq (Z_{B, \omega}, \Coh_{c, \le 1}(X))
\end{align}
is a Bridgeland stability condition on 
$D^b(\Coh_{c, \le 1}(X))$
w.r.t. the Chern character map (\ref{ch:KS}). 
Similarly to Subsection~\ref{subsec:twist2}, we denote 
by $\Stab_{\le 1}(X)$ the space of Bridgeland stability 
conditions on $D^b(\Coh_{c, \le 1}(X))$
w.r.t. the Chern character map (\ref{ch:KS}), and 
\begin{align*}
U(X) \subset \Stab_{\le 1}(X)
\end{align*}
the subset consisting of stability 
conditions of the form (\ref{S:sigmaB}). 

For $v=(\beta, m) \in N_1(S) \oplus \mathbb{Z}$, 
we have 
the open substack 
\begin{align*}
\mM_{\sigma}(v) \subset \mM_X(\beta)
\end{align*}
consisting of $\sigma$-semistable objects
 in $\Coh_{c, \le 1}(X)$. 
For $\gamma \in \Chow_X(\beta)$, let 
$\gamma \in U \subset \Chow_X(\beta)$ be a 
sufficiently small open neighborhood. 
Using the diagram (\ref{dia:chow2})
and CY orientation data of $\mM_X(\beta)|_{U}$
which exists by Theorem~\ref{thm:CYsurface}, 
the perverse sheaf
$\phi_{M_{\sigma}(v)|_{U}}$
on $\Perv(M_{\sigma}(v)|_{U})$
and the invariant
\begin{align}\label{loc:Phi}
\Phi_{\sigma}(\gamma, m) \in \mathbb{Z}[y^{\pm 1}]
\end{align}
are defined as in Definition~\ref{def:def:phiM}.
 Similarly to Lemma~\ref{lem:inde}, 
the invariant (\ref{loc:Phi})
is independent of a CY orientation data of $\mM_X(\beta)|_{U}$. 
Then the arguments of Theorem~\ref{thm:inde}
and Theorem~\ref{thm:indeE}
show the following (which is not conditional to Conjecture~\ref{conj:vir} by 
Theorem~\ref{thm:CYsurface}): 
\begin{thm}\label{thm:inde:loc}
Let $X=\mathrm{Tot}_S(K_S)$ for a smooth projective surface $S$. 
Then for $\sigma \in U(X)$, $\gamma \in \Chow_X(\beta)$ and 
$m \in \mathbb{Z}$,  
the invariant $\Phi_{\sigma}(\gamma, m)$
is independent of $\sigma$, 
so we can write it as $\Phi_X(\gamma, m)$. 
If furthermore $\gamma$ is primitive, 
then $\Phi_X(\gamma, m)$ is independent of $m$. 
\end{thm}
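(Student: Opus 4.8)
The plan is to reduce everything to the results already proved for projective Calabi--Yau $3$-folds, checking that each step survives passage to the non-compact $3$-fold $X=\mathrm{Tot}_S(K_S)$. The one genuinely new input is Theorem~\ref{thm:CYsurface}: it guarantees that $\mM_X(\beta)$ is CY at \emph{every} $\gamma\in\Chow_X(\beta)$, so that — unlike in the projective case — no conditional hypothesis is needed, and the perverse sheaf $\phi_{M_{\sigma}(v)|_{U}}$ and the invariant $\Phi_{\sigma}(\gamma,m)$ are defined unconditionally as in Definition~\ref{def:def:phiM}, with independence of the CY orientation data following exactly as in Lemma~\ref{lem:inde} (using the Stein-factorization argument, which only uses properness of the Hilbert--Chow map over the relevant neighborhood $U$).

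First I would prove the independence of $\sigma$ by repeating the proof of Theorem~\ref{thm:inde}. By the wall-chamber structure on $U(X)$ it suffices to compare $\sigma$ with a nearby $\sigma^{+}$; here one uses that $\mM_{\sigma}(v)\subset\mM_X(\beta)$ is a finite-type open substack — semistable compactly supported sheaves with fixed reduced support class $\beta$ and fixed Euler characteristic form a bounded family on $\overline{X}$, and the locus of sheaves avoiding $D_{\infty}$ is open — so that $M_{\sigma}(v)$ is projective and the diagram (\ref{dia:M+}), pulled back over $U$, makes sense. One then forms the canonical morphism $\dR p_{M\ast}\phi_{\mM_{\sigma}(v)|_{U}}\to \dR q_{M\ast}\dR p^{+}_{M\ast}\phi_{\mM_{\sigma^{+}}(v)|_{U}}$ as in the proof of Theorem~\ref{thm:inde}. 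The two assertions to verify — that $\dR q_{M\ast}\pH^i(\dR p^{+}_{M\ast}\phi_{\mM_{\sigma^{+}}(v)|_{U}})$ is perverse, and that the first perverse cohomology of the morphism is an isomorphism — are analytic-local on $M_{\sigma}(v)|_{U}$, and by Theorem~\ref{thm:compare} (valid analytic-locally around any polystable sheaf on any CY $3$-fold, compact or not, by the companion paper~\cite{Todstack}) they reduce to the corresponding statements for representations of the Ext-quiver with a convergent super-potential, i.e. to Lemma~\ref{lem:phivanish} and Lemma~\ref{lem:cmor:phi}. This gives an isomorphism $\dR\pi_{M\ast}\phi_{M_{\sigma}(v)|_{U}}\stackrel{\cong}{\to}\dR\pi^{+}_{M\ast}\phi_{M_{\sigma^{+}}(v)|_{U}}$, hence $\Phi_{\sigma}(\gamma,m)=\Phi_{\sigma^{+}}(\gamma,m)$, and therefore $\Phi_X(\gamma,m)\cneq\Phi_{\sigma}(\gamma,m)$ is well-defined.

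For the independence of $m$ when $\gamma$ is primitive, I would copy the proof of Theorem~\ref{thm:indeE}. Writing $\gamma=\sum_{i=1}^{k}a_i[C_i]$ with $\mathrm{g.c.d.}(a_1,\ldots,a_k)=1$, one picks divisors $D_i$ on $S$ defined near the support of $p_{\ast}\gamma$ with $D_i\cdot C_j=\delta_{ij}$ and a combination $D=\sum_i d_i D_i$ with $D\cdot\gamma=1$. For a sufficiently small analytic neighborhood $\gamma\in U\subset\Chow_X(\beta)$, tensoring by $p^{\ast}\oO_S(D)$ yields an isomorphism $\mM_{\sigma}(v)|_{U}\stackrel{\cong}{\to}\mM_{\sigma'}(v')|_{U}$ with $v=(\beta,m)$, $v'=(\beta,m+1)$, $\sigma=\sigma_{B,\omega}$, $\sigma'=\sigma_{B+D,\omega}$, compatible with the Hilbert--Chow maps to $U$ and with $d$-critical structures and virtual canonical line bundles; transporting a CY orientation data across it, it sends $\phi_{M_{\sigma}(v)|_{U}}$ to $\phi_{M_{\sigma'}(v')|_{U}}$, so $\Phi_{\sigma}(\gamma,m)=\Phi_{\sigma'}(\gamma,m+1)$, and combined with the independence of $\sigma$ this gives $\Phi_X(\gamma,m)=\Phi_X(\gamma,m+1)$.

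The main obstacle is not conceptual but bookkeeping: one must confirm that the finiteness and base-change properties used in the projective arguments — finite type of $\mM_{\sigma}(v)$, projectivity of $M_{\sigma}(v)$, properness of the Hilbert--Chow maps, the existence of the $d$-critical structure with virtual canonical line bundle given by the formula (\ref{vir:K}) (supplied in the non-compact case by~\cite{Bussi}), and the local description of Theorem~\ref{thm:compare} — all remain valid once one fixes the compactly supported Chern character $v$ and works over a neighborhood of $\gamma$ in the Chow variety. Since over such a neighborhood the supports of the relevant sheaves are uniformly bounded away from $D_{\infty}$, these reductions go through verbatim, and the only substantially new ingredient, the CY property, has already been established in Theorem~\ref{thm:CYsurface}; so the proof amounts to invoking the arguments of Theorem~\ref{thm:inde} and Theorem~\ref{thm:indeE} with the conditional hypothesis removed.
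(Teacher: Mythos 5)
Your proposal is correct and follows essentially the same route the paper takes: the paper's own justification is the single remark that ``the arguments of Theorem~\ref{thm:inde} and Theorem~\ref{thm:indeE} show the following (which is not conditional to Conjecture~\ref{conj:vir} by Theorem~\ref{thm:CYsurface}),'' and you spell out exactly this --- the CY hypothesis is supplied unconditionally by Theorem~\ref{thm:CYsurface}, the $\sigma$-independence is obtained by rerunning the proof of Theorem~\ref{thm:inde} (reduction via Theorem~\ref{thm:compare} to Lemma~\ref{lem:phivanish} and Lemma~\ref{lem:cmor:phi}), and the $m$-independence for primitive $\gamma$ by rerunning Theorem~\ref{thm:indeE} with the twist by $p^{\ast}\oO_S(D)$.
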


\subsection{Blow-up formula}
Let 
$S$ be a smooth projective surface and 
take a blow-up 
\begin{align*}
h \colon S^{\dag} \to S
\end{align*}
at a point $p\in S$. 
Then there exist smooth projective 3-folds $\overline{X}$, 
$\overline{X}^{\dag}$
connected by a flop 
\begin{align*}
\phi \colon \overline{X} \stackrel{f}{\to}
 Y \stackrel{f^{\dag}}{\leftarrow} \overline{X}^{\dag}
\end{align*}
satisfying the following conditions
(see~\cite[Lemma~4.2]{TodS}) 
\begin{itemize}
\item Both of the
 exceptional locus $Z=\Ex(f)$, $Z^{\dag} =\Ex(f^{\dag})$
are 
irreducible 
$(-1, -1)$-curves. 
\item There are closed embeddings 
\begin{align}\label{emb:S}
i \colon S \hookrightarrow \overline{X}, \quad 
i^{\dag} \colon S^{\dag} \hookrightarrow \overline{X}^{\dag}
\end{align}
such that 
$S \cap Z$ consists of one point, 
the strict transform of $S$
in $\overline{X}^{\dag}$ 
coincides with $S^{\dag}$, 
and $Z^{\dag} \subset S^{\dag}$ coincides with 
the exceptional locus of $h \colon S^{\dag} \to S$.   
\item There are open neighborhoods 
$S \subset X$, $S^{\dag} \subset X^{\dag}$
and isomorphisms
\begin{align}\label{isom:KS}
X \cong \mathrm{Tot}_S(K_S), \ 
X^{\dag} \cong \mathrm{Tot}_{S^{\dag}}(K_{S^{\dag}})
\end{align}
such that the embeddings (\ref{emb:S})
are identified with the zero sections. 
\end{itemize}
We regard 
one cycles on $S$, $S^{\dag}$
as one cycles on $X$, $X^{\dag}$
by isomorphisms (\ref{isom:KS}) and 
zero sections. 
Applying the 
argument of Theorem~\ref{thm:flop}, we obtain the 
following (which is not conditional to Conjecture~\ref{conj:vir}):  
\begin{thm}\label{thm:floploc}
Let $S$ be a smooth projective surface and 
$h \colon S^{\dag} \to S$ a blow-up at a point. 
Let $X=\mathrm{Tot}_S(K_S)$
and $X^{\dag}=\mathrm{Tot}_{S^{\dag}}(K_S^{\dag})$. 
Then for any effective one cycle $\gamma$ on $S$
and $m \in \mathbb{Z}$, we have the identity
\begin{align*}
\Phi_{X}(\gamma, m)=\Phi_{X^{\dag}}(h^{\ast}\gamma, m). 
\end{align*}
\end{thm}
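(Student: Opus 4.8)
The plan is to run the argument of the proof of Theorem~\ref{thm:flop}, now for the flop $\phi\colon\overline{X}\xrightarrow{f}Y\xleftarrow{f^{\dag}}\overline{X}^{\dag}$ of smooth projective $3$-folds furnished by~\cite[Lemma~4.2]{TodS}. Here the exceptional loci $Z,Z^{\dag}$ are $(-1,-1)$-curves lying in the interiors $X\subset\overline{X}$ and $X^{\dag}\subset\overline{X}^{\dag}$, the zero section $S^{\dag}\subset\overline{X}^{\dag}$ is the strict transform of $S\subset\overline{X}$, and the morphism $S^{\dag}\to S$ induced by the two contractions to $Y$ is precisely the blow-up $h$, under which the point $S\cap Z$ corresponds to $p\in S$. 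In particular $\phi$ restricts to a flop of the noncompact CY $3$-folds $X=\mathrm{Tot}_S(K_S)$ and $X^{\dag}=\mathrm{Tot}_{S^{\dag}}(K_{S^{\dag}})$.

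First I would check that the Bridgeland flop equivalence $\Phi\colon D^b(\Coh_{\le 1}(\overline{X}))\xrightarrow{\sim}D^b(\Coh_{\le 1}(\overline{X}^{\dag}))$ restricts to an equivalence $D^b(\Coh_{c,\le 1}(X))\xrightarrow{\sim}D^b(\Coh_{c,\le 1}(X^{\dag}))$; this holds because $\phi$ is an isomorphism away from the interior curve $Z$ and identifies a neighbourhood of $D_{\infty}$ with one of $D_{\infty}^{\dag}$, so the Fourier--Mukai kernel $\oO_{\overline{X}\times_Y\overline{X}^{\dag}}$ sends compactly supported objects disjoint from $D_{\infty}$ to compactly supported objects disjoint from $D_{\infty}^{\dag}$. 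Then the auxiliary results of Section~\ref{sec:flop} --- the perverse coherent hearts $\pPPer_{\le 1}$, the boundary stability conditions of Lemma~\ref{stabp}, the matching $\Phi_{\ast}\otau=\itau^{\dag}$ of Lemma~\ref{lem:Phisom}, and Lemmas~\ref{beta:m},~\ref{lem:prepare},~\ref{lem:Misom} --- carry over verbatim for $X=\mathrm{Tot}_S(K_S)$, worked out inside $\overline{X}$; one only has to keep track that all supports stay away from $D_{\infty}$ and that the moduli stacks in play remain of finite type. As in the proof of Theorem~\ref{thm:flop}, this produces, for suitable $(H,\omega)$ and for $\sigma_X^{(0)}$ near $\otau_X$ (resp.\ $\sigma_{X^{\dag}}^{(-1)}$ near $\itau_{X^{\dag}}^{\dag}$), isomorphisms of $d$-critical stacks
\[
\mM_{\sigma_X^{(0)}}(\beta)\cong\mM_{\otau_X}(\beta)\xrightarrow{\Phi_{\ast}}\mM_{\itau_{X^{\dag}}^{\dag}}(\phi_{\ast}\beta)\cong\mM_{\sigma_{X^{\dag}}^{(-1)}}(\phi_{\ast}\beta)
\]
commuting with the Hilbert--Chow maps; being induced by the derived equivalence they preserve the virtual canonical line bundles, so compatible CY orientation data (which exist unconditionally by Theorem~\ref{thm:CYsurface}) are intertwined, and hence so are the perverse sheaves $\phi_M$. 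Combining with Theorem~\ref{thm:inde:loc} gives $\Phi_X(\gamma,m)=\Phi_{X^{\dag}}(\phi_{\ast}\gamma,m)$, where $\phi_{\ast}\gamma$ is the one cycle on $X^{\dag}$ determined by $l(\Phi(F))=\phi_{\ast}\gamma$ whenever $l(F)=\gamma$ (the local-surface analogue of the cycle lemma of Section~\ref{sec:flop}).

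It then remains to identify $\phi_{\ast}\gamma$ with $h^{\ast}\gamma$, viewed on the zero section $S^{\dag}\subset X^{\dag}$. By linearity it suffices to treat $\gamma=[C]$ for a single irreducible curve $C\subset S$. Away from $Z$ the flop is an isomorphism carrying $C$ to its strict transform $\tilde{C}\subset S^{\dag}$, while near $Z$ the $(-1,-1)$ flop converts the local intersection of $C$ with $Z$ at $S\cap Z=p$, of length $\mathrm{mult}_p(C)$, into the summand $\mathrm{mult}_p(C)\cdot[Z^{\dag}]$ of the one cycle of $\Phi(\oO_C)$; thus $l(p^{\dag}_{\ast}\Phi(\oO_C))=\tilde{C}+\mathrm{mult}_p(C)\cdot[E]=h^{\ast}C$. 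Summing over the components of $\gamma$ gives $\phi_{\ast}\gamma=h^{\ast}\gamma$. The case $\Phi_X(\gamma,m)=0$ needs no separate treatment, since the stack isomorphism above makes $\Phi_X(\gamma,m)=0$ equivalent to $\Phi_{X^{\dag}}(h^{\ast}\gamma,m)=0$.

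The step I expect to be the main obstacle is this last, cycle-level identification: one must analyse the Fourier--Mukai transform of a sheaf along the flopping curve precisely enough to pin down the coefficient of $[Z^{\dag}]$, and not merely the numerical class of $\phi_{\ast}\gamma$. By contrast, transplanting the Section~\ref{sec:flop} machinery to the noncompact local surface is essentially bookkeeping, the only genuine input being that the flopping curve lies in the interior so that nothing near the boundary divisor is affected.
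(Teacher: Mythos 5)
Your overall approach is the same as the paper's: apply the machinery of Section~\ref{sec:flop} to the compactified flop $\phi\colon\overline{X}\to Y\leftarrow\overline{X}^{\dag}$ supplied by~\cite[Lemma~4.2]{TodS}, keeping track that the flopping curve lies in the interior so that compactly supported sheaves away from $D_\infty$ are carried to compactly supported sheaves away from $D_\infty^\dag$, and using Theorems~\ref{thm:CYsurface} and~\ref{thm:inde:loc} in place of the conjectural CY hypothesis. That is indeed what the paper intends by ``applying the argument of Theorem~\ref{thm:flop}''. You are also right that the paper's one-line proof does not make explicit the cycle-level identification $\phi_{\ast}\gamma=h^{\ast}\gamma$, which is genuinely needed because the Chow variety keeps track of actual cycles and not merely numerical classes; flagging this as the real content of the statement is a good observation.

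However, the way you phrase the local step is not correct as stated. You say that the scheme-theoretic intersection of $C$ with $Z$ at $p=S\cap Z$ has length $\mathrm{mult}_p(C)$. In local coordinates with $S=\{z=0\}$, $Z=\{x=y=0\}$ and $C=\{z=f(x,y)=0\}$ one has $C\cap Z=V(x,y,z,f)=V(x,y,z)$, so the intersection is the reduced point of length $1$ regardless of $\mathrm{mult}_p(C)$. Your conclusion is still right, but the reasoning should be replaced. A clean route: since $\phi$ is an isomorphism off $Z\cup Z^{\dag}$ and $\Phi$ is an equivalence, $l(\Phi(\oO_C))$ restricted to $X^{\dag}\setminus Z^{\dag}$ equals the strict transform $\tilde{C}$ with multiplicity one, so $\phi_{\ast}[C]=\tilde{C}+a[Z^{\dag}]$ for some $a\ge 0$; then determine $a$ numerically by intersecting with $[S^{\dag}]$. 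Using $N_{S/\overline{X}}\cong K_S$, $N_{S^{\dag}/\overline{X}^{\dag}}\cong K_{S^{\dag}}$, $K_{S^{\dag}}=h^{\ast}K_S+E$, $E\cdot\tilde{C}=\mathrm{mult}_p(C)$, $E^2=-1$, and the flop identity $\phi_{\ast}[C]\cdot S^{\dag}=[C]\cdot S$, one finds $a=\mathrm{mult}_p(C)$, hence $\phi_{\ast}[C]=h^{\ast}[C]$ as an actual cycle. With this replacement your argument is complete and matches the paper.
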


\newcommand{\etalchar}[1]{$^{#1}$}
\providecommand{\bysame}{\leavevmode\hbox to3em{\hrulefill}\thinspace}
\providecommand{\MR}{\relax\ifhmode\unskip\space\fi MR }
\providecommand{\MRhref}[2]{%
	\href{http://www.ams.org/mathscinet-getitem?mr=#1}{#2}
}
\providecommand{\href}[2]{#2}


Kavli Institute for the Physics and 
Mathematics of the Universe, University of Tokyo (WPI),
5-1-5 Kashiwanoha, Kashiwa, 277-8583, Japan.

\textit{E-mail address}: yukinobu.toda@ipmu.jp

\end{document}